\def\@tocline#1#2#3#4#5#6#7{\relax
  \ifnum #1>\c@tocdepth 
  \else
    \par \addpenalty\@secpenalty\addvspace{#2}%
    \begingroup \hyphenpenalty\@M
    \@ifempty{#4}{%
      \@tempdima\csname r@tocindent\number#1\endcsname\relax
    }{%
      \@tempdima#4\relax
    }%
    \parindent\z@ \leftskip#3\relax \advance\leftskip\@tempdima\relax
    \rightskip\@pnumwidth plus4em \parfillskip-\@pnumwidth
    #5\leavevmode\hskip-\@tempdima
      \ifcase #1
       \or\or \hskip 1em \or \hskip 2em \else \hskip 3em \fi%
      #6\nobreak\relax
    \dotfill\hbox to\@pnumwidth{\@tocpagenum{#7}}\par
    \nobreak
    \endgroup
  \fi}
\newtheorem{theorem}{Theorem}[section]
\newtheorem{lemma}[theorem]{Lemma}
\newtheorem{proposition}[theorem]{Proposition}
\theoremstyle{definition}
\newtheorem{defn}[theorem]{Definition}
\newtheorem{remark}[theorem]{Remark}
\newcommand{\mc}{\mathcal}
\newcommand{\mf}{\mathbf}
\newcommand{\mb}{\mathbb}
\newcommand{\wh}{\widehat}
\newcommand{\wt}{\widetilde}
\newcommand{\st}{\mathrm{st}}
\newcommand{\ud}{\,\mathrm{d}}
\newcommand{\id}{\mathrm{id}}
\DeclareMathOperator{\stab}{Stab}
\DeclareMathOperator{\aut}{Aut}
\DeclareMathOperator{\ab}{Z}
\DeclareMathOperator{\tran}{\Theta}
\DeclareMathOperator{\poly}{poly}
\DeclareMathOperator{\supp}{supp}
\DeclareMathOperator{\codim}{codim}
\DeclareMathOperator{\coup}{\mathsf{Cg}}
\DeclareMathOperator{\q}{c}
\DeclareMathOperator{\ns}{X}
\DeclareMathOperator{\nss}{Y}
\DeclareMathOperator{\co}{\circ\hspace{-0.02 cm}}
\DeclareMathOperator{\cu}{C}
\DeclareMathOperator{\cs}{s}
\DeclareMathOperator{\upmod}{\perp\!\!\!\perp}
\newcommand*{\sbr}[1]{\scalebox{0.8}{$(#1)$}}
\newcommand*{\db}[1]{\llbracket #1\rrbracket}
\begin{document}

\title[A generalization of the Green--Tao--Ziegler theorem]{Regularity and inverse theorems for uniformity norms on compact abelian groups and nilmanifolds}

\author{Pablo Candela}
\address{Universidad Aut\'onoma de Madrid and ICMAT\\ Ciudad Universitaria de Cantoblanco\\ Madrid 28049\\ Spain}
\email{pablo.candela@uam.es}

\author{Bal\'azs Szegedy}
\address{MTA Alfr\'ed R\'enyi Institute of Mathematics\\ 
Re\'altanoda utca 13-15\\
Budapest, Hungary, H-1053}
\email{szegedyb@gmail.com}
\subjclass[2010]{11B30, 43A85, 37A45}
\begin{abstract}
We prove a general form of the regularity theorem for uniformity norms, and deduce an inverse theorem for these norms which holds for a class of compact nilspaces including all compact abelian groups, and also nilmanifolds; in particular we thus obtain the first non-abelian versions of such theorems. We derive these results from a general structure theorem for cubic couplings, thereby unifying these results with the Host--Kra Ergodic Structure Theorem. A unification of this kind had been propounded as a conceptual prospect by Host and Kra. Our work also provides new results on nilspaces. In particular, we obtain a new stability result for nilspace morphisms. We also strengthen a result of Gutman, Manners and Varj\'u, by proving that a $k$-step compact nilspace of finite rank is a toral nilspace (in particular, a connected nilmanifold) if and only if its $k$-dimensional cube set is connected. We also prove that if a morphism from a cyclic group of prime order into a compact finite-rank nilspace is sufficiently balanced (i.e.\ equidistributed in a certain quantitative and multidimensional sense), then the nilspace is toral. As an application of this, we obtain a new proof of a refinement of the Green--Tao--Ziegler inverse theorem.
\end{abstract} 
\date{}
\vspace*{-1cm}
\maketitle
\vspace{-0.5cm}
\section{Introduction}
\noindent The inverse theorem for the Gowers norms is a major result in arithmetic combinatorics, with remarkable applications (see for instance \cite{GTarith,GTprimes}), and is central to the theory known as higher-order Fourier analysis, initiated by Gowers in his seminal paper \cite{GSz} (see also the survey \cite{GHFA}). The inverse theorem was proved in the breakthrough paper \cite{GTZ} by Green, Tao and Ziegler  in the case of finite cyclic groups (more precisely, finite intervals of integers), and analogous results were obtained for vector spaces over a finite field of fixed characteristic in \cite{BTZ,T&Z,T&Z2}. 

The Gowers norms can be defined on any compact abelian group, and these norms are special cases of more general uniformity norms, which can also be defined on nilmanifolds (see Definition \ref{def:Unorms}, or \cite[Ch.\ 12, \S 2]{HKbook}). The uniformity norms also have counterparts in other areas, especially in ergodic theory, where  seminorms of a similar kind were introduced by Host and Kra in \cite{HK}. The main result regarding these seminorms, known as the Ergodic Structure Theorem (established in \cite[Theorem 10]{HK}; see also \cite{HKbook}), is an analogue of, and was in fact an inspiration for, the inverse theorem for the Gowers norms, notably in its use of nilmanifolds. \enlargethispage{0.7cm}

An approach to higher-order Fourier analysis different from that in \cite{GTZ} was initia- ted by the second named author in \cite{Szegedy1}, inspired on one hand by the work of Host and Kra, especially their introduction of \emph{parallelepiped structures} \cite{HKparas}, and on the other hand by the non-standard analysis viewpoint in graph limit theory \cite{E&S}. This approach led to the development of the theory of \emph{nilspaces} by Antol\'in Camarena and the second named author in \cite{CamSzeg}, and initial applications of this theory to higher-order Fourier analysis were given in \cite{Szegedy:reg,Szegedy:HFA}. The theory of nilspaces has since been detailed further; see for instance the treatment in \cite{Cand:Notes1,Cand:Notes2} detailing in particular the measure-theoretic aspects, and also the development by Gutman, Manners and Varj\'u in \cite{GMV1,GMV2,GMV3} with more emphasis on topological aspects and applications in dynamics. Nilspace related topics have now  grown to generate an active research area, which has found further uses in ergodic theory \cite{CGS,GutLian}, probability theory \cite{CScouplings}, and topological dynamics \cite{GlaGutYe}.

It became conceivable that more conceptual light could be shed on higher-order Fourier analysis by unifying the nilspace approach from \cite{Szegedy:reg,Szegedy:HFA} with the ergodic theo- retic methods from \cite{HK}, a prospect raised notably by Host and Kra in \cite[end of Ch.\ 17]{HKbook}. In \cite{CScouplings}, a framework for such a unification was put forward, based on the concept of a cubic coupling, inspired especially by the cubic measures from \cite[\S 3.1]{HK}. A first application of cubic couplings was given in \cite{CScouplings} by recovering and extending the Ergodic Structure Theorem of Host and Kra in this framework. Another central application was announced in the same paper \cite{CScouplings}, namely a result extending the inverse theorem from \cite{GTZ} to compact abelian groups and also to nilmanifolds and more general nilspaces. The main purpose of this paper is to prove this result. Let us emphasize that while the combination of nilspace theory with non-standard analysis in the preprints \cite{Szegedy:reg,Szegedy:HFA} already yielded inverse theorems for uniformity norms, these were markedly less general than those presented here, and the results in the present paper follow a more conceptual approach using solely the material from the published (or to appear) papers \cite{Cand:Notes1, Cand:Notes2, CScouplings}. Crucially, it is the use of the cubic coupling framework here which enables the extension of the inverse theorem beyond abelian groups and its unification with the Ergodic Structure Theorem.

Let us set up some terminology. First we describe the class of nilspaces involved in our main results. This class consists essentially of filtered (possibly disconnected) nilmanifolds. Such a nilmanifold can always be viewed as a nilspace, by equipping it with the cube sets determined by the filtration; see \cite[Definition 1.1.2]{Cand:Notes2}. Since we shall work in the category of nilspaces, we want to capture precisely these nilmanifolds within this category, which we do with Definition \ref{def:CFRcoset} below.

Recall that $\ns$ is a \emph{compact finite-rank nilspace} (abbreviated to \emph{\textsc{cfr} nilspace}) if $\ns$ is a compact nilspace and every structure group of $\ns$ is a Lie group \cite[Definition 2.5.1]{Cand:Notes2}. (Following \cite{CamSzeg} and \cite{Cand:Notes2}, we assume compact spaces to be second-countable, unless specifically stated otherwise. \textsc{cfr} nilspaces are called \emph{Lie-fibred} nilspaces in \cite{GMV3}.)
\begin{defn}[\textsc{cfr} coset nilspaces]\label{def:CFRcoset}
We say that a $k$-step \textsc{cfr} nilspace is a \emph{coset nilspace} if it is isomorphic to a nilmanifold $G/\Gamma$ (thus $G$ is a nilpotent Lie group and $\Gamma$ is a discrete cocompact subgroup of $G$) equipped with cube sets of the form $\cu^n(G_\bullet)/ \cu^n(\Gamma_\bullet)$, $n\geq 0$, where $G_\bullet=(G_i)_{i\geq 0}$ is a filtration of degree at most $k$ of closed subgroups $G_i\lhd G$, and $\Gamma_\bullet=(\Gamma_i)_{i\geq 0}$ is a filtration on $\Gamma$ where $\Gamma_i=\Gamma\cap G_i$ is cocompact in $G_i$, $i\geq 0$.
\end{defn}
\noindent Our main results concern the class of compact nilspaces that are inverse limits of \textsc{cfr} coset nilspaces (see \cite[\S 2.7]{Cand:Notes2} for the inverse limit construction in this category). This includes all compact abelian groups, and more generally all inverse limits of nilmanifolds.

\enlargethispage{0.8cm}

We deduce the inverse theorem from a \emph{regularity theorem} for functions on  nilspaces in the above class, namely Theorem \ref{thm:reglem-intro}. Regularity results in arithmetic combinatorics are inspired by the well-known regularity lemmas from graph theory, and have hitherto focused on functions on abelian groups (see for instance \cite[Theorem 1.2]{GTarith}). The point of Theorem \ref{thm:reglem-intro} below is that a bounded measurable function on a \textsc{cfr} coset nilspace can always be decomposed into a sum of a structured function plus two errors, one error being very small in a prescribed uniformity norm, and the other being negligible in the $L^1$-norm. The structured function is a \emph{nilspace polynomial} of bounded complexity, a generalization of nilsequences that was introduced in \cite{Szegedy:reg}. To define nilspace polynomials, we first recall a general notion of complexity for \textsc{cfr} nilspaces. Recall that there are countably many \textsc{cfr} nilspaces up to isomorphism; see \cite[Theorem 3]{CamSzeg}, \cite[Theorem 2.6.1]{Cand:Notes2}.
\begin{defn}
By a \emph{complexity notion} for \textsc{cfr} nilspaces, we mean a bijection from  the countable set of isomorphism classes of \textsc{cfr} nilspaces to $\mb{N}$. Having fixed such a bijection, for $m>0$ we say that a \textsc{cfr} nilspace $\ns$ has \emph{complexity at most} $m$, and write $\textrm{Comp}(\ns)\leq m$, if its image under the bijection is at most $m$.
\end{defn}
\noindent Similarly to \cite{GTZ}, in this paper we do not pursue explicit bounds for our main results, so we do not need to be specific about the complexity notion being used. In fact our results hold for any prescribed complexity notion.
\begin{defn}[Nilspace polynomials]\label{def:nilspacepolys}
Let $\ns$ be a compact nilspace. A function $f:\ns\to\mb{C}$ is a \emph{nilspace polynomial} of degree $k$ if $f=F\co\phi$ where $\phi:\ns\to \nss$ is a continuous morphism, $\nss$ is a $k$-step \textsc{cfr} nilspace, and $F$ is continuous; $f$ has \emph{complexity } $\leq m$, denoted $\textrm{Comp}(f)\leq m$, if $F$ has Lipschitz constant $\leq m$ and $\textrm{Comp}(\nss)\leq m$.
\end{defn}
\noindent The Lipschitz constant here relates to a Riemannian metric that we fix from the start on each \textsc{cfr} nilspace, using the fact that these spaces are finite-dimensional manifolds \cite[Lemma 2.5.3]{Cand:Notes2}. Our regularity theorem ensures also that the morphism involved in the structured part satisfies a strong quantitative equidistribution property that we call \emph{balance} (following \cite{Szegedy:HFA}). This useful property has a technical definition (concerning morphisms and also nilspace polynomials), which we detail later; see Definition \ref{def:balance}. 

\begin{defn}[Uniformity seminorms on compact nilspaces]\label{def:Unorms}
For $d\geq 2$, the $U^d$-seminorm of a bounded Borel function $f:\ns\to\mb{C}$ on a compact nilspace $\ns$ is defined by $\|f\|_{U^d}=\big(\int_{\q\in\cu^d(\ns)} \prod_{v\in \{0,1\}^d} \mc{C}^{|v|}f(\q(v))\ud\mu(\q)\big)^{1/2^d}$, where $\mu$ is the Haar measure\footnote{This refers to the canonical Borel probability measure on a cube set in nilspace theory; see \cite[\S 2.2.2]{Cand:Notes2}.} on the cube set $\cu^d(\ns)$, $\mc{C}$ denotes the complex conjugation operator, and $|v|=\sum_{i=1}^d v\sbr{i}$.
\end{defn}
\noindent For a proof of the seminorm properties, and a discussion of when these quantities are norms, see Lemma \ref{lem:non-deg-app}. We can now state our main result.
\begin{theorem}[Regularity]\label{thm:reglem-intro}
Let $k\in\mb{N}$ and let $\mc{D}:\mb{R}_{>0}\times\mb{N}\to\mb{R}_{>0}$ be an arbitrary function. For every $\epsilon>0$ there exists $N=N(\epsilon,\mc{D})>0$ such that the following holds. For every compact nilspace $\ns$ that is an inverse limit of \textsc{cfr} coset nilspaces, and every Borel function $f:\ns\to \mb{C}$ with $|f|\leq 1$, there is a decomposition $f=f_s+f_e+f_r$ and number $m\leq N$ such that the following properties hold:
\begin{enumerate}[leftmargin=0.9cm]
\item $f_s$ is a $\mc{D}(\epsilon,m)$-balanced nilspace polynomial of degree $k$, $|f_s| \leq 1$, $\textup{Comp}(f_s)\leq m$, 
\item $\|f_e\|_{L^1}\leq\epsilon$,
\item $\|f_r\|_{U^{k+1}}\leq \mc{D}(\epsilon,m)$, $|f_r|\leq 1$ and $\max\{ |\langle f_r,f_s\rangle|,\,|\langle f_r,f_e\rangle|\}\leq \mc{D}(\epsilon,m)$.
\end{enumerate}
\end{theorem}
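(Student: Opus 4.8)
The plan is to deduce Theorem~\ref{thm:reglem-intro} from the structure theorem for cubic couplings of \cite{CScouplings} by a compactness (ultraproduct) argument, which converts the quantitative statement into an \emph{exact} statement about a single limiting cubic coupling. First I would suppose the conclusion fails for some $k$, some growth function $\mc{D}$ and some $\epsilon>0$, so that for every $n$ there is a compact nilspace $\ns^{(n)}$, itself an inverse limit of \textsc{cfr} coset nilspaces, and a Borel function $f^{(n)}$ with $|f^{(n)}|\leq 1$ admitting no decomposition satisfying (i)--(iii) with parameter $m\leq n$. Passing to an ultraproduct along a non-principal ultrafilter $\omega$ of the spaces $\ns^{(n)}$ together with all their cube sets $\cu^j(\ns^{(n)})$ and the corresponding Haar measures, I obtain a Loeb probability space $\ns^*$ carrying a cubic coupling, and $f^*:=[f^{(n)}]\in L^\infty(\ns^*)$ with $|f^*|\leq 1$. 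A first substantial point will be to check that this cubic coupling is of the required type, i.e.\ that each of its canonical structured factors is an inverse limit of \textsc{cfr} \emph{coset} nilspaces; here one combines the structure theorem for cubic couplings with the hypothesis on the $\ns^{(n)}$ and the new toral/coset criteria of this paper (for instance, if the $\ns^{(n)}$ are cyclic groups of prime order, the relevant factors are approached by sufficiently balanced morphisms from such groups and are therefore toral).

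Next I would extract the decomposition on $\ns^*$. Let $\mc{Z}_k$ be the degree-$k$ structured factor, $p_k:\ns^*\to\mc{Z}_k$ the factor map, and $g:=\mb{E}(f^*\mid\mc{Z}_k)$; since $\mc{Z}_k$ is characteristic for the $U^{k+1}$-seminorm one has $\|f^*-g\|_{U^{k+1}}=0$. Writing $\mc{Z}_k=\varprojlim_j Z_j$ with each $Z_j$ a $k$-step \textsc{cfr} coset nilspace, the martingale convergence theorem gives $\mb{E}(f^*\mid Z_j)\to g$ in $L^1$, so I may fix $j_0$ with $\|g-\mb{E}(f^*\mid Z_{j_0})\|_{L^1}\leq\epsilon/4$, then a Lipschitz $F:Z_{j_0}\to\mb{C}$ with $|F|\leq 1$ and $\|\mb{E}(f^*\mid Z_{j_0})-F\|_{L^1}\leq\epsilon/4$. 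Setting $\phi^*:=\pi_{j_0}\co p_k:\ns^*\to Z_{j_0}$, a continuous morphism, and $f_s^*:=F\co\phi^*$, I let $m$ be the (finite) complexity of this nilspace polynomial. Since $p_k$ is a factor map, $\phi^*$ is perfectly balanced; invoking the new stability result for nilspace morphisms, it will be realized, up to $\omega$-negligible error, by a sequence of morphisms $\phi^{(n)}:\ns^{(n)}\to Z_{j_0}$ that are $\mc{D}(\epsilon,m)$-balanced for $\omega$-almost every $n$, and I set $f_s^{(n)}:=F\co\phi^{(n)}$.

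Then I would transfer back. On $\ns^{(n)}$ define $f_e^{(n)}:=\mb{E}\big(f^{(n)}-f_s^{(n)}\mid(\ns^{(n)})_k\big)$, with $(\ns^{(n)})_k$ the native degree-$k$ factor of $\ns^{(n)}$, and $f_r^{(n)}:=(f^{(n)}-f_s^{(n)})-f_e^{(n)}$ (with a routine truncation to enforce $|f_r^{(n)}|\leq 1$). One verifies that the ultraproduct of the factors $(\ns^{(n)})_k$ realizes $\mc{Z}_k$, whence the ultralimits of $f_e^{(n)}$ and $f_r^{(n)}$ are $g-f_s^*$ and $f^*-g$, so that $\|f_e^*\|_{L^1}\leq\epsilon/2$, $\|f_r^*\|_{U^{k+1}}=0$, and $\langle f_r^*,f_s^*\rangle=\langle f_r^*,f_e^*\rangle=0$ because $f_s^*,f_e^*\in L^\infty(\mc{Z}_k)$ while $f_r^*\perp L^\infty(\mc{Z}_k)$. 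Since the $L^1$-norm, the $U^{k+1}$-seminorm and inner products pass to the ultralimit (all functions involved being bounded), for $\omega$-almost every $n$ the decomposition $f^{(n)}=f_s^{(n)}+f_e^{(n)}+f_r^{(n)}$ satisfies $\textup{Comp}(f_s^{(n)})\leq m$, $f_s^{(n)}$ is $\mc{D}(\epsilon,m)$-balanced with $|f_s^{(n)}|\leq 1$, $\|f_e^{(n)}\|_{L^1}\leq\epsilon$, $\|f_r^{(n)}\|_{U^{k+1}}\leq\mc{D}(\epsilon,m)$, and the two correlations are $\leq\mc{D}(\epsilon,m)$. Choosing any such $n\geq m$ then contradicts the choice of the counterexample sequence, completing the argument.

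I expect the main obstacle to lie in the structural input underlying the first two steps: identifying the structured factors of the limiting cubic coupling as inverse limits of \textsc{cfr} coset (indeed toral) nilspaces, and showing that $\phi^*$ descends to a sequence of genuinely $\mc{D}(\epsilon,m)$-balanced morphisms on the $\ns^{(n)}$. This is precisely where the structure theorem for cubic couplings, the new nilspace stability result, and the balance and connectedness criteria announced in the abstract must be brought together. By contrast, once the limit object is understood the decomposition is obtained with \emph{exact} errors, which is exactly why the statement tolerates an arbitrary growth function $\mc{D}$ depending on the a~posteriori complexity~$m$.
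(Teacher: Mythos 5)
Your overall strategy — compactness via ultraproducts, the structure theorem for cubic couplings, approximation of the structured factor by a \textsc{cfr} nilspace, the new stability result for morphisms, and transfer back — matches the paper's proof in broad outline. But the transfer-back step, which is where the quantitative statement is actually extracted, is incorrect as written, and a key technical point earlier is glossed over.

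The concrete error is in your definition $f_e^{(n)}:=\mb{E}\big(f^{(n)}-f_s^{(n)}\mid(\ns^{(n)})_k\big)$ and $f_r^{(n)}:=(f^{(n)}-f_s^{(n)})-f_e^{(n)}$. Since each $\ns^{(n)}$ is already a $k$-step nilspace, its native degree-$k$ factor $(\ns^{(n)})_k$ \emph{is} $\ns^{(n)}$ itself, so this conditional expectation is the identity, $f_r^{(n)}\equiv 0$, and $f_e^{(n)}=f^{(n)}-f_s^{(n)}$ — which has no reason to be small in $L^1$. The claim that ``the ultraproduct of the factors $(\ns^{(n)})_k$ realizes $\mc{Z}_k$'' is false for the same reason: that ultraproduct is $\mf{X}$ itself, whereas $\mc{Z}_k$ (the Fourier $\sigma$-algebra of the cubic coupling) is a \emph{proper} factor. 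Even though $\|\cdot\|_{U^{k+1}}$ is a genuine norm on each $k$-step $\ns^{(n)}$, in the Loeb ultraproduct it becomes a degenerate seminorm, which is precisely why there is a nontrivial characteristic factor. The correct move, as in the paper, is not to define $f_r^{(n)}$ intrinsically at all: one uses the Loeb realizability of $f_r^*$ (e.g.\ \cite[Proposition 3.8]{Warner}) to produce arbitrary Borel representatives $f_{r,i}$ with $\lim_\omega f_{r,i}=_\lambda f_r^*$, sets $f_{e,i}:=f_i-f_{s,i}-f_{r,i}$, and then appeals to the fact that $L^1$-norms, $U^{k+1}$-seminorms, and inner products all pass to ultralimits of bounded functions. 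The smallness of $\|f_{r,i}\|_{U^{k+1}}$ for $\omega$-most $i$ is then an a posteriori consequence of $\|f_r^*\|_{U^{k+1}}=0$, not something enforced by a pointwise construction on each $\ns^{(n)}$.

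Two further remarks. First, your ``first substantial point'' is aimed at the wrong target: the structure theorem of \cite{CScouplings} delivers a $k$-step compact nilspace which is an inverse limit of \textsc{cfr} nilspaces by general theory, and nothing requires the \emph{codomain} $\nss$ (or its finite-rank approximants $\nss_j$) to be coset nilspaces; the \textsc{cfr} coset hypothesis on the \emph{domain} spaces $\ns^{(n)}$ is what is actually used, and precisely where. Second, that is the content of the one step you treat as a given: asserting that the Loeb ultraproduct ``carries a cubic coupling'' hides the central technical work of the paper (all of Section~\ref{sec:ccax}), namely that the cubic coupling axioms — especially conditional independence — hold with the \emph{product} $\sigma$-algebra $\mc{L}_{\mf{X}}^{\db{n}}$ rather than the larger Loeb $\sigma$-algebra $\mc{L}_{\mf{X}^{\db{n}}}$, and then that one can locate a separable sub-$\sigma$-algebra on which this still holds so that the structure theorem's Borel hypothesis is met. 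Without both of those inputs the proposed argument does not get off the ground.
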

\noindent Here $\langle f,g \rangle$ denotes the inner product $\int_{\ns}f\,\overline{g}\ud\mu_{\ns}$ where $\mu_{\ns}$ is the Haar measure on $\ns$. We use the term \emph{1-bounded function} for a  function $f:\ns\to\mb{C}$ with modulus at most 1 everywhere (denoted $|f|\leq 1$). Using Theorem \ref{thm:reglem-intro}, we obtain our next main result.
\begin{theorem}[Inverse theorem]\label{thm:inverse-intro}
Let $k\in \mb{N}$ and $\delta\in (0,1]$. Then there is $m>0$ such that for every compact nilspace $\ns$ that is an inverse limit of \textsc{cfr} coset nilspaces, and every 1-bounded Borel function $f:\ns\to \mb{C}$ with $\|f\|_{U^{k+1}}\geq \delta$, there is a 1-bounded nilspace polynomial $F\co\phi$ of degree $k$ and complexity $\leq m$ such that $\langle f, F\co\phi\rangle \geq \delta^{2^{k+1}}/2$.
\end{theorem}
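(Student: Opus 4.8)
The plan is to deduce Theorem~\ref{thm:inverse-intro} from the Regularity Theorem~\ref{thm:reglem-intro}, but applied not to $f$ itself, rather to the \emph{Gowers dual function} of $f$. Recall that for a $1$-bounded Borel function $f$ on $\ns$ the dual function $\mr{D}f\colon\ns\to\mb{C}$ is obtained by disintegrating the Haar measure $\mu$ on $\cu^{k+1}(\ns)$ over the evaluation map $\q\mapsto\q(\mf 0)$ at the vertex $\mf 0=0^{k+1}$ (which pushes $\mu$ to the Haar measure $\mu_{\ns}$ on $\ns$) and putting $\mr{D}f(y)=\int\prod_{v\in\{0,1\}^{k+1}\setminus\{\mf 0\}}\mc{C}^{|v|+1}f(\q(v))\ud\mu(\q\mid\q(\mf 0)=y)$. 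Then $\mr{D}f$ is Borel and $1$-bounded, and it satisfies the two standard identities $\langle f,\mr{D}f\rangle=\|f\|_{U^{k+1}}^{2^{k+1}}$ and, for every $1$-bounded Borel $h$, $|\langle h,\mr{D}f\rangle|\le\|h\|_{U^{k+1}}\,\|f\|_{U^{k+1}}^{2^{k+1}-1}\le\|h\|_{U^{k+1}}$; the latter follows by unfolding $\langle h,\mr{D}f\rangle$ into a Gowers inner product (with $h$ at the vertex $\mf 0$ and $f$ at the remaining $2^{k+1}-1$ vertices) and invoking the Cauchy--Schwarz--Gowers inequality on $\cu^{k+1}(\ns)$ together with $\|f\|_{U^{k+1}}\le\|f\|_{L^\infty}\le 1$.

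Next I would apply Theorem~\ref{thm:reglem-intro} to the $1$-bounded Borel function $\mr{D}f$ on $\ns$, with a value $\epsilon>0$ and a function $\mc{D}$ still to be fixed (both depending only on $k$ and $\delta$). This produces $m\le N(\epsilon,\mc{D})$ and a decomposition $\mr{D}f=g_s+g_e+g_r$ in which $g_s=F\co\phi$ is a $1$-bounded nilspace polynomial of degree $k$ with $\textup{Comp}(g_s)\le m$, $\|g_e\|_{L^1}\le\epsilon$, $\|g_r\|_{U^{k+1}}\le\mc{D}(\epsilon,m)$, $|g_r|\le 1$, and $\max\{|\langle g_r,g_s\rangle|,\,|\langle g_r,g_e\rangle|\}\le\mc{D}(\epsilon,m)$. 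Expanding the first dual identity gives
\[
\delta^{2^{k+1}}\ \le\ \|f\|_{U^{k+1}}^{2^{k+1}}\ =\ \langle f,\mr{D}f\rangle\ =\ \langle f,g_s\rangle+\langle f,g_e\rangle+\langle f,g_r\rangle .
\]
Here $|\langle f,g_e\rangle|\le\|f\|_{L^\infty}\|g_e\|_{L^1}\le\epsilon$. For the last term the key is to bound $\|g_r\|_{L^2}$: writing $g_r=\mr{D}f-g_s-g_e$ one gets $\|g_r\|_{L^2}^2=\langle g_r,\mr{D}f\rangle-\langle g_r,g_s\rangle-\langle g_r,g_e\rangle$, where $|\langle g_r,\mr{D}f\rangle|\le\|g_r\|_{U^{k+1}}\le\mc{D}(\epsilon,m)$ by the second dual identity and the other two terms are $\le\mc{D}(\epsilon,m)$ by property~(iii) of the decomposition; hence $\|g_r\|_{L^2}^2\le 3\,\mc{D}(\epsilon,m)$ and, by Cauchy--Schwarz, $|\langle f,g_r\rangle|\le\|f\|_{L^2}\|g_r\|_{L^2}\le\sqrt{3\,\mc{D}(\epsilon,m)}$.

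Combining these estimates yields $\tRe\langle f,g_s\rangle\ge\delta^{2^{k+1}}-\epsilon-\sqrt{3\,\mc{D}(\epsilon,m)}$. Now I would fix $\epsilon:=\delta^{2^{k+1}}/4$ and take $\mc{D}$ to be the constant function equal to $\delta^{2^{k+2}}/48$, so that $\sqrt{3\,\mc{D}(\epsilon,m)}=\delta^{2^{k+1}}/4$ and therefore $\tRe\langle f,g_s\rangle\ge\delta^{2^{k+1}}/2$. Finally, replacing $F$ by $e^{\mathrm{i}\theta}F$ with $\theta=\arg\langle f,g_s\rangle$ changes neither $|F|\le 1$ nor the Lipschitz constant of $F$ (hence not the complexity), and turns $\langle f,F\co\phi\rangle$ into $|\langle f,g_s\rangle|\ge\tRe\langle f,g_s\rangle\ge\delta^{2^{k+1}}/2$. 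Setting $m:=N(\epsilon,\mc{D})$, a quantity depending only on $k$ and $\delta$, then gives the statement.

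The only step I expect to require genuine care — and, I suspect, the reason the Regularity Theorem is stated with the somewhat unusual inner-product bounds in part~(iii) — is the passage from smallness of $g_r$ in the $U^{k+1}$-seminorm to genuine smallness of $g_r$ in $L^2$, which is what lets Cauchy--Schwarz absorb $\langle f,g_r\rangle$ despite our having no control on any dual norm of $f$. Everything else is a routine unfolding of the dual-function identities, i.e.\ the Cauchy--Schwarz--Gowers inequality on cube sets of compact nilspaces; in particular, the balance property of the structured part plays no role in this deduction.
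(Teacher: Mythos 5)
Your proof is correct, but it takes a genuinely different route from the paper's. The paper applies Theorem~\ref{thm:reglem-intro} \emph{directly to $f$}, writes $f=f_s+f_e+f_r$, and then pairs $f$ against its own structured part $f_s$. Controlling the cross terms $\langle f_e,f_s\rangle,\langle f_r,f_s\rangle$ via properties~(ii),(iii) reduces matters to bounding $\langle f_s,f_s\rangle$ from below, which is done by first showing $\|f_s\|_{U^{k+1}}\geq\delta-o(1)$ (via the triangle inequality and the elementary bound $\|f_e\|_{U^{k+1}}\leq 3\epsilon^{1/2^k}$), and then invoking the simple two-vertex Cauchy--Schwarz estimate $\|f_s\|_{L^2}^2\geq\|f_s\|_{U^{k+1}}^{2^{k+1}}$ valid for $1$-bounded functions. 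You instead apply the regularity theorem to the dual function $\mr{D}f$ and pair $f$ against the structured part $g_s$ of that decomposition, so that the main term comes directly from the identity $\langle f,\mr{D}f\rangle=\|f\|_{U^{k+1}}^{2^{k+1}}$. The cost of your route is that you need the full Cauchy--Schwarz--Gowers inequality (to get $|\langle g_r,\mr{D}f\rangle|\leq\|g_r\|_{U^{k+1}}$) and the existence of a Borel disintegration defining $\mr{D}f$ on a general compact nilspace; both are available here (the seminorm machinery for cubic couplings in \cite{CScouplings} supplies CSG, and second-countable compact nilspaces are standard Borel), but the paper's argument avoids the dual function entirely and only ever uses the cruder two-vertex Cauchy--Schwarz bound. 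Your closing speculation is also slightly off: in the paper's proof, the inner-product bounds in part~(iii) are used not to convert $U^{k+1}$-smallness of the error into $L^2$-smallness, but simply to discard the cross term $\langle f_r,f_s\rangle$; the conversion you describe happens only in your dual-function route. Finally, a cosmetic point in your favour: you explicitly replace $F$ by $e^{\mathrm{i}\theta}F$ to make the inner product real and nonnegative, whereas the paper leaves this implicit. Both approaches also yield the stronger Theorem~\ref{thm:inverse} (with balance) with no extra work, since the balanced structured part is provided by the regularity theorem in either case.
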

\noindent As detailed below, we deduce Theorem \ref{thm:reglem-intro} from  results on cubic couplings from \cite{CScouplings}. In particular, this yields \emph{directly} that the nilspace polynomial in this result is arbitrarily well balanced in relation to its complexity (this then holds also in the inverse theorem; see Theorem \ref{thm:inverse}). In the case of finite cyclic groups, a property implying the balance property, called \emph{irrationality}, can be added \emph{a posteriori} to the regularity theorem, using separate arguments; see \cite{GTarith}. Let us emphasize also that to obtain the extension beyond abelian groups in Theorem \ref{thm:inverse-intro}, our proof differs markedly from that in \cite{Szegedy:HFA}; see Section \ref{sec:ccax}, in particular Remark \ref{rem:abcase}, and Remark \ref{rem:gentocompns} on possible further extensions.

After proving Theorems \ref{thm:reglem-intro} and \ref{thm:inverse-intro}, we focus on the important case where $\ns$ consists of a cyclic group $\mb{Z}_p$ of prime order $p$, in order to show that in this case Theorem \ref{thm:inverse-intro} implies a refinement of the Green--Tao--Ziegler inverse theorem. More precisely, we obtain the following version of \cite[Conjecture 4.5]{GTZ}. This uses the notation $\poly(\mb{Z},G_\bullet)$ for the group of polynomial maps $\mb{Z}\to G$ relative to a filtration $G_\bullet$ (see \cite{Leib, GTOrb}).
\begin{theorem}\label{thm:inverseZp-intro}
Let $k\in \mb{N}$ and let $\delta\in (0,1]$. There exists a finite set $\mc{M}_{k,\delta}$ of connected filtered nilmanifolds $(G/\Gamma,G_\bullet)$, each equipped with a smooth Riemannian me- tric $d_{G/\Gamma}$, and a constant $C_{k,\delta}>0$, with the following property. For every prime $p$ and 1-bounded function $f:\mb{Z}_p\to \mb{C}$ with $\|f\|_{U^{k+1}}\geq \delta$, there exists $G/\Gamma\in \mc{M}_{k,\delta}$, a polynomial $g\in \poly(\mb{Z},G_\bullet)$ that is $p$-periodic mod $\Gamma$, and a continuous $1$-bounded function $F:G/\Gamma\to\mb{C}$ with Lipschitz constant at most $C_{k,\delta}$ relative to $d_{G/\Gamma}$, such that $|\mb{E}_{x\in \mb{Z}_p} f(x) \overline{F(g(x)\Gamma)}|\geq \delta^{2^{k+1}}/2$.
\end{theorem}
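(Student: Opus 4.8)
The plan is to deduce Theorem~\ref{thm:inverseZp-intro} from the balanced form of our inverse theorem (Theorem~\ref{thm:inverse}) applied to $\mb{Z}_p$, followed by a translation of the resulting nilspace polynomial into a polynomial orbit on a nilmanifold. I would regard $\mb{Z}_p$ as a compact abelian group, so that (cf. the discussion in the introduction) $\|\cdot\|_{U^{k+1}}$ is the classical Gowers $U^{k+1}$-norm on $\mb{Z}_p$, and $\mb{Z}_p$ is among the compact abelian groups---hence inverse limits of \textsc{cfr} coset nilspaces---to which Theorem~\ref{thm:inverse} applies. Given a $1$-bounded $f:\mb{Z}_p\to\mb{C}$ with $\|f\|_{U^{k+1}}\geq\delta$, Theorem~\ref{thm:inverse}, applied with a balance rate $\mc{D}$ to be pinned down below, then produces an integer $m=m(k,\delta)$, a $k$-step \textsc{cfr} nilspace $\nss$ with $\mathrm{Comp}(\nss)\leq m$, a continuous $\mc{D}(\delta,m)$-balanced morphism $\phi:\mb{Z}_p\to\nss$, and a continuous $1$-bounded $F:\nss\to\mb{C}$ of Lipschitz constant at most $m$ (with respect to the fixed Riemannian metric on $\nss$), such that, writing $h:=F\co\phi$, we have $|h|\leq1$ and $\langle f,h\rangle\geq\delta^{2^{k+1}}/2$.

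The key step is to upgrade $\nss$ to a connected nilmanifold, and this is where the balance is used. Since there are only finitely many \textsc{cfr} nilspaces of complexity at most $m$, I can fix $\mc{D}(\delta,m)$, as a function of $m$ and hence ultimately of $k$ and $\delta$, small enough that the result recalled in the abstract---a sufficiently balanced morphism from a cyclic group of prime order into a \textsc{cfr} nilspace forces that nilspace to be toral---applies to the nilspace $\nss$ above, provided $p$ exceeds a threshold $p_0=p_0(k,\delta)$; such a threshold cannot be dispensed with, since for small $p$ a positive-dimensional nilmanifold admits no sufficiently balanced morphism from $\mb{Z}_p$. For $p>p_0$ we then conclude that $\nss$, being a $k$-step toral \textsc{cfr} nilspace, is isomorphic to a connected filtered nilmanifold $(G/\Gamma,G_\bullet)$ with $\Gamma$ discrete and cocompact and $\Gamma_i=\Gamma\cap G_i$ cocompact in $G_i$; I equip $G/\Gamma$ with the smooth Riemannian metric $d_{G/\Gamma}$ transported from the fixed metric of $\nss$, so that $F$ has Lipschitz constant at most $m$ relative to $d_{G/\Gamma}$. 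Then I take $\mc{M}_{k,\delta}$ to be the finite set of connected filtered nilmanifolds arising in this way from toral \textsc{cfr} nilspaces of complexity at most $m(k,\delta)$, together with the circle $\mb{R}/\mb{Z}$ equipped with its degree-$1$ filtration and standard metric (needed for small primes below), and set $C_{k,\delta}:=\max\{m(k,\delta),\,2p_0(k,\delta)\}$.

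It remains to pass from $\phi$ to a polynomial orbit. For $p>p_0(k,\delta)$, precomposing $\phi$ with the quotient morphism from the group nilspace on $\mb{Z}$ to $\mb{Z}_p$ yields a continuous morphism $\mb{Z}\to G/\Gamma$, and by the standard identification of such morphisms with polynomial sequences (see \cite{Leib,GTOrb}) this morphism equals $n\mapsto g(n)\Gamma$ for some $g\in\poly(\mb{Z},G_\bullet)$; since it factors through $\mb{Z}_p$, the sequence $g$ is $p$-periodic mod $\Gamma$. Hence $\big|\mb{E}_{x\in\mb{Z}_p}f(x)\overline{F(g(x)\Gamma)}\big|=|\langle f,h\rangle|\geq\delta^{2^{k+1}}/2$, which is the assertion with $G/\Gamma\in\mc{M}_{k,\delta}$ and Lipschitz bound $m\leq C_{k,\delta}$. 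For the finitely many residual primes $p\leq p_0(k,\delta)$ I discard $\nss$ and realize $h$ directly on the circle: take $G/\Gamma=\mb{R}/\mb{Z}$, the linear polynomial $g(n)=n/p\in\poly(\mb{Z},G_\bullet)$ (which is $p$-periodic mod $\mb{Z}$), and let $F:\mb{R}/\mb{Z}\to\mb{C}$ be the continuous piecewise-linear interpolation of the values $h(0),\dots,h(p-1)$; this $F$ is $1$-bounded, has Lipschitz constant at most $2p\leq C_{k,\delta}$, and satisfies $\mb{E}_{x\in\mb{Z}_p}f(x)\overline{F(g(x)\Gamma)}=\langle f,h\rangle\geq\delta^{2^{k+1}}/2$.

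The main obstacle is the torality step of the second paragraph: the balanced-morphism theorem must be applied with a balance threshold that is uniform over the finitely many possible target nilspaces of bounded complexity, and the genuinely small primes---for which $\mb{Z}_p$ cannot equidistribute in a positive-dimensional nilmanifold---must be peeled off and handled by the elementary circle construction. The remaining ingredients (that $\|\cdot\|_{U^{k+1}}$ on $\mb{Z}_p$ is the Gowers norm, the finiteness of the relevant complexity classes, and the morphism-to-polynomial-orbit dictionary) are routine.
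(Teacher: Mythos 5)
Your proof follows essentially the same route as the paper: apply Theorem~\ref{thm:inverse} with a balance rate chosen small enough (uniformly over the finitely many nilspaces of complexity at most~$m$) that Theorem~\ref{thm:Zp-toral} forces the target to be toral for $p$ past a threshold, upgrade the morphism to a $p$-periodic polynomial orbit, and dispose of small primes by realizing the correlating function directly on the circle. One step is under-attributed: the claim that the continuous morphism $\mb{Z}\to G/\Gamma$ is of the form $n\mapsto g(n)\Gamma$ for some $g\in\poly(\mb{Z},G_\bullet)$ is \emph{not} a consequence of \cite{Leib,GTOrb} alone. Those references identify morphisms between filtered \emph{groups} with polynomial maps; when the target is the coset nilspace $G/\Gamma$ one must additionally \emph{lift} the morphism to a polynomial map into $G$, which is precisely the content of Proposition~\ref{prop:comdiag} (proved in the appendix by a descending induction on the filtration). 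You should invoke that proposition rather than treating the lift as folklore. A small stylistic point: for $p$ below the threshold the paper sidesteps the inverse theorem entirely by interpolating $f\circ\phi^{-1}$ on $\mb{R}/\mb{Z}$, which gives $\langle f,F\circ\phi\rangle=\|f\|_{L^2}^2\geq\delta^{2^{k+1}}$ outright, whereas you interpolate the already-produced $h=F\circ\phi$; both work, but the paper's variant is a bit more economical.
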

\begin{remark}
Theorem \ref{thm:inverseZp-intro} refines \cite[Theorem 1.3]{GTZ}
 in that $g$ is \emph{directly} ensured to be $p$-periodic mod $\Gamma$ (i.e.\ $g(n)^{-1}g(n+p)\in \Gamma$ for all $n\in \mb{Z}$), thus yielding a well-defined morphism $\mb{Z}_p\to G/\Gamma$. This periodicity was first established in the inverse theorem in \cite{Szegedy:reg}, and is a notable (though not exclusive) feature of the nilspace approach (periodicity is not obtained directly in \cite[Theorem 1.3]{GTZ}, but it is  obtained in the more recent proof in \cite{Man2}). Periodicity can also be included \emph{a posteriori} in \cite[Theorem 1.3]{GTZ} with additional arguments; see \cite{Man1}. Another useful refinement that our proof can add directly to Theorem \ref{thm:inverseZp-intro} is that the nilsequence is arbitrarily well balanced in relation to the complexity of $G/\Gamma$ (for the same reason mentioned above for Theorem \ref{thm:inverse}).
 \end{remark}
 \begin{remark}
Let us elaborate on how Theorem \ref{thm:inverse-intro} relates to previous non-quantitative inverse theorems such as \cite[Theorem 1.3]{GTZ} or \cite[Theorem 2]{Szegedy:HFA}. One aspect is that Theorem \ref{thm:inverse-intro} extends these results via its premise, by being applicable to functions $f$ on domains more general than compact abelian groups. Another aspect concerns how the theorem's conclusion relates to the conclusions of previous such results, and more precisely how the bounded-complexity nilspace polynomials, obtained as correlating harmonics in Theorem \ref{thm:inverse-intro}, relate to harmonics such as the nilsequences in \cite[Theorem 1.3]{GTZ}. The \textsc{cfr} nilspaces, underlying nilspace polynomials, are generalizations of nilmanifolds which still have strong structural properties akin to several of the most useful properties of nilmanifolds (such properties include an iterated-bundle structure with compact abelian Lie fibers \cite[\S 2.5]{Cand:Notes2}, \cite[\S 3.2.3]{Cand:Notes1}; a nilpotent Lie group action compatible with the cube structure \cite[\S 3.2.4 and Theorem 2.9.10]{Cand:Notes2}; and related tools in nilspace theory). Moreover, a key fact detailed in this paper is that when one restricts these nilspaces to the setting of previous results such as \cite[Theorem 1.3]{GTZ}, one recovers exactly the more explicit structure of nilmanifolds. More precisely, the crux of Theorem \ref{thm:inverseZp-intro}, compared to Theorem \ref{thm:inverse-intro}, is that in the specific $\mb{Z}_p$ setting of the former, the balanced nilspace polynomials obtained from the general setting are shown to be precisely nilsequences generated by $p$-periodic orbits on \emph{connected nilmanifolds} (these nilsequences are the same thing as nilspace polynomials from $\mb{Z}_p$ into connected \textsc{cfr} \emph{coset} nilspaces). This is established in Theorem \ref{thm:Zp-toral}.
\end{remark}
\noindent Recall that a compact nilspace is \emph{toral} if its structure groups are tori \cite[Definition 2.9.14]{Cand:Notes2} (it is then also a connected nilmanifold \cite[Theorem 2.9.17]{Cand:Notes2}). A key element in our proof of Theorem \ref{thm:Zp-toral} is the following new result about compact nilspaces.
\begin{theorem}\label{thm:k-cube-connect}
A $k$-step \textsc{cfr} nilspace is toral if and only if its $k$-cube set is connected.
\end{theorem}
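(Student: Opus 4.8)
The plan is to treat the two implications separately; the first is routine and the second contains the main difficulty. For the direction ``toral $\Rightarrow$ connected $k$-cube set'' I would use the canonical tower $\ns=\ns_k\to\ns_{k-1}\to\cdots\to\ns_0=\{\ast\}$, in which $\ns_i\to\ns_{i-1}$ is a $\ab_i$-extension of degree $i$, with $\ab_i$ the $i$-th structure group. Passing to $k$-cube sets gives a continuous principal bundle $\cu^k(\ns_i)\to\cu^k(\ns_{i-1})$ with structure group $\cu^k(\mc{D}_i(\ab_i))$. Since $\ab_i$ is a torus, $\cu^k(\mc{D}_i(\ab_i))\cong \ab_i^{N(k,i)}$ with $N(k,i)=\sum_{r=0}^{i}\binom{k}{r}$, which is connected; as $\cu^k(\ns_0)=\{\ast\}$ is connected, and a principal bundle of compact spaces over a connected base with connected fibre has connected total space, an induction on $i$ yields that $\cu^k(\ns)$ is connected.

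For the converse I would argue by contraposition. Suppose $\ns$ is $k$-step \textsc{cfr} but not toral, and let $j$ be the least index with $\ab_j$ not a torus. Put $\nss:=\ns_j$, a $j$-step nilspace whose top structure group $\ab:=\ab_j$ is not a torus, and $\nss':=\ns_{j-1}$, which is toral since $\ab_1,\dots,\ab_{j-1}$ are all tori; by the first part, together with the fact that \textsc{cfr} nilspaces are finite-dimensional manifolds, $\cu^j(\nss')$ is then a connected compact nilmanifold, in particular aspherical. Because the restriction map $\cu^k(\ns)\to\cu^j(\ns)$ and the pushforward $\cu^j(\ns)\to\cu^j(\ns_j)=\cu^j(\nss)$ are both surjective, it is enough to show that $\cu^j(\nss)$ is disconnected. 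Now $\cu^j(\nss)\to\cu^j(\nss')$ is a principal bundle with structure group $\cu^j(\mc{D}_j(\ab))\cong \ab^{2^j}$ (every map $\{0,1\}^j\to\ab$ is polynomial of degree $\leq j$), so, writing $A:=\pi_0(\ab)$ --- a nontrivial finite abelian group, since $\ab$ is a compact abelian Lie group that is not a torus --- connectedness of $\cu^j(\nss')$ gives $\pi_0(\cu^j(\nss))\cong A^{2^j}/\mathrm{im}(\partial)$, where $\partial\colon\pi_1(\cu^j(\nss'))\to\pi_0(\cu^j(\mc{D}_j(\ab)))=A^{2^j}$ is the monodromy homomorphism of the bundle.

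The crux is then the claim that $\mathrm{im}(\partial)\subseteq\ker(\tran\colon A^{2^j}\to A)$, where $\tran(f)=\sum_{v\in\{0,1\}^j}(-1)^{|v|}f(v)$ is the top-difference homomorphism (equivalently, that the regular $A$-cover of $\cu^j(\nss')$ obtained by pushing the bundle forward along $\tran$ is trivial). Granting this, $\pi_0(\cu^j(\nss))$ surjects onto $A^{2^j}/\ker\tran\cong A\neq\{0\}$, so $\cu^j(\nss)$ is disconnected, which finishes the contrapositive. To prove the claim I would use the description of the degree-$j$ extension $\nss\to\nss'$ by a classifying $\ab$-valued cocycle of degree $j+1$: in a coordinatised local model of $\nss$, the ambiguity of the $\ab$-coordinate along a $j$-dimensional cube is a polynomial $\{0,1\}^j\to\ab$ of degree at most $j-1$, which lies in $\ker\tran$, so that $\tran$ applied to this coordinate yields a globally well-defined continuous map on $\cu^j(\nss)$ descending to a trivialisation of the pushed-forward cover. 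Making this precise --- showing that the monodromy of the cube-set bundle lies in the ``degree $\leq j-1$'' part $\ker\tran$ and never reaches the pure degree-$j$ part $A^{2^j}/\ker\tran$ --- is the main obstacle, and is where the finer structure theory of \textsc{cfr} nilspace extensions, and the asphericity of $\cu^j(\nss')$ (which constrains $\partial$ to factor through the abelianisation of a torsion-free nilpotent group), would be brought to bear.
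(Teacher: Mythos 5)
Your ``toral $\Rightarrow$ connected $k$-cube set'' direction is fine: when $\ab_i$ is a torus, $\cu^k(\mc{D}_i(\ab_i))\cong\ab_i^{N(k,i)}$ is a connected compact group, so connectedness propagates up the tower of compact abelian bundles. Your reduction of the converse to showing $\cu^j(\nss)$ disconnected (via the surjections $\cu^k(\ns)\to\cu^j(\ns)\to\cu^j(\nss)$), and the idea that the discrete quotient $A=\pi_0(\ab)$ of the top structure group is detected by the Gray-code map $\tran=\sigma_j$, are also on the right track and coincide with the paper's basic idea. However, the step you flag as ``the main obstacle'' is a genuine gap, and the sketch you offer for closing it does not work.

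The claim that the monodromy $\partial:\pi_1(\cu^j(\nss'))\to A^{2^j}$ lands in $\ker\tran$ is exactly the nontrivial content, and you try to justify it by saying that ``in a coordinatised local model of $\nss$, the ambiguity of the $\ab$-coordinate along a $j$-dimensional cube is a polynomial $\{0,1\}^j\to\ab$ of degree at most $j-1$.'' This is false. A local trivialization of $\nss\to\nss'$ over $U\subset\nss'$ is just a continuous cross-section $s:U\to\nss$, and the transition function between two such sections is an arbitrary continuous $\ab$-valued map $g$ on the overlap. The induced transition function for the cube-set bundle at $\q'\in\cu^j(\nss')$ is $v\mapsto g(\q'(v))$, and for a generic $\q'$ with $2^j$ distinct vertices this is an arbitrary element of $\ab^{2^j}$; nothing forces its alternating sum $\tran$ to vanish. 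So the local ambiguity genuinely reaches the ``pure degree-$j$'' part $\ab^{2^j}/\ker\tran\cong\ab$, and the local argument gives no control on the monodromy. The remark that asphericity makes $\partial$ factor through the abelianization of a torsion-free nilpotent group is true but says nothing about whether the image avoids $A^{2^j}\setminus\ker\tran$.

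What actually makes this precise is a global structural input that you do not have: the coset realization $\nss\cong(G/\Gamma,G_\bullet)$ with $G=\tran(\nss)$, $\Gamma=\stab_G(x)$, proved in the paper as Theorem~\ref{thm:cosetnilspace} (using that $\nss'=\nss_{j-1}$ is toral). Once that is available, $\cu^j(\nss)=\cu^j(G_\bullet)/\cu^j(\Gamma_\bullet)$, and the Gray-code map $\sigma_j$ becomes a continuous \emph{group homomorphism} $\cu^j(G_\bullet)\to G_j\cong\ab$ which vanishes on $\cu^j(\Gamma_\bullet)$ (since $\Gamma\cap G_j=\{\id\}$ by freeness of the $\ab$-action) and on the identity component of $\cu^j(G_\bullet)$ (by continuity, after quotienting $\ab$ to the finite group $A$). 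That is the rigorous incarnation of your monodromy claim, and it is precisely this translation-group/coset machinery, rather than any local-coordinate estimate, that carries the argument. Without a substitute of comparable strength, the converse direction of your proposal remains incomplete.
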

\noindent A result in the direction of Theorem \ref{thm:k-cube-connect} was observed in \cite{GMV3}. Namely, \cite[Theorem 1.22]{GMV3} was noted to imply that if all the cube sets of a \textsc{cfr} nilspace are connected then the nilspace is toral. Theorem \ref{thm:k-cube-connect} strengthens this result: the connectedness of the set of $k$-cubes suffices. The proof of Theorem \ref{thm:k-cube-connect} is given in Appendix \ref{app:torality}.

\begin{remark}
Following terminology from \cite{Szegedy:HFA}, we say that a family of finite abelian groups $(\ab_i)_{i\in \mb{N}}$ is of \emph{characteristic} 0 if for every prime $p$ there are only finitely many indices $i$ such that $p$ divides the order of $\ab_i$. Our proof of Theorem \ref{thm:inverseZp-intro} can be adapted in a straightforward way to yield an analogue of this theorem where the groups $\mb{Z}_p$ are replaced by any family of characteristic 0. We omit the details in this paper.
\end{remark}
\noindent In the quantitative direction, a proof of the inverse theorem in the case of cyclic groups $\mb{Z}_p$ was given with reasonable bounds in a recent breakthrough by Manners \cite{Man2}, and in the case of vector spaces $\mb{F}_p^n$, in another recent breakthrough by Gowers and Mili\'cevi\'c \cite{GM}. As mentioned in \cite{Man2}, currently these quantitative results cannot be made to overlap. On a conceptual level, the present paper shows that the notion of nilspace polynomials (and nilspace theory more generally) offers a framework in which a more general inverse theorem can be obtained, valid in particular for any compact abelian group (namely Theorem \ref{thm:inverse-intro}), from which more specific inverse theorems such as the Green--Tao--Ziegler theorem can be fully recovered and extended.

The structure of the paper is as follows. In Section \ref{sec:ups} we recall some background on analysis in ultraproducts, and we outline its use in proving Theorem \ref{thm:reglem-intro}. In Section \ref{sec:ccax}, we analyze  ultraproducts of \textsc{cfr} coset nilspaces to locate certain factors that have a cubic coupling structure. This will enable us to apply our structure theorem from \cite{CScouplings}, as a crucial step in our proof of Theorem \ref{thm:reglem-intro}. In Section \ref{sec:ctsmorphism}, we prove a new stability result for morphisms into \textsc{cfr} nilspaces, Theorem \ref{thm:rigid}, which is central to our proof of Theorem \ref{thm:reglem-intro} and seems to be also of intrinsic interest. In Section \ref{sec:mps} we combine the above elements to prove Theorems \ref{thm:reglem-intro} and \ref{thm:inverse-intro}. In Section \ref{sec:Zp} we prove Theorem \ref{thm:inverseZp-intro}.

\smallskip

\noindent \textbf{Acknowledgements.} We thank Terence Tao for useful feedback. The first-named author received funding from Spain’s MICINN project MTM2017-83496-P. The second-named author received funding from the European Research Council under the European Union’s Seventh Framework Programme (FP7/2007-2013)/ERC grant agreement 617747. The research was supported partially by the NKFIH ``Élvonal" KKP 133921 grant and partially by the Mathematical Foundations of Artificial Intelligence project of the National Excellence Programme (grant no. 2018-1.2.1-NKP-2018-00008). We also thank the anonymous referee for valuable feedback helping to improve this paper.

\section{Ultraproducts of nilspaces, and an outline of the main proof}\label{sec:ups}
\noindent We begin by recalling some basic notions concerning ultraproducts and the Loeb measure. We do so primarily to gather the required terminology and notation. For more background on these tools we refer to standard texts such as \cite{Ross}, or more recent treatments such as \cite[\S 1.7, \S 2.10]{TaoH}. More detail on the use of these tools specifically in higher-order Fourier analysis can also be found in \cite{Warner}.

For each $i\in\mb{N}$ let $\ns_i$ be a set equipped with a $\sigma$-algebra $\mc{B}_i$ and a probability measure $\lambda_i$ on $\mc{B}_i$. We also fix from now on a non-principal ultrafilter $\omega$ on $\mb{N}$ (see \cite[\S 1.7.1]{TaoH}). We denote by $\prod_{i\to\omega} \ns_i$ the ultraproduct of the sets $\ns_i$, that is, the quotient of the cartesian product $\prod_{i\in \mb{N}} \ns_i$ under the equivalence relation $(x_i)_i\sim (y_i)_i\;\Leftrightarrow \{i\in \mb{N}:x_i=y_i\}\in \omega$. We often denote such ultraproducts using boldface, thus $\mf{X}=\prod_{i\to \omega} \ns_i$. We can equip $\mf{X}$ with a $\sigma$-algebra and a probability measure as follows. A set $B\subset \mf{X}$ is called an \emph{internal set} if $B=\prod_{i\to \omega} B_i$ for some sequence of sets $B_i\subset \ns_i$, $i\in \mb{N}$, and is an \emph{internal measurable set} if $\{i:B_i\in \mc{B}_i\}\in \omega$. For each internal measurable set $B$, we define the real number $\lambda(B)\in [0,1]$ to be the standard part of the \emph{ultralimit} (see \cite[Definition 1.7.9]{TaoH}) of the numbers $\lambda_i(B_i)$, that is $\lambda(B)=\st\big( \lim_{i\to \omega} \lambda_i(B_i)\big)$. More generally, for any compact Hausdorff  space $Y$, for every sequence of functions $f_i:\ns_i\to Y$ we can define a function $\mf{X}\to Y$, $x\mapsto \st\big(\lim_{i\to \omega} f_i(x_i)\big)$, where $(x_i)_i$ is any representative of the class $x$, the value of this function being the unique point $y\in Y$ such that\footnote{To see the existence of $y$, note that if no such $y$ existed then using compactness we could cover $\nss$ with finitely many open sets $U$ with $\{i:f_i(x_i)\in U\}\not\in \omega$, which would contradict that $\omega$ is an ultrafilter. The uniqueness follows from the Hausdorff property and a similar use of the ultrafilter's properties.} for every open set $U\ni y$ we have  $\{i:f_i(x_i)\in U\}\in \omega$. As in several texts in this area, we shorten the notation $\st\big(\lim_{i\to \omega} f_i\big)$; we denote this by $\lim_\omega f_i$. 
\begin{defn}
Given probability spaces $(\ns_i,\mc{B}_i,\lambda_i)$, $i\in \mb{N}$,  and a non-principal ultrafilter $\omega$ on $\mb{N}$, we define the corresponding \emph{Loeb measure} to be the probability measure $\lambda$ obtained by applying the Hahn--Kolmogorov extension theorem to the premeasure $\prod_{i\to \omega} B_i\mapsto \lim_\omega \lambda_i(B_i)$ defined on internal measurable subsets of $\mf{X}$ (see \cite[Theorem 2.1]{Ross}, \cite[Theorem 2.10.2]{TaoH}). The corresponding \emph{Loeb $\sigma$-algebra}, denoted by $\mc{L}_{\mf{X}}$, is the completion of the $\sigma$-algebra on $\mf{X}$ generated by the internal measurable sets.
\end{defn}
\noindent Recall that for any sequence of functions $(f_i:\ns_i\to Y)_{i\in\mb{N}}$ into a compact set $Y\subset \mb{C}$, if $f_i$ is $\mc{B}_i$-measurable for all $i$ in some set $S\in\omega$, then $\lim_\omega f_i:\mf{X}\to Y$ is $\mc{L}_{\mf{X}}$-measurable (see \cite[Theorem 5.1]{Ross}).

We now focus on ultraproducts of nilspaces. If each set $\ns_i$ is a nilspace, with cube sets $\cu^n(\ns_i)$, $n\geq 0$ (where $\cu^0(\ns_i)=\ns_i$), then it is easily checked that the ultraproduct $\mf{X}$ equipped with cube sets $\cu^n(\mf{X}):= \prod_{i\to \omega} \cu^n(\ns_i)$  satisfies the nilspace axioms as well.

Let us now outline the proof of Theorem \ref{thm:reglem-intro}, and especially our use of ultraproducts. We argue by contradiction, supposing that there is a sequence of 1-bounded Borel functions $f_i:\ns_i\to\mb{C}$ that disproves the theorem (thus for some $\epsilon>0$ and real numbers $N_i\to\infty$ as $i\to\infty$, for each $i$ the required decomposition fails for $f_i$, $\epsilon$ and $N_i$). We then consider the 1-bounded function $f=\lim_\omega f_i:\mf{X}\to\mb{C}$, and analyze this using results on cubic couplings from \cite{CScouplings}. To detail this further, we need to recall the notion of a cubic coupling. To this end we first recall the following notation from \cite{CScouplings}.

We write $\db{n}$ for the \emph{discrete $n$-cube} $\{0,1\}^n$. Two $(n-1)$-faces $F_0,F_1\subset\db{n}$ are \emph{adjacent} if $F_0\cap F_1\neq \emptyset$. For finite sets $T\subset S$ and a system of sets $(A_v)_{v\in S}$, we write $p_T$ for the coordinate projection $\prod_{v\in S} A_v\to \prod_{v\in T} A_v$. Given a probability space $\varOmega=(\Omega, \mc{A},\lambda)$, we write $\mc{A}^S$ for the product $\sigma$-algebra $\bigotimes_{v\in S} \mc{A} = \bigvee_{v\in S} p_v^{-1}(\mc{A})$ on $\Omega^S$ (where, given $\sigma$-algebras $\mc{B}_v$ on a set, $\bigvee_{v\in S} \mc{B}_v$ denotes their \emph{join}, i.e.\ the smallest $\sigma$-algebra on this set that includes $\mc{B}_v$ for all $v\in S$). We write $\mc{A}^S_T$ for the sub-$\sigma$-algebra of $\mc{A}^S$ consisting of sets depending only on coordinates indexed in $T$, i.e.\ $\mc{A}^S_T=\bigvee_{v\in T} p_v^{-1}(\mc{A})$.  We write $\mc{B}_0\wedge_\lambda \mc{B}_1$ for the \emph{meet} of $\sigma$-algebras $\mc{B}_0,\mc{B}_1\subset \mc{A}$ (see \cite[Definition 2.6]{CScouplings}), and $\mc{B}_0\upmod_\lambda \mc{B}_1$ for the relation of \emph{conditional independence}, which holds if and only if $\forall f\in L^\infty(\mc{B}_0)$, $\mb{E}(f|\mc{B}_1)\in L^\infty(\mc{B}_0)$; see \cite[Proposition 2.10]{CScouplings}. (We omit the subscript $\lambda$ from $\wedge_\lambda,\upmod_\lambda$ when the measure $\lambda$ is clear.) Inclusion and equality up to $\lambda$-null sets are denoted by $\subset_\lambda$ and $=_\lambda$ respectively \cite[\S 2.1]{CScouplings}. We write $\coup(\varOmega,S)$ for the \emph{space of self-couplings} of $\varOmega$ indexed by $S$ \cite[Definition 2.20]{CScouplings}. Finally, given $\mu\in \coup(\varOmega,S)$ and an injection $\phi:R\to S$, we write $\mu_\phi$ for the \emph{subcoupling of $\mu$ along $\phi$} \cite[Definition 2.26]{CScouplings}. Let us now recall the notion of a cubic coupling \cite[Definition 3.1]{CScouplings}. \vspace{-0.7cm}\\
\begin{defn}\label{def:cc}
A \emph{cubic coupling} on a probability space $\varOmega=(\Omega,\mc{A},\lambda)$ is a  sequence $\big(\mu^{\db{n}}\in \coup(\varOmega,\db{n})\big)_{n\geq 0}$ satisfying the following axioms for all $m,n\geq 0$:
\begin{enumerate}[leftmargin=0.7cm]
\item[1.] (Consistency)\; If $\phi:\db{m}\to\db{n}$ is an injective cube morphism then $\mu^{\db{n}}_\phi=\mu^{\db{m}}$.
\item[2.] (Ergodicity)\; The measure $\mu^{\db{1}}$ is the product measure $\lambda \times \lambda$.
\item[3.] (Conditional independence)\; For every pair of adjacent faces $F_0,F_1$ of codimension 1 in $\db{n}$, we have $\mc{A}^{\db{n}}_{F_0} \upmod_{\mu^{\db{n}}} \mc{A}^{\db{n}}_{F_1}$ and $\mc{A}^{\db{n}}_{F_0}\wedge_{\mu^{\db{n}}} \mc{A}^{\db{n}}_{F_1}=_{\mu^{\db{n}}} \mc{A}^{\db{n}}_{F_0\cap F_1}$.
\end{enumerate} \vspace{-0.3cm}
\end{defn}
\noindent Given any cubic coupling, one can define an associated family of uniformity seminorms that generalize the Gowers norms \cite[Definition 3.15]{CScouplings}. The structure theorem for cubic couplings \cite[Theorem  4.2]{CScouplings} tells us that the characteristic factor corresponding to the $k$-th order uniformity seminorm on a cubic coupling is  a $k$-step compact nilspace. Given the functions $f_i:\ns_i\to\mb{C}$ that we started with above, which were supposed not to satisfy the decomposition in Theorem \ref{thm:reglem-intro}, our goal is to apply the structure theorem to some suitable cubic coupling obtained using $\mf{X}$ and $f$, in order to obtain eventually the contradiction that some function $f_i$ does in fact satisfy the required decomposition.

To carry out the above argument, our first main task is to obtain such a cubic coupling using $\mf{X}$ and $f$. Now each compact nilspace $\ns_i$ has an associated cubic-coupling structure, given by the Haar measures $\mu_{\cu^n(\ns_i)}$ on the cube sets $\cu^n(\ns_i)$, $n\geq 0$ (see \cite[\S 2.2]{Cand:Notes2} for background on these Haar measures). More precisely, the cubic coupling in question is the sequence $(\mu_{\ns_i}^{\db{n}})_{n\geq 0}$ where $\mu_{\ns_i}^{\db{n}}$ is defined to be $\mu_{\cu^n(\ns_i)}$ viewed as a measure on $\ns_i^{\db{n}}$, i.e.\ for any set $B$ in the product $\sigma$-algebra $\mc{B}(\ns_i)^{\db{n}}$ (where $\mc{B}(\ns_i)$ is the Borel $\sigma$-algebra on $\ns_i$) we define $\mu_{\ns_i}^{\db{n}}(B):=\mu_{\cu^n(\ns_i)}\big(B\cap \cu^n(\ns_i)\big)$.  The fact that $(\mu_{\ns_i}^{\db{n}})_{n\geq 0}$ is a cubic coupling is established in \cite[Proposition 3.6]{CScouplings}. We can then apply the Loeb measure construction to the sequence of probability spaces $(\ns_i^{\db{n}},\mc{B}(\ns_i)^{\db{n}},\mu_{\ns_i}^{\db{n}})$, $i\in \mb{N}$, and thus obtain the Loeb probability space that we shall denote by $(\mf{X}^{\db{n}},\mc{L}_{\mf{X}^{\db{n}}},\mu^{\db{n}})$.  Note that the ultraproduct of cube sets $\cu^n(\mf{X}):=\prod_{i\to\omega} \cu^n(\ns_i)$ is a subset of $\mf{X}^{\db{n}}$, and that $\mu^{\db{n}}$ is concentrated on $\cu^n(\mf{X})$.

As we shall see in the next section, the cubic coupling axioms hold to some extent for these measures $\mu^{\db{n}}$. However, two problems prevent this construction from forming a genuine cubic coupling.

The first (and main) problem is that, for a sequence of measures $(\mu^{\db{n}})_{n\geq 0}$ to form a cubic coupling, the $\sigma$-algebras involved in satisfying the three axioms (especially the third axiom) must be the \emph{product} $\sigma$-algebras $\mc{A}^{\db{n}}$ (where $\mc{A}$ is the $\sigma$-algebra of the original probability space $\varOmega$). For $\Omega=\mf{X}$, this requires that the axioms be satisfied, not with the Loeb $\sigma$-algebras $\mc{L}_{\mf{X}^{\db{n}}}$ obtained above, but rather with the product $\sigma$-algebras $\mc{L}_{\mf{X}}^{\db{n}}=\bigotimes_{v\in \db{n}} \mc{L}_{\mf{X}}$. However, we then face an analogue in the present setting of a well-known fact about Loeb measure spaces, namely, we face the fact that $\mc{L}_{\mf{X}}^{\db{n}}\subset \mc{L}_{\mf{X}^{\db{n}}}$ and that this inclusion may be strict (i.e.\ with  $\mc{L}_{\mf{X}}^{\db{n}}\neq \mc{L}_{\mf{X}^{\db{n}}}$). Indeed, the inclusion $\mc{L}_{\mf{X}}^{\db{n}}\subset \mc{L}_{\mf{X}^{\db{n}}}$ can be seen using  that each measure $\mu_{\ns_i}^{\db{n}}$ is a coupling of $\mu_{\ns_i}^{\db{0}}$, and standard properties of ultralimits (e.g.\ by applying for each $v\in \db{n}$ Lemma \ref{lem:applemf} with $\pi_i$ the projection $p_v:\ns_i^{\db{n}}\to\ns_i$, to deduce that the projection $p_v:\mf{X}^{\db{n}}\to\mf{X}$  satisfies $p_v^{-1}(\mc{L}_{\mf{X}})\subset \mc{L}_{\mf{X}^{\db{n}}}$, and then concluding that $\mc{L}_{\mf{X}}^{\db{n}} = \bigvee_{v\in \db{n}}p_v^{-1}(\mc{L}_{\mf{X}})\subset \mc{L}_{\mf{X}^{\db{n}}}$). The possible strictness of this inclusion can be seen already for $n=1$, where the associated measure $\mu^{\db{1}}$ can be seen to be the product measure $\mu^{\db{0}}\times\mu^{\db{0}}$, and where we then have examples of this strict inclusion such as  \cite[Example 3.13]{Cutland} (see also \cite[Remark 2.10.4]{TaoH}). Given the above fact, we cannot ensure directly that the third axiom in Definition \ref{def:cc} is satisfied with $\mc{L}_{\mf{X}}^{\db{n}}$ as required. This problem occupies us for most of the next section, where we show that if the nilspaces $\ns_i$ are \textsc{cfr} \emph{coset} nilspaces then the cubic coupling axioms do hold with the smaller $\sigma$-algebras $\mc{L}_{\mf{X}}^{\db{n}}$, as required.

The second problem is that the Loeb measure spaces are typically not separable, thus failing to be Borel probability spaces (i.e.\ probability spaces $(\Omega,\mc{A},\lambda)$ where the measurable space $(\Omega,\mc{A})$ is standard Borel; see \cite[Definition 2.15]{CScouplings}), which is required in \cite[Theorem 4.2]{CScouplings}. This problem is addressed in the second part of the next section, using the given function $f$ to generate a suitable separable factor of $\mf{X}$ which still satisfies the axioms in Definition \ref{def:cc}.

\section{The cubic coupling axioms for ultraproducts of \textsc{cfr} coset nilspaces}\label{sec:ccax}
\noindent Recall that for each compact nilspace $\ns$ and $n\geq 0$, we write $\mu_{\ns}^{\db{n}}$ for the measure $B\mapsto \mu_{\cu^n(\ns)}\big(B\cap \cu^n(\ns)\big)$ on $\mc{B}(\ns)^{\db{n}}$, where $\mu_{ \cu^n(\ns)}$ is the Haar probability measure on the cube set $\cu^n(\ns)$. (Note that $\mu_{\ns}^{\db{0}}$ is just the Haar measure $\mu_{\ns}$ on $\ns$.) 

Our main aim in this section is to prove the following result.
\begin{proposition}\label{prop:upccaxioms}
For each $i\in \mb{N}$ let $\ns_i$ be a $k$-step \textsc{cfr} coset nilspace. For $n\geq 0$ let $\mu^{\db{n}}$ be the Loeb measure on $(\mf{X}^{\db{n}},\mc{L}_{\mf{X}^{\db{n}}})$ corresponding to the measures $\mu^{\db{n}}_{\ns_i}$. Then the measures $\mu^{\db{n}}$ restricted to the $\sigma$-algebras $\mc{L}_{\mf{X}}^{\db{n}}$ satisfy the axioms in Definition \ref{def:cc}.
\end{proposition}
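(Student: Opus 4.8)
## Proof Proposal

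The plan is to verify the three axioms of Definition \ref{def:cc} for the sequence $(\mu^{\db{n}})_{n\geq 0}$ restricted to the product $\sigma$-algebras $\mc{L}_{\mf{X}}^{\db{n}}$. The consistency and ergodicity axioms should follow routinely from the Loeb construction: consistency of the Haar cube measures $\mu^{\db{n}}_{\ns_i}$ on each compact nilspace $\ns_i$ (\cite[Proposition 3.6]{CScouplings}, \cite[Proposition 2.2.5]{Cand:Notes2}) passes to the ultralimit because coordinate projections along injective cube morphisms are internal maps, and the pushforward of a Loeb measure under an internal measurable map is the Loeb measure of the pushforwards; ergodicity ($\mu^{\db{1}}=\lambda\times\lambda$) likewise holds at each finite level by \cite[Proposition 3.6]{CScouplings} and passes to the limit, where one checks it on internal rectangles (a $\pi$-system generating $\mc{L}_{\mf{X}}^{\db{1}}$) and extends. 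So the real content is the third (conditional independence) axiom, and moreover the crucial subtlety flagged in the text: one must establish it with the \emph{product} $\sigma$-algebras $\mc{L}_{\mf{X}}^{\db{n}} = \bigotimes_{v\in\db{n}} \mc{L}_{\mf{X}}$ rather than the a priori larger Loeb $\sigma$-algebra $\mc{L}_{\mf{X}^{\db{n}}}$, since conditional independence and the meet identity are not monotone in the ambient $\sigma$-algebra.

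For the conditional independence axiom I would proceed as follows. Fix a pair of adjacent codimension-$1$ faces $F_0,F_1\subset\db{n}$ with $F_0\cap F_1$ a codimension-$2$ face. At each finite level, \cite[Proposition 3.6]{CScouplings} gives $\mc{B}(\ns_i)^{\db{n}}_{F_0}\upmod_{\mu^{\db{n}}_{\ns_i}}\mc{B}(\ns_i)^{\db{n}}_{F_1}$ and $\mc{B}(\ns_i)^{\db{n}}_{F_0}\wedge\mc{B}(\ns_i)^{\db{n}}_{F_1}=\mc{B}(\ns_i)^{\db{n}}_{F_0\cap F_1}$; geometrically these reflect that a cube of $\ns_i$ restricted to $F_0$ and to $F_1$ glues freely along the shared face $F_0\cap F_1$, which for a coset nilspace $G/\Gamma$ is the statement that $\cu^n(G_\bullet)$ projected to the faces is a fibre product over the projection to $F_0\cap F_1$. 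The key point is to transfer these \emph{exact} algebraic gluing statements to the ultraproduct at the level of the product $\sigma$-algebras. Concretely: a bounded $\mc{L}_{\mf{X}}^{\db{n}}$-measurable function supported on $F_0$-coordinates can be approximated in $L^2(\mu^{\db{n}})$ by functions of the form $\lim_\omega g_i$ with $g_i$ depending on the $F_0$-coordinates of $\cu^n(\ns_i)$, and likewise for $F_1$; the conditional expectation $\mb{E}(\,\cdot\,|\mc{L}_{\mf{X}}^{\db{n}}_{F_1})$ of such a function can be identified with $\lim_\omega$ of the finite-level conditional expectations $\mb{E}_{\mu^{\db{n}}_{\ns_i}}(g_i|\mc{B}(\ns_i)^{\db{n}}_{F_1})$, using that conditional expectation onto a sub-$\sigma$-algebra generated by a coordinate projection is itself (approximated by) an internal operation, so it stays within functions depending on $F_1$-coordinates; applying the finite-level meet identity then places it within functions depending on $F_0\cap F_1$-coordinates. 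This yields $\mc{L}_{\mf{X}}^{\db{n}}_{F_0}\upmod_{\mu^{\db{n}}}\mc{L}_{\mf{X}}^{\db{n}}_{F_1}$ and the inclusion $\mc{L}_{\mf{X}}^{\db{n}}_{F_0}\wedge\mc{L}_{\mf{X}}^{\db{n}}_{F_1}\subset_{\mu^{\db{n}}}\mc{L}_{\mf{X}}^{\db{n}}_{F_0\cap F_1}$; the reverse inclusion is immediate.

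The main obstacle, and where the \textsc{cfr} coset hypothesis is genuinely used, is precisely the step that a bounded $\mc{L}_{\mf{X}}^{\db{n}}_{F_j}$-measurable function can be $L^2$-approximated by ultralimits of functions depending \emph{only} on the $F_j$-coordinates at each finite level, and, relatedly, that finite-level conditional expectations onto coordinate sub-$\sigma$-algebras are compatible with ultralimits when restricted to the product $\sigma$-algebra. This is where the obstruction $\mc{L}_{\mf{X}}^{\db{n}}\subsetneq\mc{L}_{\mf{X}^{\db{n}}}$ could bite: a priori the Loeb measure on $\cu^n(\mf{X})$ "remembers" more than the product of the Loeb measures on the factors. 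I expect the resolution to exploit that a \textsc{cfr} coset nilspace is a manifold of bounded dimension with Haar measure comparable to Lebesgue measure, so that the cube sets $\cu^n(\ns_i)$ are uniformly "finite-dimensional" (e.g. the relevant $\sigma$-algebras are generated by boundedly many coordinate functions up to small $L^2$-error, uniformly in $i$); this uniform finite-dimensionality is exactly what fails for general compact nilspaces and what forces the product-$\sigma$-algebra statement to hold here. Quantitatively one would probably want: for each $\delta>0$ there is a finite $F_j$-measurable partition of $\cu^n(\ns_i)$, of complexity bounded uniformly in $i$, whose $\sigma$-algebra is $\delta$-dense in $\mc{B}(\ns_i)^{\db{n}}_{F_j}$; taking ultralimits of these partitions then generates $\mc{L}_{\mf{X}}^{\db{n}}_{F_j}$, giving the needed approximation. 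Assembling these pieces — the routine two axioms, plus the three-step transfer of conditional independence via uniform finite-dimensionality of coset-nilspace cube sets — yields Proposition \ref{prop:upccaxioms}.
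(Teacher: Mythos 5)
Your treatment of axioms 1 and 2 is fine and matches the paper, and you correctly identify the real difficulty: the conditional independence axiom must be verified with the product $\sigma$-algebras $\mc{L}_{\mf{X}}^{\db{n}}$, while the finite-level statements only give it automatically with the larger Loeb $\sigma$-algebras $\mc{L}_{\mf{X}^{\db{n}}}$. However, the argument you sketch for closing this gap does not actually close it, and the heuristic you rely on at the crucial moment is based on a false premise.

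The specific failure is in the step where you write that $\mb{E}(\,\cdot\,|(\mc{L}_{\mf{X}})^{\db{n}}_{F_1})$ of a function $\lim_\omega g_i$ ``can be identified with $\lim_\omega$ of the finite-level conditional expectations $\mb{E}_{\mu^{\db{n}}_{\ns_i}}(g_i|\mc{B}(\ns_i)^{\db{n}}_{F_1})$''. That ultralimit does give a conditional expectation, but onto the \emph{Loeb} $\sigma$-algebra of the $F_1$-coordinates (what the paper calls $\mc{B}_{F_1}=p_{F_1}^{-1}(\mc{L}_{\mf{X}^{F_1}})$), not onto the product $\sigma$-algebra $(\mc{L}_{\mf{X}})^{\db{n}}_{F_1}$. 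In the paper's notation, your argument lands you in $L^\infty(\mc{B}_{F_0\cap F_1})$ rather than $L^\infty(\mc{A}_{F_0\cap F_1})$, which is precisely the wall you set out to break through, so the argument is circular. Furthermore, the proposed resolution --- that $\textsc{cfr}$ coset nilspaces are ``uniformly finite-dimensional'' so that the Borel $\sigma$-algebras on the $\cu^n(\ns_i)$ admit $\delta$-dense partitions of complexity bounded uniformly in $i$ --- is false: nothing in the hypotheses bounds the complexity of the $\ns_i$, so a sequence such as $\ns_i=\mb{Z}_{p_i}$ with $p_i\to\infty$ (or nilmanifolds of growing dimension) already defeats any uniform partition bound. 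The coset hypothesis enters the paper's proof for an entirely different reason, namely the \emph{transitive group action}. The paper first reduces (Lemma \ref{lem:reduc1}) from adjacent faces to opposite faces $F_0,F_1$, then shows (Lemmas \ref{lem:simchar}, \ref{lem:I-char-ns}, \ref{lem:I-char}) that the invariance $\sigma$-algebra $\mc{I}=\mc{B}_{F_0}\wedge_{\mu^{\db{n}}}\mc{B}_{F_1}$ is exactly the $\sigma$-algebra of sets fixed by the ultraproduct group $\mf{H}=\prod_{i\to\omega}\cu^{n-1}\big((G^{(i)})_\bullet^{+1}\big)$ acting on $\cu^{n-1}(\mf{X})$; since the product $\sigma$-algebra $\mc{L}_{\mf{X}}^{\db{n-1}}$ is globally invariant under this action (it permutes rank-1 functions), the conditional expectation of an $\mc{I}$-measurable function onto it is again $\mf{H}$-invariant, giving $\mc{A}_{F_0}\upmod\mc{I}$. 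You would need to find and exploit this algebraic mechanism (or something equivalent); a purely measure-theoretic transfer argument cannot work, since the obstruction $\mc{A}_{F_1}\subsetneq\mc{B}_{F_1}$ is genuine and is not undone by ultralimits of finite-level conditional expectations.
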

\noindent The first two axioms hold in fact for all compact nilspaces.
\begin{lemma}\label{lem:12}
For each $i\in \mb{N}$ let $\ns_i$ be a $k$-step compact nilspace. For $n\geq 0$ let $\mu^{\db{n}}$ be the Loeb measure on $(\mf{X}^{\db{n}},\mc{L}_{\mf{X}^{\db{n}}})$ corresponding to the measures $\mu^{\db{n}}_{\ns_i}$. Then the measures $\mu^{\db{n}}$ restricted to the $\sigma$-algebras $\mc{L}_{\mf{X}}^{\db{n}}$ satisfy axioms 1, 2 in Definition \ref{def:cc}.
\end{lemma}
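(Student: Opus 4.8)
The plan is to verify axioms 1 and 2 of Definition \ref{def:cc} directly from the transfer principle for ultraproducts together with the known ``finitary'' versions of these axioms that hold for each individual compact nilspace $\ns_i$ (via its Haar-measure cubic-coupling structure, \cite[Proposition 3.6]{CScouplings}). The key point throughout is that both axioms are statements about \emph{internal} measurable sets, so they transfer cleanly to the Loeb measure $\mu^{\db{n}}$; and since the product $\sigma$-algebra $\mc{L}_{\mf{X}}^{\db{n}}$ is contained in $\mc{L}_{\mf{X}^{\db{n}}}$, any identity of measures established on $\mc{L}_{\mf{X}^{\db{n}}}$ automatically restricts to the smaller $\sigma$-algebra, so there is no extra difficulty coming from the product-vs-Loeb $\sigma$-algebra discrepancy here (that subtlety only enters for axiom~3, treated in Proposition~\ref{prop:upccaxioms}).

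For \textbf{axiom 1 (Consistency)}, let $\phi:\db{m}\to\db{n}$ be an injective cube morphism. For each $i$, the Haar cubic coupling on $\ns_i$ satisfies $(\mu^{\db{n}}_{\ns_i})_\phi=\mu^{\db{m}}_{\ns_i}$, which concretely means that for every internal box $\prod_{v\in\db{m}}B_v$ (with $B_v\subset\ns_i$ Borel), the $\phi$-pushforward of $\mu^{\db{n}}_{\ns_i}$ assigns it the value $\mu^{\db{n}}_{\ns_i}\big(\{q\in\cu^n(\ns_i): q\circ\phi(v)\in B_v\ \forall v\}\big)$, and this equals $\mu^{\db{m}}_{\ns_i}(\prod_v B_v)$. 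Taking the ultralimit of both sides and using that $\mu^{\db{n}}$, $\mu^{\db{m}}$ are the Loeb measures built from these Haar measures, we get the same identity for $\mu^{\db{n}}_\phi$ and $\mu^{\db{m}}$ on all internal measurable sets of $\mf{X}^{\db{m}}$; by Hahn--Kolmogorov uniqueness of the extension, the two Loeb measures agree on all of $\mc{L}_{\mf{X}^{\db{m}}}$, hence in particular on $\mc{L}_{\mf{X}}^{\db{m}}$. One should also check that $p_\phi:\mf{X}^{\db{n}}\to\mf{X}^{\db{m}}$ restricted to $\cu^n(\mf{X})$ lands in $\cu^m(\mf{X})$ and is compatible with the internal structure, which follows because $\cu^n(\mf{X})=\prod_{i\to\omega}\cu^n(\ns_i)$ by the discussion in Section~\ref{sec:ups} and $\phi$ is a cube morphism at each finite level.

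For \textbf{axiom 2 (Ergodicity)}, I must show $\mu^{\db{1}}=\lambda\times\lambda$ where $\lambda$ is the Loeb measure on $\mf{X}=\cu^0(\mf{X})$. For each $i$, $\cu^1(\ns_i)=\ns_i\times\ns_i$ and $\mu^{\db{1}}_{\ns_i}=\mu_{\ns_i}\times\mu_{\ns_i}$ (the one-dimensional cube set of any compact nilspace is the full square with product Haar measure, $\ns_i$ being a group-like object at this level — equivalently this is the ergodicity clause of \cite[Proposition 3.6]{CScouplings}). Hence for internal rectangles $B\times B'$ we have $\mu^{\db{1}}_{\ns_i}(B_i\times B_i')=\mu_{\ns_i}(B_i)\,\mu_{\ns_i}(B_i')$; passing to ultralimits gives $\mu^{\db{1}}(B\times B')=\lambda(B)\lambda(B')$ on internal rectangles, and since these generate $\mc{L}_{\mf{X}}^{\db{1}}$ and both $\mu^{\db{1}}$ and $\lambda\times\lambda$ are (countably additive, by Loeb / Carathéodory) measures agreeing on this generating algebra, they agree on $\mc{L}_{\mf{X}}^{\db{1}}$. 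The only mild subtlety is making sure the product Loeb measure $\lambda\times\lambda$ and the Loeb measure $\mu^{\db{1}}$ of the products genuinely coincide as measures on $\mc{L}_{\mf{X}}\otimes\mc{L}_{\mf{X}}$ (not just on internal rectangles), which is the standard Loeb-product consistency and is where one invokes uniqueness in Hahn--Kolmogorov again. I expect \emph{axiom~1 for general, possibly non-injective-on-the-nose faces} — i.e.\ bookkeeping that the ultraproduct cube morphism $p_\phi$ behaves as an internal map so that transfer applies verbatim — to be the only place requiring any care, and even that is routine; the genuine obstacle of the paper lies entirely in axiom~3 and is deferred to Proposition~\ref{prop:upccaxioms}.
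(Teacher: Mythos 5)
Your proposal is correct and takes essentially the same approach as the paper: transfer the finitary ergodicity and consistency statements for the Haar cubic couplings on each $\ns_i$ to the ultralimit, using that the Loeb measure is built from internal sets, and then note that restricting from $\mc{L}_{\mf{X}^{\db{n}}}$ to $\mc{L}_{\mf{X}}^{\db{n}}$ causes no loss. The paper handles the passage from internal rectangles to arbitrary rectangles in $\mc{L}_{\mf{X}}\otimes\mc{L}_{\mf{X}}$ by an explicit mod-$0$ approximation by internal sets rather than by invoking Hahn--Kolmogorov uniqueness, but this is only a presentational difference.
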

\begin{proof}
We first check the ergodicity axiom. The $\sigma$-algebra $\mc{L}_{\mf{X}}^{\db{1}}=\mc{L}_{\mf{X}}\otimes\mc{L}_{\mf{X}}$ is generated by rectangles of the form $\mf{E}_1\times \mf{E}_2$ where $\mf{E}_i\in \mc{L}_{\mf{X}}$. By part 4 of \cite[Theorem 2.1]{Ross} applied to $\mu^{\db{0}}$, there are internal measurable sets $\mf{F}_1=\prod_{i\to \omega} F_{1,i}$, $\mf{F}_2=\prod_{i\to \omega} F_{2,i}$ such that $\mu^{\db{0}}(\mf{E}_i\Delta\mf{F}_i)=0$ for $i=1,2$. Compact nilspaces are  known to satisfy the ergodicity axiom, so $\mu^{\db{1}}_{\ns_i}=\mu_{\ns_i}\times \mu_{\ns_i}$, whence $\mu^{\db{1}}(\mf{F}_1\times \mf{F}_2)=\lim_\omega \mu_{\ns_i}(F_{1,i})\mu_{\ns_i}(F_{2,i})=\mu^{\db{0}}(\mf{F}_1)\mu^{\db{0}}(\mf{F}_2)$. Note also that $\mf{E}_1\times \mf{E}_2\in \mc{L}_{\mf{X}^{\db{1}}}$ and $\mu^{\db{1}}(\mf{E}_1\times \mf{E}_2)=\mu^{\db{1}}(\mf{F}_1\times \mf{F}_2)$ (these facts are seen similarly to the inclusion $\mc{L}_{\mf{X}}^{\db{n}} \subset \mc{L}_{\mf{X}^{\db{n}}}$ in Section \ref{sec:ups}, using Lemma \ref{lem:applemf}). The ergodicity axiom follows.

To check the consistency axiom, we need to show that given any injective morphism $\phi:\db{m}\to\db{n}$, we have $\mu^{\db{n}}_{\phi}=\mu^{\db{m}}$. This holds on the larger $\sigma$-algebra $\mc{L}_{\mf{X}^{\db{m}}}$, because $\mu^{\db{n}}$ is the Loeb measure associated with the measures $\mu_{\ns_i}^{\db{n}}$ and the consistency axiom holds for $(\mu_{\ns_i}^{\db{n}})_{n\geq 0}$ (note that the measurability of the map $\mf{X}^{\db{n}}\to \mf{X}^{\db{m}}$, $\q\mapsto \q\co\phi$ with respect to $\mc{L}_{\mf{X}^{\db{n}}}$, $\mc{L}_{\mf{X}^{\db{m}}}$ is itself ensured by the fact that the measures $\mu_{\ns_i}^{\db{n}}$ obey the consistency axiom, and Lemma \ref{lem:applemf}). But then the equality $\mu^{\db{n}}_{\phi}=\mu^{\db{m}}$ holds also in the smaller $\sigma$-algebra $\mc{L}_{\mf{X}}^{\db{m}}$, since if $B\in \mc{L}_{\mf{X}}^{\db{m}}$ and $F:=\phi(\db{m})\subset \db{n}$, then $p_F^{-1}(B)$ is in $\mc{L}_{\mf{X}^{\db{n}}}$ and so $\mu^{\db{n}}\big(p_F^{-1}(B)\big)=\mu^{\db{m}}(B)$.
\end{proof}
\noindent We turn to the main task, i.e.\ to check that the conditional independence axiom holds not only with the $\sigma$-algebras $\mc{L}_{\mf{X}^{\db{n}}}$, but also with the smaller ones  $\mc{L}_{\mf{X}}^{\db{n}}$. 
As recalled in Section \ref{sec:ups}, for $F\subset \db{n}$ we denote by $(\mc{L}_{\mf{X}})^{\db{n}}_F$ the $\sigma$-algebra $\bigvee_{v\in F} p_v^{-1}(\mc{L}_{\mf{X}})\subset \mc{L}_{\mf{X}}^{\db{n}}$.

\begin{remark}\label{rem:abcase}
In the special case of Proposition \ref{prop:upccaxioms} where each $\ns_i$ is a compact abelian group (equipped with its standard cubes; see \cite[Proposition 2.1.2]{Cand:Notes1}), the ultraproduct $\mf{X}$ is also an abelian group. This can be used to prove the conditional independence axiom with an argument that is markedly simpler than the one we use below for the more general case. Indeed, in the abelian case, the group structure on $\mf{X}$ yields a useful expression for the conditional expectation $\mb{E}\big(f| (\mc{L}_{\mf{X}})^{\db{n}}_{F_i}\big)$, namely that this is almost-surely equal to the function $\mf{x}\mapsto \int_{\mf{X}} f(\mf{x}+ t^{F_i})\ud \lambda(t)$, where $t^{F_i}$ is the element of the group $\mf{X}^{\db{n}}$ with $t^{F_i}(v)=t$ if $v\in F_i$ and $t^{F_i}(v)=0$ otherwise. These integral expressions for these expectation operators make it easy to see that for the two faces $F_0,F_1$ the operators commute. This implies the conditional independence axiom (via \cite[Proposition 2.10]{CScouplings}, say). While this case is much simpler than the argument in the general case, it still has significant content, and looking at its details can be helpful to understand the rest of this section.
\end{remark}
\noindent Let us introduce a simplified notation for $\sigma$-algebras for the rest of this section. For $S\subset \db{n}$, when the ultraproduct nilspace $\mf{X}$ and the dimension $n$ are clear from the context, we write simply $\mc{A}$ for $(\mc{L}_{\mf{X}})^{\db{n}}$, and $\mc{A}_S$ for $(\mc{L}_{\mf{X}})^{\db{n}}_S$. Similarly, we write $\mc{B}$ for $\mc{L}_{\mf{X}^{\db{n}}}$ and $\mc{B}_S$ for the $\sigma$-algebra $p_S^{-1}(\mc{L}_{\mf{X}^S})$ on $\mf{X}^{\db{n}}$. By the explanation at the end of Section \ref{sec:ups} we see that $\mc{A}_S\subset \mc{B}_S$ (and this inclusion may be strict).

Our main task, then, is to prove that for any adjacent faces $F_0,F_1\subset\db{n}$ of codimension 1, we have $\mc{A}_{F_0}\upmod_{\mu^{\db{n}}} \mc{A}_{F_1}$ and $\mc{A}_{F_0}\wedge_{\mu^{\db{n}}} \mc{A}_{F_1} = \mc{A}_{F_0\cap F_1}$.

We say that two faces of codimension 1 in $\db{n}$ are \emph{opposite faces} if they are not adjacent (i.e.\ if their intersection is empty). Given a $\sigma$-algebra $\mc{X}$ on a set $X$, and a finite set $S$, we say an $\mc{X}^S$-measurable function $f:X^S\to \mb{C}$ is a \emph{rank 1 function} if $f=\prod_{v\in S} f_v\co p_v$ where each $f_v:X\to \mb{C}$ is $\mc{X}$-measurable. 

We begin by reducing our main task as follows. 
\begin{lemma}\label{lem:reduc1}
The conditional independence axiom holds with $\mc{A}$, $\mu^{\db{n}}$ \textup{(}$\forall n\in \mb{N}$\textup{)} if the following statement holds: $\forall n\in\mb{N}$, for any opposite faces $F_0,F_1\subset \db{n}$ of codimension 1, every rank 1 bounded $\mc{A}_{F_0}$-measurable  function $f$ satisfies $\mb{E}(f|\mc{B}_{F_1})\in L^\infty(\mc{A}_{F_1})$.
\end{lemma}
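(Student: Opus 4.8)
The plan is to show that the reduction statement---conditional independence of the "small" $\sigma$-algebras $\mc{A}_{F_0},\mc{A}_{F_1}$ tested only against rank~1 functions and only the $\mc{B}$-expectation---in fact suffices to recover both parts of the conditional independence axiom (the $\upmod$ relation and the meet identity $\mc{A}_{F_0}\wedge \mc{A}_{F_1}=\mc{A}_{F_0\cap F_1}$) for all $n$. The first thing I would do is reduce from general adjacent faces to opposite faces. If $F_0,F_1$ are adjacent faces of codimension~1 in $\db{n}$, then after an automorphism of $\db{n}$ we may assume $F_0=\{v:v\sbr{1}=0\}$ and $F_1=\{v:v\sbr{2}=0\}$; splitting off the two coordinates $v\sbr{1},v\sbr{2}$ realizes $\db{n}$ as a disjoint union of four translates of $\db{n-2}$, so that $F_0\cap F_1$, and the two ``halves'' of $F_0$ and of $F_1$, are each copies of $\db{n-2}$, and the whole cube is naturally indexed so that $F_0$ and $F_1$ each decompose as opposite-face pairs sitting inside $\db{n-1}$'s. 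This lets me invoke the opposite-face hypothesis inside the relevant sub-cubes and reassemble. The point is that the consistency axiom (Lemma \ref{lem:12}) guarantees that the measure $\mu^{\db{n}}$ restricts correctly to all these sub-cubes, so the hypotheses transfer.

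Next I would upgrade from rank~1 functions to all of $L^\infty(\mc{A}_{F_0})$ and from $\mb{E}(\cdot\,|\,\mc{B}_{F_1})$ to $\mb{E}(\cdot\,|\,\mc{A}_{F_1})$. For the first upgrade: rank~1 functions $\prod_{v\in F_0} f_v\co p_v$ with each $f_v\in L^\infty(\mc{L}_{\mf{X}})$ span a dense subspace of $L^\infty(\mc{A}_{F_0})$ in $L^2$ (indeed $\mc{A}_{F_0}=\bigvee_{v\in F_0}p_v^{-1}(\mc{L}_{\mf{X}})$ is exactly the $\sigma$-algebra generated by such products, and linear combinations of products of bounded measurable functions are $L^2$-dense in $L^2$ of a product $\sigma$-algebra), so by linearity and $L^2$-continuity of conditional expectation the conclusion $\mb{E}(f\,|\,\mc{B}_{F_1})\in L^\infty(\mc{A}_{F_1})$ passes to all bounded $\mc{A}_{F_0}$-measurable $f$ (boundedness of the limit being automatic since $\mc{A}_{F_1}\subseteq\mc{B}_{F_1}$ and we already control the $\mc{B}_{F_1}$-expectation of a bounded function). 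For the second upgrade, I use that $\mc{A}_{F_1}\subseteq \mc{B}_{F_1}$, so by the tower property $\mb{E}(f\,|\,\mc{A}_{F_1})=\mb{E}\big(\mb{E}(f\,|\,\mc{B}_{F_1})\,|\,\mc{A}_{F_1}\big)$, and once $\mb{E}(f\,|\,\mc{B}_{F_1})$ is already $\mc{A}_{F_1}$-measurable the two coincide. Hence $\mb{E}(f\,|\,\mc{A}_{F_1})=\mb{E}(f\,|\,\mc{B}_{F_1})\in L^\infty(\mc{A}_{F_1})$ for all $f\in L^\infty(\mc{A}_{F_0})$, which by \cite[Proposition 2.10]{CScouplings} is precisely the relation $\mc{A}_{F_0}\upmod_{\mu^{\db{n}}}\mc{A}_{F_1}$.

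Finally, for the meet identity: the inclusion $\mc{A}_{F_0\cap F_1}\subseteq \mc{A}_{F_0}\wedge\mc{A}_{F_1}$ is immediate since $F_0\cap F_1\subseteq F_i$. For the reverse inclusion, I would use the known meet identity for the big $\sigma$-algebras, $\mc{B}_{F_0}\wedge_{\mu^{\db{n}}}\mc{B}_{F_1}=\mc{B}_{F_0\cap F_1}$, together with the conditional independence $\mc{A}_{F_0}\upmod\mc{A}_{F_1}$ just established: for $g\in L^\infty(\mc{A}_{F_0}\wedge\mc{A}_{F_1})$ one has $g=\mb{E}(g\,|\,\mc{A}_{F_1})$, and $g$ is also $\mc{A}_{F_0}$-measurable, so $g=\mb{E}(g\,|\,\mc{A}_{F_0})$; combining these with the conditional independence (which says that projecting an $\mc{A}_{F_0}$-function onto $\mc{A}_{F_1}$ lands in $L^\infty(\mc{A}_{F_0\cap F_1})$, by the standard equivalence between the various formulations of conditional independence) forces $g$ to be $\mc{A}_{F_0\cap F_1}$-measurable. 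One must check this last equivalence is available in the form stated in \cite[Proposition 2.10]{CScouplings} and that it applies to our $\sigma$-algebras sitting inside $\mc{A}$ rather than inside $\mc{B}$; this is the step where I expect to need to be most careful, since the argument must stay entirely within the small $\sigma$-algebra $\mc{A}^{\db{n}}=\mc{L}_{\mf{X}}^{\db{n}}$. The principal obstacle, though, is really just the bookkeeping in the opposite-versus-adjacent reduction: making the face decompositions and the induced sub-cube structures precise enough that the opposite-face hypothesis can be invoked cleanly and the pieces glued back together using consistency.
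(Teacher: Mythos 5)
Your outline identifies the right geometric picture (inside the face $F_0'$, which is a copy of $\db{n-1}$, the sets $F_0'\cap F_1'$ and $F_0'\setminus F_1'$ are opposite faces of codimension~1), but the reduction from adjacent to opposite faces cannot be carried out with ``consistency and gluing'' alone, and that is exactly the step you flag at the end as ``just bookkeeping''. It isn't. The hypothesis tells you about $\mb{E}(\cdot\,|\,\mc{B}_{F_1})$ for \emph{opposite} faces living inside the subcube $F_0'$. The target is $\mb{E}(f\,|\,\mc{A}_{F_1'})$ for the \emph{adjacent} face $F_1'$, and the $\sigma$-algebra $\mc{B}_{F_1'}$ (or $\mc{A}_{F_1'}$) involves coordinates outside $F_0'$. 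Restricting the coupling to $F_0'$ via consistency loses precisely that information. The missing ingredient is the known conditional-independence axiom for the $\mc{B}$-$\sigma$-algebras: one must first write $\mb{E}(f\,|\,\mc{B}_{F_1'})=\mb{E}(f\,|\,\mc{B}_{F_0'\cap F_1'})$ using $\mc{B}_{F_0'}\upmod\mc{B}_{F_1'}$, which trades the adjacent face $F_1'$ for the intersection $F_0'\cap F_1'$ \emph{inside} $F_0'$. Only after that step does the subcoupling on $F_0'$ see everything, and only then can you factor the rank-1 function over $F_0'\cap F_1'$ and $F_0'\setminus F_1'$ and invoke the opposite-face hypothesis. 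Your plan never invokes the $\mc{B}$-axiom at this stage, so the reduction does not actually go through.

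A second, smaller issue: you conclude $\mc{A}_{F_0}\upmod\mc{A}_{F_1}$ from ``$\mb{E}(f\,|\,\mc{A}_{F_1})\in L^\infty(\mc{A}_{F_1})$'', but that containment is automatic for any conditional expectation. By \cite[Proposition 2.10]{CScouplings}, the relation $\mc{A}_{F_0}\upmod\mc{A}_{F_1}$ requires $\mb{E}(f\,|\,\mc{A}_{F_1})\in L^\infty(\mc{A}_{F_0})$, i.e.\ measurability with respect to the \emph{other} $\sigma$-algebra. Establishing that is not a formal upgrade from the hypothesis; it is the content of the lemma, and it is where the $\mc{B}$-axiom and the relabeling of $F_0'$ as $\db{n-1}$ with $F_0'\setminus F_1'$ and $F_0'\cap F_1'$ as opposite faces must enter. (Also note the paper routes the whole argument through \cite[Lemma 2.30]{CScouplings}, which collapses both clauses of the conditional-independence axiom to a single rank-1 checkable condition, avoiding the separate treatment of $\upmod$ and the meet identity; your plan tries to prove them separately, which is not wrong in principle but adds work.)
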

\noindent Here and below, in notions involving equality up to null sets, unless otherwise stated these are null sets relative to $\mu^{\db{n}}$ and are allowed to be from the largest ambient $\sigma$-algebra on $\mf{X}^n$, i.e.\ $\mc{L}_{\mf{X}^n}$. Thus ``$\mb{E}(f|\mc{B}_{F_1})\in L^\infty(\mc{A}_{F_1})$" here means that $\mb{E}(f|\mc{B}_{F_1})$ agrees with some $\mc{A}_{F_1}$-measurable bounded function outside some $\mu^{\db{n}}$-null set (recall that  $\mb{E}(f|\mc{B}_{F_1})$ is defined up to $\mu^{\db{n}}$-null sets anyway). Similarly, equalities between conditional expectations are meant up to a null-set in the ambient measure (if there is danger of confusion, we indicate the measure by a subscript in the equality).
\begin{proof}
To confirm that the conditional independence axiom holds, we have to show that for any adjacent faces $F_0',F_1'\subset \db{n}$ of codimension 1 we have $\mc{A}_{F_0'}\upmod_{\mu^{\db{n}}} \mc{A}_{F_1'}$ and $\mc{A}_{F_0'}\wedge_{\mu^{\db{n}}} \mc{A}_{F_1'} =_{\mu^{\db{n}}} \mc{A}_{F_0'\cap F_1'}$. By \cite[Lemma 2.30]{CScouplings}, it suffices to prove that if $f$ is a rank 1 bounded $\mc{A}_{F_0'}$-measurable function then $\mb{E}(f|\mc{A}_{F_1'})\in L^\infty(\mc{A}_{F_0'\cap F_1'})$. We have $\mb{E}(f|\mc{A}_{F_1'})=\mb{E}(\mb{E}(f|\mc{B}_{F_1'})|\mc{A}_{F_1'})$, since $\mc{A}_{F_1'}\subset \mc{B}_{F_1'}$. We also have $\mb{E}(f|\mc{B}_{F_1'})=\mb{E}(f|\mc{B}_{F_0'\cap F_1'})$ because the conditional independence axiom holds for the measures $\mu_{\ns_i}^{\db{n}}$, and this is then seen to imply the same property for $\mu^{\db{n}}$ on $\mc{B}$ using Lemma \ref{lem:upcialgs}. Hence $\mb{E}(f|\mc{A}_{F_1'})=\mb{E}(  \mb{E}(f|\mc{B}_{F_0'\cap F_1'})  |\mc{A}_{F_1'})$. Therefore, if we prove
\begin{equation}\label{eq:reduc1}
\mb{E}(f|\mc{B}_{F_0'\cap F_1'}) \in L^\infty(\mc{A}_{F_0'\cap F_1'}),
\end{equation}
then $\mb{E}(f|\mc{B}_{F_0'\cap F_1'})=\mb{E}(f|\mc{A}_{F_0'\cap F_1'})$ (since $\mc{B}_{F_0'\cap F_1'}\supset \mc{A}_{F_0'\cap F_1'}$), which implies that $\mb{E}(f|\mc{A}_{F_1'})$ $=\mb{E}(  \mb{E}(f|\mc{A}_{F_0'\cap F_1'})  |\mc{A}_{F_1'})=\mb{E}(f|\mc{A}_{F_0'\cap F_1'})$, so $\mb{E}(f|\mc{A}_{F_1'})\in L^\infty(\mc{A}_{F_0'\cap F_1'})$ as required.

Since $f$ is a rank 1 function $\prod_{v\in F_0'} f_v\co p_v$, and $\prod_{v\in F_0'\cap F_1'} f_v\co p_v$ is $\mc{A}_{F_0'\cap F_1'}$-measurable, we have $\mb{E}(f|\mc{B}_{F_0'\cap F_1'}) = (\prod_{v\in F_0'\cap F_1'} f_v\co p_v)\, \mb{E}( \prod_{v\in F_0'\setminus F_1'} f_v\co p_v  |\mc{B}_{F_0'\cap F_1'})$. Hence, if it holds that $\mb{E}( \prod_{v\in F_0'\setminus F_1'} f_v\co p_v  |\mc{B}_{F_0'\cap F_1'})\in L^\infty(\mc{A}_{F_0'\cap F_1'})$ then \eqref{eq:reduc1} follows. But this is indeed seen to hold  by relabeling $F_0'$ as $\db{n}$, $F_0'\setminus F_1'$ as $F_0$, and $F_0'\cap F_1'$ as $F_1$, and using the statement in the lemma.
\end{proof}
\noindent To prove the statement in Lemma \ref{lem:reduc1}, we work with the $\sigma$-algebra $\mc{I}:=\mc{B}_{F_0}\wedge_{\mu^{\db{n}}} \mc{B}_{F_1}\subset \mc{L}_{\mf{X}^{\db{n}}}$. First we note the following expression for $\mc{I}$ in terms of a $\sigma$-algebra $\mc{I}'\subset \mc{L}_{\mf{X}^{\db{n-1}}}$.
\begin{lemma}\label{lem:I-expr}
Let $F_0,F_1$ be opposite faces of codimension 1 in $\db{n}$. Let $\mf{\mc{I}}'$ be the $\sigma$-algebra of sets $A'\in \mc{L}_{\mf{X}^{\db{n-1}}}$ such that $p_{F_0}^{-1}(A') =_{\mu^{\db{n}}} p_{F_1}^{-1}(A')$. Then we have  $p_{F_0}^{-1}(\mc{I}') =_{\mu^{\db{n}}}p_{F_1}^{-1}(\mc{I}') =_{\mu^{\db{n}}} \mc{I}$.
\end{lemma}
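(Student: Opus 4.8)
The plan is to show two mutual inclusions (up to $\mu^{\db{n}}$-null sets), using the standard characterization of the meet $\mc{B}_{F_0}\wedge_{\mu^{\db{n}}}\mc{B}_{F_1}$ as the $\sigma$-algebra consisting of those sets $B$ such that $\mathbf{1}_B$ is simultaneously $\mc{B}_{F_0}$-measurable and $\mc{B}_{F_1}$-measurable modulo null sets. First note that $F_0$ and $F_1$, being opposite codimension-1 faces of $\db{n}$, are each canonically identified with $\db{n-1}$ via the coordinate-deleting bijection; under these identifications $p_{F_0},p_{F_1}:\mf{X}^{\db{n}}\to\mf{X}^{\db{n-1}}$, and $\mc{B}_{F_j}=p_{F_j}^{-1}(\mc{L}_{\mf{X}^{\db{n-1}}})$. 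So a set in $\mc{I}=\mc{B}_{F_0}\wedge\mc{B}_{F_1}$ must (mod null) be of the form $p_{F_0}^{-1}(A_0)$ for some $A_0\in\mc{L}_{\mf{X}^{\db{n-1}}}$ and also (mod null) of the form $p_{F_1}^{-1}(A_1)$.

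The key step is to upgrade ``$p_{F_0}^{-1}(A_0)=_{\mu^{\db{n}}}p_{F_1}^{-1}(A_1)$ for some possibly-different $A_0,A_1$'' to ``$p_{F_0}^{-1}(A)=_{\mu^{\db{n}}}p_{F_1}^{-1}(A)$ for a single $A$'', i.e. to show $A_0=_{?}A_1$ in the appropriate sense. Here I would use that $\mu^{\db{n}}$ restricted to the coordinates of $F_0$ (resp. $F_1$) pushes forward to $\mu^{\db{n-1}}$ on $\mf{X}^{\db{n-1}}$ — this is the consistency axiom, which holds for the Loeb measures $\mu^{\db{\bullet}}$ on the larger $\sigma$-algebras $\mc{L}_{\mf{X}^{\db{\bullet}}}$ (the consistency part of Lemma \ref{lem:12} works just as well there, being inherited directly from the nilspaces $\ns_i$). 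Moreover $F_0\cap F_1=\emptyset$ and $F_0\cup F_1=\db{n}$, so $(p_{F_0},p_{F_1}):\mf{X}^{\db{n}}\to\mf{X}^{\db{n-1}}\times\mf{X}^{\db{n-1}}$ is a bijection pushing $\mu^{\db{n}}$ to some self-coupling of $(\mf{X}^{\db{n-1}},\mc{L}_{\mf{X}^{\db{n-1}}},\mu^{\db{n-1}})$. Under a self-coupling, if $p_{F_0}^{-1}(A_0)=_{\mu^{\db{n}}}p_{F_1}^{-1}(A_1)$ then integrating $\mathbf{1}_{A_0\setminus A_1}$ against the diagonal structure forces $\mu^{\db{n-1}}(A_0\triangle A_1)=0$ — this is the routine measure-theoretic core. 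Setting $A:=A_0$, we get $p_{F_0}^{-1}(A)=_{\mu^{\db{n}}}p_{F_1}^{-1}(A)$, i.e. $A\in\mc{I}'$, and $\mc{I}\subset_{\mu^{\db{n}}}p_{F_0}^{-1}(\mc{I}')$. The reverse inclusion is immediate: if $A'\in\mc{I}'$ then $p_{F_0}^{-1}(A')$ is $\mc{B}_{F_0}$-measurable, and equals $p_{F_1}^{-1}(A')$ mod null, hence is $\mc{B}_{F_1}$-measurable mod null, so lies in $\mc{I}$; and $p_{F_0}^{-1}(A')=_{\mu^{\db{n}}}p_{F_1}^{-1}(A')$ by definition of $\mc{I}'$, giving all three $\sigma$-algebras equal mod $\mu^{\db{n}}$-null sets.

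I expect the main obstacle to be bookkeeping rather than anything deep: one must be careful that $\mc{I}'$ is genuinely a $\sigma$-algebra (closure under countable operations is clear since $p_{F_j}^{-1}$ commutes with these and $=_{\mu^{\db{n}}}$ is preserved), and that the identification of $\mc{I}$ with the simultaneously-measurable sets is applied with the correct (completed) $\sigma$-algebras. A secondary point of care is that $\mc{L}_{\mf{X}^{\db{n}}}$ is complete and $\mu^{\db{n}}$ is defined on it, so all the ``$=_{\mu^{\db{n}}}$'' manipulations are legitimate; this is exactly why the lemma is phrased with the larger Loeb $\sigma$-algebras $\mc{L}_{\mf{X}^{\db{n-1}}},\mc{L}_{\mf{X}^{\db{n}}}$ rather than the product ones. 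No appeal to the conditional-independence axiom or to the \textsc{cfr} coset structure is needed for this particular lemma — it is a general fact about the meet of two ``opposite'' coordinate sub-$\sigma$-algebras under a self-coupling, and the consistency axiom is the only input from the nilspace side.
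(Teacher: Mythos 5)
There is a genuine gap at the key step. You claim that ``under a self-coupling, if $p_{F_0}^{-1}(A_0)=_{\mu^{\db{n}}}p_{F_1}^{-1}(A_1)$ then integrating $\mathbf{1}_{A_0\setminus A_1}$ against the diagonal structure forces $\mu^{\db{n-1}}(A_0\triangle A_1)=0$.'' This is false for general self-couplings, and your phrase ``the diagonal structure'' is doing illegitimate work, because a generic self-coupling has no diagonal supporting it. Concretely, take $\Omega=\{0,1\}$ with uniform measure and let the self-coupling $\mu$ on $\Omega^2$ be uniform on the anti-diagonal $\{(0,1),(1,0)\}$. With $A_0=\{0\}$, $A_1=\{1\}$ one has $p_0^{-1}(A_0)\cap\mathrm{supp}(\mu)=\{(0,1)\}=p_1^{-1}(A_1)\cap\mathrm{supp}(\mu)$, so $p_0^{-1}(A_0)=_{\mu}p_1^{-1}(A_1)$, yet $A_0\ne A_1$ and indeed $A_0\triangle A_1=\Omega$. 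In this example the meet $\mc{B}_0\wedge_\mu\mc{B}_1$ is the full $\sigma$-algebra, while your $\mc{I}'$ (sets with $p_0^{-1}(A')=_\mu p_1^{-1}(A')$) consists only of symmetric sets and is trivial; so the very conclusion $\mc{I}=_\mu p_{F_0}^{-1}(\mc{I}')$ fails. The lemma is therefore not, as you assert, ``a general fact about the meet of two opposite coordinate sub-$\sigma$-algebras under a self-coupling'' provable from consistency alone.

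The paper's proof uses precisely the structure you dismissed. It first invokes Lemma \ref{lem:upcialgs} (whose hypotheses include conditional independence at the $\ns_i$ level) to write a given $A\in\mc{I}$ as $A=_{\mu^{\db{n}}}\prod_{i\to\omega}A_i$ with each $A_i$ lying in the meet $(\mc{B}_i)^{\db{n}}_{F_0}\wedge(\mc{B}_i)^{\db{n}}_{F_1}$ for the individual nilspaces. Then, and this is the decisive input, it uses that each Haar-cube measure $\mu_{\ns_i}^{\db{n}}$ is an \emph{idempotent} coupling ({[CScouplings, Lemma 3.4]}), so that {[CScouplings, Lemma 2.62(iii)]} produces a single $A_i'$ with $p_{F_0}^{-1}(A_i')=_{\mu_{\ns_i}^{\db{n}}}A_i=_{\mu_{\ns_i}^{\db{n}}}p_{F_1}^{-1}(A_i')$, which is exactly the upgrade your argument tried to get for free. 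Passing to ultraproducts then yields $A'\in\mc{I}'$ with $A=_{\mu^{\db{n}}}p_{F_0}^{-1}(A')$. If you want to salvage your plan, you must replace the ``diagonal integration'' step with an appeal to idempotence of the coupling (or work at the $\ns_i$ level as the paper does); the easy inclusion $p_{F_0}^{-1}(\mc{I}')=_{\mu^{\db{n}}}p_{F_1}^{-1}(\mc{I}')\subset_{\mu^{\db{n}}}\mc{I}$ that you handle correctly is not where the difficulty lies.
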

\begin{proof}
It is clear from the definitions that $p_{F_0}^{-1}(\mc{I}') =_{\mu^{\db{n}}}p_{F_1}^{-1}(\mc{I}')\subset_{\mu^{\db{n}}} \mc{I}$, so it suffices to prove that $\mc{I} \subset_{\mu^{\db{n}}}  p_{F_0}^{-1}(\mc{I}')$. The idea is that the analogous inclusion is known to hold for the nilspaces $\ns_i$, and the inclusion for $\mc{I}$ then follows by straightforward arguments with ultraproducts. More precisely, let $\mc{B}_i$ denote the Borel $\sigma$-algebra on $\ns_i$ for each $i\in \mb{N}$, and recall that the cubic Haar measures $\mu_{\ns_i}^{\db{m}}$, $m\geq 0$ form a cubic coupling \cite[Proposition 3.6]{CScouplings}, so by \cite[Lemma 3.4]{CScouplings} the measure $\mu_{\ns_i}^{\db{n}}$ is an idempotent coupling, and so by \cite[Lemma 2.62 (iii) and Proposition 2.66]{CScouplings} we have $(\mc{B}_i)_{F_0}^{\db{n}} \upmod_{\mu_{\ns_i}^{\db{n}}} (\mc{B}_i)_{F_1}^{\db{n}}$, for each $i\in \mb{N}$. By Lemma \ref{lem:upcialgs}, for every $A\in \mc{I}$ there are sets $A_i\in (\mc{B}_i)_{F_0}^{\db{n}}\wedge_{\mu_{\ns_i}^{\db{n}}} (\mc{B}_i)^{\db{n}}_{F_1}$, $i\in \mb{N}$, such that $A=_{\mu^{\db{n}}}\prod_{i\to \omega} A_i$. Then by \cite[Lemma 2.62 (iii)]{CScouplings}, there is $A_i'\in \mc{B}_i^{\db{n-1}}$ such that $p_{F_0}^{-1}(A_i')=_{\mu_i^{\db{n}}}A_i=_{\mu_i^{\db{n}}}p_{F_1}^{-1}(A_i')$. Now $A':=\prod_{i\to\omega} A_i'$ is in $\mc{I}'$ and  $A=_{\mu_i^{\db{n}}} p_{F_0}^{-1}(A')$. The desired inclusion follows.
\end{proof}
\noindent Using this expression of $\mc{I}$, we now perform a second reduction, using Lemma \ref{lem:reduc1}. 
\begin{lemma}\label{lem:reduc2}
The conditional independence axiom holds with $(\mc{A},\mu^{\db{n}})$ if the following statement holds. For every pair of opposite faces $F_0,F_1$ of codimension 1 in $\db{n}$, the $\sigma$-algebra $\mc{I}=\mc{B}_{F_0}\wedge_{\mu^{\db{n}}} \mc{B}_{F_1}$ satisfies $\mc{A}_{F_0}\upmod_{\mu^{\db{n}}} \mc{I}$.
\end{lemma}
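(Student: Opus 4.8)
The plan is to deduce Lemma \ref{lem:reduc2} from Lemma \ref{lem:reduc1}, so the task reduces to showing: assuming $\mc{A}_{F_0}\upmod_{\mu^{\db{n}}}\mc{I}$ for all opposite pairs, the statement in Lemma \ref{lem:reduc1} holds, i.e. for a rank~1 function $f\in L^\infty(\mc{A}_{F_0})$ we have $\mb{E}(f|\mc{B}_{F_1})\in L^\infty(\mc{A}_{F_1})$. The natural starting point is the tower property through $\mc{I}$. Since the cubic coupling axioms hold with the larger $\sigma$-algebras $\mc{B}$, and $\mc{I}=\mc{B}_{F_0}\wedge_{\mu^{\db{n}}}\mc{B}_{F_1}$ is the meet, conditional independence of $\mc{B}_{F_0}$ and $\mc{B}_{F_1}$ gives $\mb{E}(f|\mc{B}_{F_1})=\mb{E}(f|\mc{I})$ for $f\in L^\infty(\mc{B}_{F_0})$ (this is the defining property of conditional independence over the meet, cf. the characterization in \cite[Proposition 2.10]{CScouplings}). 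Hence it suffices to prove that $\mb{E}(f|\mc{I})$ is $\mc{A}_{F_1}$-measurable.

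First I would use the hypothesis $\mc{A}_{F_0}\upmod_{\mu^{\db{n}}}\mc{I}$, which by \cite[Proposition 2.10]{CScouplings} says exactly that for every $g\in L^\infty(\mc{A}_{F_0})$ we have $\mb{E}(g|\mc{I})\in L^\infty(\mc{A}_{F_0})$. Applying this to our rank~1 function $f\in L^\infty(\mc{A}_{F_0})\subset L^\infty(\mc{B}_{F_0})$, we obtain that $\mb{E}(f|\mc{I})$ is (a.s. equal to) an $\mc{A}_{F_0}$-measurable function. Combining with the previous paragraph, $\mb{E}(f|\mc{B}_{F_1})=\mb{E}(f|\mc{I})$ is simultaneously $\mc{I}$-measurable and $\mc{A}_{F_0}$-measurable. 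Now I invoke Lemma \ref{lem:I-expr}: $\mc{I}=_{\mu^{\db{n}}}p_{F_1}^{-1}(\mc{I}')=_{\mu^{\db{n}}}p_{F_0}^{-1}(\mc{I}')$ for a common $\sigma$-algebra $\mc{I}'\subset\mc{L}_{\mf{X}^{\db{n-1}}}$ on the codimension-1 face. Therefore $\mb{E}(f|\mc{I})$ is $p_{F_1}^{-1}(\mc{I}')$-measurable; and since it is also $\mc{A}_{F_0}$-measurable, pulling everything down to $\mf{X}^{\db{n-1}}$ via the projection $p_{F_0}$ (equivalently $p_{F_1}$, which agree on $\mc{I}$-measurable functions by Lemma \ref{lem:I-expr}), the function descends to an $\mc{I}'$-measurable function $h$ on $\mf{X}^{\db{n-1}}$ with $\mb{E}(f|\mc{I})=h\co p_{F_1}$. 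It remains to check that $h$ is actually $(\mc{L}_{\mf{X}})^{\db{n-1}}$-measurable (the ``small'' product $\sigma$-algebra on the face), for then $h\co p_{F_1}\in L^\infty(\mc{A}_{F_1})$ as desired; this follows because $\mb{E}(f|\mc{I})$ is $\mc{A}_{F_0}$-measurable, i.e. built from the coordinate $\sigma$-algebras $\mc{L}_{\mf{X}}$ at the vertices of $F_0$, and under the identification $p_{F_0}:\mf{X}^{F_0}\xrightarrow{\sim}\mf{X}^{\db{n-1}}$ the $\sigma$-algebra $\mc{A}_{F_0}$ corresponds precisely to $(\mc{L}_{\mf{X}})^{\db{n-1}}$.

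The main obstacle I anticipate is the bookkeeping in this last descent step: one must reconcile two \emph{a priori} different realizations of the same conditional expectation—one as an $\mc{A}_{F_0}$-measurable function living on the vertices of $F_0$, the other as a pullback of an $\mc{I}'$-measurable function on $\mf{X}^{\db{n-1}}$—and argue that the representative can be chosen $(\mc{L}_{\mf{X}})^{\db{n-1}}$-measurable rather than merely $\mc{L}_{\mf{X}^{\db{n-1}}}$-measurable. This is where the distinction $\mc{A}_S\subsetneq\mc{B}_S$ is genuinely in play, and care is needed to ensure the intersection of the two measurability properties lands in the small algebra; I expect this to follow from the fact that $\mc{A}_{F_0}$-measurability is preserved under conditional expectation onto $\mc{I}$ (the hypothesis of the lemma) together with the canonical isomorphism $(\mf{X}^{F_0},\mc{A}_{F_0})\cong(\mf{X}^{\db{n-1}},(\mc{L}_{\mf{X}})^{\db{n-1}})$, but it should be spelled out carefully. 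Everything else—the tower property, the identification $\mb{E}(f|\mc{B}_{F_1})=\mb{E}(f|\mc{I})$, and the application of Lemma \ref{lem:I-expr}—is routine given the results already established.
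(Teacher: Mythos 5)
Your proposal is correct and follows essentially the same route as the paper: reduce via Lemma \ref{lem:reduc1}, use $\mc{B}_{F_0}\upmod\mc{B}_{F_1}$ to replace $\mb{E}(f|\mc{B}_{F_1})$ with $\mb{E}(f|\mc{I})$, apply the hypothesis $\mc{A}_{F_0}\upmod\mc{I}$ to get $\mc{A}_{F_0}$-measurability, and then exploit Lemma \ref{lem:I-expr} to transfer from $F_0$ to $F_1$. The only presentational difference is in the final transfer: you descend explicitly to $\mf{X}^{\db{n-1}}$ via the projections $p_{F_0}$ and $p_{F_1}$ and compare the two realizations of $\mb{E}(f|\mc{I})$, whereas the paper packages the same observation more compactly by introducing the reflection automorphism $\sigma$ of $\db{n}$ that swaps $F_0$ and $F_1$, noting that $\sigma$ fixes $\mc{I}$ mod $\mu^{\db{n}}$ (by Lemma \ref{lem:I-expr}) and carries $\mc{A}_{F_0}$ to $\mc{A}_{F_1}$, hence $\mc{I}\wedge\mc{A}_{F_0}=_{\mu^{\db{n}}}\mc{I}\wedge\mc{A}_{F_1}$, from which the conclusion is immediate. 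Two small points worth tightening in your writeup: the claim $\mc{B}_{F_0}\upmod\mc{B}_{F_1}$ for \emph{opposite} faces is not literally one of the three cubic coupling axioms (which speak of adjacent faces), and the paper derives it from idempotence of $\mu_{\ns_i}^{\db{n}}$ via \cite[Lemma 2.62]{CScouplings} and Lemma \ref{lem:upcialgs}; and in your descent step, the link $h=_{\mu^{\db{n-1}}}g$ should be justified by noting that the pushforward of $\mu^{\db{n}}$ under $p_{F_0}$ is $\mu^{\db{n-1}}$ (consistency axiom), so that $h\co p_{F_0}=_{\mu^{\db{n}}}g\co p_{F_0}$ implies $h=_{\mu^{\db{n-1}}}g$. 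You correctly flag this last point as the place requiring care; the reflection-map argument of the paper avoids it.
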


\begin{proof}
By Lemma \ref{lem:reduc1}, it suffices to prove that for every rank 1 bounded $\mc{A}_{F_0}$-measurable function $f$ we have $\mb{E}(f|\mc{B}_{F_1})\in L^\infty(\mc{A}_{F_1})$. We claim that $\mc{B}_{F_0} \upmod \mc{B}_{F_1}$. As in the proof of Lemma \ref{lem:I-expr}, this follows from a similar property holding for the nilspaces $\ns_i$. Indeed, as recalled in that proof, for each $i$ the coupling $\mu_{\ns_i}^{\db{n}}$ is idempotent. By \cite[Lemma 2.62 (iii) and Proposition 2.66]{CScouplings} the claimed conditional independence holds for the analogues of $\mc{B}_{F_0},\mc{B}_{F_1}$ on $\ns_i^{\db{n}}$. Our claim then follows by Lemma \ref{lem:upcialgs}. Now, since $f$ is $\mc{B}_{F_0}$-measurable (as $\mc{B}_{F_0}\supset \mc{A}_{F_0}$), by $\mc{B}_{F_0} \upmod \mc{B}_{F_1}$ we have $\mb{E}(f|\mc{B}_{F_1})=\mb{E}(f|\mc{B}_{F_0}\wedge\mc{B}_{F_1})=\mb{E}(f|\mc{I})$. Hence, it suffices to prove that $\mb{E}(f|\mc{I})\in L^\infty(\mc{A}_{F_1})$.

We now claim that $\mc{I}\wedge \mc{A}_{F_0} =_{\mu^{\db{n}}} \mc{I}\wedge \mc{A}_{F_1}$. Confirming this claim would complete the proof. Indeed, by assumption $\mc{A}_{F_0}\upmod \mc{I}$, so we would have $\mb{E}(f|\mc{I})\in L^\infty(\mc{A}_{F_0}\wedge \mc{I})= L^\infty(\mc{A}_{F_1}\wedge \mc{I})\subset L^\infty(\mc{A}_{F_1})$, as required. To prove the claim, let $\sigma$ be the reflection map on $\mf{X}^{\db{n}}$ induced by the reflection on $\db{n}$ that permutes $F_0$ and $F_1$. By Lemma \ref{lem:I-expr}, for every $U\in \mc{I}$ we have $\sigma(U)=_{\mu^{\db{n}}} U$. Since $\sigma(\mc{A}_{F_0})=\mc{A}_{F_1}$, if follows that for every $U\in \mc{I}\wedge \mc{A}_{F_0}$ we have $U=_{\mu^{\db{n}}}\sigma(U)\in \sigma(\mc{A}_{F_0})=\mc{A}_{F_1}$, so $\mc{I}\wedge \mc{A}_{F_0}\subset_{\mu^{\db{n}}} \mc{I}\wedge \mc{A}_{F_1}$. Similarly $\mc{I}\wedge \mc{A}_{F_1}\subset_{\mu^{\db{n}}} \mc{I}\wedge \mc{A}_{F_0}$.
\end{proof}
\noindent To prove the statement in Lemma \ref{lem:reduc2}, we now work towards a useful description of $\mc{I}$ in terms of an invariance under a certain group action. For this, we start using the coset nilspace structure. Thus, we now suppose that $\mf{X}$ is an ultraproduct of \textsc{cfr} coset nilspaces $\ns_i=(G^{(i)}/\Gamma^{(i)},G^{(i)}_\bullet)$, $i\in \mb{N}$. Note that $\mf{X}$ is then a coset nilspace $(G/\Gamma,G_\bullet)$ (in the algebraic sense of \cite[Proposition 2.3.1]{Cand:Notes1}), where $G$, $\Gamma$ are the groups $\prod_{i\to\omega} G^{(i)}$, $\prod_{i\to\omega} \Gamma^{(i)}$ respectively, and $G_\bullet=(G_j)_{j\geq 0}$ is a filtration with $G_j= \prod_{i\to\omega} G^{(i)}_j$.

Given a filtration $G_\bullet$ and $\ell\in \mb{N}$, we denote by $G_\bullet^{+ \ell}$ the \emph{shifted filtration} whose $j$-th term is $G_{j+\ell}$ (strictly speaking, this is a \emph{prefiltration}; see \cite[Apppendix C]{CS}). We use the notion of a $1$-\emph{arrow} of cubes on a nilspace $\ns$ \cite[Definition 2.2.18]{Cand:Notes1}: for $\q_0,\q_1\in \cu^n(\ns)$, the $1$-arrow $\langle \q_0,\q_1\rangle_1\in \ns^{\db{n+1}}$ is defined by $\langle \q_0,\q_1\rangle_1(v,j)=\q_j(v)$, $j=0,1$. 

Given any nilspace $\ns$, we define an equivalence relation $\sim$ on $\cu^{n-1}(\ns)$ by declaring that $\q_0\sim\q_1$ if $\langle\q_0,\q_1\rangle_1\in \cu^n(\ns)$. The following result gives a useful algebraic description of this relation when $\ns$ is a coset nilspace $(G/\Gamma,G_\bullet)$ (the purely algebraic definition of a coset nilspace can be recalled from \cite[Proposition 2.3.1]{Cand:Notes1}).

\begin{lemma}\label{lem:simchar}
Let $\ns=(G/\Gamma,G_\bullet)$ be a coset nilspace. Then $\q_0\sim\q_1$ if and only if there exist $\wt{\q}_0, \wt{\q}_1\in \cu^{n-1}(G_\bullet)$ with $\q_i=\pi_\Gamma\co \wt{\q}_i$, $i=0,1$, and $\wt{\q}_0^{\,-1}\, \wt{\q}_1\in\cu^{n-1}(G_\bullet^{+1})$. Thus, the equivalence classes of $\sim$ are the orbits of the action of $\cu^{n-1}(G_\bullet^{+1})$ on $\cu^{n-1}(\ns)$.
\end{lemma}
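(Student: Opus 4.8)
The plan is to unwind the definition of $\sim$ and translate the cube-membership condition $\arr{\q_0,\q_1}_1 \in \cu^n(\ns)$ into the algebraic language of the coset nilspace $(G/\Gamma, G_\bullet)$, using the explicit description of $\cu^n(G/\Gamma)$ as $\cu^n(G_\bullet)/\cu^n(\Gamma_\bullet)$ from \cite[Proposition 2.3.1]{Cand:Notes1}. First I would establish the ``if'' direction, which is the routine one: given lifts $\wt\q_0,\wt\q_1 \in \cu^{n-1}(G_\bullet)$ with $\wt\q_0^{\,-1}\wt\q_1 \in \cu^{n-1}(G_\bullet^{+1})$, I want to exhibit a lift of $\arr{\q_0,\q_1}_1$ lying in $\cu^n(G_\bullet)$. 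The natural candidate is the $1$-arrow $\arr{\wt\q_0,\wt\q_1}_1 : \db{n}\to G$ built from the lifts; the point is that a $1$-arrow of two $(n-1)$-cubes over $G_\bullet$ is an $n$-cube over $G_\bullet$ precisely when the two faces are cubes of $G_\bullet$ and their "difference" is a cube of the shifted filtration $G_\bullet^{+1}$ — this is exactly the gluing/concatenation property of cubes on group filtrations (it is the algebraic content behind the $1$-arrow construction, cf. \cite[Definition 2.2.18]{Cand:Notes1} and the filtration-cube description in \cite[Proposition 2.3.1]{Cand:Notes1}). Composing with $\pi_\Gamma$ then gives $\arr{\q_0,\q_1}_1 \in \cu^n(G/\Gamma)$, so $\q_0 \sim \q_1$.

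For the ``only if'' direction, suppose $\arr{\q_0,\q_1}_1 \in \cu^n(G/\Gamma)$. By the description of $n$-cubes on the coset nilspace, there is a lift $\wt\cu \in \cu^n(G_\bullet)$ with $\pi_\Gamma \co \wt\cu = \arr{\q_0,\q_1}_1$. Restricting $\wt\cu$ to the two opposite faces $\db{n-1}\times\{0\}$ and $\db{n-1}\times\{1\}$ gives $\wt\q_0, \wt\q_1 \in \cu^{n-1}(G_\bullet)$ with $\pi_\Gamma \co \wt\q_i = \q_i$, and I then need to check $\wt\q_0^{\,-1}\wt\q_1 \in \cu^{n-1}(G_\bullet^{+1})$. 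This follows by applying a discrete derivative (a "face-difference" operator) to the $n$-cube $\wt\cu$ in the last coordinate direction: the derivative of a cube of $G_\bullet$ in a given direction is a cube of the shifted prefiltration $G_\bullet^{+1}$ — this is the standard behaviour of cubes under differencing, and on the group side it is immediate from the definition of $\cu^n(G_\bullet)$ in terms of upper-face multiplications. Here one must be mildly careful that $\wt\q_0^{\,-1}\wt\q_1$ is formed pointwise in $G$ and that this coincides with the relevant face-derivative of $\wt\cu$; a short computation using that $G_\bullet$ is a filtration (so the groups $G_j$ are normal and the bracket condition holds) handles this.

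Finally, the "in particular" clause: the equivalence classes of $\sim$ are exactly the orbits of $\cu^{n-1}(G_\bullet^{+1})$ acting on $\cu^{n-1}(G/\Gamma)$. Given the characterisation just proved, $\q_0 \sim \q_1$ iff some lift of $\q_1$ is obtained from some lift of $\q_0$ by left-multiplication by an element of $\cu^{n-1}(G_\bullet^{+1})$; since any two lifts of the same $\q_i$ differ by an element of $\cu^{n-1}(\Gamma_\bullet) \subset \cu^{n-1}(G_\bullet^{+1})$ (using $\Gamma_j = \Gamma \cap G_j$ and hence $\cu^{n-1}(\Gamma_\bullet)$ sits inside $\cu^{n-1}(G_\bullet^{+1})$ after the shift — more precisely $\Gamma_{j+1} \subset \Gamma_j$, so this inclusion holds), the relation descends to a well-defined action on $\cu^{n-1}(G/\Gamma)$ with the $\sim$-classes as orbits. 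I expect the main obstacle to be the bookkeeping in the "only if" direction — correctly identifying $\wt\q_0^{\,-1}\wt\q_1$ with a face-derivative of $\wt\cu$ and verifying that this derivative lands in $\cu^{n-1}(G_\bullet^{+1})$ — rather than any conceptual difficulty; everything else is a direct unpacking of the definitions of $1$-arrows and of cubes on coset nilspaces.
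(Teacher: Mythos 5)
The main ``if and only if'' characterisation is argued essentially as in the paper: lift $\langle\q_0,\q_1\rangle_1$ to a cube in $\cu^n(G_\bullet)$, restrict to the two $(n-1)$-faces, and invoke the differencing/gluing characterisation of $1$-arrows of group cubes (this is \cite[Lemma 2.2.19]{Cand:Notes1}, which you are implicitly using when you speak of the ``discrete derivative'' landing in $\cu^{n-1}(G_\bullet^{+1})$). That part is fine.

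Your justification of the ``in particular'' clause, however, contains a genuine error. You claim $\cu^{n-1}(\Gamma_\bullet)\subset\cu^{n-1}(G_\bullet^{+1})$, citing ``$\Gamma_{j+1}\subset\Gamma_j$'' as the reason. That last containment is just the statement that $\Gamma_\bullet$ is a filtration; it does \emph{not} give what you need, which would be $\Gamma_j=\Gamma\cap G_j\subset G_{j+1}$ for all $j$. This is false in general: already for $G=\mb{R}$, $\Gamma=\mb{Z}$ with the degree-$1$ filtration $G_0=G_1=\mb{R}$, $G_2=\{0\}$, one has $\cu^1(\Gamma_\bullet)=\Gamma\times\Gamma$ while $\cu^1(G_\bullet^{+1})$ is the diagonal of $G\times G$, so the inclusion fails. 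Relatedly, you pass silently from the condition $\wt\q_0^{-1}\wt\q_1\in\cu^{n-1}(G_\bullet^{+1})$ (right-multiplication by an element of $\cu^{n-1}(G_\bullet^{+1})$) to ``left-multiplication by an element of $\cu^{n-1}(G_\bullet^{+1})$.'' That conversion is not automatic; it is exactly where the real work is. The paper handles it by using that $\cu^{n-1}(G_\bullet^{+1})$ is a \emph{normal} subgroup of $\cu^{n-1}(G_\bullet)$: writing $\wt\q_1=\wt\q_0 h$ with $h\in\cu^{n-1}(G_\bullet^{+1})$, one rewrites $\wt\q_0 h=(\wt\q_0 h\wt\q_0^{-1})\wt\q_0=g\,\wt\q_0$ with $g=\wt\q_0 h\wt\q_0^{-1}\in\cu^{n-1}(G_\bullet^{+1})$ by normality, and then $\q_1=g\,\wt\q_0\Gamma^{\db{n-1}}=g\cdot\q_0$. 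You should replace the false inclusion argument by this normality argument; the rest (well-definedness of left-multiplication on the quotient $\cu^{n-1}(G_\bullet)/\cu^{n-1}(\Gamma_\bullet)$, and the converse direction using normality again) is then routine.
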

Here $\pi_\Gamma$ denotes the canonical quotient map $G\to G/\Gamma$.
\begin{proof}
Suppose that $\q_0\sim \q_1$. Thus $\langle \q_0,\q_1\rangle_1\in \cu^n(\ns)$, so there is $\q\in \cu^n(G_\bullet)$ such that $\langle\q_0,\q_1\rangle_1 = \pi_\Gamma\co\q$. For $i\in\{0,1\}$ let $\wt{\q}_i$ be the restriction of $\q$ to the face $\{v\in \db{n}:v\sbr{n}=i\}$. Then $\pi_{\Gamma}\co\wt{\q}_i=\q_i$. Since $\langle\wt{\q}_0,\wt{\q}_1\rangle_1=\q$ is a cube, we have by \cite[Lemma 2.2.19]{Cand:Notes1} that $\wt{\q}_0^{\,-1}\,\wt{\q}_1\in\cu^{n-1}(G_\bullet^{+1})$. The backward implication is also clear, using the backward implication in \cite[Lemma 2.2.19]{Cand:Notes1}. For the last claim, suppose that $\wt{\q}_0\Gamma^{\db{n-1}}\sim\wt{\q}_1\Gamma^{\db{n-1}}$, and note that $\wt{\q}_1\Gamma^{\db{n-1}}=\wt{\q}_0(\wt{\q}_0^{\,-1}\,\wt{\q}_1)\Gamma^{\db{n-1}} = g\, \wt{\q}_0\Gamma^{\db{n-1}}$, where $g:=\wt{\q}_0(\wt{\q}_0^{\,-1}\wt{\q}_1)\wt{\q}_0^{\,-1}$ is in $\cu^{n-1}(G_\bullet^{+1})$ since this is a normal subgroup of $\cu^{n-1}(G_\bullet)$.
\end{proof}
\noindent We use this algebraic expression of the relation $\sim$ to prove the following description of the $\sigma$-algebra $\mc{I}'$ from Lemma \ref{lem:I-expr}, as a key step toward the proof of Proposition \ref{prop:upccaxioms}.
\begin{lemma}\label{lem:I-char}
For each $i\in \mb{N}$ let $\ns_i$ be a \textsc{cfr} coset nilspace $(G^{(i)}/\Gamma^{(i)},G^{(i)}_\bullet)$. Let $\mf{H}$ be the ultraproduct group $\prod_{i\to\omega}\cu^{n-1}\big((G^{(i)})_\bullet^{+1}\big)$. Then a set $A\in \mc{L}_{\mf{X}^{\db{n-1}}}$ is in $\mc{I}'$ if and only if $g\cdot A =_{\mu^{\db{n-1}}} A$ for every $g\in \mf{H}$.
\end{lemma}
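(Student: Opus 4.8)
The plan is to prove both inclusions of the desired set equality, passing through the level-$i$ nilspaces via Łoś's theorem and the transfer principle for ultraproducts. First I would translate the defining condition of $\mc{I}'$ into the language developed in Lemma \ref{lem:simchar}. Recall $\mc{I}'$ consists of sets $A'\in\mc{L}_{\mf{X}^{\db{n-1}}}$ with $p_{F_0}^{-1}(A')=_{\mu^{\db{n}}}p_{F_1}^{-1}(A')$. The key geometric fact is that the pushforward of $\mu^{\db{n}}$ under the pair of projections $(p_{F_0},p_{F_1}):\mf{X}^{\db{n}}\to\mf{X}^{F_0}\times\mf{X}^{F_1}$, after identifying $F_0\cong F_1\cong\db{n-1}$, is exactly the measure governing the relation $\sim$ on $\cu^{n-1}(\mf{X})$, i.e.\ the ``$1$-arrow'' measure: a $\mu^{\db{n}}$-random cube $\q$ restricts to $\wt{\q}_0=\q|_{F_0}$ and $\wt{\q}_1=\q|_{F_1}$ which, by Lemma \ref{lem:simchar}, are related by $\wt{\q}_0\sim\wt{\q}_1$, and conversely any pair of $\sim$-related $(n-1)$-cubes arises this way with the appropriate conditional distribution. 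By Lemma \ref{lem:simchar}, $\sim$ is precisely the orbit equivalence of the action of $\mf{H}=\prod_{i\to\omega}\cu^{n-1}((G^{(i)})_\bullet^{+1})$ on $\cu^{n-1}(\mf{X})$ (one checks that the ultraproduct of the groups $\cu^{n-1}((G^{(i)})_\bullet^{+1})$ is $\cu^{n-1}(G_\bullet^{+1})$ for the ultraproduct filtration, using that each $\cu^{n-1}(\cdot)$ is cut out by algebraic constraints, cf.\ the discussion preceding Lemma \ref{lem:simchar}). So the condition $p_{F_0}^{-1}(A')=_{\mu^{\db{n}}}p_{F_1}^{-1}(A')$ says that $A'$ is, up to null sets, a union of $\mf{H}$-orbits, which is exactly $g\cdot A=_{\mu^{\db{n-1}}}A$ for all $g\in\mf{H}$ — provided one knows the relevant ergodic-type/measure-disintegration statement.

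The cleanest route is to transfer the corresponding statement from the nilspaces $\ns_i$. At level $i$, the analogous $\sigma$-algebra $\mc{I}_i'$ of sets $A_i'\in\mc{B}_i^{\db{n-1}}$ with $p_{F_0}^{-1}(A_i')=_{\mu_{\ns_i}^{\db{n}}}p_{F_1}^{-1}(A_i')$ should coincide (mod null sets) with the $\sigma$-algebra of $\cu^{n-1}((G^{(i)})_\bullet^{+1})$-invariant sets on $\cu^{n-1}(\ns_i)$. This is essentially the statement that the Haar measure on $\cu^n(\ns_i)$ disintegrates over the $\sim$-relation into the Haar measures on the $\sim$-classes (which are the $\cu^{n-1}((G^{(i)})_\bullet^{+1})$-orbits), together with the fact that for a compact group acting on a compact space by a transitive-on-fibres action, the invariant sets are exactly those measurable with respect to the quotient. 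One can get this either directly from the structure of coset nilspaces (the $\sim$-classes are cosets of the compact group $\cu^{n-1}((G^{(i)})_\bullet^{+1})/(\cu^{n-1}((G^{(i)})_\bullet^{+1})\cap$ stabilizer$)$, and Haar-on-cubes restricts to Haar-on-these-cosets by \cite[Proposition 2.2.5]{Cand:Notes2} and uniqueness of Haar measure) or by citing the idempotence of $\mu_{\ns_i}^{\db{n}}$ and \cite[Lemma 2.62]{CScouplings} as in the proofs of Lemmas \ref{lem:I-expr}–\ref{lem:reduc2}. Granting the level-$i$ statement, I would then lift it: given $A\in\mc{L}_{\mf{X}^{\db{n-1}}}$, by Lemma \ref{lem:upcialgs} (in the internal form) write $A=_{\mu^{\db{n-1}}}\prod_{i\to\omega}A_i$ with $A_i\in\mc{B}_i^{\db{n-1}}$; the condition $A\in\mc{I}'$ is an internal condition (equality of two internal sets up to measure zero, which by the Łoś-type transfer for Loeb measure holds iff $p_{F_0}^{-1}(A_i)=_{\mu_{\ns_i}^{\db{n}}}p_{F_1}^{-1}(A_i)$ for $\omega$-most $i$), i.e.\ iff $A_i\in\mc{I}_i'$ for $\omega$-most $i$, i.e.\ iff each $A_i$ is $\cu^{n-1}((G^{(i)})_\bullet^{+1})$-invariant mod null for $\omega$-most $i$; and the latter, by Łoś again applied element-by-element, is equivalent to $g\cdot A=_{\mu^{\db{n-1}}}A$ for every $g=(g_i)_{i\to\omega}\in\mf{H}$.

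The one genuine subtlety — and the step I expect to cost the most care — is the direction showing that $\mf{H}$-invariance of $A$ (a statement quantified over the \emph{uncountable} group $\mf{H}$) forces $A\in\mc{I}'$, equivalently forces $A_i\in\mc{I}_i'$ for $\omega$-most $i$. The potential gap is that invariance under every internal $g\in\mf{H}$ might a priori be weaker than invariance of $\omega$-most $A_i$ under every $g_i\in\cu^{n-1}((G^{(i)})_\bullet^{+1})$: one needs to rule out that the $A_i$ fail invariance on a ``small'' set of group elements that disappears in the ultraproduct. The way around this is to use a \emph{measurable selection / countable-generation} argument: the failure set $\{g_i:g_i\cdot A_i\neq_{\mu_i^{\db{n-1}}}A_i\}$ is an internal set, and if for $\omega$-most $i$ it is nonempty, one can choose $g_i$ in it (internal choice) to produce a single $g=(g_i)_{i\to\omega}\in\mf{H}$ witnessing $g\cdot A\neq_{\mu^{\db{n-1}}}A$ — here one must check that $\mu^{\db{n-1}}(g\cdot A\,\Delta\,A)=\st(\lim_\omega\mu_i^{\db{n-1}}(g_i\cdot A_i\,\Delta\,A_i))$ is actually positive, which requires a \emph{quantitative} version of the level-$i$ statement: if $A_i$ is not a.e.\ invariant then there is $g_i$ with $\mu_i^{\db{n-1}}(g_i\cdot A_i\,\Delta\,A_i)$ bounded below by some $\delta$ depending only on how far $A_i$ is from $\mc{I}_i'$. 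This uniformity is where I would invest the real work, likely by exploiting the coset structure (the orbit $\cu^{n-1}((G^{(i)})_\bullet^{+1})\cdot\wt{\q}$ carries its Haar measure, so non-invariance of $A_i$ means a genuinely positive-measure symmetric difference on a positive-measure union of orbits, and a Fubini/averaging argument over the acting group produces the required $g_i$ with definite $\delta$). With that quantitative input in hand, both directions close by the transfer principle, completing the proof.
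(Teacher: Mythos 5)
Your plan is essentially the same as the paper's and is correct in outline, but one early step is stated imprecisely and masks where the real work lies. You write that, for a fixed representation $A=_{\mu^{\db{n-1}}}\prod_{i\to\omega}A_i$, the condition $A\in\mc{I}'$ ``by Łoś-type transfer for Loeb measure holds iff $p_{F_0}^{-1}(A_i)=_{\mu_{\ns_i}^{\db{n}}}p_{F_1}^{-1}(A_i)$ for $\omega$-most $i$.'' This is false as stated: the Loeb equality $p_{F_0}^{-1}(A)=_{\mu^{\db{n}}}p_{F_1}^{-1}(A)$ is equivalent to $\lim_\omega\mu^{\db{n}}_{\ns_i}\big(p_{F_0}^{-1}(A_i)\Delta p_{F_1}^{-1}(A_i)\big)=0$, not to this symmetric difference being exactly zero $\omega$-often. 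What \emph{is} true is the existential version: $A\in\mc{I}'$ iff the representatives \emph{can be re-chosen} so that $A_i\in\mc{I}_i'$ $\omega$-often, and that re-choice is precisely where the content is. In the forward direction the re-choice is supplied by Lemma \ref{lem:upcialgs} (the ultraproduct of the meets $\mc{B}_{i,F_0}\wedge\mc{B}_{i,F_1}$ is the meet of the ultraproducts, mod null) together with idempotence of $\mu_{\ns_i}^{\db{n}}$, and this is in fact how the paper carries that direction out.

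For the backward direction, the ``genuine subtlety'' you flag is the right one, and the quantitative stability you then call for is exactly the paper's Lemma \ref{lem:finapprox}: if $A_i$ is $\epsilon$-almost invariant under every $g\in H_i$ then there is an exactly $H_i$-invariant $A_i'$ with $\mu_{\ns_i}^{\db{n-1}}(A_i\Delta A_i')\leq 5\epsilon^{1/4}$ (proved, as you anticipate, by averaging over the acting group --- concretely the mean ergodic theorem for amenable groups plus a level-set argument). The paper organizes the argument in the ``positive'' direction: it first shows that $s_i:=\sup_{h\in H_i}\mu_{\ns_i}^{\db{n-1}}(A_i\Delta hA_i)<\epsilon$ for $\omega$-most $i$ (by picking witnesses $h_i$ in the contrapositive, exactly as you propose), then applies Lemma \ref{lem:finapprox} to replace each $A_i$ by an exactly invariant $A_i'$ close to it, so $A'=\prod_{i\to\omega}A_i'$ lies in $\mc{I}'$ by Lemma \ref{lem:upcialgs} and $\mu^{\db{n-1}}(A\Delta A')\to 0$ as $\epsilon\to 0$. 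Your contrapositive formulation works too, but would additionally need the fact that conditional expectation onto $\mc{I}'$ commutes with ultralimits, which the paper sidesteps by correcting the sets directly. In short: same approach, same key stability ingredient, and your instinct about how to prove that ingredient is right; the chief thing to fix is the overstatement of the ``internal condition'' transfer, which should be replaced by the re-choice argument via Lemma \ref{lem:upcialgs} in one direction and the $\epsilon$-correction via Lemma \ref{lem:finapprox} in the other.
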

\noindent To prove this we first obtain the following analogous result for \textsc{cfr} coset nilspaces.
\begin{lemma}\label{lem:I-char-ns}
Let $\ns$ be a \textsc{cfr} coset nilspace $(G/\Gamma,G_\bullet)$, let $H=\cu^{n-1}(G_\bullet^{+1})$, and let $\mc{J}$ be the $\sigma$-algebra of Borel sets $A\subset \ns^{\db{n-1}}$ such that $p_{F_0}^{-1}(A)=_{\mu_{\ns}^{\db{n}}}p_{F_1}^{-1}(A)$. Then a Borel set $A\subset \ns^{\db{n-1}}$ is in $\mc{J}$ if and only if $g\cdot A=_{\mu_{\ns}^{\db{n-1}}}A$ for every $g\in H$.
\end{lemma}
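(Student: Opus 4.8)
The plan is to prove the two inclusions separately. First I would show that if $g\cdot A =_{\mu_{\ns}^{\db{n-1}}} A$ for every $g\in H$, then $A\in\mc{J}$. The key observation is that, by Lemma \ref{lem:simchar}, the equivalence classes of the relation $\sim$ on $\cu^{n-1}(\ns)$ are precisely the orbits of $H=\cu^{n-1}(G_\bullet^{+1})$ acting on $\cu^{n-1}(\ns)$. Now the two preimages $p_{F_0}^{-1}(A)$ and $p_{F_1}^{-1}(A)$ inside $\ns^{\db{n}}$ are, when restricted to $\cu^n(\ns)$, determined by the values of $A$ on the two opposite $(n-1)$-faces $F_0,F_1$ of a cube $\q\in\cu^n(\ns)$; and by the 1-arrow description, the restrictions $\q|_{F_0}$ and $\q|_{F_1}$ are exactly a pair of $\sim$-related $(n-1)$-cubes (identifying $F_0,F_1$ with $\db{n-1}$). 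So an $H$-invariant $A$ cannot distinguish $\q|_{F_0}$ from $\q|_{F_1}$, which gives $p_{F_0}^{-1}(A)=_{\mu_{\ns}^{\db{n}}}p_{F_1}^{-1}(A)$. Making this rigorous requires knowing that the Haar measure $\mu_{\ns}^{\db{n}}$, pushed forward to $\cu^{n-1}(\ns)$ via $p_{F_0}$ (resp. $p_{F_1}$), is $\mu_{\ns}^{\db{n-1}}$ (this follows from the consistency/composition properties of Haar measures on cube sets, \cite[Proposition 2.2.5]{Cand:Notes2}), and that the disintegration of $\mu_{\ns}^{\db{n}}$ over $p_{F_0}$ is supported on the $\sim$-class of the $F_0$-coordinate — i.e. conditioned on $\q|_{F_0}=\q_0$, the cube $\q|_{F_1}$ ranges over the $H$-orbit of $\q_0$ with a distribution that (by translation-invariance of Haar measure on the relevant group) is the pushforward of Haar measure on the stabilizer coset; $H$-invariance of $A$ then makes the indicator of $A$ constant $\mu_{\ns}^{\db{n}}$-a.e. along these fibres.

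For the converse, suppose $A\in\mc{J}$, i.e. $p_{F_0}^{-1}(A)=_{\mu_{\ns}^{\db{n}}}p_{F_1}^{-1}(A)$. I want to deduce $g\cdot A=_{\mu_{\ns}^{\db{n-1}}}A$ for all $g\in H$. The strategy is: fix $g\in H$ and, using Lemma \ref{lem:simchar}, realize the map $\q_0\mapsto g\cdot\q_0$ as $\q_0\mapsto \q|_{F_1}$ where $\q\in\cu^n(\ns)$ is a cube with $\q|_{F_0}=\q_0$ — concretely, lift $\q_0$ to $\wt\q_0\in\cu^{n-1}(G_\bullet)$, take $\wt\q_1=\wt g\,\wt\q_0$ for a suitable lift of $g$ into $\cu^{n-1}(G_\bullet^{+1})$, and form the 1-arrow $\langle\wt\q_0,\wt\q_1\rangle_1\in\cu^n(G_\bullet)$, which projects to a cube in $\cu^n(\ns)$. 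The push-forward of $\mu_{\ns}^{\db{n}}$ under $\q\mapsto(\q|_{F_0},\q|_{F_1})$ has both marginals equal to $\mu_{\ns}^{\db{n-1}}$, and the fibre over the first marginal is (a.e.) the full $H$-orbit equipped with Haar-type measure, so the assertion $p_{F_0}^{-1}(A)=p_{F_1}^{-1}(A)$ a.e. translates into: for $\mu_{\ns}^{\db{n-1}}$-almost every $\q_0$, $\mathbf{1}_A(\q_0)=\mathbf{1}_A(h\cdot\q_0)$ for (Haar-)almost every $h\in H$. A Fubini argument over $H\times\cu^{n-1}(\ns)$ (using that the action of $H$ on $\cu^{n-1}(\ns)$ preserves $\mu_{\ns}^{\db{n-1}}$, which holds because $\mu_{\ns}^{\db{n-1}}$ is the image of Haar measure on $\cu^{n-1}(G_\bullet)$ and $H=\cu^{n-1}(G_\bullet^{+1})$ is a subgroup) then upgrades ``for a.e. $h$'' to ``for every $h$'', giving $g\cdot A=_{\mu_{\ns}^{\db{n-1}}}A$ for each individual $g$.

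The main obstacle I expect is the measure-theoretic bookkeeping in identifying the disintegration of $\mu_{\ns}^{\db{n}}$ along the two projections $p_{F_0},p_{F_1}$ with the orbit structure of the $H$-action — i.e. verifying precisely that conditioning on the $F_0$-face gives a measure on the $F_1$-face supported on the $\sim$-class and equal to the pushforward of the appropriate Haar measure. This is where the coset-nilspace structure is essential: one lifts everything to $\cu^n(G_\bullet)$, where Haar measure is genuinely a group Haar measure and the fibres of $p_{F_0}$ are honest cosets of $\cu^{n-1}(G_\bullet^{+1})$, and then pushes down to $\ns^{\db{n}}$, controlling the $\Gamma$-quotient via cocompactness. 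Once this fibrewise picture is established, both inclusions are essentially formal, via Fubini and invariance of Haar measure. (Lemma \ref{lem:I-char} for the ultraproduct should then follow by the now-familiar transfer principle, using Lemma \ref{lem:upcialgs} to represent sets in $\mc{I}'$ by internal sets whose components lie in the $\sigma$-algebras $\mc{J}$ for the $\ns_i$.)
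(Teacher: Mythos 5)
Your plan is correct in outline but takes a more measure-theoretic route than the paper. For the direction $A\in\mc{J}\Rightarrow g\cdot A=_\mu A$, the paper avoids disintegration altogether: it observes that $p_{F_0}^{-1}(A')=_{\mu_\ns^{\db{n}}}\{\langle\q_0,g\q_0\rangle_1:\q_0\in A',\,g\in H\}$ is invariant (exactly, not merely a.e.) under the auxiliary group $H'=\{\langle\id_H,g\rangle_1:g\in H\}$ acting on $\cu^n(\ns)$, transfers this invariance through the hypothesis to $p_{F_1}^{-1}(A)$, and unwinds the identity $g'\cdot p_{F_1}^{-1}(A)=_{\mu_\ns^{\db{n}}}p_{F_1}^{-1}(g\cdot A)$ to obtain $A=_\mu g\cdot A$ simultaneously for every $g\in H$, with no ``a.e.\ $h$ versus every $h$'' issue to resolve. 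For the converse the paper invokes Mackey's point-realization theorem \cite[Theorem 3]{Ma} to replace $A$ by a genuinely $H$-invariant set $A'$, after which $p_{F_0}^{-1}(A')=_{\mu_\ns^{\db{n}}}\{\langle\q_0,\q_1\rangle_1:\q_0,\q_1\in A',\q_0\sim\q_1\}=_{\mu_\ns^{\db{n}}}p_{F_1}^{-1}(A')$ is immediate from Lemma \ref{lem:simchar}. Your disintegration-plus-Fubini plan trades Mackey for explicit fibre computations, which is a legitimate alternative, but two points want sharpening. First, the phrase ``a Fubini argument \dots\ then upgrades `for a.e.\ $h$' to `for every $h$'\,'' is inaccurate as written: Fubini only interchanges the two a.e.\ quantifiers, and passing to all of $H$ requires the further observation that $H_0=\{h\in H:h\cdot A=_\mu A\}$ is a conull subgroup of $H$ and hence equals $H$. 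Second, $H=\cu^{n-1}(G_\bullet^{+1})$ need not be compact (only its orbits in $\cu^{n-1}(\ns)$ are), so the conditional measures of $\mu_\ns^{\db{n}}$ over $p_{F_0}$ are the $H$-invariant probability measures on the orbits $H\cdot\q_0\cong H/\stab_H(\q_0)$ rather than pushforwards of a fixed probability measure on $H$; the Fubini interchange must therefore be carried out on the lift to $\cu^n(G_\bullet)$ as you indicate, tracking the cocompact stabilizers through the $\Gamma$-quotient. That is exactly the bookkeeping the paper's approach sidesteps, at the cost of importing Mackey's theorem.
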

\noindent Recall that $\mu_{\ns}^{\db{n}}$ denotes the Haar measure on $\cu^n(\ns)$ viewed as a measure on $\ns^{\db{n}}$.
\begin{proof}
Assume that $p_{F_0}^{-1}(A)=_{\mu_{\ns}^{\db{n}}} p_{F_1}^{-1}(A)$, and let $A'=A\cap \cu^{n-1}(\ns)$. Note that every element in $p_{F_0}^{-1}(A')$ that lies in $\cu^n(\ns)$ is of the form $\langle \q_0,\q_1\rangle_1$ for $\q_0\sim\q_1$, with $\q_0\in A'$. Since $\mu_{\ns}^{\db{n}}$ is concentrated on $\cu^n(\ns)$, we have $p_{F_0}^{-1}(A)=_{\mu_{\ns}^{\db{n}}}p_{F_0}^{-1}(A')=_{\mu_{\ns}^{\db{n}}}\{\langle\q_0,g\cdot \q_0\rangle_1:\q_0\in A', g\in H\}$, by Lemma \ref{lem:simchar}. Letting $H'$ denote the group $\{\langle\id_H,g\rangle_1:g\in H\}$, it follows that $p_{F_0}^{-1}(A)=_{\mu_{\ns}^{\db{n}}} g'\cdot p_{F_0}^{-1}(A)$ for every $g'=\langle\id_H,g\rangle_1\in H'$. By our assumption, this implies $p_{F_1}^{-1}(A)=_{\mu_{\ns}^{\db{n}}} g'\cdot p_{F_1}^{-1}(A)$. Moreover $
g'\cdot p_{F_1}^{-1}(A)=_{\mu_{\ns}^{\db{n}}}g'\cdot \{\langle h \cdot \q_1,  \q_1\rangle_1:\q_1\in A', h\in H\}$ and this equals $\{\langle h\cdot \q_1, \q_1\rangle_1:\q_1\in g\cdot A', h\in H\}=_{\mu_{\ns}^{\db{n}}} p_{F_1}^{-1}(g\cdot A)$. Hence $p_{F_1}^{-1}(A)=_{\mu_{\ns}^{\db{n}}} p_{F_1}^{-1}(g\cdot A)$, which implies that $A =_{\mu_{\ns}^{\db{n-1}}} g\cdot A$ as required.

Conversely, if $A =_{\mu_{\ns}^{\db{n-1}}} g\cdot A$ for all $g\in H$, then by \cite[Theorem 3]{Ma} there is $A'=_{\mu_{\ns}^{\db{n-1}}} A$ such that $g\cdot A'= A'$ for every $g\in H$. Using Lemma \ref{lem:simchar} as above yields $p_{F_0}^{-1}(A')=_{\mu_{\ns}^{\db{n}}}\{\langle\q_0, \q_1\rangle_1:\q_0,\q_1\in A, \q_0\sim \q_1\}=_{\mu_{\ns}^{\db{n}}}p_{F_1}^{-1}(A')$, whence $S\in \mc{J}$.
\end{proof}

\begin{proof}[Proof of Lemma \ref{lem:I-char}]
We first prove the forward implication. If $A\in \mc{I}'$, then by definition $\wt{A}:=p_{F_0}^{-1}(A)=_{\mu^{\db{n}}} p_{F_1}^{-1}(A)$, so in particular $\wt{A}\in \mc{B}_{F_0}\wedge \mc{B}_{F_1}$. By Lemma  \ref{lem:upcialgs} there are Borel sets $\wt{A}_i \in \mc{B}_{i,F_0}\wedge \mc{B}_{i,F_1}$, $i\in \mb{N}$,  such that $\wt{A}=_{\mu^{\db{n}}}\prod_{i\to\omega} \wt{A}_i$ (where $\mc{B}_{i,F_0}$ is the analogue of $\mc{B}_{F_0}$ for $\ns_i$). For each $i$, combining the idempotence of $\mu_{\ns_i}^{\db{n}}$ with \cite[Lemma 2.62]{Cand:Notes2} as in previous proofs, we obtain Borel sets $A_i\in \ns_i^{\db{n-1}}$ such that $\wt{A}_i=_{\mu_{\ns_i}^{\db{n}}} p_{F_0}^{-1}(A_i)=_{\mu_{\ns_i}^{\db{n}}}p_{F_1}^{-1}(A_i)$. Hence $p_{F_0}^{-1}(A)=_{\mu^{\db{n}}}\prod_{i\to\omega}p_{F_0}^{-1}(A_i)=_{\mu^{\db{n}}}p_{F_0}^{-1}(\prod_{i\to\omega}A_i)$. Consequently $A=_{\mu^{\db{n-1}}}\prod_{i\to \omega} A_i$. By Lemma \ref{lem:I-char-ns} every such set $A_i$ is $H_i$-invariant for $H_i:=\cu^{n-1}\big((G^{(i)})_\bullet^{+1}\big))$. It follows that $A$ is $\mf{H}$-invariant as required.

Conversely, if $\mu^{\db{n-1}}(A\Delta h\cdot A)=0$ for all $h\in \mf{H}$, then by  \cite[Theorem 2.1]{Ross} there are Borel sets $A_i\subset \ns_i^{\db{n-1}}$ such that $A=_{\mu^{\db{n-1}}}\prod_{i\to\omega} A_i$. For each $i$ let $s_i=\sup_{h\in H_i} \mu_{\ns_i}^{\db{n-1}}\big(A_i\Delta (h\cdot A_i)\big)$. We claim that for every $\epsilon >0$ we have $\{i: s_i< \epsilon\}\in \omega$. Otherwise there is $\epsilon >0$ such that $\{i: s_i\geq  \epsilon\}\in \omega$, so for every such $i$ there is $h_i\in H_i$ such that $\mu_{\ns_i}^{\db{n-1}} \big(A_i\Delta (h_i\cdot A_i)\big)\geq \epsilon/2$. Letting $h=\lim_{i\to \omega}h_i \in \mf{H}$, we would have $\mu^{\db{n-1}} \big(A \Delta (h \cdot A)\big)\geq \epsilon/2 >0$, a contradiction. This proves our claim. Hence, for every $\epsilon>0$, for every $i$ such that $s_i<\epsilon$, by Lemma \ref{lem:finapprox} there is an $H_i$-invariant set $A'_i$ such that $\mu_{\ns_i}^{\db{n-1}}\big(A_i\Delta A_i') \leq 5\epsilon^{1/4}$. Let $A'=\prod_{i\to\omega} A_i'$. Then $\mu^{\db{n-1}}\big(A \Delta A')\leq 5\epsilon^{1/4}$. Since $A_i'\in \mc{J}_i$, we have $A'\in \mc{I}'$ by Lemma \ref{lem:upcialgs}. Letting $\epsilon\to 0$, we deduce that $A\in \mc{I}'$.
\end{proof}
We can now complete the proof of Proposition \ref{prop:upccaxioms}, by proving the following result.
\begin{proposition}
For every pair of opposite faces $F_0,F_1$ of codimension 1 in $\db{n}$, the $\sigma$-algebra $\mc{I}=\mc{B}_{F_0}\wedge \mc{B}_{F_1}$ satisfies $\mc{A}_{F_0}\upmod \mc{I}$.
\end{proposition}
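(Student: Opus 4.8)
The plan is to establish this conditional independence by evaluating $\mb E_{\mu^{\db n}}(\,\cdot\mid\mc I)$ on rank-$1$ functions and showing the outcome lies in $L^\infty(\mc A_{F_0})$, exploiting the group structure of the coset nilspaces. By the standard reduction to rank-$1$ functions (cf. \cite[Lemma 2.30]{CScouplings}) it suffices to treat $f=\prod_{v\in F_0}f_v\co p_v\in L^\infty(\mc A_{F_0})$, and since Loeb-measurable functions are $L^2$-limits of ultralimits of continuous functions on the $\ns_i$, I may take $f_v=\lim_\omega f_{v,i}$ with $f_{v,i}:\ns_i\to\mb C$ continuous and $1$-bounded. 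As $f$ and $\mc I=_{\mu^{\db n}}p_{F_0}^{-1}(\mc I')$ (Lemma \ref{lem:I-expr}) are both $\mc B_{F_0}$-measurable, the conditional expectation is computed inside the probability space on $\mf X^{F_0}$ carrying the $F_0$-marginal $\mu^{\db{n-1}}$ of $\mu^{\db n}$; identifying $F_0$ with $\db{n-1}$, the task becomes to show that
\[
\mb E_{\mu^{\db{n-1}}}\!\Big(\,\textstyle\prod_{v}f_v\co p_v\,\Big|\,\mc I'\,\Big)\in L^\infty\big((\mc L_{\mf X})^{\db{n-1}}\big),
\]
where, by Lemma \ref{lem:I-char}, $\mc I'$ is the $\sigma$-algebra of sets invariant under the left-translation action of $\mf H=\prod_{i\to\omega}\cu^{n-1}\big((G^{(i)})_\bullet^{+1}\big)$ on $\cu^{n-1}(\mf X)$.

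By ultraproduct transfer arguments analogous to those in the proofs of Lemmas \ref{lem:I-char} and \ref{lem:reduc2} (using Lemma \ref{lem:upcialgs}), the left-hand side above equals $\lim_\omega\psi_i$, where $\psi_i:=\mb E_{\mu^{\db{n-1}}_{\ns_i}}(g_i\mid\mc J_i)$, $g_i=\prod_v f_{v,i}\co p_v$, and $\mc J_i$ is the $\sigma$-algebra of $H_i$-invariant Borel sets on $\cu^{n-1}(\ns_i)$ with $H_i:=\cu^{n-1}\big((G^{(i)})_\bullet^{+1}\big)$ (Lemma \ref{lem:I-char-ns}). Now $\psi_i$ is the average of $g_i$ over the $H_i$-orbits, which by Lemma \ref{lem:simchar} are exactly the $\sim$-classes; each orbit is a compact homogeneous space of $H_i$ — its stabiliser is a conjugate of $\cu^{n-1}\big((\Gamma^{(i)})_\bullet^{+1}\big)$, which is cocompact in $H_i$ by Definition \ref{def:CFRcoset} — and the averaging measure on it, being the disintegration of the ($H_i$-invariant) Haar measure $\mu^{\db n}_{\ns_i}$ along restriction to $F_0$, is the unique $H_i$-invariant probability measure. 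The essential feature is that $H_i$ acts on $\cu^{n-1}(\ns_i)=\cu^{n-1}(G^{(i)}_\bullet)/\cu^{n-1}(\Gamma^{(i)}_\bullet)$ \emph{coordinatewise}, by the left-translations $L_h$ ($h\in G^{(i)}$) of $\ns_i=G^{(i)}/\Gamma^{(i)}$, each of which is a measure-preserving homeomorphism. One therefore aims to rewrite $\psi_i$ as an average, over a compact parameter space $\mf K_i$ (built from $H_i$ and $\cu^{n-1}((\Gamma^{(i)})_\bullet^{+1})$) with probability measure $\beta_i$, of genuinely rank-$1$ functions of $\q$:
\[
\psi_i(\q)=\int_{\mf K_i}\ \prod_{v\in\db{n-1}}h^{\kappa}_{v,i}\big(\q(v)\big)\ \ud\beta_i(\kappa),\qquad h^{\kappa}_{v,i}=f_{v,i}\co L_{c_i(\kappa)(v)},
\]
where $c_i:\mf K_i\to H_i$ is Borel and crucially does not depend on $\q$. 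Arranging this last point is the heart of the argument. The obstruction is that a coordinatewise lift $\ns_i\to G^{(i)}$ of $\pi_\Gamma$ need not send cubes to cubes, so that parametrising an orbit by the left $H_i$-action introduces a $\Gamma$-valued discrepancy; one must show this discrepancy can be pushed entirely into the $\kappa$-integration. I expect this to use the normality of $\cu^{n-1}\big((G^{(i)})_\bullet^{+1}\big)$ in $\cu^{n-1}(G^{(i)}_\bullet)$ and its generation by face subgroups, perhaps organised as an induction over the degree of the filtration $G^{(i)}_\bullet$, and to be where the bulk of the work lies.

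With $\psi_i$ in the displayed form, one passes to the ultraproduct: $\mf K=\prod_{i\to\omega}\mf K_i$ is compact with Loeb probability measure $\beta=\lim_\omega\beta_i$, and by the commutation of ultralimits with integration of bounded internal functions \cite[\S 3]{Warner},
\[
\mb E_{\mu^{\db{n-1}}}\!\Big(\,\textstyle\prod_{v}f_v\co p_v\,\Big|\,\mc I'\,\Big)(\q)=\int_{\mf K}\ \prod_{v\in\db{n-1}}h^{\kappa}_{v}\big(\q(v)\big)\ \ud\beta(\kappa),
\]
where $h^{\kappa}_{v}=\lim_\omega h^{\kappa_i}_{v,i}$ is $f_v$ composed with an internal measure-preserving transformation of $\mf X$, hence $h^{\kappa}_{v}\in L^\infty(\mc L_{\mf X})$. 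For each fixed $\kappa$ the integrand lies in $L^\infty\big((\mc L_{\mf X})^{\db{n-1}}\big)$, being a product over $v$ of $\mc L_{\mf X}$-measurable functions of the single coordinate $\q(v)$; by Fubini for the Loeb measure the integral over $\kappa$ lies in the same space. This is exactly the assertion that $\mb E_{\mu^{\db n}}(f\mid\mc I)\in L^\infty(\mc A_{F_0})$, which by the rank-$1$ reduction completes the proof of the proposition, and with it that of Proposition \ref{prop:upccaxioms}.
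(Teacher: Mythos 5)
Your proposal attacks the proposition by directly computing $\mb{E}(\cdot\mid\mc{I}')$ on rank-one functions in $L^\infty(\mc{A})$; this is a genuinely different route from the paper's, and it has a genuine gap at what you yourself call ``the heart of the argument.'' The step where $\psi_i=\mb{E}_{\mu^{\db{n-1}}_{\ns_i}}(g_i\mid\mc{J}_i)$ is rewritten as
$\psi_i(\q)=\int_{\mf K_i}\prod_{v}f_{v,i}\big(L_{c_i(\kappa)(v)}\q(v)\big)\ud\beta_i(\kappa)$
with $c_i$ \emph{independent of} $\q$ is never carried out: you identify the obstruction (the $\Gamma$-valued discrepancy one incurs when parametrising orbits by the left $H_i$-action, because a coordinatewise lift of $\pi_\Gamma$ need not send cubes to cubes) and then say ``I expect this to use\dots'' and that it is ``where the bulk of the work lies.'' That is precisely the nontrivial content, and nothing in the sketch shows it can be arranged. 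The difficulty is real: the orbit $H_i\cdot\q$ is parametrised by $H_i/\stab_{H_i}(\q)$, and both the stabiliser and any choice of fundamental domain depend on $\q$, so it is not automatic that the orbit average unpacks into a $\q$-independent integral of rank-one functions. (Also, the sentence identifying the averaging measure as ``the disintegration of $\mu^{\db n}_{\ns_i}$ along restriction to $F_0$'' conflates the disintegration over the orbit relation on $\cu^{n-1}(\ns_i)$ with the conditional measure of the $n$-cube measure given an $(n-1)$-face; only the former is what you need, and the identification with the unique $H_i$-invariant measure on each orbit requires justification when $H_i$ is noncompact.)

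The paper's proof sidesteps all of this with a short symmetry argument, conditioning in the \emph{opposite} direction. Since $\upmod$ is symmetric, it suffices to show that for $f\in L^\infty(\mc{I}')$ one has $\mb{E}(f\mid\mc{A})\in L^\infty(\mc{I}')$ where $\mc{A}=\mc{L}_{\mf X}^{\db{n-1}}$. Three observations suffice: (i) $f$ is $\mf H$-invariant by Lemma \ref{lem:I-char}; (ii) $\mc{A}$ is globally $\mf H$-invariant, because $\mf H$ acts coordinatewise and hence permutes rank-one functions; (iii) each $h\in\mf H$ is measure-preserving. Then $\mb{E}(f\mid\mc{A})^h=\mb{E}(f^h\mid\mc{A}^h)=\mb{E}(f\mid\mc{A})$, so $\mb{E}(f\mid\mc{A})$ is $\mf H$-invariant and hence $\mc{I}'$-measurable, giving $\mc{I}'\upmod\mc{A}$. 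The only structural input is the coordinatewise action of $\mf H$ --- the same feature you are trying to exploit --- but used to establish a global invariance of $\mc{A}$ rather than to compute a conditional expectation explicitly. If you want to pursue your explicit route, the missing rank-one decomposition would have to be proved; as it stands the proposal is incomplete precisely at the step that matters, and the paper shows a way to avoid it entirely.
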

\begin{proof}
As $\mc{A}_{F_0}=p_{F_0}^{-1}(\mc{L}_{\mf{X}}^{\db{n-1}})$ and $\mc{I}=_{\mu^{\db{n}}}p_{F_0}^{-1}(\mc{I}')$, it suffices to show that $\mc{L}_{\mf{X}}^{\db{n-1}}\upmod \mc{I}'$. For this proof let $\mc{A}$ denote $\mc{L}_{\mf{X}}^{\db{n-1}}$. Let $f\in L^\infty(\mc{I}')$ and $h\in \mf{H}$. Then $f^h=_{\mu^{\db{n-1}}}f$, by Lemma \ref{lem:I-char} (where $f^h(x):=f(h\cdot x)$),  
so $\mb{E}(f|\mc{A})=_{\mu^{\db{n-1}}}\mb{E}(f^h|\mc{A})$. Note the global invariance $\mc{A}^h=_{\mu^{\db{n-1}}}\mc{A}$, since $g^h\in L^\infty(\mc{A})$ for every $g\in L^\infty(\mc{A})$ of rank 1. Hence $\mb{E}(f^h|\mc{A})=_{\mu^{\db{n-1}}} \mb{E}(f^h|\mc{A}^h)$. As $h$ is measure preserving, $\mb{E}(f^h|\mc{A}^h)=_{\mu^{\db{n-1}}} \mb{E}(f|\mc{A})^h$, so $\mb{E}(f|\mc{A})=_{\mu^{\db{n-1}}} \mb{E}(f|\mc{A})^h$. This holds for all $h$, so $\mb{E}(f|\mc{A})\in L^\infty(\mc{I}')$. Hence $\mc{I}'\upmod \mc{A}$.
\end{proof}
\begin{remark}\label{rem:gentocompns}
To prove Proposition \ref{prop:upccaxioms}, we have made significant use of the transitive group action present on a \textsc{cfr} coset nilspace. We do not know whether the cubic coupling axioms can be proved for ultraproducts of more general compact nilspaces, where such a group action is not necessarily available. If the axioms still hold in such a  setting, then this may yield an extension of Theorem \ref{thm:reglem-intro} valid for all compact nilspaces.
\end{remark}

\subsection{Locating a separable factor yielding a Borel cubic coupling}\label{sec:abcasesepfact}\hfill \smallskip\\
\noindent Given a probability space $(\Omega,\mc{A},\lambda)$, we say that a $\sigma$-algebra $\mc{X}\subset \mc{A}$ is separable if $L^1_\lambda(\mc{X})$ is separable as a metric space. In this subsection we prove the following result.
\begin{proposition}\label{prop:sepcc}
Let $(\ns_i)_{i\in \mb{N}}$ be a sequence of \textsc{cfr} coset nilspaces. Then for every separable $\sigma$-algebra $\mc{X}_0\subset \mc{L}_{\mf{X}}$ there is a separable $\sigma$-algebra $\mc{X}\subset \mc{L}_{\mf{X}}$ such that $\mc{X}_0\subset \mc{X}$ and such that the Loeb measures $\mu^{\db{n}}$ on the $\sigma$-algebras $\mc{X}^{\db{n}}$ form a cubic coupling.
\end{proposition}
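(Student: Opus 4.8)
The plan is to build $\mc{X}$ as the increasing union of a countable chain of separable $\sigma$-algebras $\mc{X}_0\subset \mc{X}_1\subset \mc{X}_2\subset\cdots$, designed so that in the limit all the cubic coupling axioms close up. By Proposition \ref{prop:upccaxioms}, the Loeb measures $\mu^{\db{n}}$ restricted to the product $\sigma$-algebras $\mc{L}_{\mf{X}}^{\db{n}}$ already satisfy the three axioms in Definition \ref{def:cc}. So the only issue is that we must cut down from $\mc{L}_{\mf{X}}$ to a separable sub-$\sigma$-algebra $\mc{X}$ which is closed under the operations appearing in the axioms: for every injective cube morphism $\phi:\db{m}\to\db{n}$ the consistency axiom compares $\mu^{\db{n}}_\phi$ with $\mu^{\db{m}}$ (no new sets needed, but we must keep track of which sets in $\mc{X}^{\db{m}}$ arise as restrictions), and for each pair of adjacent codimension-1 faces $F_0,F_1\subset\db{n}$ the conditional-independence axiom requires $\mc{X}^{\db{n}}_{F_0}\upmod_{\mu^{\db{n}}} \mc{X}^{\db{n}}_{F_1}$ together with $\mc{X}^{\db{n}}_{F_0}\wedge_{\mu^{\db{n}}}\mc{X}^{\db{n}}_{F_1} =_{\mu^{\db{n}}} \mc{X}^{\db{n}}_{F_0\cap F_1}$. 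The key point is that each of these requirements, applied to a single bounded measurable function $f$ on the relevant cube power, only ever asks that a single new function (a conditional expectation) be measurable with respect to $\mc{X}$, and conditional expectations of $L^2$ functions are determined by countably much information.

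Concretely, I would argue as follows. Fix a countable dense subset of the unit ball of $L^2_{\mu^{\db{0}}}(\mc{X}_0)$, and more generally, for each $n$, a countable family $\mc{G}_n$ of rank 1 functions in $L^\infty(\mc{X}_0^{\db{n}})$ whose products span a dense subspace of $L^2_{\mu^{\db{n}}}(\mc{X}_0^{\db{n}})$. For each such $f\in\mc{G}_n$, each pair of adjacent faces $F_0,F_1$, and each $f'\in\mc{G}_n$, the functions $\mb{E}(f\mid \mc{L}_{\mf{X}}^{\db{n}}_{F_1})$ and $\mb{E}(f'\mid \mc{L}_{\mf{X}}^{\db{n}}_{F_0\cap F_1})$ are (by Proposition \ref{prop:upccaxioms} and \cite[Lemma 2.30]{CScouplings}) $\mc{L}_{\mf{X}}^{\db{n}}$-measurable, in fact measurable in the appropriate face-$\sigma$-algebra, and each is pulled back from a function on $\mf{X}^{\db{n-1}}$ or a smaller face; since each such conditional expectation is an $L^2$ limit of finite linear combinations of indicators of measurable rectangles $\prod_{v}\mf{E}_v$ with $\mf{E}_v\in\mc{L}_{\mf{X}}$, I may pick countably many sets $\mf{E}\in\mc{L}_{\mf{X}}$ realizing all of these (across all $n$, all faces, all $f,f'$). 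Let $\mc{X}_1$ be the separable $\sigma$-algebra generated by $\mc{X}_0$ together with this countable collection. Iterating, $\mc{X}_{j+1}$ is obtained from $\mc{X}_j$ the same way, and I set $\mc{X}=\bigvee_j \mc{X}_j$, which is separable (a countable join of separable $\sigma$-algebras). Also throw $\mc{X}_0$ in at the start so that $\mc{X}_0\subset\mc{X}$.

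It remains to verify that $\mu^{\db{n}}$ on $\mc{X}^{\db{n}}$ satisfies the axioms. Consistency and ergodicity are immediate, since they hold for $\mc{L}_{\mf{X}}^{\db{n}}\supset\mc{X}^{\db{n}}$ and only involve restriction/product-measure identities (cf.\ the proof of Lemma \ref{lem:12}): if $\phi:\db{m}\to\db{n}$ is injective and $B\in\mc{X}^{\db{m}}$, then $p_{\phi(\db{m})}^{-1}(B)\in\mc{X}^{\db{n}}$ and the measure identity $\mu^{\db{n}}(p_{\phi(\db{m})}^{-1}(B))=\mu^{\db{m}}(B)$ is inherited; similarly $\mu^{\db{1}}$ restricted to $\mc{X}\otimes\mc{X}$ is still $\mu^{\db{0}}\times\mu^{\db{0}}$. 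For conditional independence, fix adjacent $F_0,F_1$ and, by \cite[Lemma 2.30]{CScouplings}, it suffices to show that for every rank 1 $f\in L^\infty(\mc{X}^{\db{n}}_{F_0})$ the function $\mb{E}_{\mu^{\db{n}}}(f\mid\mc{X}^{\db{n}}_{F_1})$ is $\mc{X}^{\db{n}}_{F_0\cap F_1}$-measurable. Since $f$ lies in $L^2_{\mu^{\db{n}}}(\mc{X}^{\db{n}}_{F_0})$, which is the closed span of products of elements of $\bigcup_j\mc{G}_n^{(j)}$ supported on $F_0$ (here $\mc{G}_n^{(j)}$ denotes the analogue of $\mc{G}_n$ for $\mc{X}_j$ used in forming $\mc{X}_{j+1}$), and since conditional expectation is an $L^2$-contraction, it is enough to treat $f$ in one of these generating families; but for such $f$ the construction of the next stage $\mc{X}_{j+1}$ put the sets realizing $\mb{E}_{\mu^{\db{n}}}(f\mid\mc{L}_{\mf{X}}^{\db{n}}_{F_1})$ into $\mc{X}$, and by Proposition \ref{prop:upccaxioms} that conditional expectation is already $\mc{L}_{\mf{X}}^{\db{n}}_{F_0\cap F_1}$-measurable, hence it is $\mc{X}^{\db{n}}_{F_0\cap F_1}$-measurable; taking $L^2$ limits, the same holds for all $f\in L^\infty(\mc{X}^{\db{n}}_{F_0})$. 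Finally $\mc{X}^{\db{n}}_{F_0}\wedge_{\mu^{\db{n}}}\mc{X}^{\db{n}}_{F_1} =_{\mu^{\db{n}}} \mc{X}^{\db{n}}_{F_0\cap F_1}$ follows from the same bootstrapping: the inclusion $\supseteq$ is trivial, and $\subseteq$ follows because $\mb{E}(f\mid\mc{X}^{\db{n}}_{F_1})\in L^\infty(\mc{X}^{\db{n}}_{F_0\cap F_1})$ for $f\in L^\infty(\mc{X}^{\db{n}}_{F_0})$, again via \cite[Lemma 2.30]{CScouplings}. I expect the main obstacle to be purely bookkeeping: one must make sure the countable collection of auxiliary functions chosen at stage $j$ is rich enough that, in the direct limit, the face-$\sigma$-algebras $\mc{X}^{\db{n}}_{F}$ are genuinely the $F$-coordinate sub-$\sigma$-algebras of $\mc{X}^{\db{n}}$ (this is where one uses that each $\mc{G}_n$ consists of rank 1 functions and that $L^2$ of a product $\sigma$-algebra is spanned by product functions), and that the chain closes under the countably many relevant instances of the axioms simultaneously — a standard but slightly delicate "countable closure" argument, with no real analytic content once Proposition \ref{prop:upccaxioms} is in hand.
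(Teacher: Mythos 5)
Your proposal is correct and follows essentially the same route as the paper: start from Proposition \ref{prop:upccaxioms}, iteratively enlarge $\mc{X}_0$ by adjoining, for a countable dense family of rank-1 generators, countably many sets of $\mc{L}_{\mf{X}}$ that realize the relevant conditional expectations in the right face $\sigma$-algebras (the paper packages this realization step as Lemma \ref{lem:sep-joins} and the per-stage enlargement as Lemma \ref{lem:inductstep}), and take the countable join, checking the axioms by an $L^1$/$L^2$ limiting argument. The only cosmetic difference is that the paper reduces via the consistency axiom to a single adjacent pair $(F_{n,0},F_{n,1})$ per dimension $n$, whereas you close under all adjacent pairs; either way yields the required separable cubic coupling.
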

The proof relies on the following couple of lemmas.
\begin{lemma}\label{lem:sep-joins}
Let $(\Omega,\mc{A},\lambda)$ be a probability space and let $S$ be a finite set. For each $v\in S$ let $\mc{X}_v$ be a sub-$\sigma$-algebra of $\mc{A}$, and let $\mc{C}\subset \bigvee_{v\in S} \mc{X}_v$ be a separable $\sigma$-algebra. Then there are separable $\sigma$-algebras $\mc{X}_v'\subset \mc{X}_v$ for $v\in S$ such that $\mc{C}\subset_\lambda\bigvee_{v\in S} \mc{X}_v'$.
\end{lemma}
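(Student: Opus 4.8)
The plan is to build the separable sub-$\sigma$-algebras $\mc{X}_v'$ by a countable approximation argument, exploiting the fact that a separable $\sigma$-algebra is generated (modulo $\lambda$-null sets) by a countable collection of sets, and that each such set can be approximated in $L^1_\lambda$ by sets lying in the algebra (not $\sigma$-algebra) generated by the $p_v^{-1}$-preimages, i.e.\ by finite Boolean combinations of sets from the individual $\mc{X}_v$. First I would fix a countable family $\{C_k\}_{k\in\mb{N}}$ of sets in $\mc{C}$ that generates $\mc{C}$ modulo null sets (this exists precisely because $L^1_\lambda(\mc{C})$ is separable: take a countable dense set of indicator-approximants and extract the sets). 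Then, since $\bigvee_{v\in S}\mc{X}_v$ is the $\sigma$-algebra generated by the algebra $\mc{R}$ of finite unions of ``rectangles'' $\bigcap_{v\in S} E_v$ with $E_v\in\mc{X}_v$, standard measure theory (the $\pi$-$\lambda$ / monotone class approximation) gives, for each $k$ and each $m\in\mb{N}$, a set $R_{k,m}\in\mc{R}$ with $\lambda(C_k\,\Delta\, R_{k,m})<1/m$.

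Next I would collect, over all $k$ and $m$, the countably many sets $E_v\in\mc{X}_v$ that appear in the finitely many rectangles making up each $R_{k,m}$; for each fixed $v\in S$ this yields a countable subfamily $\mc{E}_v\subset\mc{X}_v$. I set $\mc{X}_v':=\sigma(\mc{E}_v)\subset\mc{X}_v$. This $\mc{X}_v'$ is countably generated, hence separable (a countably generated $\sigma$-algebra on a probability space has separable $L^1$). By construction every $R_{k,m}$ lies in the algebra generated by $\bigcup_{v} \mc{E}_v$, hence in $\bigvee_{v\in S}\mc{X}_v'$. Since $\lambda(C_k\,\Delta\,R_{k,m})\to 0$ as $m\to\infty$ and $\bigvee_{v\in S}\mc{X}_v'$ is complete up to null sets (or: we pass to its $\lambda$-completion within $\mc{A}$), each $C_k$ lies in $\bigvee_{v\in S}\mc{X}_v'$ modulo $\lambda$-null sets. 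As the $C_k$ generate $\mc{C}$ modulo null sets, we conclude $\mc{C}\subset_\lambda \bigvee_{v\in S}\mc{X}_v'$, as required.

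The only genuinely delicate point is the bookkeeping that each approximating set $R_{k,m}$ is built from rectangles $\bigcap_{v\in S}E_v$ with the $v$-component genuinely in $\mc{X}_v$, so that extracting these components coordinate by coordinate gives the countable families $\mc{E}_v$; this is where one uses that $\bigvee_{v\in S}\mc{X}_v = \bigvee_{v\in S} p_v^{-1}(\mc{X}_v)$ is generated by such rectangles rather than by a more complicated family. Everything else is routine: separability of $L^1$ for a countably generated $\sigma$-algebra, approximation of a generating set in a $\sigma$-algebra by elements of a generating algebra, and the stability of ``$\subset_\lambda$'' under countable operations. I do not expect any substantial obstacle here; the lemma is a soft measure-theoretic preliminary whose role is to let one shrink the product-type $\sigma$-algebras to separable ones while controlling a prescribed separable sub-$\sigma$-algebra, which is exactly what the subsequent construction of a Borel cubic coupling in Proposition \ref{prop:sepcc} requires.
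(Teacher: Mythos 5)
Your proof is correct and the overall structure (extract a countable generating family for $\mc{C}$, approximate each generator using only countably many sets from each $\mc{X}_v$, and take $\mc{X}_v'$ to be the $\sigma$-algebra generated by those) matches the paper's argument. The one point of difference is the approximation step: the paper works in $L^1$ with a dense sequence of functions in $L^1(\mc{C})$ and invokes \cite[Lemma~2.2]{CScouplings}, which gives density in $L^1(\bigvee_v\mc{X}_v)$ of finite sums of bounded rank-one products $\prod_v g_v$ with $g_v\in L^\infty(\mc{X}_v)$, then collects the functions $g_v$ to generate $\mc{X}_v'$; you instead work with sets, use the standard fact that the algebra of finite unions of ``rectangles'' $\bigcap_v E_v$ (with $E_v\in\mc{X}_v$) is $\lambda$-dense in the $\sigma$-algebra it generates, and collect the components $E_v$. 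Your route is self-contained and slightly more elementary, since it avoids the auxiliary $L^1$-density lemma; the paper's route is more uniform with how the other arguments in that section are phrased (everything in terms of rank-one functions and conditional expectations). Both are valid, and the remaining steps (countably generated $\Rightarrow$ separable, passing the approximation to all of $\mc{C}$) are handled correctly in your write-up.
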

\begin{proof}
The separability of $\mc{C}$ implies that there is a dense sequence of functions $(f_\ell)_{\ell\in\mb{N}}$ in $L^1(\mc{C})$. 
By \cite[Lemma 2.2]{CScouplings}, for each $\ell$ there is a sequence of functions $(f_{k,\ell})_{k\in \mb{N}}$, where for each $k$ we have $\|f_{k,\ell}-f_\ell\|_{L^1}\leq 1/k$ and $f_{k,\ell}$ is a finite sum of bounded rank 1 functions, i.e.\  $f_{k,\ell}=\sum_{j=1}^{m_{k,\ell}} \prod_{v\in S} g_{v,j,k,\ell}$ where $g_{v,j,k,\ell}\in L^\infty(\mc{X}_v)$ for every $j$. Let $\mc{X}_v'$ be the separable sub-$\sigma$-algebra of $\mc{X}_i$ generated by the collection  $\{g_{v,j,k,\ell}:\ell,k\in \mb{N},j\in [m_{k,\ell}]\}$. This collection is countable, so $\mc{X}_v'$ is separable. Now given any $f\in L^1(\mc{C})$, for any $\epsilon>0$ there is $\ell$ such that $\|f-f_\ell\|_{L^1}< \epsilon/2$, and there is $k$ such that $\|f_\ell-f_{\ell,k}\|_{L^1}< \epsilon/2$, so $\|f-f_{k,\ell}\|_{L^1}<\epsilon$, and by construction $f_{k,\ell}\in L^1(\bigvee_{v\in S} \mc{X}_v')$. Letting $\epsilon\to 0$, we deduce that $\mc{C}\subset_\lambda\bigvee_{v\in S} \mc{X}_v'$.
\end{proof}
\noindent Let us single out the adjacent faces $F_{n,0}:=\{0\}\times \db{n-1}$,\, $F_{n,1}:=\db{n-1}\times\{0\}$ in $\db{n}$. For $p\in [1,\infty]$ we denote by $\mc{U}^p(\mc{A})$ the unit ball of $L^p(\mc{A})$.
\begin{lemma}\label{lem:inductstep}
Let $\mc{C}$ be a separable sub-$\sigma$-algebra of $\mc{L}_{\mf{X}}$. There is a separable $\sigma$-algebra $\mc{D}$ with $\mc{C}\subset \mc{D}\subset \mc{L}_{\mf{X}}$, such that for every $n\in\mb{N}$, for every system $(f_v)_{v\in F_{n,0}}$ of bounded $\mc{C}$-measurable functions $f_v$, we have $\mb{E}\big(\prod_{v\in F_{n,0}} f_v\co p_v|(\mc{L}_{\mf{X}})^{\db{n}}_{F_{n,1}}\big)\in L^\infty(\mc{D}^{\db{n}}_{F_{n,0}\cap F_{n,1}})$.
\end{lemma}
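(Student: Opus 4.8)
\noindent The plan is to deduce this from the cubic coupling axioms established in Proposition~\ref{prop:upccaxioms}, together with the separability of $\mc{C}$ and Lemma~\ref{lem:sep-joins}. Since $F_{n,0}$ and $F_{n,1}$ are adjacent faces of codimension $1$ in $\db{n}$, Proposition~\ref{prop:upccaxioms} already gives, for any $g\in L^\infty\big((\mc{L}_{\mf{X}})^{\db{n}}_{F_{n,0}}\big)$, that $\mb{E}\big(g\,|\,(\mc{L}_{\mf{X}})^{\db{n}}_{F_{n,1}}\big)$ lies in $L^\infty\big((\mc{L}_{\mf{X}})^{\db{n}}_{F_{n,0}}\big)$ by conditional independence, and hence, being also $(\mc{L}_{\mf{X}})^{\db{n}}_{F_{n,1}}$-measurable, is measurable with respect to $(\mc{L}_{\mf{X}})^{\db{n}}_{F_{n,0}}\wedge_{\mu^{\db{n}}}(\mc{L}_{\mf{X}})^{\db{n}}_{F_{n,1}}=_{\mu^{\db{n}}}(\mc{L}_{\mf{X}})^{\db{n}}_{F_{n,0}\cap F_{n,1}}$, using the second part of the same axiom. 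So the only real work is to shrink the (typically non-separable) $\sigma$-algebra $\mc{L}_{\mf{X}}$ down to a separable $\mc{D}\supset\mc{C}$ that still captures all the relevant conditional expectations.

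\noindent First I would reduce to a countable family using separability. Since $\mc{C}$ is separable, $\mc{U}^\infty(\mc{C})$ is separable in the $L^1(\mu_{\mf{X}})$-metric, so fix a countable $L^1$-dense set $D=\{d_j:j\in\mb{N}\}\subset\mc{U}^\infty(\mc{C})$. For each $n\in\mb{N}$ and $\mathbf{j}=(j_v)_{v\in F_{n,0}}\in\mb{N}^{F_{n,0}}$ put $g_{n,\mathbf{j}}:=\prod_{v\in F_{n,0}}d_{j_v}\co p_v$ and $h_{n,\mathbf{j}}:=\mb{E}\big(g_{n,\mathbf{j}}\,|\,(\mc{L}_{\mf{X}})^{\db{n}}_{F_{n,1}}\big)$; by the first paragraph each $h_{n,\mathbf{j}}$ is measurable with respect to $(\mc{L}_{\mf{X}})^{\db{n}}_{F_{n,0}\cap F_{n,1}}=\bigvee_{v\in F_{n,0}\cap F_{n,1}}p_v^{-1}(\mc{L}_{\mf{X}})$, and there are only countably many such functions in all. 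For each fixed $n$ they generate a separable $\mc{C}_n'\subset\bigvee_{v\in F_{n,0}\cap F_{n,1}}p_v^{-1}(\mc{L}_{\mf{X}})$, so Lemma~\ref{lem:sep-joins} (with $S=F_{n,0}\cap F_{n,1}$, $\mc{X}_v=p_v^{-1}(\mc{L}_{\mf{X}})$) yields separable $\mc{X}_{v,n}'\subset p_v^{-1}(\mc{L}_{\mf{X}})$ with $\mc{C}_n'\subset_{\mu^{\db{n}}}\bigvee_v\mc{X}_{v,n}'$. By the consistency axiom, $p_v$ pushes $\mu^{\db{n}}$ forward to $\mu_{\mf{X}}$, so $p_v^{-1}$ identifies the measure algebra of $(\mc{L}_{\mf{X}},\mu_{\mf{X}})$ with its image, whence $\mc{X}_{v,n}'=_{\mu^{\db{n}}}p_v^{-1}(\mc{D}_{v,n})$ for a separable $\mc{D}_{v,n}\subset\mc{L}_{\mf{X}}$. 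I would then take $\mc{D}$ to be the $\sigma$-algebra generated by $\mc{C}$ and all the $\mc{D}_{v,n}$, completed for $\mu_{\mf{X}}$-null sets: a countable join of separable $\sigma$-algebras, hence separable, and containing $\mc{C}$.

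\noindent To finish, given an arbitrary system $(f_v)_{v\in F_{n,0}}$ with $f_v\in\mc{U}^\infty(\mc{C})$, for each $k$ pick $\mathbf{j}^{(k)}$ with $\|f_v-d_{j_v^{(k)}}\|_{L^1(\mu_{\mf{X}})}<1/k$ for all $v$. Since $p_v$ pushes $\mu^{\db{n}}$ to $\mu_{\mf{X}}$ and all factors are $1$-bounded, a telescoping estimate gives $\big\|\prod_{v\in F_{n,0}}f_v\co p_v-g_{n,\mathbf{j}^{(k)}}\big\|_{L^1(\mu^{\db{n}})}\le|F_{n,0}|/k\to0$, and since conditional expectation onto $(\mc{L}_{\mf{X}})^{\db{n}}_{F_{n,1}}$ is an $L^1$-contraction, $h_{n,\mathbf{j}^{(k)}}\to\mb{E}\big(\prod_{v\in F_{n,0}}f_v\co p_v\,|\,(\mc{L}_{\mf{X}})^{\db{n}}_{F_{n,1}}\big)$ in $L^1(\mu^{\db{n}})$. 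Each $h_{n,\mathbf{j}^{(k)}}$ is $\mc{D}^{\db{n}}_{F_{n,0}\cap F_{n,1}}$-measurable by the construction of $\mc{D}$, and this property passes to $L^1$-limits, so the limit is $\mc{D}^{\db{n}}_{F_{n,0}\cap F_{n,1}}$-measurable, as required.

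\noindent I expect the main obstacle to be bookkeeping rather than any deep difficulty: one has to exhibit a \emph{single} separable $\mc{D}$ that works for every $n$ and every system $(f_v)$ at once, which is exactly what forces the passage to the countable dense set $D$ and the use of Lemma~\ref{lem:sep-joins} to split the separable $\sigma$-algebra generated by all the conditional expectations into coordinatewise pieces that can be pulled back along the $p_v$ into $\mc{L}_{\mf{X}}$. The secondary point needing care is that the identification $\mc{X}_{v,n}'=p_v^{-1}(\mc{D}_{v,n})$ and the meet-equality invoked from the conditional independence axiom hold only modulo $\mu^{\db{n}}$-null sets, so the whole argument should be carried out in the measure algebras.
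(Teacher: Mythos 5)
Your proposal is correct and follows essentially the same route as the paper: a countable $L^1$-dense family in $\mc{U}^\infty(\mc{C})$ generates a separable $\sigma$-algebra of conditional expectations which, by Proposition~\ref{prop:upccaxioms}, lies in $(\mc{L}_{\mf{X}})^{\db{n}}_{F_{n,0}\cap F_{n,1}}$, and Lemma~\ref{lem:sep-joins} then splits it into coordinatewise pieces; the arbitrary case follows by $L^1$-approximation and contractivity of conditional expectation. The only cosmetic difference is that you spell out the pull-back $\mc{X}'_{v,n}=_{\mu^{\db{n}}}p_v^{-1}(\mc{D}_{v,n})$ explicitly (a step the paper leaves implicit when it asserts the existence of a single separable $\mc{D}_n\subset\mc{L}_{\mf{X}}$), which is a harmless elaboration.
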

\begin{proof}
By assumption the metric space $L^1(\mc{C})$ is separable, and therefore so is the subset $\mc{U}^\infty(\mc{C})\subset L^1(\mc{C})$, so there is a sequence $\mc{S}\subset \mc{U}^\infty(\mc{C})$ that is dense in $\mc{U}^\infty(\mc{C})$ relatively to the $L^1$-norm. Recall that $\mc{A}$ denotes $\mc{L}_{\mf{X}}^{\db{n}}$. Let $\langle\mc{C}\rangle_n$ denote the sub-$\sigma$-algebra of $\mc{A}_{F_{n,1}}$ generated by all expectations $\mb{E}(\prod_{v\in F_{n,0}} g_v\co p_v|\mc{A}_{F_{n,1}})$ for systems $(g_v)_{v\in F_{n,0}}$ of functions in $\mc{S}$. Since $\langle\mc{C}\rangle_n$ is generated by countably many functions, it is separable. By the conditional independence axiom (Proposition \ref{prop:upccaxioms}) we have $\mb{E}(\prod_{v\in F_{n,0}} g_v\co p_v|\mc{A}_{F_{n,1}})\in L^\infty(\mc{A}_{F_{n,0}\cap F_{n,1}})$. Hence $\langle\mc{C}\rangle_n\subset_\lambda \mc{A}_{F_{n,0}\cap F_{n,1}}$. By Lemma \ref{lem:sep-joins}, there is a separable $\sigma$-algebra $\mc{D}_n\subset \mc{L}_{\mf{X}}$ such that $\langle\mc{C}\rangle_n\subset_\lambda (\mc{D}_n)_{F_{n,0}\cap F_{n,1}}^{\db{n}}$. Let $\mc{D}=\mc{C}\vee\big(\bigvee_{n\in \mb{N}} \mc{D}_n\big)$. Fix any system $\big(f_v\in \mc{U}^\infty(\mc{C})\big)_{v\in F_{n,0}}$. For every $\epsilon > 0$, for each $v$ there is $g_v\in \mc{S}$ such that $\|f_v-g_v\|_{L^1}\leq \epsilon$. Using telescoping sums we have $\| \mb{E}(\prod_{v\in F_{n,0}} f_v\co p_v|\mc{A}_{F_{n,1}})-\mb{E}(\prod_{v\in F_{n,0}} g_v\co p_v|\mc{A}_{F_{n,1}})\|_{L^1}\leq 2^n\,\epsilon$. Letting $\epsilon\to 0$ yields $\mb{E}(\prod_{v\in F_{n,0}} f_v\co p_v|\mc{A}_{F_{n,1}})\in L^1\big((\mc{D}_n)_{F_{n,0}\cap F_{n,1}}^{\db{n}}\big)\subset L^1(\mc{D}_{F_{n,0}\cap F_{n,1}}^{\db{n}})$. The result follows.
\end{proof}

\begin{proof}[Proof of Proposition \ref{prop:sepcc}]
The consistency and ergodicity axioms hold with $\mc{L}_{\mf{X}}$ (by Lemma \ref{lem:12}), so they clearly hold also for any sub-$\sigma$-algebra of $\mc{L}_{\mf{X}}$. In particular, for each $n$ we have to check the conditional independence axiom (for the suitable separable $\sigma$-algebra $\mc{X}\subset \mc{L}_{\mf{X}}$) only for $F_{n,0},F_{n,1}$, rather than for all pairs of adjacent $(n-1)$-faces in $\db{n}$ (indeed, the consistency axiom implies conditional independence for every such pair of faces, once we have it just for $F_{n,0},F_{n,1}$). So let us prove that there is a separable $\sigma$-algebra $\mc{X}\subset \mc{L}_{\mf{X}}$ such that for each $n$, for every system $(f_v)_{v\in F_{n,0}}$ in $L^\infty(\mc{X})$, we have $\mb{E}(\prod_{v\in F_{n,0}} f_v\co p_v|\mc{A}_{F_{n,1}})\in L^\infty(\mc{X}^{\db{n}}_{F_{n,0}\cap F_{n,1}})$ (this is enough, since by \cite[Lemma 2.2]{CScouplings} every integrable $\mc{X}_{F_{n,0}}^{\db{n}}$-measurable function is a limit of finite sums of rank 1 functions $\prod_{v\in F_{n,0}} f_v\co p_v$). If we prove this, then we also have $\mb{E}(\prod_{v\in F_{n,0}} f_v\co p_v|\mc{X}^{\db{n}}_{F_{n,1}})\in L^\infty(\mc{X}^{\db{n}}_{F_{n,0}\cap F_{n,1}})$, since $\mc{X}^{\db{n}}_{F_{n,0}\cap F_{n,1}} \subset \mc{X}^{\db{n}}_{F_{n,1}}\subset \mc{A}_{F_{n,1}}$. To obtain $\mc{X}$, we argue as follows: let $\mc{X}_0$ be the initial separable $\sigma$-algebra in the proposition, and let $(\mc{X}_i)_{i\in \mb{N}}$ be the increasing sequence of separable sub-$\sigma$-algebras of $\mc{L}_{\mf{X}}$ defined inductively by letting $\mc{X}_i$ be the $\sigma$-algebra $\mc{D}$ obtained by applying Lemma \ref{lem:inductstep} with $\mc{C}=\mc{X}_{i-1}$. Let $\mc{X}=\bigvee_{i\geq 0} \mc{X}_i$. To see that this has the required property, fix any $n$ and let $(f_v)_{v\in F_{n,0}}$ be any system of functions in $L^\infty(\mc{X})$. We have to check that $\mb{E}(\prod_{v\in F_{n,0}} f_v\co p_v|\mc{A}_{F_{n,1}})\in L^\infty(\mc{X}^{\db{n}}_{F_{n,0}\cap F_{n,1}})$. It clearly suffices to do this assuming that $f_v\in \mc{U}^\infty(\mc{X})$. Fix any $\epsilon>0$. For each $v$ there is $f_v'\in \mc{U}^\infty(\mc{X}_i)$ for some $i=i(v)$ such that $\|f_v-f_v'\|_{L^1}< \epsilon$ (indeed we can take $f_v'$ to be a version of $\mb{E}(f_v|\mc{X}_i)$). Letting $j=\max_{v\in F_{n,0}} i(v)$, we have $f'_v\in \mc{U}^\infty(\mc{X}_j)$ for all $v$. It then follows by construction and Lemma \ref{lem:inductstep} that $\mb{E}(\prod_{v\in F_{n,0}} f_v'\co p_v|\mc{A}_{F_{n,1}})\in L^\infty\big((\mc{X}_{j+1})^{\db{n}}_{F_{n,0}\cap F_{n,1}}\big)\subset L^\infty\big(\mc{X}^{\db{n}}_{F_{n,0}\cap F_{n,1}}\big)$. As in the previous proof, this expectation converges to $\mb{E}(\prod_{v\in F_{n,0}} f_v\co p_v|\mc{A}_{F_{n,1}})$ as $\epsilon\to 0$, so the latter expectation is also $\mc{X}^{\db{n}}_{F_{n,0}\cap F_{n,1}}$-measurable modulo null sets, as required.
\end{proof}

\section{Stability of morphisms into compact finite-rank nilspaces}\label{sec:ctsmorphism}
\noindent By a \emph{compatible metric} on a topological space $X$ we mean a metric $d$ on $X$ which generates the given topology on $X$. Given such a metric $d$ on $X$, for any $x,y\in X$ and $\epsilon>0$ we write $x\approx_\epsilon y$ to mean that $d(x,y)\leq \epsilon$. Recall that if $G$ is a compact group acting continuously on a metric space $X$ with metric $d$, then we can always define a compatible metric $d'$ on $X$ which is also \emph{$G$-invariant}, meaning that for all $x,y\in X$ and $g\in G$ we have $d'(gx,gy)=d'(x,y)$ (see \cite[Proposition 1.1.12]{Palais}).

Given compact nilspaces $\ns,\nss$, with a compatible metric $d$ on $\nss$, we define a pseudometric $d_1$ on the space of Borel measurable functions $\phi:\ns\to\nss$ by the formula $d_1(\phi_1,\phi_2)=\int_{\ns} d(\phi_1(x),\phi_2(x)\big)\ud\mu_{\ns}(x)$.  

\begin{defn}
Let $\ns,\nss$ be $k$-step compact nilspaces, and let $d$ be a compatible metric on $\nss$. For $\delta>0$, a $(\delta,1)$-\emph{quasimorphism} from $\ns$ to $\nss$ (relative to $d$) is a Borel measurable map $\phi:\ns\to\nss$ satisfying
\begin{equation}\label{eq:quasim}
\mu_{\ns}^{\db{k+1}}\,\big(\{\q\in \cu^{k+1}(\ns): \exists \q'\in \cu^{k+1}(\nss),\,\forall\, v\in \db{k+1},\, \phi\co\q(v)\approx_\delta \q'(v)\}\big)\geq 1-\delta,
\end{equation}
where $\mu_{\ns}^{\db{k+1}}$ denotes the Haar probability measure on $\cu^{k+1}(\ns)$.
\end{defn}
\noindent We write ``$(\delta,1)$-quasimorphism", rather than just ``$\delta$-quasimorphism", to distinguish this notion from the quasimorphisms defined in \cite[Definition 2.8.1]{Cand:Notes2}, which we call here $(\delta,\infty)$\emph{-quasimorphisms}; these are defined by replacing property \eqref{eq:quasim} with the uniform (and stronger) property $\forall\q\in \cu^{k+1}(\ns),\, \exists \q'\in \cu^{k+1}(\nss),\,\forall\, v\in \db{k+1},\, \phi\co\q(v)\approx_\delta \q'(v)$.

In our proof of Theorem \ref{thm:reglem-intro} in Section \ref{sec:mps}, a key ingredient is the following stability (or rigidity) result for morphisms.
\begin{theorem}\label{thm:rigid}
Let $\nss$ be a $k$-step \textsc{cfr} nilspace with compatible metric $d$. For every $\epsilon>0$ there exists $\delta=\delta(\epsilon,\nss)>0$ such that if $\ns$ is a compact nilspace and $\phi:\ns \to\nss$ is a $(\delta,1)$-quasimorphism, then there exists a continuous morphism $\phi':\ns\to\nss$ such that $d_1(\phi,\phi') \leq \epsilon$.
\end{theorem}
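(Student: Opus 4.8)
The plan is to prove Theorem \ref{thm:rigid} by a compactness-and-contradiction argument combined with the ultraproduct machinery developed in Sections \ref{sec:ups}--\ref{sec:ccax}, reducing the statement to a rigidity statement about genuine morphisms into $\nss$ over a cubic coupling. Suppose the theorem fails for some $k$-step \textsc{cfr} nilspace $\nss$ and some $\epsilon>0$. Then for every $i\in\mb{N}$ there is a compact nilspace $\ns_i$ and a $(1/i,1)$-quasimorphism $\phi_i:\ns_i\to\nss$ such that $d_1(\phi_i,\psi)>\epsilon$ for every continuous morphism $\psi:\ns_i\to\nss$. Fixing a non-principal ultrafilter $\omega$, form the ultraproduct $\mf{X}=\prod_{i\to\omega}\ns_i$ with its Loeb structure, and set $\phi=\lim_\omega\phi_i:\mf{X}\to\nss$, an $\mc{L}_{\mf{X}}$-measurable map. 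The defining inequality \eqref{eq:quasim} with $\delta=1/i\to 0$ passes to the ultraproduct to give: for $\mu^{\db{k+1}}$-almost every internal cube $\q\in\cu^{k+1}(\mf{X})$, the composite $\phi\co\q$ agrees exactly (as a point of $\nss^{\db{k+1}}$) with some $\q'\in\cu^{k+1}(\nss)$, since $\approx_{1/i}$ becomes $\approx_0$ in the limit and $\cu^{k+1}(\nss)$ is closed. Hence $\phi$ pushes almost every cube of $\mf{X}$ to a cube of $\nss$; I will upgrade ``almost every'' to ``every'' on a suitable separable factor.

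Next I would restrict attention to a separable sub-$\sigma$-algebra $\mc{X}\subset\mc{L}_{\mf{X}}$ containing $\phi^{-1}(\mc{B}_{\nss})$, chosen via Proposition \ref{prop:sepcc} (applied if the $\ns_i$ are \textsc{cfr} coset nilspaces; in general one first reduces to that case, or argues directly that the needed conditional-independence properties of $\phi^{-1}$ hold on the factor generated by $\phi$), so that the Loeb measures form a genuine Borel cubic coupling on the factor nilspace $\mf{Y}$ associated to $(\mf{X},\mc{X})$. The map $\phi$ descends to $\mf{Y}$ and is, after correction on a null set, an honest nilspace morphism $\mf{Y}\to\nss$ on the corresponding cube sets: indeed the almost-everywhere cube-preservation statement, together with the coupling axioms and a Fubini/approximation argument on faces, forces $\phi\co\q\in\cu^{k+1}(\nss)$ for *all* $\q$ in the cube set of the factor (one uses that the property of being a morphism into a $k$-step nilspace is detected on $(k+1)$-cubes, cf.\ the standard characterization of morphisms). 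Now I invoke the structure theorem for cubic couplings from \cite{CScouplings}: the factor $\mf{Y}$ retracts onto its $k$-step nilspace part, and the resulting morphism $\phi:\mf{Y}\to\nss$ can be approximated, or directly replaced, by a continuous morphism; here I would use the countability of \textsc{cfr} nilspaces up to isomorphism and a suitable approximation/lifting result for morphisms between compact nilspaces, together with the fact that $\nss$ has finite rank so that morphisms into it are rigid. From such a continuous morphism $\Phi:\mf{Y}\to\nss$ with $d_1(\phi,\Phi)$ small on $\mf{Y}$, one transfers back: $\Phi$ is $\mc{X}$-measurable and may be approximated by internal data, yielding continuous morphisms $\psi_i:\ns_i\to\nss$ with $d_1(\phi_i,\psi_i)<\epsilon$ for $\omega$-almost every $i$ (using that $\nss$, being a fixed \textsc{cfr} nilspace, has only countably many continuous morphisms from a given source up to the relevant approximation, or more simply that a morphism close in $L^1$ on the ultraproduct comes from morphisms close in $L^1$ on cofinitely many coordinates). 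This contradicts the choice of the $\phi_i$, completing the proof.

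The main obstacle, and the step requiring the most care, is the passage from the *almost-everywhere* cube-preservation property of $\phi$ on the ultraproduct to an *everywhere* (i.e.\ genuine-morphism) property on a separable Borel factor — equivalently, producing the honest morphism $\phi:\mf{Y}\to\nss$ of nilspaces from the Loeb-measurable map. This is exactly where one needs the cubic-coupling axioms with the *small* $\sigma$-algebras $\mc{L}_{\mf{X}}^{\db{n}}$ (Proposition \ref{prop:upccaxioms}) rather than the full Loeb $\sigma$-algebras, so that conditioning behaves well on faces and the a.e.\ statement on $(k+1)$-cubes can be propagated coherently; and one must be careful that the corrections on null sets at each dimension can be made consistently (a standard but delicate inverse-limit/compatibility argument, leaning on the consistency and conditional-independence axioms). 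A secondary technical point is the transfer back from $\mf{Y}$ to the individual $\ns_i$: a continuous morphism on the factor must be realized (up to small $L^1$-error) by morphisms on cofinitely many coordinate nilspaces, which uses compactness of $\cu^{k+1}(\nss)$ and the finiteness of the rank of $\nss$ to control the continuity and the equidistribution needed for the approximation.
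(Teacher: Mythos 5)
Your proposal takes a fundamentally different route from the paper --- compactness via ultraproducts plus the cubic-coupling machinery --- but it has a serious circularity, and two further gaps.

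The circularity is in the transfer-back step. You end with a continuous morphism $\Phi$ on the ultraproduct (or its separable factor $\mf{Y}$), and you then need to produce continuous morphisms $\psi_i:\ns_i\to\nss$ on $\omega$-many coordinates with $d_1(\phi_i,\psi_i)<\epsilon$. But $\Phi$ is only represented by \emph{Borel} internal maps $g_i:\ns_i\to\nss$ with $\lim_\omega g_i = \Phi$ (via \cite[Proposition 3.8]{Warner}); nothing forces those $g_i$ to be morphisms. Turning the $g_i$ --- which are at best $(\delta_i,1)$-quasimorphisms with $\delta_i\to 0$ --- into genuine continuous morphisms close in $d_1$ is precisely the statement of Theorem~\ref{thm:rigid} you are trying to prove. (This is exactly how the paper uses Theorem~\ref{thm:rigid} in the proof of Theorem~\ref{thm:reglem-intro}: the transfer from the ultraproduct morphism back to coordinate morphisms is \emph{done by} invoking Theorem~\ref{thm:rigid}.) Your parenthetical justifications do not avoid this: $\nss$ certainly does not have only countably many continuous morphisms from a variable source $\ns_i$, and ``a morphism close in $L^1$ on the ultraproduct comes from morphisms on cofinitely many coordinates'' is not a general principle --- it is the rigidity you set out to establish.

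Two further gaps. First, the statement is for \emph{arbitrary} compact nilspaces $\ns$, whereas Proposition~\ref{prop:upccaxioms} and Proposition~\ref{prop:sepcc} are proved only for \textsc{cfr} coset nilspaces; you flag this but do not resolve it, and Remark~\ref{rem:gentocompns} explicitly records that the coupling axioms are not known for general compact nilspaces. Second, your central step --- upgrading ``$\phi$ pushes almost every cube to a cube'' to an honest morphism on a factor --- is acknowledged as the main obstacle but is left at the level of ``a Fubini/approximation argument on faces,'' with no mechanism given; this is not a standard consequence of the coupling axioms, and it is unclear why consistent null-set corrections across all dimensions should be possible without some new idea.

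By contrast, the paper's proof is direct and inductive on the step $k$, never touches ultraproducts, and works for all compact $\ns$. Its engine is a new stability result for cocycles in $d_1$ (Proposition~\ref{prop:costab}, built from a tricube-based removal lemma, Lemma~\ref{lem:coremoval}, and an approximation-by-coboundary lemma, Lemma~\ref{lem:approxfn}), together with a lifting lemma (Lemma~\ref{lem:lift}) and the known \emph{uniform} ($(\delta,\infty)$) stability theorem. The inductive step: project to $\nss_{k-1}$, rectify there by induction, lift, show the resulting cocycle of the induced degree-$k$ extension is $d_1$-close to $0$, conclude the extension splits to get a continuous lift $\psi$, then handle the remaining $\mc{D}_k(\ab_k)$-valued defect with the abelian case (Lemma~\ref{lem:stabiabcase}). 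You should look for an argument along these lines rather than via ultraproducts, since the latter route needs the very theorem you want as its last step.
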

\noindent This theorem is an analogue, for $(\delta,1)$-quasimorphisms, of the uniform stability result for $(\delta,\infty)$-quasimorphisms given in \cite[Theorem 5]{CamSzeg} (see also \cite[Theorem 2.8.2]{Cand:Notes2}). Indeed, we obtain the statement of this uniform stability result by replacing in Theorem \ref{thm:rigid}  every ``1" by ``$\infty$" (where $d_\infty(\phi_1,\phi_2)=\sup_{x\in \ns} d(\phi_1(x),\phi_2(x)$).
\subsection{Cocycles close to the 0 cocycle are coboundaries}\hfill\\
Recall that the group $\aut(\db{k})$ of automorphisms of the cube $\db{k}$ is generated by permutations of $[k]=\{1,2,\ldots,k\}$ and coordinate reflections. For $\theta\in \aut(\db{k})$ we write $r(\theta)$ for the number of reflections involved in $\theta$. Equivalently, $r(\theta)$ is the number of coordinates equal to 1 of $\theta(0^k)$. Two $n$-cubes $\q_1,\q_2$ on a nilspace are  \emph{adjacent} if $\q_1(v,1)=\q_2(v,0)$ for all $v\in \db{n-1}$; we can then form their \emph{concatenation}, which is the $n$-cube $\q$ such that $\q(v,0)=\q_1(v,0)$ and $\q(v,1)=\q_2(v,1)$ for all $v\in \db{n-1}$ (see \cite[Lemma 3.1.7]{Cand:Notes1}). 

We now recall the definition of a nilspace cocycle, which is fundamental to the structural analysis of nilspaces (see \cite[Definition 2.14]{CamSzeg} or \cite[Definition 3.3.14]{Cand:Notes1}).
\begin{defn}\label{def:cocycle} 
Let $\ns$ be a nilspace, $\ab$ an abelian group, and $k\in \mb{Z}_{\geq -1}$. A $\ab$-valued \emph{cocycle of degree} $k$ on $\ns$ is a function $\rho: \cu^{k+1}(\ns)\to \ab$ with the following properties:\\ \vspace{-0.6cm}
\begin{enumerate}[leftmargin=0.7cm]
\item If $\q \in \cu^{k+1}(\ns)$ and $\theta\in \aut(\db{k+1})$, then $\rho(\q\co\theta )=(-1)^{r(\theta)}\rho(\q)$.
\item If $\q_3$ is the concatenation of cubes $\q_1,\q_2\in \cu^{k+1}(\ns)$ then $\rho(\q_3)=\rho(\q_1)+\rho(\q_2)$.
\end{enumerate}
\end{defn}
\noindent We recall also that for any $n\in \mb{N}$ and any group $G$ we denote by $\sigma_n$ the Gray-code map $G^{\db{n}}\to G$ from \cite[Definition 2.2.22]{Cand:Notes1}; in particular if $G$ is abelian we have $\sigma_n(g):=\sum_{v\in \db{n}} (-1)^{|v|} g(v)$ for every $g: \db{n}\to G$. Using this notation, we say that a cocycle $\rho$ of degree $k$ on $\ns$ is a \emph{coboundary} (of degree $k$) if there is a function $f:\ns\to\ab$ such that $\rho(\q)=\sigma_{k+1}(f\co\q)$ for every $\q\in \cu^{k+1}(\ns)$. We refer to \cite[\S 3.3.3]{Cand:Notes1} for more background on cocycles and coboundaries.

The proof of Theorem \ref{thm:rigid}, given in Subsection \ref{subsec:proofstab},  relies on the following stability result for cocycles, which is the main result in this subsection.
\begin{proposition}\label{prop:costab}
Let $\ab$ be a compact abelian group, and let $d_Z$ be a compatible $\ab$-invariant metric on $\ab$. There exists $\epsilon>0$ such that the following holds. If $\ns$ is a compact nilspace and $\rho:\cu^k(\ns)\to\ab$ is a Borel cocycle such that $d_1(0,\rho):=\int_{\cu^k(\ns)} d_{\ab}\big(\rho(\q),0_{\ab}\big)\ud\mu_{\cu^k(\ns)}(\q)\leq \epsilon$, then $\rho$ is a coboundary.
\end{proposition}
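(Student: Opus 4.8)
The plan is to reduce the problem to the case where $\ns$ is a \textsc{cfr} nilspace, where finite-dimensionality and compactness of the relevant cube sets allow one to exploit rigidity, and then to produce the desired coboundary by an averaging (retraction) argument followed by a stability argument on the resulting cohomology class. I would first recall that any compact nilspace $\ns$ is an inverse limit of \textsc{cfr} nilspaces $\ns = \varprojlim \ns_j$, with $\pi_j:\ns\to \ns_j$ the factor maps; a Borel cocycle of degree $k-1$ on $\ns$ that is $L^1$-close to $0$ can be approximated by (pullbacks of) cocycles on the $\ns_j$'s, so it suffices to treat the case of a \textsc{cfr} nilspace with a uniform $\epsilon$. (Here one uses that the cohomology of a cocycle is unchanged under $L^1$-small perturbations, which will in fact be part of what we prove.) The key point to establish is that for a \textsc{cfr} nilspace $\ns$, there is a fixed $\epsilon>0$ (depending only on $\ab$ and $d_\ab$, via, say, the injectivity radius of a bi-invariant metric on $\ab$, or on the supremum of distances so that the ``small'' ball in $\ab$ contains no nontrivial subgroup) such that any Borel cocycle $\rho$ with $d_1(0,\rho)\le \epsilon$ is cohomologous to $0$.

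\textbf{Main steps.} First, I would show that $\rho$ being $L^1$-close to $0$, together with the cocycle identities (antisymmetry under $\aut(\db{k})$ and additivity under concatenation), forces $\rho$ to be \emph{everywhere} close to $0$ — i.e.\ one upgrades the $L^1$-smallness to $L^\infty$-smallness on $\cu^k(\ns)$. This is where the finite-rank hypothesis is essential: the image $\rho(\cu^k(\ns))$, being the image of a connected (up to the structure-group cosets) compact set under a Borel map satisfying strong algebraic constraints, together with the fact that a cocycle of degree $k-1$ restricted to degenerate cubes vanishes, propagates smallness; more concretely, one uses that a cube of $\ns$ can be ``filled'' through intermediate cubes all of which differ from degenerate cubes by controlled amounts, and the additivity of $\rho$ then controls $\rho$ on arbitrary cubes by the $L^1$-average. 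Second, once $\rho$ is uniformly $\delta$-small with $\delta$ less than (half) the injectivity radius of $\ab$, the cocycle takes values in a small ball around $0$; choosing a local logarithm (a continuous section of a neighbourhood of $0\in\ab$ into the Lie algebra, or just using that a small neighbourhood of $0$ in a compact abelian Lie group is ``contractible and norm-additive to first order''), one shows that $\rho$ is cohomologous to a cocycle valued in the connected component, and then that it is cohomologous to $0$ by the standard fact that cocycles of degree $k-1$ valued in a small ball are coboundaries on a \textsc{cfr} nilspace — this is essentially the statement, already available in the nilspace literature (\cite{CamSzeg,Cand:Notes1}), that the relevant cohomology group $H^{k}(\ns,\ab)$ is discrete, so a cocycle in a sufficiently small neighbourhood of $0$ in the (compact) group of cocycles lies in the connected component of $0$, which consists entirely of coboundaries. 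Third, I would assemble these: the uniformly-small cocycle lies in the $0$-component of $Z^k(\ns,\ab)$, hence is a coboundary; passing back through the inverse limit yields the statement for general compact $\ns$.

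\textbf{Main obstacle.} The hard part will be the upgrade from $L^1$-smallness to uniform smallness of $\rho$ (the first step). Unlike the $(\delta,\infty)$-quasimorphism setting, where one starts with a uniform hypothesis, here one only controls an average, and a priori $\rho$ could be large on a null set — but since $\rho$ is a genuine cocycle (not merely an approximate one), the algebraic identities should rigidly tie the value on \emph{any} cube to values on a positive-measure family of cubes, via concatenation and the Haar-measure-preserving action of the cube automorphism group and of the structure groups. Making this rigorous requires a careful ``cube surgery'' argument: expressing an arbitrary $\q\in\cu^k(\ns)$ (up to reordering) as a bounded-length concatenation of cubes each of which is, with positive probability, $L^1$-typical for $\rho$, and then bounding $d_\ab(\rho(\q),0)$ by a bounded sum of typical-small contributions. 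One must also handle the disconnected case (where $\ns$ may have several connected components and $\ab$ may be disconnected), which is why the constant $\epsilon$ should be taken smaller than the scale at which $\ab$ looks like its Lie algebra near $0$, ensuring no ``wraparound'' obstructs the local-logarithm step. Once uniform smallness is in hand, the remaining arguments are standard nilspace cohomology, drawing on \cite{CamSzeg} and \cite[\S 3.3.3]{Cand:Notes1}.
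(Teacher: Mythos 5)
Your plan contains a genuine gap at the crucial step. You claim that $L^1$-smallness of $\rho$, together with the cocycle identities (antisymmetry under $\aut(\db{k})$, additivity under concatenation, and the tricube/``cube surgery'' mechanism), forces $\rho$ itself to be \emph{everywhere} small, i.e.\ $L^\infty$-small on $\cu^k(\ns)$. This is false. For a counterexample, let $g:\ns\to\ab$ be any Borel function that is $0$ almost everywhere but takes a fixed nontrivial value at a single point $x_0$. Then $\rho(\q):=\sigma_k(g\co\q)$ is a bona fide cocycle (it is a coboundary) with $d_1(\rho,0)=0$, yet $\rho(\q)$ is bounded away from $0$ on every cube rooted at $x_0$. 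The cocycle axioms cannot rule this out, because the class of cocycles is closed under addition of arbitrary coboundaries $\sigma_k(g\co\cdot)$, and such coboundaries can have $d_1=0$ while being $L^\infty$-large. So no amount of tricube/concatenation rigidity applied directly to $\rho$ will give you uniform control on \emph{all} cubes; at best it controls $\rho$ on cubes whose vertices lie in a large-measure ``good'' set (this is exactly the content of the paper's Lemma \ref{lem:coremoval}). The missing idea is that one must first \emph{subtract a coboundary}: the paper's Lemma \ref{lem:approxfn} constructs a Borel $g:\ns\to\ab$ where $g(x)$ records the typical value of $\rho$ on cubes rooted at $x$ (obtained via the tricube averaging), and then shows that $\rho-\sigma_k(g\co\cdot)$ is uniformly small. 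Since one is free to add and subtract coboundaries when deciding whether $\rho$ is a coboundary, this suffices. Your plan, as written, tries to prove the stronger (and false) statement that $\rho$ itself is uniformly small, and so cannot be carried out without inserting this correction step.

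Two smaller issues. First, the reduction to \textsc{cfr} nilspaces at the start of your plan is circular as stated: you justify it by saying ``the cohomology of a cocycle is unchanged under $L^1$-small perturbations, which will in fact be part of what we prove,'' but that is essentially the proposition itself, so it cannot be assumed in the reduction. It is also unnecessary: the tricube arguments and the uniform-stability input (that a cocycle with values uniformly close to $0$ is a coboundary, \cite[Lemma 2.5.7]{Cand:Notes2}) are available for all compact nilspaces, so the paper never restricts to the \textsc{cfr} case in this proposition. Second, the remarks about local logarithms and the Lie algebra of $\ab$ are unnecessary once uniform smallness (of the corrected cocycle) is established, since the cited uniform stability lemma already does all the remaining work; they also sit uneasily with the hypothesis, which only assumes $\ab$ is a compact abelian group.
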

\noindent A key element in the proof of Proposition \ref{prop:costab} is the following result.
\begin{lemma}\label{lem:coremoval}
Let $\ns$ be a compact nilspace, let $\ab$ be a compact abelian group with compatible $\ab$-invariant  metric $d_{\ab}$, let $\rho:\cu^k(\ns)\to\ab$ be a Borel measurable cocycle, let $0<\epsilon< 2^{-4k}$, and suppose that $d_1(\rho,0)\leq\epsilon$. Then there is a Borel set $S\subset \ns$ such that $\mu_{\ns}(S)> 1-\epsilon^{1/2}$ and $d_{\ab}(\rho(\q),0)\leq 2^k\epsilon^{1/4}$ for every $\q\in \cu^k(\ns)\cap S^{\db{k}}$.
\end{lemma}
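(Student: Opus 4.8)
The plan is to extract the set $S$ by a first-moment / Markov-type argument on a suitable averaged defect function, and then propagate the pointwise smallness of $\rho$ from single points of $S$ to cubes inside $S^{\db k}$ using the cocycle relations. First I would introduce, for each $x\in \ns$, the quantity $g(x)=\int_{\q\in \cu^k(\ns),\,\q(0^k)=x} d_{\ab}\big(\rho(\q),0\big)\ud\mu(\q)$, the conditional expectation of $d_{\ab}(\rho(\cdot),0)$ given the value of a cube at the vertex $0^k$; more robustly, one should do this simultaneously over all vertices $v\in\db k$ (using that for $\theta\in\aut(\db k)$ one has $d_{\ab}(\rho(\q\co\theta),0)=d_{\ab}(\rho(\q),0)$ by property (i) of Definition \ref{def:cocycle} together with the fact that $d_{\ab}$ is translation-invariant and symmetric, so $d_{\ab}(-a,0)=d_{\ab}(a,0)$). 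Since $\int_{\ns} g\ud\mu_{\ns} = d_1(\rho,0)\le\epsilon$ (by Fubini, using that the $0^k$-vertex pushforward of the Haar measure on $\cu^k(\ns)$ is $\mu_{\ns}$, which holds because the projection $\q\mapsto\q(0^k)$ is a nilspace morphism and Haar measures push forward to Haar measures), Markov's inequality gives a Borel set $S\subset\ns$ with $\mu_{\ns}(S)>1-\epsilon^{1/2}$ on which $g(x)\le \epsilon^{1/2}$.

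Next I would show that if $\q\in\cu^k(\ns)$ has \emph{all} its vertices in $S$, then $d_{\ab}(\rho(\q),0)$ is small. The idea is to compare $\q$ with a nearby cube that is the concatenation of cubes each of which is ``pinned'' at a good vertex. Concretely, pick a generic auxiliary point $y\in\ns$; for a vertex $v$ of $\db k$ with $\q(v)\in S$, the cube obtained by collapsing appropriately (or building a cube with $\q(v)$ at vertex $0^k$) has small $\rho$-value for most choices of the remaining data, since $g(\q(v))\le\epsilon^{1/2}$. Then, using property (ii) of Definition \ref{def:cocycle} (additivity under concatenation) and property (i) (antisymmetry under $\aut(\db k)$), one expresses $\rho(\q)$ as a signed sum of at most $2^k$ terms, each of which is $\rho$ of a cube pinned at some vertex $\q(v)\in S$; averaging over the free data and over the auxiliary point, each such term contributes at most $\epsilon^{1/2}$ on average, so there is a choice of all auxiliary data making $d_{\ab}(\rho(\q),0)\le 2^k\epsilon^{1/2}\le 2^k\epsilon^{1/4}$. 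Since $\rho(\q)$ itself does not depend on the auxiliary data, this bound holds for $\q$ itself. The powers of $2$ and the passage from $\epsilon^{1/2}$ to $\epsilon^{1/4}$ give slack to absorb the union bounds needed to ensure that all the auxiliary cubes used simultaneously avoid their respective bad sets; the hypothesis $\epsilon<2^{-4k}$ guarantees these union bounds are nontrivial.

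The main obstacle is the combinatorial decomposition in the second step: one needs a \emph{fixed} finite scheme that, given any $k$-cube $\q$ with all vertices in $S$, writes $\rho(\q)$ as a bounded signed sum of $\rho$-values of cubes each pinned at one of the vertices of $\q$, where all the pinned cubes are genuine cubes on $\ns$ (this uses the cube-completion and concatenation structure, e.g. \cite[Lemma 3.1.7]{Cand:Notes1} and the gluing lemmas for cubes) and where the ``free'' data ranges over a product of cube spaces so that Fubini and Markov apply cleanly. Setting up this scheme so that the error terms telescope correctly under the antisymmetry relation, and checking measurability of all the intermediate maps, is where the real work lies; the rest is routine measure theory. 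I would expect the induction to be on $k$, splitting a $k$-cube along its last coordinate into two adjacent $(k-1)$-dimensional faces, applying concatenation to peel off one face at a time, and using the $(k-1)$-dimensional statement (or a direct pinning argument) on each piece.
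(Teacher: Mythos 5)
Your first step is essentially the paper's: by Markov applied to $x\mapsto\int_{\cu^k_x(\ns)}d_{\ab}(\rho(\q),0)\ud\mu(\q)$ one obtains a set $S$ of measure $>1-\epsilon^{1/2}$ on which most cubes pinned at $x$ have small $\rho$-value; the paper's $S$ is defined via a two-parameter threshold (probability $\ge 1-\epsilon^{1/4}$ that the defect is $\le\epsilon^{1/4}$), but the two definitions are interchangeable for this purpose.

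The real gap is in your second step, and you correctly sense where it is. You write that one needs a ``fixed finite scheme'' expressing $\rho(\q)$ as a bounded signed sum of $\rho$-values of cubes, each pinned at one vertex $\q(v)\in S$. This is precisely what the \emph{tricube composition} identity (Lemma 2.18 in \cite{CamSzeg}, or Lemma 3.3.31 in \cite{Cand:Notes1}) provides: for every tricube $t\in\mc{T}(\q)$ one has $\rho(\q)=\sum_{v\in\db{k}}(-1)^{|v|}\rho(t\co\Psi_v)$, with each $t\co\Psi_v\in\cu^k_{\q(v)}(\ns)$. Your proposed alternatives to locate such a scheme do not obviously work. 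The ``peeling by concatenation along the last coordinate'' cannot get off the ground because $\rho$ is only defined on $k$-cubes and concatenation requires two $k$-cubes agreeing along an entire $(k-1)$-face, whereas the cubes you want to introduce are only required to agree with $\q$ at a single vertex. More importantly, your assertion that the ``free data ranges over a product of cube spaces so that Fubini and Markov apply cleanly'' is not correct and hides the central subtlety: the $2^k$ auxiliary cubes in the decomposition cannot be chosen independently (if they were, the signed sum would not equal $\rho(\q)$). They must all come from a \emph{single} random tricube $t$, so they are highly coupled. The crucial measure-theoretic input, which is what makes the paper's union bound go through, is that the restricted tricube space $\mc{T}(\q)$ carries a Haar measure such that each marginal $t\mapsto t\co\Psi_v$ is measure-preserving onto $\cu^k_{\q(v)}(\ns)$ (see \cite[Corollary 2.2.22]{Cand:Notes2}); one then applies the union bound over the $2^k$ correlated events. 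Without the tricube identity and this marginal property, the averaging argument you sketch has no underlying object to average over, so as written the proposal does not close.
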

\noindent The proof employs tricubes, which are very useful tools in nilspace theory (\cite[\S 3.1.3]{Cand:Notes1}), especially because they enable an operation akin to convolution (called \emph{tricube composition}) to be performed with cubes (see \cite[Lemma 3.1.16]{Cand:Notes1}). A crucial property of cocyles, which is used repeatedly in this section, is that they commute with this operation in the sense captured in \cite[Lemma 2.18]{CamSzeg} (see also \cite[Lemma 3.3.31]{Cand:Notes1}).
\begin{proof}[Proof of Lemma \ref{lem:coremoval}]
Let 
\[
S=\big\{x\in \ns:  \mu_{\cu^k_x(\ns)}\big(\{\q\in \cu^k_x(\ns): d_{\ab}(\rho(\q),0)\leq \epsilon^{1/4}\}\big)\geq 1-\epsilon^{1/4}\big\},
\]
where $\cu^k_x(\ns):=\{\q\in\cu^k(\ns):\q(0^k)=x\}$, and $\mu_{\cu^k_x(\ns)}$ denotes the Haar probability measure on $\cu^k_x(\ns)$ (see \cite[Lemma 2.2.17]{Cand:Notes2}). By Markov's inequality, we have
\[
\mu_{\ns}(\ns\setminus S)\;\epsilon^{1/2} < \int_{\ns} \int_{\cu^k_x(\ns)} d_{\ab}(\rho(\q),0) \ud\mu_{\cu^k_x(\ns)}(\q) \ud\mu_{\ns}(x) = d_1(\rho,0) \leq \epsilon.
\]
Hence $\mu_{\ns}(S)> 1-\epsilon^{1/2}$.

Now if $\q\in\cu^k(\ns)\cap S^{\db{k}}$, then for each $v\in \db{k}$, by definition of $S$ there is a measure at least $1-\epsilon^{1/4}$ of cubes $\q'\in \cu^k_{\q(v)}(\ns)$ such that $d_{\ab}(\rho(\q'),0)\leq \epsilon^{1/4}$. Recall that the restricted tricube space $\mc{T}(\q):=\hom_{\q\co\omega_k^{-1}}(T_k,\ns)$, being an iterated compact abelian bundle, has a Haar measure (see \cite[Lemma 2.2.12]{Cand:Notes2}, and see \cite[Definition 3.1.15]{Cand:Notes1} for the notion of the \emph{outer-point map} $\omega_k$). Let us denote this Haar measure by $\mu_{\mc{T}(\q)}$. For each $v\in \db{k}$ the map $\mc{T}(\q)\to\cu^k_{\q(v)}(\ns)$, $t\mapsto t\co \Psi_v$ takes this measure $\mu_{\mc{T}(\q)}$ to the Haar measure on $\cu^k_{\q(v)}(\ns)$ (see \cite[Corollary 2.2.22]{Cand:Notes2}, and see \cite[Definition 3.1.13]{Cand:Notes1} for the maps $\Psi_v$). It follows from this and the union bound that
\[
\mu_{\mc{T}(\q)}\big(\big\{t\in \mc{T}(\q): \forall \,v\in \db{k}, \, d_{\ab}\big(\rho(t\co \Psi_v),0\big)\leq \epsilon^{1/4}\big\}\big)\geq 1-2^k \epsilon^{1/4}.
\]
Our assumption for $\epsilon$ implies that this measure is positive, so there exists $t\in \mc{T}(\q)$ with this property, namely such that $d_{\ab}\big(\rho(t\co \Psi_v),0\big)\leq \epsilon^{1/4}$ for every $v\in \db{k}$. For this tricube $t$, we apply the formula $\rho(\q)=\sum_{v\in \db{k}}(-1)^{|v|}\rho(t\co\Psi_v)$, which holds for every tricube in $\mc{T}(\q)$ by \cite[Lemma 3.3.31]{Cand:Notes1}. By the triangle inequality and $\ab$-invariance of $d_{\ab}$, we obtain $d_{\ab}(\rho(\q),0)\leq \sum_{v\in \db{k}} d_{\ab}(\rho(t\co\Psi_v),0)\leq  2^k \epsilon^{1/4}$, as claimed.
\end{proof}
\noindent Using the set $S$ provided by Lemma \ref{lem:coremoval}, we can define a function $g:\ns\to\ab$ such that, subtracting the coboundary $\q\mapsto \sigma_k(g\co\q)$ from $\rho$, we obtain a new cocycle $\rho'$ whose values are \emph{uniformly} close to $0$ (not just close in $d_1$), as follows.
\begin{lemma}\label{lem:approxfn}
Let $\ns$ be a compact nilspace, let $\ab$ be a compact abelian group with compatible  $\ab$-invariant metric $d_{\ab}$, let $C$ denote the diameter of $\ab$ relative to $d_{\ab}$, let $\rho:\cu^k(\ns)\to\ab$ be a Borel cocycle, let $\epsilon\in (0,2^{-4k})$, and suppose that $d_1(\rho,0)\leq \epsilon$. Then there is a Borel function $g:\ns\to\ab$ with $d_1(g,0)\leq (2+C)4^k\epsilon^{1/4}$ such that $\rho':\q\mapsto \rho(\q)-\sigma_k(g\co\q)$ satisfies $d_{\ab}(\rho'(\q),0)\leq 8^k\epsilon^{1/4}$, $\forall\q\in \cu^k(\ns)$.
\end{lemma}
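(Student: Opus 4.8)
The plan is to use the set $S$ from Lemma \ref{lem:coremoval}, on which $\rho$ is uniformly near $0$ on cubes, to build $g$ by a one-step "integration" of $\rho$ over $k$-cubes based at a point. Concretely, I would first pick a base point $x_0\in S$ (more precisely, a point at which the conditional measure on $\cu^k_{x_0}(\ns)$ gives $S$-valued cubes full-ish mass — such a point exists since $\mu_\ns(S)>1-\epsilon^{1/2}$ and by Fubini over $\cu^k(\ns)$). Then for $x\in\ns$ define $g(x)$ using a cube $\q$ with $\q(0^k)=x_0$ and $\q(1^k)=x$ (say, an edge-type cube, or more robustly a $k$-cube whose values other than $x$ all lie in $S$), setting $g(x)$ equal to a suitable signed sum $\sigma_k$ of $\rho$ evaluated on such cubes, so that by the cocycle identity the coboundary $\q\mapsto\sigma_k(g\co\q)$ captures the "large" part of $\rho$. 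I would choose $g(x)$ for $x\in S$ first, where the construction is clean and the averaging cubes can be kept inside $S$, and then extend $g$ arbitrarily (e.g. by $0$) off $S$; since $\mu_\ns(\ns\setminus S)<\epsilon^{1/2}$, this only costs $O(C\epsilon^{1/2})$ in $d_1(g,0)$, absorbed into the claimed bound $(2+C)4^k\epsilon^{1/4}$.

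The key steps, in order, would be: (1) apply Lemma \ref{lem:coremoval} to get $S$ with $\mu_\ns(S)>1-\epsilon^{1/2}$ and $d_\ab(\rho(\q),0)\le 2^k\epsilon^{1/4}$ for $\q\in\cu^k(\ns)\cap S^{\db{k}}$; (2) fix a good base point $x_0\in S$ and, for each $x$, locate (by a union bound / positive-measure argument, using the ergodicity of the cube structure and Haar measures on fibered cube spaces as in the proof of Lemma \ref{lem:coremoval}) a $k$-cube $\q_x$ with $\q_x(0^k)=x_0$, $\q_x(1^k)=x$, and with the remaining $2^k-2$ vertices in $S$ whenever $x\in S$; (3) set $g(x):=\sigma_k(\rho\text{-data along }\q_x)$ — more precisely $g(x)$ should be defined so that $\sigma_k(g\co\q)$ and $\rho(\q)$ agree up to a controlled error, which forces $g(x)$ to be (essentially) $\rho$ evaluated on a degenerate-in-one-direction cube from $x_0$ to $x$; (4) verify the coboundary identity: for a generic $\q\in\cu^k(\ns)$, glue the cubes $\q_{\q(v)}$ along the faces of $\q$ to express $\sigma_k(g\co\q)$ as a signed sum of $\rho$-values on cubes, using property (i) (antisymmetry under $\aut(\db k)$) and property (ii) (additivity under concatenation) of cocycles together with the tricube-composition/Gray-code identities from \cite[\S 3.1.3, \S 3.3.3]{Cand:Notes1}; the terms that survive are $\rho(\q)$ plus $O(2^k)$ many $\rho$-values on cubes that, when $\q\in S^{\db k}$, lie in $S^{\db k}$ and are therefore each of size $\le 2^k\epsilon^{1/4}$, giving $d_\ab(\rho'(\q),0)\le 8^k\epsilon^{1/4}$ there; (5) bound $d_1(\rho',0)$ and $d_1(g,0)$: on $S^{\db k}$ we have the pointwise bound just obtained, and the complementary event $\{\q:\q\notin S^{\db k}\}$ has measure $\le 2^k\epsilon^{1/2}$, so integrating (with diameter $C$) yields the stated bounds after choosing the exponent $1/4$ to dominate $1/2$ for small $\epsilon$.

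The step I expect to be the main obstacle is step (4): making the "telescoping over the faces of $\q$" rigorous. One must show that the signed combination $\sigma_k(g\co\q)=\sum_{v\in\db k}(-1)^{|v|}g(\q(v))$, with each $g(\q(v))$ itself a $\rho$-value on a cube joining $x_0$ to $\q(v)$, collapses — via concatenation/antisymmetry of the cocycle and a tricube-composition argument in the spirit of Lemma \ref{lem:coremoval} — to $\rho(\q)$ modulo a bounded number of error cubes. The subtlety is that the auxiliary cubes $\q_{\q(v)}$ are only chosen on a large-measure set and need to be assembled into an honest cube configuration to which \cite[Lemma 2.18]{CamSzeg}/\cite[Lemma 3.3.31]{Cand:Notes1} applies; I would handle this by first proving the identity on a positive-measure (hence dense enough) set of configurations using a single well-chosen tricube $t\in\mc{T}(\q)$ whose faces $t\co\Psi_v$ realize all the relevant auxiliary cubes simultaneously, and then noting that because $\rho$ is Borel and the bounds are in $L^1$, working on a large-measure set suffices for all conclusions of the lemma. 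The other, more routine, obstacle is bookkeeping the numerical constants so that they fit under $(2+C)4^k\epsilon^{1/4}$ and $8^k\epsilon^{1/4}$; this is where the hypothesis $\epsilon<2^{-4k}$ is used to keep the relevant union bounds and positive-probability arguments valid.
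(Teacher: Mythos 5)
Your proposal has a genuine gap: you are proving the wrong kind of bound on $\rho'$. The lemma requires the \emph{pointwise} estimate $d_{\ab}(\rho'(\q),0)\leq 8^k\epsilon^{1/4}$ for \emph{every} $\q\in\cu^k(\ns)$, but your step (5) only establishes the bound on the good set $S^{\db k}$ and then proposes to ``integrate'' the complementary small-measure event against the diameter $C$, which at best gives an $L^1$ bound on $\rho'$. That is not enough: the whole point of this lemma (see the subsequent Proposition \ref{prop:costab}) is to produce a cocycle $\rho'$ that is \emph{uniformly} close to zero so that \cite[Lemma 2.5.7]{Cand:Notes2} can be applied to conclude $\rho'$ is a coboundary; an $L^1$-small cocycle would just reproduce the hypothesis of the lemma and make no progress.

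The paper's proof achieves the uniform bound through two mechanisms you are missing. First, $g(x)$ is defined for \emph{every} $x\in\ns$ (not just $x\in S$, nor via a fixed base point $x_0$): for each $x$ one picks a cube $\q_0\in\cu^k_x(\ns)$ anchored at $x$ whose remaining $2^k-1$ vertices lie in $S$ (such $\q_0$ exists with positive probability by a union bound), and sets $g(x)=\rho(\q_0)$; a tricube argument then shows that $g(x)$ is ``well-defined up to error $4^k\epsilon^{1/4}$'' in the precise sense of \eqref{eq:rootco}, i.e.\ the value $\rho(\q)$ is within $4^k\epsilon^{1/4}$ of $g(x)$ for a $(1-4^k\epsilon^{1/2})$-fraction of $\q\in\cu^k_x(\ns)$. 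Second, for an \emph{arbitrary} $\q\in\cu^k(\ns)$ (not only $\q\in S^{\db k}$), a union bound over the $2^k$ vertices combined with \eqref{eq:rootco} shows that with probability $>1-8^k\epsilon^{1/2}$ a tricube $t\in\mc{T}(\q)$ has $\rho(t\co\Psi_v)\approx_{4^k\epsilon^{1/4}} g(\q(v))$ for all $v$ simultaneously. The hypothesis $\epsilon<2^{-4k}$ forces $8^k\epsilon^{1/2}<1$, so such a tricube \emph{exists}, and the cocycle identity $\rho(\q)=\sum_v(-1)^{|v|}\rho(t\co\Psi_v)$ then yields the pointwise bound for that single $\q$. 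This existence-of-a-good-tricube argument, valid for every $\q$, is what your proposal lacks; your fixed-base-point construction and restriction to $S^{\db k}$ do not give you a uniform conclusion.
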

\begin{proof}
Let $S$ be the subset of $\ns$ given by Lemma \ref{lem:coremoval}. 

We claim that for every $x\in \ns$ there exists an element $g(x)\in \ab$ such that 
\begin{equation}\label{eq:rootco}
\mu_{\cu^k_x(\ns)} \big(\big\{\q\in\cu^k_x(\ns):d_{\ab}\big(\rho(\q),g(x)\big)\leq 4^k\epsilon^{1/4}\big\}\big)> 1-4^k\epsilon^{1/2}.
\end{equation}
To see this, fix any $x\in \ns$, and note that for each $v\neq 0^k$, the map $\cu^k_x(\ns)\to \ns$, $\q\mapsto \q(v)$ preserves the Haar measures (by \cite[Lemma 2.2.14]{Cand:Notes2} with $n=k$, $P=\db{k}$, $P_1=\{0^k\}$, $P_2=\{v\}$). Since $\mu(S)> 1-\epsilon^{1/2}$, by the union bound we therefore have 
$ \mu_{\cu^k_x(\ns)} \big(\big\{\q\in\cu^k_x(\ns): \forall\,v\neq 0^k,\, \q(v)\in S\big\}\big)> 1-(2^k-1) \epsilon^{1/2}$. Fix any cube $\q_0\in \cu^k_x(\ns)$ with $\q_0(v)\in S$ for every $v\neq 0^k$. Combining the last inequality with the fact (used in the previous proof) that the map $\mc{T}(\q_0)\to \cu^k_{\q_0(v)}(\ns)$, $t\mapsto t\co\Psi_v$ preserves the Haar measures, we deduce by the union bound that
\[
\mu_{\mc{T}(\q_0)}\big(\big\{t\in\mc{T}(\q_0): \forall\,v\neq 0^k,\, t\co\Psi_v \in S^{\db{k}}\big\}\big) > 1-(2^k-1)^2 \epsilon^{1/2} > 1-4^k\epsilon^{1/2}.
\]
Let $g(x):=\rho(\q_0)$, and note that $\q_0$ can be chosen to make the function $g:\ns\to\ab$ Borel, by \cite[Theorem (12.16), (12.18)]{Ke} and the continuity of the map $\q\mapsto \q(0^k)$.

For every tricube $t$ in the above set, we have $\rho(\q_0)= \sum_{v\in \db{k}} (-1)^{|v|} \rho(t\co\Psi_v)$ and, for every $v\neq 0^k$, since $t\co\Psi_v\in S^{\db{k}}$, we have $d_{\ab}(\rho(t\co\Psi_v),0)\leq 2^k\epsilon^{1/4}$ by Lemma \ref{lem:coremoval}. We deduce that $d_{\ab}\big(g(x),\rho(t\co\Psi_{0^k})\big) \leq 4^k\epsilon^{1/4}$. Hence
\begin{equation}\label{eq:trico1}
\mu_{\mc{T}(\q_0)}\big(\big\{t\in\mc{T}(\q_0): g(x)\approx_{4^k\epsilon^{1/4}}\rho(t\co\Psi_{0^k})\big\}\big) > 1-4^k\epsilon^{1/2}.
\end{equation}
Since the map $\mc{T}(\q_0)\to \cu^k_x(\ns)$, $t\mapsto t\co\Psi_{0^k}$ preserves the Haar measures, we have that \eqref{eq:trico1} is equivalent to \eqref{eq:rootco}, which proves our claim.

Define the coboundary $f:\cu^k(\ns)\to\ab$ by $f(\q)=\sigma_k(g\co\q)$. Fix any cube $\q\in \cu^k(\ns)$. By the measure-preserving properties used earlier, the union bound, and \eqref{eq:rootco}, we have
\[
\mu_{\mc{T}(\q)}\big(\big\{t\in\mc{T}(\q): \forall\,v\in\db{k},\, d_{\ab}\big(\rho(t\co\Psi_v),g\co\q(v)\big)\leq 4^k\epsilon^{1/4} \big\}\big) > 1-8^k\epsilon^{1/2}.
\]
By our assumption on $\epsilon$ we have $8^k\epsilon^{1/2}<1$, so there exists $t\in\mc{T}(\q)$ with the above property. Applying the formula $\rho(\q)= \sum_{v\in \db{k}} (-1)^{|v|} \rho(t\co\Psi_v)$ for this $t$, and the triangle inequality (and shift invariance of $d_{\ab}$), we deduce that $d_{\ab}\big(\rho(\q),f(\q)\big)\leq 8^k\epsilon^{1/4}$, as required. Finally, we have
\begin{eqnarray*}
& & d_1(g,0)\;  = \int_{\ns} d_{\ab}(g(x),0) \ud\mu_{\ns}(x) \; = \; \int_{\ns}\int_{\cu^k_x(\ns)}  d_{\ab}(g(x),0) \ud\mu_{\cu^k_x(\ns)}(\q)\ud\mu_{\ns}(x) \\
& \leq & \int_{\ns}\int_{\cu^k_x(\ns)}  d_{\ab}(g(x)-\rho(\q),0) \ud\mu_{\cu^k_x(\ns)}(\q)\ud\mu_{\ns}(x) \,+\,\int_{\cu^k(\ns)}  d_{\ab}(\rho(\q),0) \ud\mu_{\cu^k(\ns)}(\q).
\end{eqnarray*}
The latter integral is $d_1(\rho,0)$, and by \eqref{eq:rootco} the former integral is at most $(1+C) 4^k\epsilon^{1/4}$. Hence $d_1(g,0)\leq d_1(\rho,0)  + (1+C) 4^k\epsilon^{1/4}\leq (2+C)4^k\epsilon^{1/4}$, as required.
\end{proof}
We can now complete the proof of the stability result for cocycles.
\begin{proof}[Proof of Proposition \ref{prop:costab}]
We know by \cite[Lemma 2.5.7]{Cand:Notes2} that there exists $\epsilon_0>0$ depending only on $\ab$ and $k$ such that if a cocycle $\rho':\cu^k(\ns)\to \ab$ takes all its values within distance $\epsilon_0$ of $0_{\ab}$, then $\rho'$ is a coboundary. Applying Lemma \ref{lem:approxfn} with $\epsilon$ sufficiently small in terms of $\epsilon_0$ and $k$, we conclude that $\rho-f$ is a coboundary, where $f(\q)=\sigma_k(g\co \q)$. Since $f$ is also a coboundary, it follows that $\rho$ is a coboundary.
\end{proof}

\subsection{Proof of the stability result for morphisms}\label{subsec:proofstab}\hfill\\
\noindent Given a $k$-step nilspace $\ns$, for $j\in [k]$ we denote by $\ns_j$ the $j$-th \emph{factor} of $\ns$ (also denoted by $\mc{F}_j(\ns)$, with $\mc{F}_k(\ns)=\ns$), and by $\pi_j$ the factor map $\ns\to\ns_j$ (see \cite[Lemma 3.2.10]{Cand:Notes1}). If $\ns$ is compact, with a compatible $\ab_k$-invariant metric $d$, we can always metrize $\ns_{k-1}$ with the quotient metric corresponding to $d$ the standard way (see \cite[(2.2)]{Cand:Notes2}).

We shall use the following rectification result for cubes (see  \cite[Lemma 2.8.3]{Cand:Notes2}).
\begin{lemma}\label{lem:verticubapprox}
Let $\ns$ be a $k$-step compact nilspace with compatible $\ab_k$-invariant metric $d$, and let $d'$ be the quotient metric on $\ns_{k-1}$. For every $\epsilon>0$ there exists $\delta>0$ such that the following holds. If $\q\in \cu^{k+1}(\ns)$ satisfies $d'\big(\pi_{k-1}\co \q(\cdot,0),\pi_{k-1}\co\q(\cdot,1)\big)\leq \delta$ on $\db{k}$, then there is $\q'\in \cu^{k+1}(\ns)$ with $\q\approx_\epsilon \q'$ and $\pi_{k-1}\co \q'(\cdot,0) = \pi_{k-1}\co\q'(\cdot,1)$ on $\db{k}$.
\end{lemma}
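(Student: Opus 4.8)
The plan is to reduce the statement to a rectification problem on the structure group $\ab_k$ of the top factor of $\ns$, where it becomes a question of approximating cubes that are "almost vertical" by genuinely vertical ones. First I would fix the compatible metric $d$ on $\ns$ and the induced quotient metric $d'$ on $\ns_{k-1}$. Since $\pi_{k-1}:\ns\to\ns_{k-1}$ realizes $\ns$ as a compact abelian bundle over $\ns_{k-1}$ with structure group $\ab_k$ (see \cite[\S 2.2]{Cand:Notes2}), a cube $\q\in\cu^{k+1}(\ns)$ projects to $\bar{\q}:=\pi_{k-1}\co\q\in\cu^{k+1}(\ns_{k-1})$, and the fibres of $\pi_{k-1}$ are cosets of $\ab_k$; the hypothesis says precisely that for each $v\in\db{k}$ the two points $\bar{\q}(v,0)$ and $\bar{\q}(v,1)$ are within $\delta$ of each other in $d'$. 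The goal is to perturb $\q$ within the fibres (changing it by an amount controlled by $\delta$) so that the projected cube becomes genuinely vertical, i.e. $\bar{\q}'(v,0)=\bar{\q}'(v,1)$ for all $v$, while keeping the perturbed map a cube.

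The key steps, in order, are as follows. (i) Use the compactness of $\ns$ and the continuity of the cube set $\cu^{k+1}(\ns)$ (a closed subset of $\ns^{\db{k+1}}$) to argue by a standard compactness/contradiction scheme: if the conclusion fails, there is $\epsilon>0$ and a sequence $\q^{(m)}\in\cu^{k+1}(\ns)$ with the vertical-defect of the projection tending to $0$ but with no $\epsilon$-close vertical-projection cube; passing to a convergent subsequence $\q^{(m)}\to\q^\infty$, the limit $\q^\infty$ is a cube whose projection $\pi_{k-1}\co\q^\infty$ is exactly vertical, i.e. $\pi_{k-1}\co\q^\infty(v,0)=\pi_{k-1}\co\q^\infty(v,1)$ for all $v$. (ii) Observe that $\q^\infty$ itself is then an admissible "$\q'$" for $\q=\q^\infty$ with defect $0<\epsilon$, but one must produce such a $\q'$ for every $\q^{(m)}$, not just the limit; so the real content is a \emph{local} rectification: near any cube whose projection is vertical, every cube whose projection is $\delta$-nearly vertical admits an $\epsilon$-close cube with vertical projection. (iii) To get this local statement, lift the picture to the structure group: writing $\q$ in terms of a fixed continuous local section of $\pi_{k-1}$ over a small neighbourhood of $\bar{\q}^{\infty}$ in $\cu^{k+1}(\ns_{k-1})$ plus an $\ab_k$-valued coordinate $\alpha:\db{k+1}\to\ab_k$, the condition that $\q$ be a cube becomes (after correcting by the section) the condition that $\alpha\in\cu^{k+1}(\ab_k^{(D_k)})$ for the appropriate degree-$k$ filtration, and the "projection is vertical" condition becomes that the base cube $\bar{\q}$ equals $\bar{\q}(\cdot,0)$ pulled back along the last-coordinate projection. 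Rectifying $\bar{\q}$ (which is $\delta$-nearly of this form in $\ns_{k-1}$) to an exactly vertical base cube $\bar{\q}'$ can be done by the analogous—already known—rectification on the $(k-1)$-step nilspace $\ns_{k-1}$ applied to lower-dimensional cubes, or directly by an averaging/projection argument on $\cu^{k+1}(\ns_{k-1})$, choosing $\delta$ small enough that the perturbation is $\epsilon/2$-small; then lift $\bar{\q}'$ back through the section and adjust $\alpha$ within $\cu^{k+1}(\ab_k^{(D_k)})$ by a further $\epsilon/2$-small amount to restore the cube condition, using that the relevant group of cube coordinates is a closed (hence "rectifiable") subgroup of a torus-by-finite Lie group.

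The main obstacle I expect is step (iii): showing that the two corrections—first rectifying the \emph{base} cube in $\ns_{k-1}$, then re-adjusting the \emph{fibre} coordinate so that the full lifted map lands back in $\cu^{k+1}(\ns)$—can be carried out simultaneously with quantitative control, i.e. that small movements of the base cube can be accompanied by small movements in the fibre preserving the cube axioms. This is essentially a statement that the projection map $\cu^{k+1}(\ns)\to\cu^{k+1}(\ns_{k-1})$ is, locally near a vertical cube, an open map with a controlled local section; this should follow from the compact-abelian-bundle structure (\cite[Lemma 2.2.12]{Cand:Notes2}) together with the finite-rank (Lie) hypothesis ensuring that the structure group $\ab_k$ and the associated cube groups are Lie groups, so that one may invoke a Lipschitz local section. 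One then simply chooses $\delta$ small enough that all the perturbations sum to at most $\epsilon$, contradicting the assumed failure in step (i), which completes the proof.
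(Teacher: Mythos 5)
The paper does not actually prove this lemma; it cites it directly as \cite[Lemma 2.8.3]{Cand:Notes2}, so there is no proof in the present paper to compare against. That said, your compactness argument in step (i) is essentially a complete and correct proof on its own, and you have misread its strength, which leads you to an unnecessary and (in one place) unavailable detour in steps (ii) and (iii).

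Concretely: in step (i) you suppose the conclusion fails, extract a sequence $\q^{(m)}\in\cu^{k+1}(\ns)$ whose vertical defect tends to $0$ but which admits no $\epsilon$-close cube with exactly vertical projection, and pass to a limit $\q^{\infty}\in\cu^{k+1}(\ns)$ which has exactly vertical projection (using that $\cu^{k+1}(\ns)$ is closed, hence compact, and that $\pi_{k-1}$ and $d'$ are continuous). That is already the end of the proof: for $m$ large enough we have $d(\q^{(m)},\q^{\infty})<\epsilon$, so $\q'=\q^{\infty}$ is an $\epsilon$-close cube with vertical projection for $\q^{(m)}$, contradicting the choice of $\q^{(m)}$. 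You do not need a ``local rectification'' at all; the sequence $\q^{(m)}$ converges \emph{to} the rectified object, and convergence is precisely the $\epsilon$-closeness you need. Your step (ii) claim that $\q^{\infty}$ is only an admissible $\q'$ ``for $\q=\q^{\infty}$'' is the misreading: it is admissible for every $\q^{(m)}$ close to it.

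The detour in step (iii) is not only unnecessary but also slightly off-target for this particular lemma: the statement is for a general $k$-step \emph{compact} nilspace, not a \textsc{cfr} one, so the structure group $\ab_k$ need not be a Lie group and the ``Lipschitz local section'' you invoke is not available in the stated generality. The soft compactness argument avoids all of this, which is presumably why the cited reference proves the lemma in this generality without finite-rank hypotheses. In short: keep step (i), add the one missing sentence observing that $\q^{\infty}$ serves as $\q'$ for all large $m$, and delete steps (ii) and (iii).
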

\noindent Recall from \cite[Definition 2.2.30]{Cand:Notes1} the notation $\mc{D}_k(\ab)$ for the degree-$k$ nilspace structure on an abelian group $\ab$. In our proof of Theorem \ref{thm:rigid}, we argue by induction on $k$. Each step of the induction uses the following special case of the theorem. 
\begin{lemma}\label{lem:stabiabcase}
Let $\ab$ be a compact abelian Lie group equipped with a compatible $\ab$-invariant metric $d_{\ab}$, and let $k\in\mb{Z}_{\geq 0}$. For every $\epsilon>0$ there exists $\delta=\delta(\epsilon,k,\ab) > 0$ such that if $\phi$ is a $(\delta,1)$-quasimorphism from a compact $k$-step nilspace $\ns$ to $\mc{D}_k(\ab)$, then there is a morphism $\phi':\ns\to \mc{D}_k(\ab)$ such that $d_1(\phi,\phi')\leq \epsilon$. 
\end{lemma}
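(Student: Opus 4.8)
The plan is to reduce this abelian case to the cocycle stability result, Proposition \ref{prop:costab}, by exploiting the fact that morphisms from a $k$-step nilspace $\ns$ into $\mc{D}_k(\ab)$ are essentially parametrized by cocycles of degree $k-1$ on $\ns$ (more precisely, by $\ab$-valued functions $\phi$ for which $\q\mapsto\sigma_k(\phi\co\q)$ vanishes on $\cu^k(\ns)$, since such $\sigma_k$-expressions are exactly the relevant cocycle-type quantities for $\mc{D}_k(\ab)$-cubes). First I would take a $(\delta,1)$-quasimorphism $\phi:\ns\to\mc{D}_k(\ab)$ and consider the Borel function $\rho:\cu^{k+1}(\ns)\to\ab$ defined by $\rho(\q)=\sigma_{k+1}(\phi\co\q)$. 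Recalling that the cubes of $\mc{D}_k(\ab)$ are precisely the maps $\db{k+1}\to\ab$ on which $\sigma_{k+1}$ vanishes, the quasimorphism hypothesis \eqref{eq:quasim} says that for at least a $(1-\delta)$-measure of $\q\in\cu^{k+1}(\ns)$ there is a genuine cube $\q'$ of $\mc{D}_k(\ab)$ within $\delta$ of $\phi\co\q$ pointwise; since $\sigma_{k+1}(\q')=0$ and $\sigma_{k+1}$ is Lipschitz (it is a fixed signed sum of $2^{k+1}$ coordinates), this forces $d_\ab(\rho(\q),0)\leq 2^{k+1}\delta$ on that set, and $\rho$ is bounded by $\diam(\ab)$ everywhere. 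Hence $d_1(\rho,0)\leq 2^{k+1}\delta+\diam(\ab)\,\delta$, which we can make as small as we like by choosing $\delta$ small in terms of $\epsilon$, $k$ and $\ab$.

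The next step is to upgrade $\rho$ from ``small in $d_1$'' to ``equal to a genuine cocycle close to $\phi$''. The subtlety is that $\rho$ as defined is not literally a cocycle in the sense of Definition \ref{def:cocycle} — it satisfies the symmetry and concatenation identities formally (these are algebraic identities satisfied by $\sigma_{k+1}$ applied to any function, using that $\phi$ is merely a function), so in fact $\rho$ \emph{is} a degree-$k$ cocycle on $\ns$ automatically, regardless of whether $\phi$ is a morphism. Thus I can apply Proposition \ref{prop:costab} directly: for $\delta$ small enough that $d_1(\rho,0)$ is below the threshold $\epsilon$ coming from that proposition (with the group $\ab$ and degree $k+1$), we conclude $\rho$ is a coboundary, i.e.\ there is a Borel $h:\ns\to\ab$ with $\rho(\q)=\sigma_{k+1}(h\co\q)$ for all $\q\in\cu^{k+1}(\ns)$. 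Moreover, tracking the construction through Lemma \ref{lem:approxfn} (which underlies Proposition \ref{prop:costab}), the function $h$ can be taken with $d_1(h,\phi)$ small: indeed Lemma \ref{lem:approxfn} produces a function $g$ with $d_1(g,0)=O_k(\diam(\ab))\,(d_1(\rho,0))^{1/4}$ such that $\rho-\sigma_{k+1}(g\co\cdot)$ takes uniformly small values; applying \cite[Lemma 2.5.7]{Cand:Notes2} then writes that residual cocycle as $\sigma_{k+1}(g'\co\cdot)$ for a further function $g'$ that is uniformly small, so $h=g+g'$ works and $d_1(h,0)$ is small — whence, after replacing $h$ by $\phi-h$ (note $\sigma_{k+1}((\phi-h)\co\q)=\rho(\q)-\rho(\q)=0$), we get a Borel map $\phi':=\phi-h:\ns\to\ab$ with $\sigma_{k+1}(\phi'\co\q)=0$ for all $\q\in\cu^{k+1}(\ns)$ and $d_1(\phi,\phi')=d_1(h,0)\leq\epsilon$.

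Finally I would verify that this $\phi'$ is (a.e.\ equal to) a genuine \emph{continuous} morphism $\ns\to\mc{D}_k(\ab)$. The condition $\sigma_{k+1}(\phi'\co\q)=0$ for all $\q\in\cu^{k+1}(\ns)$ says exactly that $\phi'$ maps $\cu^{k+1}(\ns)$ into $\cu^{k+1}(\mc{D}_k(\ab))$; one checks analogously (or deduces, using that cubes of lower dimension embed into $\cu^{k+1}$ via Lemma \ref{lem:verticubapprox}-type face inclusions and the consistency structure of nilspaces) that $\phi'$ sends $\cu^n(\ns)$ into $\cu^n(\mc{D}_k(\ab))$ for all $n$, so $\phi'$ is a nilspace morphism. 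Continuity (and correcting $\phi'$ on a null set to a genuinely everywhere-defined continuous morphism) follows from the standard fact that a measurable morphism between compact nilspaces agrees a.e.\ with a continuous one, e.g.\ via the rigidity/rectification machinery of \cite[\S 2.8]{Cand:Notes2} together with Lemma \ref{lem:verticubapprox}; alternatively, since $\mc{D}_k(\ab)$ is a compact abelian group viewed as a nilspace, a measurable affine-type map that is a morphism a.e.\ is automatically continuous. The main obstacle, and the place requiring the most care, is the bookkeeping in the second step: ensuring that the coboundary correction supplied by Proposition \ref{prop:costab} can be chosen \emph{close to $\phi$ in $d_1$} and not merely close to $0$, which is why I would go through the internals of Lemma \ref{lem:approxfn} rather than invoking Proposition \ref{prop:costab} as a black box.
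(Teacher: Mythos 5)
Your opening moves are exactly the paper's: define $\rho(\q)=\sigma_{k+1}(\phi\co\q)$, observe that the quasimorphism hypothesis forces $d_1(\rho,0)\lesssim (2^{k+1}+\diam\ab)\,\delta$, and invoke Lemma~\ref{lem:approxfn} to produce a Borel $g:\ns\to\ab$ with $d_1(g,0)$ small such that $\rho-\sigma_{k+1}(g\co\cdot)$ takes \emph{uniformly} small values. The divergence, and the gap, is in what you do next. The paper observes that this uniform bound means $\phi_1:=\phi-g$ is a $(\delta',\infty)$-quasimorphism (it constructs an explicit nearby cube $\q'$ by correcting one vertex), and then invokes \cite[Theorem 2.8.2]{Cand:Notes2} as a black box: this theorem directly hands a \emph{continuous} morphism $\phi'$ with $d_\infty(\phi_1,\phi')\le\epsilon/2$, so $d_1(\phi,\phi')\le d_1(g,0)+\epsilon/2$ and you are done. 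You instead try to finish via \cite[Lemma 2.5.7]{Cand:Notes2} (cocycle rigidity), asserting that the residual cocycle can be written $\sigma_{k+1}(g'\co\cdot)$ with $g'$ \emph{uniformly small}.

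That last assertion is the genuine gap. As cited in the paper, \cite[Lemma 2.5.7]{Cand:Notes2} only guarantees that a uniformly small cocycle \emph{is a coboundary}; it does not come packaged with a bound on the witness. Since a coboundary $\sigma_{k+1}(g'\co\cdot)$ determines $g'$ only up to adding an arbitrary morphism $\ns\to\mc{D}_k(\ab)$, ``the cocycle is small'' does not formally imply ``some witness is small'' --- that is a strictly stronger statement that needs its own proof. This is precisely the kind of quantitative control that \cite[Theorem 2.8.2]{Cand:Notes2} supplies and that your outline merely asserts. Relatedly, your continuity step is too casual: $\phi'=\phi-h$ is a priori only a Borel map satisfying the morphism identities, and the ``alternative'' argument you suggest (measurable homomorphisms between Polish groups are continuous) does not apply, since $\ns$ need not be a group and $\phi'$ is not a homomorphism. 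The paper avoids this entirely because \cite[Theorem 2.8.2]{Cand:Notes2} already produces a continuous morphism. To repair your argument you would in effect have to reprove the relevant part of \cite[Theorem 2.8.2]{Cand:Notes2} for the target $\mc{D}_k(\ab)$; it is cleaner (and what the paper does) to use Lemma~\ref{lem:approxfn} only to upgrade $\phi$ to a $(\delta',\infty)$-quasimorphism and then cite that theorem.
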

\begin{proof}
Let $C$ be the diameter of $\ab$ relative to $d_{\ab}$. Let $\delta'\in \big(0, \epsilon/(2+C)\big)$ be sufficiently small for the conclusion of \cite[Theorem 2.8.2]{Cand:Notes2} to hold with initial parameter $\epsilon/2$, for every $(\delta',\infty)$-quasimorphism $\ns\to\mc{D}_k(\ab)$. Let $0<\delta <\delta'^4/\big(8^{4(k+1)} (2^{k+1}+C)\big)$.

Let $\rho$ be the coboundary $\q\mapsto \sigma_{k+1}(\phi\co\q)$. From our assumption, inequality \eqref{eq:quasim}, and the definition of the cube structure on $\mc{D}_k(\ab)$ (see \cite[formula (2.9)]{Cand:Notes1}) it follows that $d_1(\rho,0)\leq (2^{k+1}+C)\delta$. By Lemma \ref{lem:approxfn} applied with $\epsilon_0=(2^{k+1}+C)\delta$, there exists a Borel function $g:\ns\to \ab$ such that $d_{\ab}\big(\rho(\q)-\sigma_{k+1}(g\co \q),0\big)\leq 8^{k+1}\epsilon_0^{1/4}<\delta'$ for \emph{every} cube $\q\in\cu^{k+1}(\ns)$. Equivalently, the map $\phi_1:\ns\to\ab$, $x\mapsto \phi(x)-g(x)$ satisfies $d_{\ab}\big(\sigma_{k+1}(\phi_1\co\q),0\big)\leq \delta'$. Let $\q'\in\cu^{k+1}\big(\mc{D}_k(\ab)\big)$ be the cube such that $\q'(v)=\phi_1\co\q(v)$ for $v\neq 0^{k+1}$ and $\q'(0^{k+1})=\phi_1\co\q(0^{k+1})-\sigma_{k+1}(\phi_1\co\q)$ (note that $\q'$ is indeed in $\cu^{k+1}\big(\mc{D}_k(\ab)\big)$ since $\sigma_{k+1}(\q')=0$). We clearly have $d_{\ab}\big(\q'(v),\phi_1\co\q(v)\big)\leq \delta'$ for every $v\in\db{k+1}$. We have thus shown that $\phi_1$ is a $(\delta',\infty)$-quasimorphism. 

We can thus apply \cite[Theorem 2.8.2]{Cand:Notes2} to conclude that there is a continuous morphism $\phi': \ns\to \mc{D}_k(\ab)$ such that $d_{\ab}\big(\phi_1(x),\phi'(x)\big)\leq \epsilon /2$ for all $x\in \ns$. Hence $d_1(\phi,\phi')\leq d_1(\phi,\phi_1)+ d_1(\phi_1,\phi')\leq d_1(g,0)+\epsilon/2$. By Lemma \ref{lem:approxfn} we have $d_1(g,0)\leq (2+C)4^{k+1}\epsilon_0^{1/4}=\frac{(2+C)\delta'}{2^{k+1}}\leq \epsilon/2$.
\end{proof}
\noindent We need one more lemma before the proof of Theorem \ref{thm:rigid}. This lemma enables us to lift certain Borel maps, and is useful for the inductive step in the proof of the theorem.
\begin{lemma}\label{lem:lift}
Let $\nss$ be a $k$-step \textsc{cfr} nilspace, with $k$-th structure group $\ab_k$, let $d$ be a $\ab_k$-invariant compatible metric on $\nss$, with corresponding quotient metric $d'$ on $\nss_{k-1}$. For every $\epsilon>0$ there exists $\delta>0$ such that the following holds. Let $\ns$ be a  $k$-step compact nilspace, let $\phi:\ns\to\nss$ be a Borel map, let $\phi_1=\pi_{k-1,\nss}\co \phi:\ns\to\nss_{k-1}$, and let $\phi_2:\ns\to\nss_{k-1}$ be a continuous map such that for some Borel set $A\subset \ns$ we have  $d'(\phi_1(x),\phi_2(x))<\delta$ for every $x\in A$. Then there is a Borel map $\phi_3:\ns\to\nss$ such that for every $x\in \ns$, $\pi_{k-1,\nss}\co \phi_3(x)=\phi_2(x)$, and for every $x\in A$, $d(\phi(x),\phi_3(x))<\epsilon$.
\end{lemma}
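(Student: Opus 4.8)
The plan is to reduce the problem to a local lifting statement for the fibration $\pi_{k-1,\nss}:\nss\to\nss_{k-1}$, using the fact that a \textsc{cfr} nilspace is a finite-dimensional compact manifold and that $\pi_{k-1,\nss}$ exhibits $\nss$ as a principal bundle over $\nss_{k-1}$ with structure group the $k$-th structure group $\ab_k$ of $\nss$ (a compact abelian Lie group); see \cite[Lemma 2.5.3]{Cand:Notes2} and the bundle description of nilspaces in \cite[\S 3.3]{Cand:Notes1}. Concretely, by compactness of $\nss_{k-1}$ we can cover it by finitely many open sets $U_1,\dots,U_r$ over each of which the bundle trivializes, i.e. there is a continuous section $s_j:U_j\to\nss$ with $\pi_{k-1,\nss}\co s_j=\id_{U_j}$, and every point of $\pi_{k-1,\nss}^{-1}(U_j)$ is uniquely of the form $a\cdot s_j(y)$ for $a\in \ab_k$, $y\in U_j$. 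Using a Lebesgue-number argument for this cover and the uniform continuity of the action of $\ab_k$, choose $\delta>0$ small enough that: (i) any two points of $\nss_{k-1}$ at distance $<\delta$ lie in a common $U_j$; and (ii) whenever $y,y'\in U_j$ with $d'(y,y')<\delta$, the two fibre points $s_j(y)$ and $s_j(y')$ satisfy $d(s_j(y),s_j(y'))<\epsilon$, and moreover $d$-balls of radius $\epsilon$ behave well under the $\ab_k$-translation identifying fibres over $U_j$.

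With $\delta$ so chosen, I would construct $\phi_3$ as follows. First partition $\ns$ into Borel pieces according to which $U_j$ contains $\phi_2(x)$ (choosing, say, the least index $j$ that works), giving a Borel map $x\mapsto j(x)$; this is measurable since $\phi_2$ is continuous. On the piece where $j(x)=j$ and $x\in A$, we have $\phi_1(x)=\pi_{k-1,\nss}\co\phi(x)$ within $\delta$ of $\phi_2(x)$, so both lie in $U_j$, and $\phi(x)$ lies in $\pi_{k-1,\nss}^{-1}(U_j)$; write $\phi(x)=a(x)\cdot s_j(\phi_1(x))$ for the unique $a(x)\in\ab_k$, which depends Borel-measurably on $x$ since $\phi$ is Borel and the trivialization is continuous. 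Now set $\phi_3(x):=a(x)\cdot s_j(\phi_2(x))$; this has $\pi_{k-1,\nss}\co\phi_3(x)=\phi_2(x)$ by construction, and for $x\in A$ the estimate $d(\phi(x),\phi_3(x))<\epsilon$ follows from comparing $a(x)\cdot s_j(\phi_1(x))$ with $a(x)\cdot s_j(\phi_2(x))$ and invoking (ii) together with the (uniform) continuity of the $\ab_k$-action. On the complement $\ns\setminus A$, and on the pieces where $x\in A$ but one only needs the equation $\pi_{k-1,\nss}\co\phi_3=\phi_2$, simply define $\phi_3(x):=s_{j(x)}(\phi_2(x))$, which is Borel and lies over $\phi_2(x)$; no distance bound is required there. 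Piecing these definitions together over the finitely many Borel cells yields a single Borel map $\phi_3:\ns\to\nss$ with the two asserted properties.

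The main obstacle is the measurability bookkeeping in the construction of the "fibre coordinate" $a(x)$ and the global patching: one must ensure that the local trivializations can be chosen so that $x\mapsto a(x)$ and $x\mapsto j(x)$ are genuinely Borel, which relies on the continuity of the bundle charts of the \textsc{cfr} nilspace $\nss$ and on standard selection facts for Polish spaces (e.g. \cite[Theorem (12.16), (12.18)]{Ke}, used similarly in the proof of Lemma \ref{lem:approxfn}). A secondary point is calibrating $\delta$ uniformly: it must be chosen only in terms of $\epsilon$ and $\nss$ (not $\ns$ or $\phi$), which is exactly why we extract it from the finite trivializing cover and the compact group $\ab_k$ — all data intrinsic to $\nss$. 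Everything else is routine, since over each trivializing chart the fibration is just the projection $\ab_k\times U_j\to U_j$ and the lift is literally "keep the same $\ab_k$-coordinate, move the base point from $\phi_1(x)$ to $\phi_2(x)$."
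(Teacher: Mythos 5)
Your proposal is correct and follows essentially the same route as the paper: both arguments exploit the local triviality of the $\ab_k$-bundle $\pi_{k-1,\nss}:\nss\to\nss_{k-1}$ over a finite cover of $\nss_{k-1}$, choose $\delta$ by compactness/uniform-continuity estimates intrinsic to $\nss$, lift by preserving the $\ab_k$-coordinate over each chart, fall back to a fixed Borel cross section off $A$, and patch over a greedy Borel partition indexed by the chart. The paper formalizes the estimate on $d(\phi(x),\phi_3(x))$ by tracking the trivializing homeomorphism $\theta_y$ and the product metric $d'+d_{\ab_k}$ together with uniform continuity of $\theta_y^{-1}$ on $\overline{B_{\delta_y}(y)}\times\ab_k$, which is the rigorous version of your remark that \emph{``$d$-balls of radius $\epsilon$ behave well under the $\ab_k$-translation''}; if you write this up you should make that step explicit, since the $\ab_k$-action need not be by isometries for $d$.
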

\begin{proof}
By Gleason's slice theorem $\nss$ is a locally trivial $\ab_k$-bundle over $\nss_{k-1}$ (see \cite[Proposition 2.5.2]{Cand:Notes2}). Hence, for each $y\in \nss_{k-1}$ there is $\delta_y>0$ such that the $\ab_k$-bundle $\nss$ trivializes over the closed ball $\overline{B_{\delta_y}(y)}\subset \nss_{k-1}$. Thus we have a $\ab_k$-bundle isomorphism $\theta_y: \pi_{k-1}^{-1}\big(\overline{B_{\delta_y}(y)}\big)\to \overline{B_{\delta_y}(y)}\times \ab_k$, $w\mapsto (\pi_{k-1}(w),z)$, i.e., $\theta_y$ is a $\ab_k$-equivariant homeomorphism (where the action of $\ab_k$ on $B_{\delta_y}(y)\times \ab_k$ is defined by $z'\cdot (\pi_{k-1}(w),z)=(\pi_{k-1}(w),z+z')$). By uniform continuity of $\theta_y^{-1}$ on the compact set $\overline{B_{\delta_y}(y)}\times\ab_k$, there is $\delta_y'>0$ such that, letting $d''$ denote the metric $d'+d_{\ab_k}$ on $B_{\delta_y}(y)\times \ab_k$ (with $d_{\ab_k}$ the metric on $\ab_k$), we have $d''\big(\theta_y(w),\theta_y(w')\big)\leq \delta_y'$ $\Rightarrow$ $d(w,w')\leq \epsilon$.

Since the balls $B_{\delta_y/2}(y)$ cover $\nss_{k-1}$, by compactness there is a finite subcover by balls $B_{\delta_i/2}(y_i)$, $i\in [M]$, where $\delta_i=\delta_{y_i}$. Thus $\nss$ trivializes over each ball $\overline{B_{\delta_i}(y_i)}$. Let $\delta<\frac{1}{2}\min\{\delta_i,\delta'_{y_i}:i\in [M]\}$. Then, for each $x\in \ns$, there is $i\in [M]$ such that $d'(\phi_2(x),y_i)<\delta_i/2$, whence if $x\in A$ then $d'\big(\phi_1(x),y_i\big)\leq d'\big(\phi_1(x),\phi_2(x)\big)+d'\big(\phi_2(x),y_i\big)< \delta + \delta_i/2 < \delta_i$.
In particular, for every $x\in A$ there is $i\in [M]$ such that $\phi_1(x),\phi_2(x)\in B_{\delta_i}(y_i)$.

Now we claim that for each $i\in [M]$ there is a Borel function $f_i:\phi_2^{-1}\big(B_{\delta_i/2}(y_i)\big)\to\nss$ such that $\pi_{k-1}\co f_i=\phi_2$ and $d\big(f_i(x),\phi(x)\big)\leq \epsilon$ for all $x\in A\cap\phi_2^{-1}\big(B_{\delta_i/2}(y_i)\big)$. To see this, let $\theta_i=\theta_{y_i}: \pi_{k-1}^{-1}\big(B_{\delta_i}(y_i)\big)\to B_{\delta_i}(y_i)\times \ab_k$, $y\mapsto (\pi_{k-1}(y),z)$ be the trivializing bundle isomorphism.
Fix any $x\in \ns$, and let $i$ be such that $\phi_2(x)\in B_{\delta_i/2}(y_i)$. If $x\in A$ then, since $\phi_1(x)\in B_{\delta_i}(y_i)$, there is $z_x\in \ab_k$ such that $\theta_i\co\phi(x)=(\phi_1(x),z_x)$. In this case let $f_i(x):=\theta_i^{-1}(\phi_2(x),z_x)$. If $x\in \phi_2^{-1}\big(B_{\delta_i/2}(y_i)\big)\setminus A$, then we just let $f_i(x)=\cs\co\phi_2(x)$, where $\cs:\nss_{k-1}\to\nss$ is a fixed Borel cross section for $\nss$ (which always exists for such bundles, see \cite[Lemma 2.4.5]{Cand:Notes2}). Thus clearly $\pi_{k-1}\co f_i=\phi_2$. We can see that $f_i$ is Borel  as follows. Let $p_2$ denote the projection to the $\ab_k$ component on $B_{\delta_i}(y_i)\times \ab_k$. Let $g$ denote the function which ``corrects" the $\ab_k$ component of $\cs\co \phi_2(x)$, namely $g:x\mapsto\theta_i\co\cs\co \phi_2(x) + \big(p_2\co\theta_i\co\phi(x)-p_2\co \theta_i\co\cs\co \phi_2(x)\big)=(\phi_2(x),z_x)$. Then $g$ is Borel, and $f_i(x)=\theta_i^{-1}\co g(x)$ for $x\in A$, so $f_i$ is also Borel. Let us now confirm that $d\big(f_i(x),\phi(x)\big)\leq \epsilon$ for all $x\in A\cap \phi_2^{-1}\big(B_{\delta_i/2}(y_i)\big)$. Since $\theta_i\co f_i(x)$ and $\theta_i\co\phi(x)$ have the same $\ab_k$-component $z_x$ (by construction of $f_i$), we have $d''(\theta_i\co f_i(x),\theta_i\co \phi(x))=d'\big(\phi_2(x),\phi_1(x)\big)\leq \delta$. Hence, since $\delta<\delta_i'$, we have $d(f_i(x),\phi(x))\leq \epsilon$ by the choice of $\delta_i'$ above. This proves our claim.

We can greedily form a Borel partition of the domain of $\phi_2$ out of the sets $\phi_2^{-1}(B_{\delta_i/2}(y_i))$. Thus with each $x$ in this domain we associate a unique $i\in [M]$ such that $\phi_2(x)\in B_{\delta_i/2}(y_i)$. We set $\phi_3(x):= f_i(x)$, which makes $\phi_3$ a Borel function.
\end{proof}

\begin{proof}[Proof of Theorem \ref{thm:rigid}]
We argue by induction on $k$. The case $k=0$ is trivial (a non-empty 0-step nilspace is a one-point nilspace). For $k>0$, let $\phi:\ns\to\nss$ be a $(\delta,1)$-quasimorphism relative to the given compatible metric $d$. Note that letting $\tilde{d}$ be the corresponding $\ab_k$-invariant metric on $\nss$ (see \cite[Lemma 2.1.11]{Cand:Notes2}), the identity map on $\nss$ is uniformly continuous $(\nss,d)\to (\nss,\tilde{d})$, so $\phi$ is a $(\tilde{\delta},1)$-quasimorphism relative to $\tilde{d}$ for some $\tilde{\delta}(\delta)>0$ with $\tilde{\delta}=o(1)_{\delta\to 0}$, and therefore we may relabel $\tilde{d},\tilde{\delta}$ as $d,\delta$ and assume without loss of generality that $d$ was already $\ab_k$-invariant. Now let $\phi_1'=\pi_{k-1}\co\phi$, and note that $\phi_1'$ is also a $(\delta,1)$-quasimorphism relative to the quotient metric $d'$ on $\nss_{k-1}$. By induction, for some positive $\delta_1=\delta_1(\delta)=o(1)_{\delta\to 0}$, there exists a continuous morphism $\phi_2:\ns\to\nss_{k-1}$ such that $d_1(\phi_2,\phi_1')\leq \delta_1$. This implies by Markov's inequality that for some Borel set $A\subset \ns$ with $\mu_{\ns}(A)\geq 1-\delta_1^{1/2}$ we have $d'(\phi_2(x),\phi_1'(x))\leq \delta_1^{1/2}$ for all $x\in A$. Applying Lemma \ref{lem:lift} with initial parameter $\delta_2>0$, we obtain a Borel map $\phi_3:\ns\to\nss$ such that $\phi_2=\pi_{k-1}\co\phi_3$ and $d(\phi(x),\phi_3(x)\big)\leq \delta_2= o(1)_{\delta\to 0}$ for every $x\in A$, which implies that $d_1(\phi,\phi_3)<\delta_2+\delta_1^{1/2} C$, where $C$ is the diameter of $(\nss,d_{\nss})$. Note that this implies that $\phi_3$ is also a $(\delta',1)$-quasimorphism for some positive $\delta'=o(1)_{\delta\to 0}$, and what we have gained compared to $\phi$ is that $\phi_3$ is a lift of the \emph{morphism} $\phi_2$ (i.e.\ $\pi_{k-1}\co\phi_3=\phi_2$). We shall now use this to show that $\phi_2$ can in fact be lifted to a continuous morphism $\psi:\ns\to\nss$ (not just to a quasimorphism like $\phi_3$).

Let $W$ be the fiber product $\{(x,y)\in \ns\times \nss: \phi_2(x)=\pi_{k-1,\nss}(y)\}$. This is a compact sub-nilspace of the product nilspace $\ns\times\nss$, i.e.\ $W$ is a $k$-step compact nilspace if we equip it with the cubes $\q$ on the product nilspace $\ns\times \nss$ such that $\q$ takes values in $W$ (see the proof of \cite[Lemma 4.2]{CGS}, applied taking $\psi_1$ in that proof to be $\pi_{k-1,\nss}$ here). Note that this $k$-step nilspace $W$ is an extension of degree $k$ of $\ns$ by the abelian group $\ab_k(\nss)$, because the action of $\ab_k(\nss)$ on the $\nss$-component of $W$ is transitive on each fiber of the projection $\pi:W\to \ns$, $(x,y)\mapsto x$ (recall \cite[Definition 3.3.13]{Cand:Notes1}). 

The map $\phi_3$ induces a Borel cross section $\cs:\ns\to W$, $x\mapsto (x,\phi_3(x))$. With this cross section we can associate a cocycle following \cite[Lemma 3.3.21]{Cand:Notes1}, namely the cocycle $\rho_{\cs}:\cu^{k+1}(\ns)\to\ab_k(\nss)$ defined by $\q\mapsto \sigma_{k+1}(\cs\co\q -\q')$ for any cube $\q'\in\cu^{k+1}(W)$ such that $\pi\co\q'=\q$. It then follows from the definitions that $\rho_{\cs}(\q)=\sigma_{k+1}(\phi_3\co\q-\q'')$ for any $\q''\in\cu^{k+1}(\nss)$ such that $\pi_{k-1,\nss}\co\q''=\phi_2\co\q$. Since $d_1(\phi,\phi_3)<\delta_2+\delta_1^{1/2} C$, and $\phi$ is a $(\delta,1)$-quasimorphism, we deduce using Lemma \ref{lem:verticubapprox} that $d_1(\rho_{\cs},0)<\delta_3$, where $\delta_3>0$ tends to $0$ as $\delta\to 0$ (recall that $\delta_1,\delta_2$ are both $o(1)_{\delta\to 0}$). By Proposition  \ref{prop:costab}, $\rho_{\cs}$ is a coboundary, so $W$ is a split extension of $\ns$, whence there is a Borel morphism $\psi:\ns\to\nss$ such that $\pi_{k-1}\co\psi=\phi_2$, and $\psi$ is then continuous by \cite[Theorem 2.4.6]{Cand:Notes2}. 

Let $\phi_4:\ns\to\mc{D}_k(\ab_k(\nss))$, $x\mapsto\phi_3(x)-\psi(x)$, where the subtraction here is enabled by the fact that $\phi_3(x),\psi(x)$ lie in the same fiber of $\pi_{k-1}$ in $\nss$ (every such fiber is an affine copy of the group $\ab_k(\nss)$; see \cite[Corollary 3.2.16]{Cand:Notes1}). Note that $\phi_4$ is a $(\delta_4,1)$-quasimorphism for some positive $\delta_4=\delta_4(\delta)=o(1)_{\delta\to 0}$. By Lemma \ref{lem:stabiabcase} there is a continuous morphism $\phi_5:\ns \to\mc{D}_k(\ab_k)$ such that $d_1(\phi_4-\phi_5,0)<\delta_5$ for some positive $\delta_5=\delta_5(\delta)=o(1)_{\delta\to 0}$. Now let $\phi':\ns\to\nss$, $x\mapsto\psi(x)+\phi_5(x)$. Then $\phi'$ is a continuous morphism and  $d_1(\phi,\phi')\leq d_1(\phi,\psi+\phi_4) + d_1(\psi+\phi_4,\phi') =   d_1(\phi,\phi_3) + d_1(\phi_4-\phi_5,0) < \delta_2+\delta_1^{1/2} C+\delta_5$, which is less than $\epsilon$ for $\delta$ sufficiently small.
\end{proof}

\section{Proof of the regularity and inverse theorems}\label{sec:mps}

\noindent Recall that given a Polish space $\nss$, the space $\mc{P}(\nss)$ of Borel probability measures on $\nss$ equipped with the weak topology is metrizable, and is in fact a Polish space (see \cite[Theorems (17.23) and (17.19)]{Ke}). Given a nilspace morphism $\phi:\ns\to\nss$ and $n\in \mb{N}$, we denote by $\phi^{\db{n}}$ the map $\cu^n(\ns)\to \cu^n(\nss)$, $\q\mapsto \phi\co\q$.

In the decomposition given by Theorem \ref{thm:reglem-intro}, the structured part is guaranteed to have the following useful property.
\begin{defn}[Balance]\label{def:balance}
Let $\nss$ be a $k$-step compact nilspace. For each $n\in \mb{N}$ fix a metric $d_n$ on the space $\mc{P}(\cu^n(\nss))$. Let $\ns$ be a  compact nilspace, and let $\phi:\ns\to \nss$ be a continuous morphism. Then for $b>0$ we say that $\phi$ is \emph{$b$-balanced} if for every $n\leq 1/b$ we have $d_n\big(\mu_{\cu^n(\ns)}\co(\phi^{\db{n}})^{-1}, \mu_{\cu^n(\nss)}\big)\leq b$. A nilspace polynomial $F\co\phi$ is \emph{$b$-balanced} if the morphism $\phi$ is $b$-balanced.\vspace{-0.2cm}
\end{defn}
\noindent The balance property is an approximate form of multidimensional equidistribution: the image of $\phi^{\db{n}}$, $n\in [1/b]$, tends toward being equidistributed in $\cu^n(\nss)$ as $b$ decreases. This property is useful in problems involving averages of functions over certain configurations. It appeared in \cite{Szegedy:HFA}, and is related to a property of approximate \emph{irrationality} from \cite{GTarith}. In fact, from results in the latter paper it follows that, for nilsequences, high irrationality implies $b$-balance for small $b$ (see \cite[Theorem 3.6]{GTarith}, or \cite[Theorem 4.1]{CS}). 
\begin{proof}[Proof of Theorem \ref{thm:reglem-intro}]
We begin by noting that it suffices to prove the result for \textsc{cfr} coset nilspaces. Indeed, if $\ns$ is an inverse limit of such nilspaces, then the preimages of the Borel $\sigma$-algebras on these spaces under the limit maps form an increasing sequence of $\sigma$-algebras $\mc{B}_i$ on $\ns$ such that $\bigvee_{i\in \mb{N}}\mc{B}_i=_{\mu_{\ns}} \mc{B}_{\ns}$, the Borel $\sigma$-algebra on $\ns$. By standard results $\mb{E}(f|\mc{B}_i)\to f$ in $L^1$ as $i\to\infty$. This implies (using \cite[Lemma 2.17]{CScouplings}) that given any $\epsilon>0$, there is a limit map $\psi:\ns\to\ns'$, i.e.\ a continuous fibration onto a \textsc{cfr} coset nilspace $\ns'$, and a 1-bounded Borel function $f':\ns'\to\mb{C}$, such that $h:= f-f'\co\psi$ satisfies $\|h\|_{L^1} \leq \epsilon/2$. Let $f'=f'_s+f'_e+f'_r$ be the decomposition for $f'$ applied with initial parameter $\epsilon/2$ and with $\mc{D}'(\epsilon,m):=\mc{D}(2\epsilon,m)$, and let $f_s=f'_s\co\psi$, $f_e= h + f_e'\co \psi$, $f_r=f'_r\co\psi$. We have (using that $\psi$ is a Haar-measure-preserving morphism \cite[Corollary 2.2.7]{Cand:Notes2}) that $f=f_s+f_e+f_r$ is a valid decomposition for $\epsilon$, $\mc{D}$.

To prove the theorem for \textsc{cfr} coset nilspaces, we argue by contradiction. Suppose that the theorem fails for some $\epsilon>0$. This means that there is a sequence of functions $(f_i)_{i\in \mb{N}}$ where $f_i:\ns_i\to \mb{C}$ is Borel measurable on a compact coset nilspace $\ns_i$ with $|f_i|\leq 1$, such that $f_i$ does not satisfy the statement with $\epsilon$ and $N=i$. Let $\omega$ be a non-principal ultrafilter on $\mb{N}$ and let $\mf{X}$ be the ultraproduct $\prod_{i\to \omega} \ns_i$ equipped with the Loeb probability measure $\lambda'$ on $\mc{L}_{\mf{X}}$. Let $f:\mf{X}\to \mb{C}$ be the Loeb measurable function $\lim_\omega f_i$, and let $\mc{B}_0$ be the separable sub-$\sigma$-algebra of $\mc{L}_{\mf{X}}$ generated by $f$.

By Proposition \ref{prop:sepcc} there is a $\sigma$-algebra $\mc{B}'\subset \mc{L}_{\mf{X}}$ including $\mc{B}_0$ such that the probability space $\varOmega'=(\mf{X},\mc{B}',\lambda')$ is separable, and such that the sequence of measures $\mu^{\db{n}}$ on $(\mf{X}^{\db{n}},{\mc{B}'}^{\db{n}})$ form a cubic coupling. By \cite[(17.44), iv)]{Ke}, the measure algebra of $\varOmega'$ is isomorphic to the measure algebra of a Borel probability space $\varOmega=(\Omega,\mc{B},\lambda)$. By \cite[343B(vi)]{Fremlin3} (using \cite[211L(a)-(c)]{Fremlin2} and \cite[324K(b)]{Fremlin3}) there is a mod 0 isomorphism $\theta:\Omega'\to \Omega$ realizing this measure-algebra isomorphism. Moreover, by \cite[Proposition A.11]{CScouplings} the images of the measures $\mu^{\db{n}}$ under the maps $\theta^{\db{n}}$ form a cubic coupling on $\varOmega$. From now on we identify $f$ and $f\co\theta^{-1}$, so we view $f$ as a function on $\Omega$.

Let $\mc{F}_k$ be the $k$-th Fourier $\sigma$-algebra on $\Omega$ (see \cite[Definition 3.18]{CScouplings}). Then we have $f=f_s+f_r$, where $f_s=\mb{E}(f|\mc{F}_k)$, and $f_r=f-\mb{E}(f|\mc{F}_k)$ satisfies $\|f_r\|_{U^{k+1}}=0$. We now apply the structure theorem for cubic couplings \cite[Theorem 4.2]{CScouplings}. More precisely, applying this theorem to the above cubic coupling $\big(\varOmega,(\mu^{\db{n}})_{n\geq 0}\big)$, we obtain a $k$-step compact nilspace $\nss$, and a measurable map $\gamma_k:\Omega\to\nss$ such that $\gamma_k^{\db{n}}$ takes $\mu^{\db{n}}$ to the Haar measure $\mu_{\cu^n(\nss)}$ for each $n\geq 0$. Moreover, this nilspace $\nss$ is related to $\mc{F}_k$ in the sense that, letting $\mc{B}_{\nss}$ denote the Borel $\sigma$-algebra on $\nss$, we have that the $\sigma$-algebra $\gamma_k^{-1}(\mc{B}_{\nss})$ equals $\mc{F}_k$ modulo null sets (see \cite[Lemma 3.42]{CScouplings}). Then by \cite[Lemma 2.17]{CScouplings}  there is a Borel function $g:\nss\to \mb{C}$ such that $f_s=_\lambda g\co \gamma_k$.

By \cite[Theorem 2.7.3]{Cand:Notes2}, the nilspace $\nss$ is an inverse limit of $k$-step \textsc{cfr} nilspaces $\nss_j$, $j\in \mb{N}$, where the limit maps $\psi_j:\nss\to \nss_j$ are continuous fibrations. Let $\mc{Y}_j$ denote the $\sigma$-algebra on $\nss$ generated by $\psi_j$. Arguing as in the first paragraph of the proof, there is $j\in\mb{N}$ such that $g_j:=\mb{E}(g|\mc{Y}_j)$ satisfies $\|g-g_j\|_1\leq\epsilon/3$. For this $j$ let $\gamma=\psi_j\co\gamma_k:\Omega\to \nss_j$. As fibrations take cube sets onto cube sets in a measure-preserving way, the map $\gamma$ has the same measure-preserving properties as $\gamma_k$. Furthermore, by Lusin's theorem  combined with \cite[Theorem 1]{Georg}, there is a continuous function $h:\nss_j\to\mb{C}$ with $|h|\leq 1$ and with finite Lipschitz constant $C$ such that $\|g_j-h\|_{L^1(\nss)} \leq\epsilon/3$. Let $q= h\co \gamma:\Omega\to\mb{C}$. The measure-preserving properties of $\gamma_k$ and $\psi_j$ imply that $\|f_s-q\|_{L^1(\Omega)}= \|g-h\co\psi_j\|_{L^1(\nss)}\leq 2\epsilon/3$. Let $f_e=f_s-q=f-q-f_r$. 

Next, we show that there are continuous morphisms $\phi_i:\ns_i\to\nss_j$, $i\in \mb{N}$, such that $\gamma=_\lambda \lim_\omega \phi_i$. Note that since $\gamma$ is $\mc{L}_{\mf{X}}$-measurable, by \cite[Corollary 5.1]{Ross} it has a \emph{lifting}, i.e.\ there are Borel maps $g_i:\ns_i\to\nss_j$, $i\in \mb{N}$ such that $\gamma=_\lambda \lim_\omega g_i$. This together with the measure-preserving property of $\gamma^{\db{k+1}}$ implies that the preimage of $\cu^{k+1}(\nss_j)$ under $( \lim_\omega g_i)^{\db{k+1}}$ has $\mu^{\db{k+1}}$-probability 1. For each $i$ let $\delta_i=\inf\{t: g_i\textrm{ is a }(t,1)\textrm{-quasimorphism}\}\in [0,1]$. Then $\lim_\omega \delta_i=0$. Indeed, otherwise for some $\delta>0$ the set $S_1=\{i\in \mb{N}: g_i\textrm{ is not a }(\delta,1)\textrm{-quasimorphism}\}$ is in $\omega$. Then for each $i\in S_1$ there is a Borel set $B_i\subset\cu^{k+1}(\ns_i)$ of measure at least $\delta$ such that for every $\q\in B_i$ the image $g_i\co \q$ is $\delta$-separated from cubes, that is for every $\q'\in \cu^{k+1}(\nss_j)$ we have $\max_{v\in \db{k+1}} d_{\nss_j}(g_i\co \q(v),\q'(v))\geq \delta$. Since $S_1\in\omega$, we can take $B=\prod_{i\to \omega} B_i\subset \Omega$, and we have $\mu^{\db{k+1}}(B)\geq \delta$. Then, for every $\q\in B$ the composition $(\lim_\omega g_i)\co\q$ is also $\delta$-separated from cubes, so it cannot be in $\cu^{k+1}(\nss_j)$. This contradicts the above fact that $(\lim_\omega g_i)^{\db{k+1}}$ maps almost every $\q\in\cu^{k+1}(\Omega)$ into $\cu^{k+1}(\nss_j)$, so we indeed have $\lim_\omega \delta_i=0$. Hence there is a sequence $(\delta_i'>0)_{i\in \mb{N}}$ with $\lim_\omega \delta_i'=0$ such that $g_i$ is a $(\delta_i',1)$-quasimorphism for each $i$. Theorem \ref{thm:rigid} implies that for each $i$ there is a continuous morphism $\phi_i:\ns_i\to \nss_j$ such that $\mu_{\ns_i}(\{x\in\ns_i: \phi_i(x)\approx_{\epsilon_i} g_i(x)\})\geq 1-\epsilon_i$, where $\lim_\omega \epsilon_i= 0$. Hence $\lim_\omega g_i=_\lambda \lim_\omega \phi_i$, as required. Indeed, otherwise we have $\lambda( \lim_\omega g_i\neq \lim_\omega \phi_i)>0$, which implies (using monotonicity of $\lambda$) that $\lambda( \lim_\omega g_i\approx_\eta \lim_\omega \phi_i)<1-\eta$ for some $\eta>0$. But this event $\lim_\omega g_i\approx_\eta \lim_\omega \phi_i$ is $\big\{(x_i)\in \Omega:\{i: g_i(x_i)\approx_\eta \phi_i(x_i)\}\in \omega\big\}$, and this includes the set $\prod_{i\to\omega} \big\{x_i\in \ns_i: g_i(x_i)\approx_{\epsilon_i} \phi_i(x_i)\}$ (using that $\epsilon_i<\eta$ for a cofinite set of integers $i$); but the latter set has $\lambda$-measure 1, since $\mu_{\ns_i}(\{x\in\ns_i: \phi_i(x)\approx_{\epsilon_i} g_i(x)\})\geq 1-\epsilon_i$, and this contradicts that $\eta>0$.

There is a sequence $(b_i>0)_{i\in \mb{N}}$ such that $\phi_i$ is $b_i$-balanced for all $i$ and $\lim_\omega b_i=0$. Indeed, otherwise some $b>0$, $S'_2\in\omega$ satisfy that $\forall\, i\in S'_2$, $\phi_i$ is not $b$-balanced. Then there is $S_2\subset S'_2$ with $S_2\in \omega$, and $n\in [1/b]$, with $d_n\big(\mu_{\cu^n(\ns_i)}\co(\phi_i^{\db{n}})^{-1}, \mu_{\cu^n(\nss_j)}\big)\geq b$ for all $i\in S_2$.  As $\gamma^{\db{n}}$ is measure-preserving, we have $\lim_\omega d_n\big(\mu_{\cu^n(\ns_i)}\co(\phi_i^{\db{n}})^{-1}, \mu_{\cu^n(\nss_j)}\big)= \lim_\omega d_n\big(\mu_{\cu^n(\ns_i)}\co(\phi_i^{\db{n}})^{-1}, \mu^{\db{n}}\co(\gamma^{\db{n}})^{-1}\big)=0$ (using Lemma \ref{lem:Loebcomm}), a contradiction.

For each $i$ let $f_{s,i}=h\co \phi_i$, and apply \cite[Corollary 5.1]{Ross} again to obtain a sequence of Borel functions $(f_{r,i}:\ns_i\to \mb{C})_{i\in \mb{N}}$ such that $\lim_\omega f_{r,i}=_\lambda f_r$. Let $f_{e,i}=f_i-f_{s,i}-f_{r,i}$. Since $\lim_\omega g_i=_\lambda\lim_\omega \phi_i$, we have $\lim_\omega f_{s,i}=_\lambda q$, whence $\lim_\omega f_{e,i}=_\lambda f_e$. We also have $\lim_\omega \|f_{r,i}\|_{U^{k+1}}=\|f_r\|_{U^{k+1}}=0$. Since $q$ and $f_e$ are both $\mc{F}_k$-measurable, we have $\langle f_r,q\rangle$ and $\langle f_r,f_e\rangle$ both 0, and therefore $\lim_\omega \langle f_{r,i},f_{s,i}\rangle=\langle f_r,q\rangle=0$ and $\lim_\omega \langle f_{r,i},f_{e,i} \rangle= \langle f_r,f_e \rangle=0$.
Let $m$ be the maximum of $C$ and the complexity of $\nss_j$. Combining the properties in this paragraph and the previous one, we deduce that there is a set $S\in \omega$ such that for every $i\in S$ the decomposition $f_i=f_{s,i}+f_{r,i}+f_{e,i}$ satisfies the properties in the theorem with this value of $m$, the initial $\epsilon$, and the corresponding value $\mc{D}(\epsilon,m)$. This gives a contradiction for $i\in S$ with $i\geq m$.
\end{proof}
\noindent We deduce the following inverse theorem, which clearly implies Theorem \ref{thm:inverse-intro}.
\begin{theorem}\label{thm:inverse}
Let $k\in \mb{N}$, and let $b:\mb{R}_{>0}\to \mb{R}_{>0}$ be an arbitrary function. For every $\delta\in (0,1]$ there is $M>0$ such that for every compact nilspace $\ns$ that is an inverse limit of \textsc{cfr} coset nilspaces, and every 1-bounded Borel function $f:\ns\to \mb{C}$ such that $\|f\|_{U^{k+1}}\geq \delta$, for some $m\leq M$ there is a $b(m)$-balanced 1-bounded nilspace-polynomial $F\co\phi$ of degree $k$ and complexity at most $m$ such that $\langle f, F\co\phi\rangle \geq \delta^{2^{k+1}}/2$.
\end{theorem}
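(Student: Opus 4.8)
The plan is to deduce Theorem~\ref{thm:inverse} from the regularity theorem (Theorem~\ref{thm:reglem-intro}) by a soft Gowers--Cauchy--Schwarz argument, the one genuinely new point being to recognize the \emph{dual function} of the structured part as a nilspace polynomial of controlled complexity, which is precisely where the balance property does its work. First I would fix, for the given $\delta$, a small $\epsilon=\epsilon(\delta,k)>0$ and a function $\mc{D}\colon\mb{R}_{>0}\times\mb{N}\to\mb{R}_{>0}$ of the form $\mc{D}(\epsilon,m)=\min\{b(\kappa(m)),\,c(\delta,k,m)\}$, where $\kappa$ is a function of $m$ (and $k$) accounting for the complexity blow-up incurred below when passing to a dual function, and $c(\delta,k,m)>0$ is small, to be pinned down at the end. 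Applying Theorem~\ref{thm:reglem-intro} with these $\epsilon$ and $\mc{D}$ to the given $1$-bounded $f$ with $\|f\|_{U^{k+1}}\geq\delta$ yields $f=f_s+f_e+f_r$ and $m\leq N(\epsilon,\mc{D})$, where $f_s=F\co\phi$ with $\phi\colon\ns\to\nss$ a $\mc{D}(\epsilon,m)$-balanced continuous morphism into a $k$-step \textsc{cfr} nilspace $\nss$ of complexity at most $m$ and $\mr{Lip}(F)\leq m$.

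Next I would run the standard expansion. Writing $\langle\!\langle (g_v)_{v\in\db{k+1}}\rangle\!\rangle:=\int_{\cu^{k+1}(\ns)}\prod_v\mc{C}^{|v|}g_v(\q(v))\ud\mu$ for the Gowers inner product (so $\langle\!\langle(f)\rangle\!\rangle=\|f\|_{U^{k+1}}^{2^{k+1}}$), and using both the Gowers--Cauchy--Schwarz inequality $|\langle\!\langle(g_v)\rangle\!\rangle|\leq\prod_v\|g_v\|_{U^{k+1}}$ and the crude estimate $|\langle\!\langle(g_v)\rangle\!\rangle|\leq\|g_{v_0}\|_{L^1}\prod_{v\neq v_0}\|g_v\|_{\infty}$, a slot-by-slot telescoping of $f=f_s+(f_e+f_r)$ gives $\bigl|\,\|f\|_{U^{k+1}}^{2^{k+1}}-\|f_s\|_{U^{k+1}}^{2^{k+1}}\,\bigr|\leq 2^{k+1}\bigl(\|f_e\|_{L^1}+\|f_r\|_{U^{k+1}}\bigr)$, using $|f_e|\leq 3$ and $|f_s|,|f_r|\leq 1$. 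Let $\wt F:=\mc{D}_{k+1}F$ be the dual function of $F$ computed on $\nss$, i.e. $\wt F(y)=\int_{\cu^{k+1}_y(\nss)}\prod_{v\neq 0^{k+1}}\mc{C}^{|v|+1}F(\q(v))\ud\mu_{\cu^{k+1}_y(\nss)}(\q)$, and write $\mc{D}_{k+1}f_s$ for the dual function of $f_s$ on $\ns$ (a $1$-bounded Borel function); then $\langle f_s,\mc{D}_{k+1}f_s\rangle=\|f_s\|_{U^{k+1}}^{2^{k+1}}$, and a single-slot comparison as above yields $\bigl|\langle f,\mc{D}_{k+1}f_s\rangle-\|f_s\|_{U^{k+1}}^{2^{k+1}}\bigr|\leq\|f_e\|_{L^1}+\|f_r\|_{U^{k+1}}$. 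Combining, $\langle f,\mc{D}_{k+1}f_s\rangle\geq\delta^{2^{k+1}}-(2^{k+1}+1)(\epsilon+\mc{D}(\epsilon,m))$.

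The crux is then to replace $\mc{D}_{k+1}f_s$ by the genuine nilspace polynomial $\wt F\co\phi$. Since $\nss$ is one of only finitely many \textsc{cfr} nilspaces of complexity at most $m$ and the cube-fibre Haar measures on each of these vary Lipschitz-continuously with the base vertex, $\wt F$ is continuous and $1$-bounded with $\mr{Lip}(\wt F)\leq\kappa_0(m,k)$ for some $\kappa_0$ depending only on $m$ and $k$; hence $\wt F\co\phi$ is a $1$-bounded nilspace polynomial of degree $k$ and complexity at most $m':=\kappa(m):=\max\{m,\kappa_0(m,k)\}$, and it is $b(m')$-balanced because $\phi$ is $\mc{D}(\epsilon,m)\leq b(\kappa(m))$-balanced. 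To bound $\|\mc{D}_{k+1}f_s-\wt F\co\phi\|_{L^1}$ I would expand the $L^2$-norm of the difference: each of the three resulting integrals is a cube average on $\ns$ of a fixed continuous function precomposed with $\phi^{\db{N}}$ for $N\in\{0,\,k{+}1,\,2(k{+}1)\}$ --- for the two quadratic terms using that the ``wedge'' coupling of two copies of $\cu^{k+1}_x(\ns)$ glued at the vertex $x$ is the pushforward of $\mu_{\cu^{2(k+1)}(\ns)}$ under restriction to two complementary $(k+1)$-faces of $\db{2(k+1)}$, a standard gluing property of nilspace cube sets --- and the balance of $\phi$ at levels up to $2(k+1)$ forces all three to lie within $O_{m,k}(\mc{D}(\epsilon,m))$ of the common value $\int_{\nss}|\wt F|^2\ud\mu_{\nss}$, whence $\|\mc{D}_{k+1}f_s-\wt F\co\phi\|_{L^1}^2\leq\|\mc{D}_{k+1}f_s-\wt F\co\phi\|_{L^2}^2\leq C(m,k)\,\mc{D}(\epsilon,m)$. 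Finally, choosing $\epsilon$ and $c(\delta,k,m)$ small enough that $(2^{k+1}+1)(\epsilon+\mc{D}(\epsilon,m))\leq\delta^{2^{k+1}}/4$ and $\sqrt{C(m,k)\mc{D}(\epsilon,m)}\leq\delta^{2^{k+1}}/4$, one gets $\mathrm{Re}\,\langle f,\wt F\co\phi\rangle\geq\delta^{2^{k+1}}/2$; multiplying $\wt F$ by a suitable unit scalar (which changes neither degree, complexity, balance, nor $1$-boundedness) makes the inner product itself at least $\delta^{2^{k+1}}/2$, and the complexity bound $m'=\kappa(m)\leq\kappa(N(\epsilon,\mc{D}))$ depends only on $k$ and $\delta$, as required.

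I expect the main obstacle to be exactly this last step. The dual of $f_s$ is a priori only a cube average over the fibres $\cu^{k+1}_x(\ns)$, and turning it into a cube average over $\cu^{k+1}_{\phi(x)}(\nss)$ needs a fibrewise equidistribution of $\phi$ that is not available pointwise and must instead be extracted in the mean from the balance hypothesis (used at cube levels up to $2(k+1)$, via the wedge-coupling description above); a secondary but necessary point is to keep the Lipschitz constant of $\wt F$ --- hence the complexity of $\wt F\co\phi$ --- controlled uniformly over the finitely many possible target nilspaces $\nss$ of complexity at most $m$.
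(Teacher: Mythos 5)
Your argument is correct in outline, but it is a genuinely different and considerably longer route than the paper's, and the extra work it creates for you is entirely avoidable.

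The paper's proof correlates $f$ directly with $f_s=F\co\phi$ itself. The key point you overlook is that Theorem~\ref{thm:reglem-intro} was set up so that the structured part $f_s$ \emph{is already} the balanced nilspace polynomial one wants in the conclusion, and --- crucially --- property~(iii) supplies the quasi-orthogonality bounds $|\langle f_r,f_s\rangle|\leq\mc{D}(\epsilon,m)$ and $|\langle f_r,f_e\rangle|\leq\mc{D}(\epsilon,m)$. Pairing $f=f_s+f_e+f_r$ against $f_s$ and using (ii)--(iii) gives at once $\langle f,f_s\rangle\geq\|f_s\|_{L^2}^2-\epsilon-\mc{D}(\epsilon,m)$; then $\|f_s\|_{L^2}^2\geq\|f_s\|_{U^{k+1}}^{2^{k+1}}$ (for a $1$-bounded function, via the ergodicity axiom), and $\|f_s\|_{U^{k+1}}$ is bounded below by $\delta$ minus small errors, because $\|f_e\|_{U^{k+1}}\lesssim\epsilon^{1/2^k}$ (from $\|f_e\|_{L^1}\leq\epsilon$, $|f_e|\leq 3$) and $\|f_r\|_{U^{k+1}}\leq\mc{D}(\epsilon,m)$. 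One then picks $\epsilon$ and $\mc{D}$ to finish. No dual functions, no comparison of fibrewise cube averages on $\ns$ with those on $\nss$, no extra complexity bookkeeping beyond the $m$ already provided.

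Your approach --- correlating with $\mc{D}_{k+1}f_s$ and then replacing it by $\wt F\co\phi$ --- is the classical Green--Tao--Ziegler scheme, which is natural if the regularity decomposition does not come with quasi-orthogonality and one must go through the Gowers--Cauchy--Schwarz inequality. Since Theorem~\ref{thm:reglem-intro} \emph{does} supply quasi-orthogonality, you are discarding exactly the feature that makes the deduction short. As a result you are forced to prove the $L^2$-closeness of $\mc{D}_{k+1}f_s$ to $\wt F\co\phi$. Your sketch of this is sound in spirit, but two of its ingredients are more delicate than you make them sound and would need to be supplied carefully: (a) the Lipschitz bound on $\wt F$ does not follow merely from the finiteness of isomorphism classes of $\nss$ with $\mathrm{Comp}\leq m$ --- continuity of the Haar system on $\cu^{k+1}_y(\nss)$ in $y$ gives uniform continuity, not Lipschitz continuity, so one would instead have to approximate $\wt F$ by a Lipschitz function (as the paper does elsewhere with Georganopoulos' theorem), carrying the extra error through; and (b) the identification of $\int_\ns|\mc{D}_{k+1}f_s|^2$ with a single cube average over $\cu^{2(k+1)}(\ns)$ via the ``wedge at a vertex'' coupling needs a proof inside the cubic coupling framework (it is standard in the abelian case but must be justified here, e.g.\ by iterating the conditional independence axiom). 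None of this is fatal, but it is substantial machinery replacing two lines in the paper. In short: recognize that $f_s$ is your answer, and use property (iii).
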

\begin{proof}
We apply Theorem \ref{thm:reglem-intro} with $\epsilon=\epsilon(\delta)>0$ and $\mc{D}$ to be fixed later. By property $(ii)$ in the theorem and the fact that $|f_s|\leq 1$, we have $|\langle f_e,f_s\rangle|\leq \epsilon$, and by property $(iii)$ we have $|\langle f_r,f_s\rangle|\leq \mc{D}(\epsilon,m)$. Therefore, taking the inner product of $f_s$ with each side of the decomposition $f=f_s+f_e+f_r$, we obtain $\langle f,f_s\rangle \geq \langle f_s,f_s\rangle- \epsilon - \mc{D}(\epsilon,m)$.

We also have $\|f_e\|_{L^1}\leq \epsilon$ and $|f_e|\leq 3$, whence $\|f_e\|_{U^{k+1}}\leq (3^{2^{k+1}-2}\epsilon^2)^{1/2^{k+1}}\leq 3 \epsilon^{1/2^k}$. Combining this with the above decomposition of $f$ and the bound $\|f_r\|_{U^{k+1}}\leq \mc{D}(\epsilon,m)$, we deduce that $\|f_s\|_{U^{k+1}}\geq \delta - 3 \epsilon^{1/2^k} - \mc{D}(\epsilon,m)$. This together with $|f_s|\leq 1$ implies that $\langle f_s,f_s\rangle = \|f_s\|_{L^2}^2\geq\|f_s\|_{U^{k+1}}^{2^{k+1}}\geq (\delta - 3 \epsilon^{1/2^k} - \mc{D}(\epsilon,m))^{2^{k+1}}$.

We now fix $\epsilon=\big(\frac{\delta}{3}(1-(\frac{5}{6})^{1/2^{k+1}})\big)^{2^k}$, and choose $\mc{D}$ so that the following hold: firstly, so that $\mc{D}(\epsilon,m)\leq b(m)$; secondly, so that by the last inequality in the previous paragraph we have $\langle f_s,f_s\rangle\geq  2\delta^{2^{k+1}}/3$; finally, so that $\epsilon +\mc{D}(\epsilon,m)\leq \delta^{2^{k+1}}/6$, which implies, by the last inequality in the first paragraph, that $\langle f,f_s\rangle \geq \delta^{2^{k+1}}/2$. We can then let $M$ be the number $N$ given by Theorem \ref{thm:reglem-intro} for this choice of $\epsilon$ and $\mc{D}$.
\end{proof}

\section{The case of simple abelian groups}\label{sec:Zp}
\noindent In this final section we use Theorem \ref{thm:reglem-intro} to prove Theorem \ref{thm:inverseZp-intro}. \enlargethispage{0.6cm} 

Recall that Definition \ref{def:balance} presupposes that for each $n$ a metric has been fixed on the space $\mc{P}(\cu^n(\ns))$ of Borel probabilities on $\cu^n(\ns)$ (equipped with the weak topology). 
For the proof of Theorem \ref{thm:inverseZp-intro} it is convenient to fix the  metrics in a process by induction on the step $k$ of $\ns$ as follows: having already defined a metric $d_{n,k-1}$ on $\mc{P}(\cu^n(\ns_{k-1}))$, we first let $d_{n,k}'$ be a metric on $\mc{P}(\cu^n(\ns))$ defined the standard way (see \cite[Theorem (17.19)]{Ke}), and then we define $d_{n,k}$ for $\mu,\nu\in \mc{P}(\cu^n(\ns))$ by
\begin{equation}\label{eq:metrize-k}
d_{n,k}(\mu,\nu) = d_{n,k}'(\mu,\nu)+d_{n,k-1}\big(\mu\co(\pi_{k-1}^{\db{n}})^{-1} ,\nu\co (\pi_{k-1}^{\db{n}})^{-1} \big).\vspace{-0.1cm}
\end{equation}
This construction is convenient for the proof because if $\phi$ is $b$-balanced relative to the metrics $d_{n,k}$, then $\pi_{k-1}\co \phi$ is automatically $b$-balanced relative to the metrics $d_{n,k-1}$. For the remainder of this section, we  suppose that we have fixed what we call a \emph{factor-consistent metrization for cubic measures} on \textsc{cfr} nilspaces, by which we mean the result of the following process: first we fix a sequence of metrics $d_{n,1}$ on $\mc{P}(\cu^n(\ns))$ ($n\geq 0$) for each $1$-step \textsc{cfr} nilspace $\ns$, then we fix metrics $d_{n,2}$ on $\mc{P}(\cu^n(\ns))$ for each $2$-step \textsc{cfr} nilspace $\ns$ using \eqref{eq:metrize-k} as above, and so on for increasing $k$.

In the proof of Theorem \ref{thm:inverseZp-intro}, a key ingredient is the following result, which ensures that the morphism that we obtain from Theorem \ref{thm:inverse} takes values in a toral nilspace. 
\begin{theorem}\label{thm:Zp-toral}
Fix any complexity notion and any factor-consistent metrization for cubic measures on \textsc{cfr} nilspaces. Then for every $M>0$ there exist $b>0$ and $p_0>0$ with the following property. Let $\nss$ be a $k$-step \textsc{cfr} nilspace of complexity at most $M$, and let $\phi:\mb{Z}_p\to\nss$ be a $b$-balanced morphism for a prime $p>p_0$. Then $\nss$ is toral. 
\end{theorem}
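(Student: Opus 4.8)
\emph{Proof proposal.} The plan is to deduce the statement from Theorem~\ref{thm:k-cube-connect} together with the rigidity of polynomial sequences on $\mb{Z}_p$ for large primes $p$. First, since the complexity notion is fixed, there are only finitely many \textsc{cfr} nilspaces $\nss$ with $\mathrm{Comp}(\nss)\le M$; so it suffices to show that for \emph{each} non-toral $k$-step \textsc{cfr} nilspace $\nss$ there are $b_\nss>0$ and $p_{0,\nss}$ such that $\nss$ admits no $b_\nss$-balanced morphism $\mb{Z}_p\to\nss$ for primes $p>p_{0,\nss}$, and then take $b=\min\big(1/k,\min_\nss b_\nss\big)$ and $p_0=\max_\nss p_{0,\nss}$. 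Fix such a $\nss$. The cube set $\cu^k(\nss)$ is again a compact \textsc{cfr} nilspace of step at most $k$, and by Theorem~\ref{thm:k-cube-connect} it is \emph{disconnected}. I would then use that $Q:=\pi_0\big(\cu^k(\nss)\big)$ is a finite set with $|Q|\ge 2$ which carries a natural structure of a finite nilspace of step at most $k$ --- the maximal finite factor of $\cu^k(\nss)$ --- for which the component map $c:\cu^k(\nss)\to Q$ is a continuous morphism, with $|Q|$ bounded in terms of $\mathrm{Comp}(\nss)$; and that, the Haar measure $\mu_{\cu^k(\nss)}$ having full support, $\mu_{\cu^k(\nss)}\big(c^{-1}(q)\big)<1$ for every $q\in Q$.

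Given a morphism $\phi:\mb{Z}_p\to\nss$, precomposing with the canonical morphism $\mc{D}_1(\mb{Z})\to\mc{D}_1(\mb{Z}_p)$ yields a morphism $g:\mc{D}_1(\mb{Z})\to\nss$ (a $p$-periodic polynomial sequence of degree $\le k$), and $c\co g^{\db{k}}$ is a morphism from $\cu^k(\mc{D}_1(\mb{Z}))$ --- which is the degree-$1$ group nilspace on the group of affine maps $\db{k}\to\mb{Z}$, i.e.\ on $\mb{Z}^{k+1}$ --- into $Q$. Thus $P:=c\co g^{\db{k}}$ is a polynomial map $\mb{Z}^{k+1}\to Q$ of degree at most $k$, it is $p$-periodic in each coordinate (since $g$ factors through $\mb{Z}_p$), and it descends to $c\co\phi^{\db{k}}$ on $\cu^k(\mb{Z}_p)\cong\mb{Z}_p^{k+1}$. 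I claim $P$ is constant whenever $p>k$ and $\gcd(p,|Q|)=1$: decomposing through the factor maps of $Q$ reduces to a finite abelian target, and restricting $P$ to an arbitrary affine line $t\mapsto v_0+tv_1$ gives a $p$-periodic polynomial $\mb{Z}\to Q$ of degree $\le k$, of the form $t\mapsto\sum_{i=0}^{k}\tbinom{t}{i}q_i$; expanding $\tbinom{t+p}{i}-\tbinom{t}{i}$ by Vandermonde and equating coefficients in the (linearly independent) system $\{\tbinom{t}{j}\}_{j\le k}$ forces $p\,q_i=0$, hence $q_i=0$, for every $i\ge 1$ by descending induction on $i$, since $\gcd(p,|Q|)=1$. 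So $P$ is constant on every line, hence constant, and therefore $\phi^{\db{k}}\big(\cu^k(\mb{Z}_p)\big)$ lies in a single set $c^{-1}(q_0)$.

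Then the push-forward $\nu_p:=\mu_{\cu^k(\mb{Z}_p)}\co(\phi^{\db{k}})^{-1}$ is supported on the clopen set $c^{-1}(q_0)$, which has $\mu_{\cu^k(\nss)}$-measure at most $1-\eta_\nss$, where $\eta_\nss:=\min_{q\in Q}\mu_{\cu^k(\nss)}\big(\cu^k(\nss)\setminus c^{-1}(q)\big)>0$. As $\nu\mapsto\nu\big(c^{-1}(q)\big)$ is weakly continuous, the set of $\nu\in\mc{P}(\cu^k(\nss))$ supported on some $c^{-1}(q)$ is closed and omits $\mu_{\cu^k(\nss)}$, so its $d_k$-distance $\beta_\nss$ to $\mu_{\cu^k(\nss)}$ (for the fixed factor-consistent metrization) is positive; hence $d_k(\nu_p,\mu_{\cu^k(\nss)})\ge\beta_\nss$. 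I would then set $b_\nss:=\min\big(1/(k+1),\,\beta_\nss/2\big)$ and $p_{0,\nss}:=\max(k,|Q|)$: for a prime $p>p_{0,\nss}$ one has $p>k$ and $\gcd(p,|Q|)=1$, so a $b_\nss$-balanced morphism $\mb{Z}_p\to\nss$ would give $d_k(\nu_p,\mu_{\cu^k(\nss)})\le b_\nss<\beta_\nss$, a contradiction. This proves the reduction, and assembling over the finitely many non-toral \textsc{cfr} nilspaces of complexity $\le M$ produces $b$ and $p_0$.

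The step I expect to be the main obstacle is the structural input used in the first paragraph: that $\pi_0\big(\cu^k(\nss)\big)$ is naturally a finite nilspace factor of $\cu^k(\nss)$, with the component map a morphism and $|\pi_0|$ controlled by $\mathrm{Comp}(\nss)$. This is where one must exploit the iterated compact-abelian-bundle structure of \textsc{cfr} nilspaces --- applying $\pi_0$ along the bundle tower $\cu^k(\nss_j)\to\cu^k(\nss_{j-1})$ and using the homotopy exact sequence to exhibit $\pi_0(\cu^k(\nss))$ as an iterated extension of the finite groups $\pi_0(\ab_j(\nss))$, so that the pushed-forward cube sets make $Q$ a finite $k$-step nilspace and $c$ a morphism. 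This fact should be extractable from (or is closely related to) the analysis underpinning Theorem~\ref{thm:k-cube-connect} in Appendix~\ref{app:torality}; everything else in the argument is routine once it is available.
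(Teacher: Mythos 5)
Your proposal follows the paper's overall strategy (reduce to showing $\cu^k(\nss)$ is connected via Theorem~\ref{thm:k-cube-connect}, then derive a contradiction from a balanced morphism into a disconnected $\cu^k(\nss)$), but it diverges in two structural choices: you avoid the paper's induction on $k$, and you argue that the image of $\phi^{\db{k}}$ is \emph{concentrated in one component} (so the push-forward is far from Haar in the weak metric), whereas the paper argues the image must hit \emph{every} component (so $q\co\phi^{\db{k}}$ is surjective onto a nontrivial finite nilspace, which is then shown impossible by Lemma~\ref{lem:finabcase}). Your finiteness observation (only finitely many \textsc{cfr} nilspaces have complexity $\le M$) is correct and a clean way to get the uniform $b,p_0$, and the binomial/Vandermonde argument for constancy of $p$-periodic polynomial maps into a group of coprime order is essentially the content of Lemma~\ref{lem:finabcase} proved by hand. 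None of that is wrong.

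The genuine gaps are exactly at the structural inputs you flag, but there are two rather than one, and both are what the paper's induction is \emph{for}. First, your claim that $Q=\pi_0(\cu^k(\nss))$ is a finite nilspace and $c$ a morphism: the paper proves this as Lemma~\ref{lem:conncomp}, whose hypothesis is that $\ns$ is a \textsc{cfr} \emph{coset} nilspace, and the proof genuinely uses the presentation $\cu^k(\nss)=\wt G/\wt\Gamma$ (the identity component $\wt G^0$ is a closed normal subgroup, $\wt G^0\wt\Gamma$ is open, etc.). The paper gets the coset presentation from Theorem~\ref{thm:cosetnilspace}, which requires $\nss_{k-1}$ to be toral, and \emph{that} is supplied by the induction on $k$ together with the factor-consistency of the metrization (so $\pi_{k-1}\co\phi$ is again $b$-balanced). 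Your sketch via the bundle tower and a homotopy exact sequence is plausible in spirit, but it is not a routine extraction from Appendix~\ref{app:torality}; you would have to establish independently that the pushed-forward cube sets actually make $Q$ a nilspace and $c$ a (cube-surjective) morphism, which the paper does not do abstractly. Second, and not flagged in your proposal: you assert that $P=c\co g^{\db{k}}$ is a polynomial map $\mb{Z}^{k+1}\to Q$ of degree $\le k$, i.e.\ that $g^{\db{k}}:\cu^k(\mc{D}_1(\mb{Z}))\to\cu^k(\nss)$ is a nilspace morphism with respect to the coset-nilspace structure on $\cu^k(\nss)$. This is precisely Lemma~\ref{lem:cubemapmorph}; it is nontrivial (its proof uses the Taylor expansion of polynomial sequences and the Chu--Vandermonde identity to control the filtration degrees on $\wt G=\cu^k(G_\bullet)$), and it again requires the coset presentation of $\nss$. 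So the attempt to bypass the induction on $k$ leaves both of these load-bearing facts unproved; the paper's route through Theorems~\ref{thm:cosetnilspace} and Lemmas~\ref{lem:conncomp},~\ref{lem:cubemapmorph} is what makes them available, and the induction is the mechanism that makes the coset presentation applicable at step $k$.
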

\noindent This section is mostly devoted to the proof of this result. The proof of Theorem \ref{thm:inverseZp-intro} is a simple combination of Theorems \ref{thm:Zp-toral} and \ref{thm:inverse}, and is given at the end of this section. 

Recall that a nilspace $\ns$ can be equipped with a filtration of  \emph{translation groups} $\tran_i(\ns)$, $i\geq 0$ (see \cite[Definition 3.2.27]{Cand:Notes1}), and that for \textsc{cfr} nilspaces these translation groups are Lie groups (see \cite[Theorem 2.9.10]{Cand:Notes2}).

In the proof of Theorem \ref{thm:Zp-toral}, we shall argue by induction on $k$. This will enable us to assume that $\nss_{k-1}$ is toral, and we shall then use the following characterization of such nilspaces, which will be very convenient for the rest of the argument.\vspace{-0.1cm}
\begin{theorem}\label{thm:cosetnilspace}
Let $\ns$ be a $k$-step \textsc{cfr} nilspace such that the factor $\ns_{k-1}$ is toral. Let $G$ denote the Lie group $\tran(\ns)$, let $G_\bullet$ denote the degree-$k$ filtration $(\tran_i(\ns))_{i\geq 0}$, and for an arbitrary fixed $x\in \ns$ let $\Gamma=\stab_G(x)$. Then $\ns$ is isomorphic as a compact nilspace to the coset nilspace $(G/\Gamma,G_\bullet)$.\vspace{-0.1cm}
\end{theorem}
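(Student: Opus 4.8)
The plan is to reduce the statement to the single claim that the translation group $\tran(\ns)$ acts transitively on $\ns$, and then to establish that claim using the structure of $\ns$ as a degree-$k$ extension of the toral factor $\ns_{k-1}$. Granting transitivity, here is how the rest would go. By \cite[Theorem 2.9.10]{Cand:Notes2} the group $G=\tran(\ns)$ is a nilpotent Lie group; its translation filtration $G_\bullet=(\tran_i(\ns))_{i\geq 0}$ has degree $k$ because $\ns$ is $k$-step; and for any $x\in\ns$ the orbit map $g\mapsto g(x)$ induces a homeomorphism $G/\Gamma\to\ns$ with $\Gamma=\stab_G(x)$. I would then check that under this homeomorphism the coset cube sets $\cu^n(G_\bullet)/\cu^n(\Gamma_\bullet)$, with $\Gamma_\bullet$ given by $\Gamma_i=\Gamma\cap G_i$, coincide with $\cu^n(\ns)$ --- cubes of translations project onto $\cu^n(\ns)$ with fibres exactly $\cu^n(\Gamma_\bullet)$ --- that each $\Gamma_i$ is cocompact in $G_i$ because the orbit $G_i\cdot x$ is compact, and that $\Gamma$ is discrete because the finite-rank hypothesis forces $G$ to be a Lie group of dimension $\dim\ns$ with discrete isotropy. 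This packaging of transitivity into a coset-nilspace description would follow by essentially the same reasoning used in the proof of \cite[Theorem 2.9.17]{Cand:Notes2}.

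For transitivity itself, I would use that $\ns$ is a degree-$k$ extension of $\ns_{k-1}$ by the compact abelian Lie group $\ab_k:=\ab_k(\ns)$, which comes with a free action of $\ab_k$ on $\ns$ by translations of degree $k$ whose orbits are the fibres of $\pi_{k-1}$ (see \cite[\S 3.3]{Cand:Notes1}); in particular $\ab_k\leq\tran_k(\ns)$. Writing $\ab_k\cong\mb{T}^m\times D$ with $D$ finite, the quotient $\ns':=\ns/D$ by the finite subgroup $D\leq\tran_k(\ns)$ is again a \textsc{cfr} nilspace and is a degree-$k$ extension of $\ns_{k-1}$ by $\ab_k/D\cong\mb{T}^m$; since $\ns_{k-1}$ is toral, every structure group of $\ns'$ is then a torus, so $\ns'$ is toral, whence by \cite[Theorem 2.9.17]{Cand:Notes2} and Mal'cev theory $\ns'\cong L/\Lambda$ for a simply connected nilpotent Lie group $L$ with a degree-$k$ filtration $L_\bullet$ and a cocompact lattice $\Lambda\cong\pi_1(\ns')$, on which $L$ acts transitively by left translation. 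The covering map $\ns\to\ns'$ is a principal bundle with finite abelian structure group $D$; pulling it back along the universal cover $L\to L/\Lambda$ identifies $\ns$ with $(L\times D)/\Lambda$, where $\Lambda$ acts on $L$ by right translation and on $D$ through a monodromy homomorphism $\chi\colon\Lambda\to D$. The group $L\times D$ then acts on $(L\times D)/\Lambda$ by translations --- left translation on the $L$-factor and the deck action on the $D$-factor --- transitively, and embeds into $\tran(\ns)$ compatibly with the filtrations; so $\tran(\ns)$ acts transitively on $\ns$.

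The step I expect to be the main obstacle is the verification, in the first paragraph, that for $G=\tran(\ns)$ the \emph{full} translation group the coset cube sets of $(G/\Gamma,G_\bullet)$ coincide exactly with the cube sets of $\ns$ --- equivalently that $\cu^n(\tran_\bullet(\ns))$ surjects onto $\cu^n(\ns)$ with fibres precisely $\cu^n(\Gamma_\bullet)$ --- and that allowing all translations (not merely those coming from the group $L\times D$ constructed above) does not enlarge the isotropy beyond a discrete cocompact subgroup. Closely related points that need care are that $\ns/D$ is genuinely a \textsc{cfr} nilspace and that the identification $\ns\cong(L\times D)/\Lambda$ is an isomorphism of nilspaces rather than only of topological spaces, i.e.\ that the principal $D$-bundle structure transports the degree-$k$ filtration correctly; both follow from the degree-$k$ extension formalism of \cite[\S 3.3]{Cand:Notes1} but require a careful check.
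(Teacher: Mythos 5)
Your high-level decomposition — reduce to transitivity of $G=\tran(\ns)$, then identify the coset cube sets with $\cu^n(\ns)$, then check discreteness and cocompactness of $\Gamma$ — matches the paper's strategy. Your route to transitivity, however, is genuinely different: you split $\ab_k\cong\mb{T}^m\times D$ with $D$ finite, pass to the toral quotient $\ns'=\ns/D$, invoke \cite[Theorem 2.9.17]{Cand:Notes2} to realize $\ns'$ as a nilmanifold $L/\Lambda$, and then reconstruct $\ns$ as $(L\times D)/\Lambda$ via a monodromy homomorphism $\chi\colon\Lambda\to D$. The paper's route is considerably more direct: by \cite[Corollary 2.9.12]{Cand:Notes2} the identity component $\tran(\ns)^0$ acts transitively on each connected component of $\ns$; since $\ns_{k-1}$ is toral (hence connected), one lifts a translation in $\tran(\ns_{k-1})^0$ to $\tran(\ns)^0$ via the shadow homomorphism, moving $x$ to a point in the correct fibre of $\pi_{k-1}$, and then applies an element of $\ab_k$. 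No covering-space theory or classification of principal $D$-bundles is needed, and no auxiliary group $L\times D$ has to be compared with $\tran(\ns)$.

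There are genuine gaps. First, your argument that $\Gamma$ is discrete (``the finite-rank hypothesis forces $G$ to be a Lie group of dimension $\dim\ns$ with discrete isotropy'') is circular: the equality $\dim G=\dim\ns$ would be a \emph{consequence} of transitivity together with discrete isotropy, not something you can assert beforehand, and the finite-rank hypothesis alone gives only that $G$ is a Lie group. The paper's proof uses the torality of $\ns_{k-1}$ essentially: under the shadow homomorphism $h\colon\tran(\ns)\to\tran(\ns_{k-1})$, $h(\Gamma)$ is discrete because $\ns_{k-1}$ is a nilmanifold, while $\Gamma\cap\ker(h)$ is trivial by \cite[Lemma 2.9.9]{Cand:Notes2}. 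Second, and more centrally, the identification $\cu^n(\ns)=\pi_\Gamma^{\db{n}}\big(\cu^n(G_\bullet)\big)$ — which you correctly flag as the main obstacle — is where the real work lies, and you give no mechanism for it. The paper supplies one: for $\q\in\cu^n(\ns)$, one writes $\pi_{k-1}\circ\q=\tilde\q\cdot x'$ with $\tilde\q\in\cu^n(\tran(\ns_{k-1})^0_\bullet)$ using torality, lifts each Taylor coefficient of $\tilde\q$ to $\tran(\ns)^0$ via \cite[Theorem 2.9.10 (ii)]{Cand:Notes2} to get $\q'$, and then corrects $\q-\q'\in\cu^n(\mc{D}_k(\ab_k))$ by degree-$k$ translations. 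Finally, your transitivity argument already presupposes that $\ns\cong(L\times D)/\Lambda$ as a \emph{coset nilspace}, so that $L\times D$ maps into $\tran(\ns)$; establishing this nilspace (not merely topological) identification is essentially the same problem as the cube-set identification you deferred, so the logical loop is not obviously broken.
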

\noindent This theorem tells us essentially that such a nilspace $\ns$ must be a \textsc{cfr} \emph{coset} nilspace, but it also gives us groups $G,\Gamma$ and a  filtration $G_\bullet$ with which we can represent $\ns$. The proof is an adaptation of \cite[Theorem 2.9.17]{Cand:Notes2}; see Theorem \ref{thm:cosetnilspace-app} in Appendix \ref{app:torality}.

Given Theorem \ref{thm:cosetnilspace}, for the proof of Theorem \ref{thm:Zp-toral} we can focus on coset nilspaces. This is useful thanks to the following description of morphisms from $\mb{Z}_p$ into such nilspaces. 
\begin{proposition}\label{prop:comdiag}
Let $\ns=(G/\Gamma,G_\bullet)$ be a coset nilspace. For a positive integer $N$ let  $\phi:\mb{Z}_N\to G/\Gamma$ be a morphism \textup{(}relative to the standard degree-1 cube structure on $\mb{Z}_N$\textup{)}. Then for every homomorphism $\beta:\mb{Z}\to \mb{Z}_N$ there is a polynomial map $g\in\poly(\mb{Z},G_\bullet)$ such that $\phi\co \beta = \pi_{\Gamma}\co g$.
\end{proposition}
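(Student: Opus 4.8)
The plan is to lift $\phi$ along the quotient $\pi_\Gamma: G\to G/\Gamma$ in a way that is compatible with the cube structure, and then use the standard correspondence between morphisms out of $\mb{Z}_N$ (or $\mb{Z}$) into a coset nilspace and polynomial maps relative to the filtration. First I would recall the algebraic description of a coset nilspace from \cite[Proposition 2.3.1]{Cand:Notes1}: a morphism $\psi:\mb{Z}\to G/\Gamma$ with respect to the degree-$1$ cube structure on $\mb{Z}$ is the same thing (after composing with $\pi_\Gamma$) as a polynomial sequence in $\poly(\mb{Z},G_\bullet)$, because $\cu^n(\mb{Z}_\bullet)$ for the degree-$1$ filtration consists exactly of the arithmetic-progression cubes, and a map $\mb{Z}\to G$ sends these into $\cu^n(G_\bullet)$ iff it lies in $\poly(\mb{Z},G_\bullet)$ (this is the Lazard–Leibman characterization; see \cite{Leib}, \cite[Definition 1.8]{GTOrb}). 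So the real content is producing a lift $g:\mb{Z}\to G$ of the composite $\phi\co\beta:\mb{Z}\to G/\Gamma$ which is itself a morphism into $G/\Gamma$ at the level of $G$ — equivalently, a polynomial map.

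The key steps, in order, are as follows. (1) Observe that $\beta:\mb{Z}\to\mb{Z}_N$ is a morphism for the degree-$1$ cube structures (it is a group homomorphism, hence certainly polynomial of degree $1$), so $\phi\co\beta:\mb{Z}\to G/\Gamma$ is a composite of morphisms, hence a morphism. (2) Reduce to lifting a morphism $\mb{Z}\to G/\Gamma$ to a morphism $\mb{Z}\to G$. Here I would use that $\mb{Z}$, with the degree-$1$ filtration $\mb{Z}_\bullet$ given by $\mb{Z}_0=\mb{Z}_1=\mb{Z}$ and $\mb{Z}_i=0$ for $i\geq 2$, is (as a nilspace) a free/projective object in a suitable sense: concretely, a polynomial sequence $\psi\in\poly(\mb{Z},(G/\Gamma)_\bullet)$ is determined by its Taylor/Maclaurin coefficients $g_i\in G_i/\Gamma_i$ (using Leibman's theory that $\poly(\mb{Z},G_\bullet)\cong \prod_i G_i$ via the binomial-coefficient expansion), and one may lift each coset $g_i\Gamma_i$ to an arbitrary representative $\tilde g_i\in G_i$ and define $g(n)=\prod_i \tilde g_i^{\binom{n}{i}}$, which lies in $\poly(\mb{Z},G_\bullet)$ and satisfies $\pi_\Gamma\co g=\psi$. (3) Apply this with $\psi=\phi\co\beta$ to obtain the desired $g\in\poly(\mb{Z},G_\bullet)$ with $\phi\co\beta=\pi_\Gamma\co g$.

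The main obstacle I expect is step (2): making precise and correct the claim that a polynomial sequence into $G/\Gamma$ lifts to a polynomial sequence into $G$. The subtlety is that while each structure-group coset lifts freely, one must verify that the resulting product $\prod_i\tilde g_i^{\binom{n}{i}}$ is genuinely polynomial relative to $G_\bullet$ and that $\pi_\Gamma\co g$ really equals the original $\psi$ and not just a sequence with the same coefficients modulo something. This is where I would lean on Leibman's structure theorem for $\poly(\mb{Z},G_\bullet)$ (that the Maclaurin coefficient map $\poly(\mb{Z},G_\bullet)\to\prod_{i\ge0}G_i$ is a bijection, with the analogous statement for $G/\Gamma$ compatible with $\pi_\Gamma$), together with the observation that $\pi_\Gamma$ maps $G_i$ onto $G_i/\Gamma_i$ with $\Gamma_i=\Gamma\cap G_i$. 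An alternative, more hands-on route avoiding Leibman: induct on the step $k$, using the central extension $G_k/\Gamma_k\to G/\Gamma\to (G/\Gamma)_{k-1}$; lift the morphism into the $(k-1)$-step factor by induction, then lift across the top structure group using that $\mb{Z}$ has trivial "higher cohomology" for such abelian extensions (every cocycle on $\mb{Z}$ relative to a degree-$1$ structure is a coboundary, since $\mb{Z}$ is a projective nilspace). Either way the work is routine once the framework is set up; I would present the Leibman-coefficient version as the cleanest.
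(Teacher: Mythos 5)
Your primary route (the ``Leibman coefficient version'') has a genuine gap. You treat $\phi\co\beta$ as an element of $\poly(\mb{Z},(G/\Gamma)_\bullet)$ with Taylor coefficients $g_i\in G_i/\Gamma_i$, but $\Gamma$ is in general not normal in $G$, so $G/\Gamma$ is merely a coset nilspace, not a filtered group; $(G/\Gamma)_\bullet$ is not a filtration and $\poly(\mb{Z},(G/\Gamma)_\bullet)$ does not make sense. A morphism $\mb{Z}\to G/\Gamma$ therefore has no intrinsic Maclaurin expansion to speak of, and the assertion that such a morphism ``is determined by coefficients $g_i\Gamma_i$'' is essentially equivalent to the statement you are trying to prove (existence of a polynomial lift). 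Even granting some choice of representatives, it is not obvious that $\pi_\Gamma\co\bigl(\prod_i\tilde g_i^{\binom{n}{i}}\bigr)$ is independent of the representatives $\tilde g_i$, again because $\Gamma$ need not be normal and the interleaved $\Gamma_i$-factors cannot simply be pushed to the right. So step (2), which you yourself flagged as the main obstacle, is not a routine bookkeeping matter — it is where the real content lies, and the proposed mechanism does not close it.

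Your briefly sketched alternative is the one that actually works, and it is essentially the paper's proof. The paper first reduces (as you do) to lifting a morphism $\mb{Z}\to G/\Gamma$ to a polynomial $\mb{Z}\to G$, and then performs a \emph{descending} induction on the filtration level $j$: assuming $\phi$ takes values in $(G_j\Gamma)/\Gamma$, it passes to the quotient $q_j:(G_j\Gamma)/\Gamma\to A_j$ where $A_j=G_j\Gamma/(G_{j+1}\Gamma)$ is a genuine \emph{abelian group} (this is exactly where the filtration property is used to make a bona fide group appear), notes that $q_j\co\phi$ is a morphism $\mb{Z}\to\mc{D}_j(A_j)$ and hence an honest polynomial with Taylor coefficients $a_\ell\in A_j$, lifts those coefficients to elements $b_\ell\in G_j$, forms $\alpha(x)=\prod_\ell b_\ell^{\binom{x}{\ell}}\in\poly(\mb{Z},G_\bullet)$, and observes that $\alpha^{-1}\phi$ now takes values in $(G_{j+1}\Gamma)/\Gamma$, closing the induction (base case $j=k+1$ is trivial since $G_{k+1}$ is trivial). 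The crucial move — replacing the non-group $G/\Gamma$ by the abelian quotient $A_j$ before invoking Lemma \ref{lem:Taylor} — is precisely what is missing from your main route. If you flesh out your ``central extension / cocycle'' alternative along these lines you will recover the paper's argument; as written, the Leibman-coefficient version is not a proof.
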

\noindent The proof, adapting an argument from \cite{Szegedy:HFA}, is given at the end of Appendix \ref{app:torality}.

In the proof of Theorem \ref{thm:Zp-toral}, we use the following lemma in the inductive step.

\begin{lemma}\label{lem:conncomp}
Let $\ns$ be a \textsc{cfr} coset nilspace $(G/\Gamma,G_\bullet)$, and let $\nss$ be the coset nilspace $(G/(G^0\,\Gamma),G_\bullet)$ where $G^0$ is the identity component of $G$. Then the quotient map $q:G/\Gamma \to G/(G^0\,\Gamma)$ is a morphism of compact nilspaces, and $\nss$ is in bijection with the set of connected components of $\ns$. In particular $\nss$ is a finite \textup{(}discrete\textup{) }nilspace.
\end{lemma}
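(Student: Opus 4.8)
The plan is to first pin down the relevant algebraic and topological properties of the subgroup $G^0\Gamma$, then to identify the fibres of $q$ with the connected components of $\ns$, and finally to read off finiteness from compactness.

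Since $G$ is a Lie group, $G^0$ is an \emph{open} normal subgroup of $G$, so $G^0\Gamma$ (the product of a normal subgroup with a subgroup) is again a subgroup, and it is open, hence closed, and contains $\Gamma$. By the coset nilspace construction (\cite[Proposition 2.3.1]{Cand:Notes1}), $\cu^n(\ns)$ is the set of maps $\pi_\Gamma\co\wt\q$ with $\wt\q\in\cu^n(G_\bullet)$, and $\cu^n(\nss)$ is the set of maps $\pi_{G^0\Gamma}\co\wt\q$ with $\wt\q\in\cu^n(G_\bullet)$, where $\pi_\Gamma:G\to G/\Gamma$ and $\pi_{G^0\Gamma}:G\to G/(G^0\Gamma)$ are the quotient homomorphisms. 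The inclusion $\Gamma\le G^0\Gamma$ induces a canonical surjection $q:G/\Gamma\to G/(G^0\Gamma)$ with $q\co\pi_\Gamma=\pi_{G^0\Gamma}$, so every cube $\q=\pi_\Gamma\co\wt\q$ of $\ns$ satisfies $q\co\q=\pi_{G^0\Gamma}\co\wt\q\in\cu^n(\nss)$; thus $q$ maps cubes to cubes. As $q$ is also continuous (it is induced by the continuous quotient map $G\to G/(G^0\Gamma)$ of topological groups), it is a morphism of compact nilspaces.

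Next I would compute the fibres of $q$. The fibre over the identity coset of $\nss$ is $\pi_\Gamma(G^0\Gamma)$; using the normality of $G^0$ one has $\pi_\Gamma(G^0\gamma)=\pi_\Gamma(G^0)$ for every $\gamma\in\Gamma$, so this fibre is exactly $\pi_\Gamma(G^0)$, which is connected (being the continuous image of the connected set $G^0$) and open in $\ns$ (since $\pi_\Gamma$ is an open map and $G^0$ is open). Every other fibre is a left translate $g\cdot\pi_\Gamma(G^0)=\pi_\Gamma(gG^0)$, hence likewise open and connected; since the fibres partition $\ns$, each of them is also closed, so each fibre is a clopen connected subset. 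A partition of a space into clopen connected sets coincides with its partition into connected components (a clopen connected set meets, hence by connectedness of the component contains, the component through any of its points, and is contained in that component by its own connectedness). Hence $q$ induces a bijection between the points of $\nss$ and the connected components of $\ns$.

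Finally, since $\ns=G/\Gamma$ is compact and is the disjoint union of the open fibres of $q$, there are only finitely many of them; equivalently $G^0\Gamma$ has finite index in $G$, so $\nss=G/(G^0\Gamma)$ is a finite set, and it carries the discrete topology because $G^0\Gamma$ is open. This makes $\nss$ a finite discrete nilspace. I do not expect a genuine obstacle here: the one place the hypotheses are essential is the use of the Lie group structure of $G$ to guarantee that $G^0$ is \emph{open} (so that $G^0\Gamma$ is clopen and the fibres of $q$ are clopen); everything else is elementary topology of quotients of topological groups.
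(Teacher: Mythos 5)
Your proof is correct and follows essentially the same route as the paper's: both exploit that $G^0$ is open in the Lie group $G$ to get open (hence clopen) fibres/coset-images, and then use compactness of $G/\Gamma$ for finiteness. The only small difference is cosmetic: you obtain connectedness of each fibre by the clean identity $\pi_\Gamma(gG^0\Gamma)=\pi_\Gamma(gG^0)$ (a continuous image of a connected set, and note the normality of $G^0$ is not actually needed here since $\gamma\Gamma=\Gamma$), whereas the paper argues it by explicitly exhibiting paths in the quotient.
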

\begin{proof}
It is clear that $q$ is a (continuous) morphism, because any cube $\q\in \cu^n(\ns)$ lifts to a cube $\tilde\q\in\cu^n(G_\bullet)$, i.e.\ we have $\q=\tilde\q\Gamma^{\db{n}}$ (by definition of the coset nilspace structure), so $q\co \q = \tilde\q (G^0\, \Gamma)^{\db{n}}$ is indeed a cube on $\nss$.

We claim that the quotient map $\pi_\Gamma:G\to G/\Gamma$ induces a bijection from the set of cosets of $G^0\Gamma$ (i.e.\ the set $\nss$) to the set of connected components of $G/\Gamma$. First note that the image under $\pi_\Gamma$ of any coset of $G^0\Gamma$ is open, because $G^0$ is open (as $G$ is a Lie group) and $\pi_\Gamma$ is an open map. Since these images cover the compact set $G/\Gamma$, and clearly two distinct cosets of $G^0\Gamma$ are mapped to disjoint such images by $\pi_\Gamma$, these images form a finite partition of $G/\Gamma$. Moreover, the image of every coset $gG^0\Gamma$ is connected in $G/\Gamma$ (indeed for any points $gg_1\gamma_1,gg_2\gamma_2$ in this coset there are paths from $gg_i\gamma_i$ to $g\gamma_i$ via $G^0$ for $i=1,2$, and then $g\gamma_1$, $g\gamma_2$ are identified in the quotient), so each such image is included in one of the components of $G/\Gamma$, and therefore must be the whole component (otherwise this component would be a disjoint union of at least two such images, which are open sets, contradicting the connectedness of the component). This shows that each component of $G/\Gamma$ is an image under $\pi_{\Gamma}$ of a unique coset of $G^0\Gamma$, which proves our claim.
\end{proof}
We need two more lemmas before we can prove Theorem \ref{thm:Zp-toral}.
\begin{lemma}\label{lem:cubemapmorph}
Let $\nss$ be a coset nilspace, let $N\in\mb{N}$ and let $\phi:\mb{Z}_N\to\nss$ be a morphism. Then for each $k\in\mb{N}$ the map $\phi^{\db{k}}:\q\mapsto \phi\co\q$ is a nilspace morphism $\cu^k(\mb{Z}_N)\to \cu^k(\nss)$.
\end{lemma}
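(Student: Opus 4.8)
The plan is to deduce the claim from the defining property of $\phi$ as a nilspace morphism, combined with the standard identification of the cube sets of the nilspace $\cu^k(\ns)$ with higher-dimensional cube sets of $\ns$. Recall that for any nilspace $\ns$ and any $k\geq 0$, the cube set $\cu^k(\ns)$ carries a canonical nilspace structure in which, for each $n\geq 0$, a map $\q:\db{n}\to\cu^k(\ns)$ lies in $\cu^n\big(\cu^k(\ns)\big)$ if and only if the associated map $\wh{\q}:\db{n}\times\db{k}\to\ns$, $(v,w)\mapsto \q(v)(w)$, lies in $\cu^{n+k}(\ns)$, where $\db{n}\times\db{k}$ is identified with $\db{n+k}$ in the obvious way (see \cite[\S 3.2]{Cand:Notes1}). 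In particular $\q\mapsto\wh{\q}$ is a bijection from $\cu^n\big(\cu^k(\ns)\big)$ onto $\cu^{n+k}(\ns)$, for each $n$.

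Granting this, the proof is a short unwinding of definitions. First, $\phi^{\db{k}}$ indeed maps $\cu^k(\mb{Z}_N)$ into $\cu^k(\nss)$; this is the case $m=k$ of the fact that a nilspace morphism $\phi$ maps $\cu^m(\mb{Z}_N)$ into $\cu^m(\nss)$ for every $m\geq 0$. Now fix $n\in\mb{N}$ and let $\q\in\cu^n\big(\cu^k(\mb{Z}_N)\big)$. By the identification above we have $\wh{\q}\in\cu^{n+k}(\mb{Z}_N)$, hence $\phi\co\wh{\q}\in\cu^{n+k}(\nss)$ since $\phi$ is a morphism. A direct check shows that $\phi\co\wh{\q}$ is exactly the uncurried form of the map $v\mapsto\phi^{\db{k}}(\q(v))$, that is, $\phi\co\wh{\q}=\wh{\phi^{\db{k}}\co\q}$. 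Therefore $\phi^{\db{k}}\co\q\in\cu^n\big(\cu^k(\nss)\big)$. As $n$ and $\q$ were arbitrary, $\phi^{\db{k}}$ sends cubes to cubes, i.e. it is a nilspace morphism $\cu^k(\mb{Z}_N)\to\cu^k(\nss)$.

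The only point that genuinely needs care is the identification of $\cu^n\big(\cu^k(\ns)\big)$ with $\cu^{n+k}(\ns)$ used above: this is a standard, if slightly technical, fact about nilspaces, and once it is in hand nothing in the argument relies on the coset structure of $\nss$ (or on the degree-$1$ cube structure on $\mb{Z}_N$) --- the same reasoning applies verbatim to a morphism between any two nilspaces. The coset hypothesis is recorded here only because that is the generality in which the lemma will subsequently be applied.
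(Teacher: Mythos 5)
There is a genuine gap, and it is the very step you flag as ``granted''. The ``canonical nilspace structure'' you describe on $\cu^k(\ns)$ --- declaring $\q\in\cu^n(\cu^k(\ns))$ iff the uncurried map $\wh{\q}$ lies in $\cu^{n+k}(\ns)$ --- is not the cube structure the lemma is about, and in fact is not a nilspace structure at all. To see both points, take $\ns=\mb{Z}_N$ with degree-$1$ cubes. An $(n{+}k)$-cube on $\mb{Z}_N$ has the affine form $\wh{\q}(v,w)=a+\sum_i v_i h_i+\sum_j w_j g_j$, with no $v_iw_j$ cross terms. Specializing to $n=1$, uncurried $1$-cubes $(\q_0,\q_1)$ in $\cu^k(\mb{Z}_N)$ must therefore satisfy $\q_1-\q_0\equiv\text{const}$, so not every pair is a $1$-cube and the ergodicity axiom fails; the ``canonical'' structure is not a nilspace. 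By contrast, the structure the lemma actually uses (made explicit in the paper's proof) is the degree-$1$ abelian-group structure on $\cu^k(\mb{Z}_N)\cong\mb{Z}_N^{k+1}$, for which every pair is a $1$-cube, and more generally an $n$-cube $\q$ has each Taylor coefficient $n_j(v)$ of $\q(v)$ varying affinely in $v$. Uncurrying such a cube gives $\wh{\q}(v,w)=n_0(v)+\sum_j w_j\,n_j(v)$, which does have cross terms $v_iw_j$ and is generally not in $\cu^{n+k}(\mb{Z}_N)$.

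Consequently your argument only handles a strictly smaller collection of cubes on the domain $\cu^k(\mb{Z}_N)$ than the lemma requires, and proves nothing about the cubes with nonconstant directions $n_1(v),\dots,n_k(v)$. These are precisely the cubes for which the lemma has real content, and for which the filtration degree bookkeeping is delicate: on the target side the relevant structure is the coset nilspace on $\cu^k(\nss)=\wt G/\wt\Gamma$ with filtration $\wt G_i=G_i^{\db{k}}\cap\cu^k(G_\bullet)$, and showing that $\phi^{\db{k}}$ is a polynomial map into $(\wt G,\wt G_\bullet)$ requires expanding $g(n_0+v\cdot(n_1,\dots,n_k))$ via Taylor coefficients, using Chu--Vandermonde to sort the resulting binomial products by face codimension, and invoking Leibman's theorem to multiply the pieces back together. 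This is why the paper's proof passes through Proposition \ref{prop:comdiag} and this computation rather than a formal uncurrying. Your closing remark that ``nothing in the argument relies on the coset structure of $\nss$ (or on the degree-$1$ cube structure on $\mb{Z}_N$)'' is a warning sign: the statement you have actually proved is about a different, much weaker structure on the cube sets, which happens to be trivially functorial precisely because it discards the part of the structure that carries the content.
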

\begin{proof}
We are assuming that $\nss$ is the coset space $G/\Gamma$, for some filtered group $(G,G_\bullet)$ and $\Gamma\leq G$, and that $\cu^k(\nss)=\{\q\Gamma^{\db{k}}:\q\in \cu^k(G_\bullet)\}$. We view the abelian group $\cu^k(\mb{Z}_N)$ as a nilspace by equipping it with the standard cubes, and we view $\cu^k(\nss)$ as the coset nilspace $\wt G/\wt \Gamma$ where $\wt G$, $\wt \Gamma$ denote the group $\cu^k(G_\bullet)$ and subgroup $\cu^k(\Gamma_\bullet)$ respectively (with $\Gamma_i:=\Gamma\cap G_i$), and where $\wt G$ is equipped with the filtration $\wt G_\bullet = \big(G_i^{\db{k}}\cap \cu^k(G_\bullet)\big)_{i\geq 0}$. By Proposition \ref{prop:comdiag} there is a polynomial map $g\in\poly(\mb{Z},G_\bullet)$ such that, identifying $\mb{Z}_N$ with the set of integers $[0,N-1]$ with addition mod $N$, we have $\phi(n)=g(n)\Gamma$ for all $n$ (in particular $g$ is $N$-periodic mod $\Gamma$). Define \vspace{-0.1cm}
\begin{equation}\label{eq:kdimg}
g^{(k)}: \mb{Z}^{k+1}\to\wt G ,\;\; \mf{n}=(n_0,n_1,\dots,n_k)\mapsto \big(g(n_0+v\cdot (n_1,\dots,n_k))\big)_{v\in \db{k}}. \vspace{-0.1cm}
\end{equation}
The group isomorphism $\theta:\mb{Z}_N^{k+1}\to\cu^k(\mb{Z}_N)$, $\mf{n}\mapsto \big(n_0+v\cdot (n_1,\dots,n_k)\mod N\big)_{v\in\db{k}}$ is a nilspace isomorphism. Hence $\phi^{\db{k}}$ is a morphism if and only if 
the map $\mf{n}\mapsto g^{(k)}(\mf{n})\Gamma^{\db{k}}$ is a morphism $\mb{Z}_N^{k+1}\to \cu^k(\nss)$ (since the latter map is $\phi^{\db{k}}\co \theta$). 
Recall that the morphisms between two group nilspaces are the polynomial maps between the filtered groups \cite[Theorem 2.2.14]{Cand:Notes1}. Hence it suffices to prove that $g^{(k)}\in \poly(\mb{Z}^{k+1},\wt G_\bullet)$, as then $g^{(k)}$ is a morphism into $\wt G$ and then $g^{(k)}(\mf{n})\Gamma^{\db{k}}$ is a morphism as required.\\
\indent By Lemma \ref{lem:Taylor}, there is a unique expression $g(n)=g_0g_1^n\cdots g_k^{\binom{n}{k}}$, where $g_i\in G_i$. Substituting this expression into \eqref{eq:kdimg} and expanding, we see that $g^{(k)}(\mf{n})$ is a pointwise product of maps $h_j:\mb{Z}^{k+1}\to\wt G$, $j\in [0,k]$, of the form $h_j(\mf{n})= \Big(g_j^{\binom{n_0+v\cdot(n_1,\ldots,n_k)}{j}}\Big)_{v\in \db{k}}$. By Leibman's theorem \cite{Leib}, polynomial maps form a group under pointwise multiplication, so it suffices to show that for every $j\in [0,k]$ we have $h_j\in \poly(\mb{Z}^{k+1},\wt G_\bullet)$. We have $\binom{n_0+v\cdot(n_1,\ldots,n_k)}{j}=\sum_{\mf{i}=(i_0,\ldots,i_k)\in \mb{Z}_{\geq 0}^{k+1}, |\mf{i}|=j}\binom{n_0}{i_0}\binom{v_1 n_1}{i_1}\cdots \binom{v_k n_k}{i_k}$, by the identity of Chu--Vandermonde. Letting $\mf{i}'=(i_1,\ldots,i_k)$ be the restriction of $\mf{i}$ to its last $k$ coordinates, we note that $\binom{n_0}{i_0}\binom{v_1 n_1}{i_1}\cdots \binom{v_k n_k}{i_k}$ gives a non-zero contribution to the last sum above only if $\supp(\mf{i}')\subset \supp(v)$. 
We deduce that
$h_j(\mf{n}) = \prod_{ \mf{i},\, |\mf{i}|=j}  g_{\mf{i}}^{\tbinom{n_0}{i_0}\cdots \tbinom{n_k}{i_k}}$, where $g_{\mf{i}}$ is the element of $G^{\db{k}}$ with $g_{\mf{i}}(v)=g_j$ if $\supp(v)\supset \supp(\mf{i}')$, and $g_{\mf{i}}(v)=\id_G$ otherwise. Now observe that, since $|\supp(\mf{i}')|\leq j$, the set $\{v:\supp(v)\supset \supp(\mf{i}')\}$ is a face of codimension at most $j$ in $\db{k}$. Since $g_j\in G_j$, it follows that $g_{\mf{i}}\in \wt G_j$. 

We have shown that $h_j$ is a pointwise product of maps of the form $\mf{n}\mapsto g_{\mf{i}}^{\binom{\mf{n}}{\mf{i}}}$, where $\binom{\mf{n}}{\mf{i}}=\binom{n_0}{i_0}\binom{n_1}{i_1}\cdots \binom{n_k}{i_k}$. It is known that these maps are polynomial (see the proof of \cite[Lemma 6.7]{GTOrb}). This proves that $g^{(k)} \in\poly(\mb{Z}^{k+1},\wt G_\bullet)$, and the result follows.
\end{proof} 

\begin{remark}
In Lemma \ref{lem:cubemapmorph} we equipped the cube set $\cu^k(\nss)$ itself with a natural nilspace structure, but note that this was enabled by the specific \emph{coset}-nilspace nature of $\nss$. There is in fact a \emph{cube}space structure that one can define on $\cu^k(\ns)$ for a \emph{general} nilspace $\ns$: given a map $\q:\db{m}\to\cu^k(\ns)$, $v\mapsto \q(v)$ (where $\q(v)$ is itself a cube $w\mapsto \q(v)(w)$ in $\cu^k(\ns)$), we declare $\q$ to be an $m$-cube on $\cu^k(\ns)$ if for every $w\in \db{k}$, the map $\db{m}\to\ns$, $v\mapsto \q(v)(w)$ is in $\cu^m(\ns)$. It seems to be an interesting question whether this cubespace structure satisfies the completion axiom and thus defines a nilspace structure. The answer is affirmative when $\ns$ is a coset nilspace, because it can be checked that in this case this structure is equivalent to the one used on $\cu^k(\nss)$ above. This fact can be used to give an alternative proof of Lemma \ref{lem:cubemapmorph}.
\end{remark}

\begin{lemma}\label{lem:finabcase}
Let $\ab_1$, $\ab_2$ be finite abelian groups with coprime orders, and let $\ell\in\mb{N}$. Then every morphism $\mc{D}_1(\ab_1)\to \mc{D}_\ell(\ab_2)$ is constant.
\end{lemma}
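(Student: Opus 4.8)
The plan is to invoke the fact that morphisms between nilspaces built on filtered groups are exactly the polynomial maps between those groups (\cite[Theorem 2.2.14]{Cand:Notes1}): a morphism $\phi\colon\mc{D}_1(\ab_1)\to\mc{D}_\ell(\ab_2)$ is precisely a map $\ab_1\to\ab_2$ all of whose $(\ell+1)$-fold discrete derivatives vanish, and I want to show every such $\phi$ is constant. Writing $N_1=|\ab_1|$, the first step is to reduce to one-dimensional ``lines'': for a fixed $a\in\ab_1$, the map $q_a\colon\mb{Z}\to\ab_2$, $k\mapsto\phi(ka)$, is a polynomial map of degree at most $\ell$ (being the composite of the homomorphism $k\mapsto ka$, a morphism $\mc{D}_1(\mb{Z})\to\mc{D}_1(\ab_1)$, with $\phi$), and it is $N_1$-periodic because $N_1a=0_{\ab_1}$. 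It then suffices to show each $q_a$ is constant, since then $\phi(a)=q_a(1)=q_a(0)=\phi(0_{\ab_1})$ for every $a$, with no base-point normalization needed.

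For the main step I would expand $q_a$ in the binomial basis, $q_a(k)=\sum_{j=0}^{\ell}\binom{k}{j}b_j$ with $b_j\in\ab_2$ (Lemma \ref{lem:Taylor}, or \cite{Leib}), and substitute $k=N_1t$. The $N_1$-periodicity forces $q_a(N_1t)=q_a(0)$ for all $t\in\mb{Z}$, so the integer-valued polynomial $t\mapsto q_a(N_1t)$ is constant; expanding it in the basis $\big(\binom{t}{i}\big)_{i\ge 0}$, every coefficient with $i\ge 1$ vanishes. On the other hand $\binom{N_1t}{j}=\sum_{i=0}^{j}s(j,i)\binom{t}{i}$ for integers $s(j,i)$ with $s(j,j)=N_1^{\,j}$ (comparing leading terms of both sides as polynomials in $t$), so the vanishing of the $\binom{t}{i}$-coefficients for $i\ge 1$ in $q_a(N_1t)$ yields the upper-triangular system $\sum_{j=i}^{\ell}s(j,i)\,b_j=0$ for $i=1,\dots,\ell$, with $N_1^{\,j}$ on the diagonal. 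Since $\gcd(N_1,|\ab_2|)=1$, multiplication by $N_1^{\,j}$ is an automorphism of $\ab_2$ for every $j$, so solving from $i=\ell$ downwards gives $b_\ell=b_{\ell-1}=\dots=b_1=0$; hence $q_a(k)=b_0=q_a(0)$ for all $k$, which is what was needed. Letting $a$ range over $\ab_1$ then shows $\phi\equiv\phi(0_{\ab_1})$.

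I do not expect a genuine obstacle here; the content is simply the observation that a polynomial map of bounded degree which is periodic with period coprime to the order of the target group must be constant. The one point deserving a little care is the change of basis $\binom{N_1t}{j}=\sum_i s(j,i)\binom{t}{i}$ together with the identification of the diagonal coefficient $s(j,j)=N_1^{\,j}$ — this is precisely where the coprimality hypothesis is used, through the invertibility of $N_1^{\,j}$ on $\ab_2$ — but it is a routine leading-coefficient comparison. An essentially equivalent alternative would be to argue directly on $\ab_1$ via the $(\ell+1)$-fold difference characterization of polynomiality, but passing through the cyclic lines $\{ka:k\in\mb{Z}\}$ keeps the computation transparent and avoids invoking the structure of polynomial maps on a general finite abelian group.
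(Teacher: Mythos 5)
Your argument is correct, and it takes a genuinely different route from the paper's. The paper proves the lemma by induction on $\ell$ using the difference operator: for $\ell=1$ a morphism is affine, so $x\mapsto\phi(x)-\phi(0)$ is a homomorphism whose image has order dividing both $\lvert\ab_1\rvert$ and $\lvert\ab_2\rvert$, hence trivial; for $\ell>1$ each $\Delta_t\phi$ is a morphism into $\mc{D}_{\ell-1}(\ab_2)$, so constant by induction, reducing again to the $\ell=1$ case. Your proof instead restricts $\phi$ to the cyclic lines $k\mapsto\phi(ka)$ in $\mb{Z}$, writes out the binomial/Taylor expansion of the resulting degree-$\ell$ polynomial $q_a\colon\mb{Z}\to\ab_2$, exploits $N_1$-periodicity, and reads off the vanishing of the coefficients $b_1,\dots,b_\ell$ from the upper-triangular change-of-basis system $\binom{N_1 t}{j}=\sum_i s(j,i)\binom{t}{i}$ with diagonal $N_1^j$ invertible modulo $\lvert\ab_2\rvert$. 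Both uses of coprimality are equivalent in substance (a homomorphic image must be trivial versus $N_1^j$ invertible on $\ab_2$), but the paper's inductive difference argument is softer and shorter, while yours is more explicit and makes the mechanism of coprimality transparent by exhibiting it at each diagonal entry; yours also avoids appealing to the structure of polynomial maps on a general finite abelian group by working only with $\mb{Z}$-lines, at the cost of a small binomial computation.
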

\begin{proof}
We argue by induction on $\ell$. For $\ell=1$, note that a morphism $\phi:\mc{D}_1(\ab_1)\mapsto\mc{D}_1(\ab_2)$ satisfies $\Delta_s\Delta_t\phi(x)=0$ for every $s,t,x\in\ab_1$ (see \cite[formula (2.9)]{Cand:Notes1}), which means that $\phi$ is an affine homomorphism $\ab_1\to \ab_2$, so the map $\psi:x\mapsto \phi(x)-\phi(0)$ is a homomorphism. By standard group theory, the order $|\psi(\ab_1)|$ divides both $|\ab_1|$ and $|\ab_2|$, so we must have $|\psi(\ab_1)|=1$, so $\phi$ is constant. For $\ell>1$, note that for every morphism $\phi:\mc{D}_1(\ab_1)\to \mc{D}_\ell(\ab_2)$, for every $t\in \ab_1$ the map $\Delta_t\phi: x\mapsto \phi(x+t)-\phi(x)$ is a morphism $\mc{D}_1(\ab_1)\to \mc{D}_{\ell-1}(\ab_2)$, so by induction $\Delta_t\phi$ is a constant function of $x$, for each $t$. Hence $\Delta_s\Delta_t\phi(x)=0$ for all $s,t,x\in\ab_1$. Arguing as for $\ell=1$, we deduce that $\phi$ is constant.
\end{proof}

We can now prove the characterization of balanced morphisms on $\mb{Z}_p$.

\begin{proof}[Proof of Theorem \ref{thm:Zp-toral}]
By Theorem \ref{thm:k-cube-connect} it suffices to show that $\cu^k(\nss)$ is connected. We prove this by induction on $k$. The base case $k=0$ is trivial.

Let $k\geq 1$, and suppose for a contradiction that $\cu^k(\nss)$ is disconnected.

We have that $\pi_{k-1}\co\phi$ is also $b$-balanced (by our choice of a factor-consistent metrization), so we can assume by induction that $\nss_{k-1}$ is toral. Hence $\nss$ is isomorphic to a compact coset nilspace $(G/\Gamma,G_\bullet)$, by Theorem \ref{thm:cosetnilspace}. Letting $\wt G=\cu^k(G_\bullet)$ with the filtration $\wt{G}_\bullet = \big(G_j^{\db{k}}\cap \cu^k(G_\bullet)\big)_{j\geq 0}$, and $\wt{\Gamma} = \cu^k(\Gamma_\bullet)$, we have that $\cu^k(\nss)$ is homeomorphic to the compact coset space $\wt{G}/\wt{\Gamma}$, which we equip with the coset nilspace structure determined by $\wt{G}_\bullet$. By Lemma \ref{lem:cubemapmorph}, the map $\phi^{\db{k}}:\cu^k(\mb{Z}_p)\to\cu^k(\nss)$, $\q\mapsto \phi\co\q$ is a morphism. We apply Lemma \ref{lem:conncomp} to $\cu^k(\nss)$, and let $q:\wt{G}/\wt{\Gamma}\mapsto \wt{G}/(\wt{G}^0 \wt{\Gamma})$ be the resulting quotient morphism. Then $q \co \phi^{\db{k}}$ is a morphism from $\cu^k(\mb{Z}_p)$ to a discrete nilspace $\wt{Y}$ of finite cardinality equal to the number of connected components of $\cu^k(\nss)$. 

We claim that for $b$ sufficiently small (depending only on $M$), for every such component $C$ we have $\phi^{\db{k}}\big(\cu^k(\mb{Z}_p)\big)\cap C\neq \emptyset$. Indeed, by Lemma \ref{lem:compequim} the finitely many connected components of $\cu^k(\nss)$ all have equal Haar measure $\nu>0$. Hence, for any such component $C$, it follows from the Portmanteau Theorem \cite[(17.20)]{Ke} (using that $C$ is open) that the measure $\mu_{\cu^k(\mb{Z}_p)}\co(\phi^{\db{k}})^{-1}(C)$ is at least $\nu-o(1)_{b\to 0}$ (where $\mu_{\cu^k(\mb{Z}_p)}$ is the Haar measure on $\cu^k(\mb{Z}_p)$), so for $b$ sufficiently small this measure is positive, which proves our claim. This claim implies that $q \co \phi^{\db{k}}$ is surjective.

Now let $\wt{\nss}_i$ be the nilspace factor of $\wt{\nss}$ for the minimal $i\in [k]$ such that $\wt{\nss}_i$ is not the 1-point nilspace. In particular, it follows from minimality of $i$ that $\wt{\nss}_i$ is a finite abelian group $\ab$ with the degree-$i$ nilspace structure $\mc{D}_i(\ab)$. Since the factor map $\pi_i:\wt{\nss}\to \wt{\nss}_i$ is a surjective morphism, it follows that the map $\psi := \pi_i\co q \co \phi^{\db{k}}$ is a surjective morphism $\cu^k(\mb{Z}_p)\to \wt{\nss}_i$. For $p$ sufficiently large in terms of $M$, the orders $|\cu^k(\mb{Z}_p)|=p^{k+1}$ and $|\wt{\nss}_i|$ are coprime, so by Lemma \ref{lem:finabcase} the morphism $\psi$ must be constant, and therefore cannot be surjective, so we have a contradiction.
\end{proof}

Finally, having proved Theorem \ref{thm:Zp-toral}, we can prove the inverse theorem for $\mb{Z}_p$.

\begin{proof}[Proof of Theorem \ref{thm:inverseZp-intro}]
We first note that, having fixed an arbitrary complexity notion for \textsc{cfr} nilspaces $\nss$, there is a function $h:\mb{N}\to\mb{N}$ (which can be assumed to be increasing) such that if $\textrm{Comp}(\nss)\leq m$ then $\nss$ has at most $h(m)$ connected components. Now suppose that $\|f\|_{U^{k+1}(\mb{Z}_p)}\geq \delta$. We apply Theorem \ref{thm:inverse} with $\delta$, with a function $b$ to be specified later and with $\ns=\mb{Z}_p$. Let $M=M(k,\delta,b)>0$ be the resulting number and let $F\co \phi$ be the resulting nilspace polynomial, for an underlying \textsc{cfr} nilspace $\nss$ with $\textrm{Comp}(\nss)\leq m\leq M$, and with the morphism $\phi:\mb{Z}_p\to \ns$ being $b(m)$-balanced. If $p>h(m)$ and $b(m)$ is sufficiently small, then it follows by Theorem \ref{thm:Zp-toral}  that $\ns$ is toral. In particular, it is a connected nilmanifold, and by Proposition \ref{prop:comdiag} the nilspace polynomial is a $p$-periodic nilsequence as required. Thus, for $p>h(m)$ we obtain the conclusion of Theorem \ref{thm:inverseZp-intro} with $C_{k,\delta}=M$. For $p\leq h(m)$ we also obtain the conclusion, but for a simpler reason:  letting $\phi$ be the homomorphism embedding $\mb{Z}_p$ as a discrete subgroup of the circle group $\mb{R}/\mb{Z}$, and letting $F:\mb{R}/\mb{Z}\to \mb{C}$ be some  function with Lipschitz constant $O_p(1)$ that extends the function $f\co\phi^{-1}$ from $\phi(\mb{Z}_p)$ to all of $\mb{R}/\mb{Z}$, we then have $\langle f,F\co\phi\rangle=\|f\|_{L^2(\mb{Z}_p)}^2\geq \|f\|_{U^{k+1}(\mb{Z}_p)}^{2^{k+1}}\geq \delta^{2^{k+1}}$, and the conclusion of Theorem \ref{thm:inverseZp-intro} follows with constant $C_{k,\delta}$ still depending only on $k$ and $\delta$.
\end{proof}

\appendix

\section{Results from nilspace theory}\label{app:torality}
\noindent In this appendix our first and main aim is to prove Theorem \ref{thm:k-cube-connect}. We also gather some results from nilspace theory which are adaptations of results from previous works.

We begin with the following useful description of \textsc{cfr} $k$-step nilspaces whose $k-1$ factor is toral, which was stated as Theorem \ref{thm:cosetnilspace}.

\begin{theorem}\label{thm:cosetnilspace-app}
Let $\ns$ be a $k$-step \textsc{cfr} nilspace such that the factor $\ns_{k-1}$ is toral. Let $G$ denote the Lie group $\tran(\ns)$, let $G_\bullet$ denote the degree-$k$ filtration $(\tran_i(\ns))_{i\geq 0}$, and for an arbitrary fixed $x\in \ns$ let $\Gamma=\stab_G(x)$. Then $\ns$ is isomorphic as a compact nilspace to the coset space $G/\Gamma$ with cube sets $\cu^n(\ns)=(\cu^n(G_\bullet)\cdot \Gamma^{\db{n}})/\Gamma^{\db{n}}$, $n\geq 0$.
\end{theorem}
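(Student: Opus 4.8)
The plan is to follow the template of \cite[Theorem 2.9.17]{Cand:Notes2}, which handles the fully toral case, and adapt it to the situation where only the $(k-1)$-factor is assumed toral. The three things to establish are: (a) the translation group $G=\tran(\ns)$ acts transitively on $\ns$, so that fixing $x\in\ns$ yields a continuous bijection $G/\Gamma\to\ns$ with $\Gamma=\stab_G(x)$, and this bijection is a homeomorphism by compactness; (b) this bijection is in fact a nilspace isomorphism, i.e. it identifies the cube sets $\cu^n(\ns)$ with the sets $(\cu^n(G_\bullet)\cdot\Gamma^{\db n})/\Gamma^{\db n}$, where $G_\bullet=(\tran_i(\ns))_{i\geq 0}$; and (c) $\Gamma_i:=\Gamma\cap G_i$ is cocompact in $G_i$, so that $(G/\Gamma,G_\bullet)$ is a genuine \textsc{cfr} coset nilspace in the sense of Definition \ref{def:CFRcoset}. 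For (b), the inclusion $\cu^n(G_\bullet)\cdot x^{\db n}\subseteq\cu^n(\ns)$ is immediate from the definition of the translation groups $\tran_i(\ns)$ and the fact that they act by cube-preserving maps; the reverse inclusion is the substantive point and is where the hypothesis on $\ns_{k-1}$ enters.

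First I would set up the induction on $k$. For $k=0$ the statement is trivial, and for $k=1$ a $1$-step \textsc{cfr} nilspace is a compact abelian Lie group with its degree-$1$ structure, so $G=\ns$, $\Gamma$ is trivial, and everything is clear. For the inductive step, I would use that $\ns$ is a degree-$k$ extension of $\ns_{k-1}$ by the structure group $\ab_k=\ab_k(\ns)$, a compact abelian Lie group, via a continuous cocycle. The key structural input is that $\ns_{k-1}$ is toral, hence (by the inductive hypothesis, or directly by \cite[Theorem 2.9.17]{Cand:Notes2}) a \textsc{cfr} coset nilspace on which $\tran(\ns_{k-1})$ acts transitively. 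The translation group $\tran(\ns)$ surjects onto $\tran(\ns_{k-1})$ with kernel containing the translations coming from $\ab_k$ acting on fibres (these are the translations of degree $k$, i.e. $\tran_k(\ns)$ contains a copy of $\ab_k$ via the vertical action). Transitivity of $\tran(\ns)$ on $\ns$ then follows by combining transitivity of $\tran(\ns_{k-1})$ downstairs with the vertical $\ab_k$-action on each fibre — concretely, given two points of $\ns$, first move one so its image in $\ns_{k-1}$ matches the other's, then use the fibrewise action. Care is needed here because the vertical $\ab_k$-action need not be globally well-defined as an element of $\tran_k(\ns)$ unless the relevant cocycle is a coboundary on the relevant piece; I would handle this exactly as in the proof of \cite[Theorem 2.9.17]{Cand:Notes2}, possibly passing to a finite-index or covering argument, and this is where I expect the argument to require the most care.

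For the cube-set identification (b), I would argue that a cube $\q\in\cu^n(\ns)$ projects to $\pi_{k-1}\co\q\in\cu^n(\ns_{k-1})$, which by the inductive hypothesis is of the form $\wt\q\cdot x_{k-1}^{\db n}$ for some $\wt\q\in\cu^n((G_{k-1})_\bullet)$ where $G_{k-1}=\tran(\ns_{k-1})$; lifting this through the extension, one writes $\q$ as a product of a cube in $\cu^n(G_\bullet)$ applied to $x^{\db n}$ with a correction living in the vertical $\ab_k$ direction, and the vertical correction is itself a cube of $\cu^n$ of the degree-$k$ group $\mc D_k(\ab_k)$, hence realised by $\tran_k$-translations. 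This is essentially \cite[Lemma 2.9.15]{Cand:Notes2} (or the analogous lemma) adapted to the mixed situation. Finally (c), cocompactness of $\Gamma_i$ in $G_i$, follows because $G_i/\Gamma_i$ is (homeomorphic to) the $i$-th ``fibre group'' orbit, a continuous image of the compact space $\ns$ or a subspace thereof; more precisely one can identify $G_i/\Gamma_i$ with the orbit $\tran_i(\ns)\cdot x$ inside the compact nilspace $\ns$, which is compact, and run the standard argument that a Lie group modulo a closed subgroup is compact iff the homogeneous space is. I would assemble these pieces and conclude that the continuous bijection $G/\Gamma\to\ns$ is the required isomorphism of compact nilspaces.

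The main obstacle, as indicated, is establishing transitivity of $\tran(\ns)$ and, relatedly, producing enough vertical translations of degree $k$: the vertical $\ab_k$-action on fibres is only canonically a translation when the extension cocycle restricts suitably, and reconciling this with the global structure (rather than working fibrewise) is the delicate step. Everything else is a fairly mechanical adaptation of the toral case in \cite{Cand:Notes2}, which is why the body of the paper defers the proof to this appendix and describes it as "an adaptation of \cite[Theorem 2.9.17]{Cand:Notes2}".
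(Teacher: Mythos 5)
Your plan follows the same route as the paper: deduce transitivity of $G=\tran(\ns)$ by combining transitivity of $\tran(\ns_{k-1})^0$ downstairs (available since $\ns_{k-1}$ is toral) with the fibrewise $\ab_k$-action, then identify the cube sets by lifting a translation-cube representation of $\pi_{k-1}\co\q$ via the surjectivity of $\tran_i(\ns)^0\to\tran_i(\ns_{k-1})^0$ (\cite[Theorem 2.9.10]{Cand:Notes2}), and correct the remaining discrepancy, which lies in $\cu^n(\mc{D}_k(\ab_k))$, by $\tran_k$-translations. The paper does not package this as an induction on $k$; it simply invokes torality of $\ns_{k-1}$ directly, which keeps the argument shorter.

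Two imprecisions deserve comment. First, the item you flag as ``the main obstacle'' --- that the fibrewise $\ab_k$-action on $\ns$ might fail to be realised by genuine translations unless a cocycle is a coboundary --- is not an actual obstacle: for any $k$-step nilspace the action of the last structure group $\ab_k$ on fibres \emph{is} by translations of degree $k$ (this is part of the basic structure theory, cf.\ \cite[Lemma 3.2.37]{Cand:Notes1}; in the paper's notation the group $\tau(\ab_k)$ sits inside $\tran_k(\ns)$), with no splitting or coboundary condition required. The coboundary question is relevant only if one wants a morphic cross-section of $\ns\to\ns_{k-1}$, which is not needed here. Second, $\tran(\ns)$ need not surject onto $\tran(\ns_{k-1})$; the correct statement, and the one used in the paper's proof, is that the shadow homomorphism $h$ restricts to a surjection $\tran_i(\ns)^0\to\tran_i(\ns_{k-1})^0$. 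Since $\ns_{k-1}$ toral ensures $\tran(\ns_{k-1})^0$ already acts transitively on $\ns_{k-1}$, this suffices. You also omit that $\Gamma$ is discrete (the paper proves this from torality of $\ns_{k-1}$ together with \cite[Lemma 2.9.9]{Cand:Notes2}); conversely your item (c) on cocompactness of $\Gamma_i$ in $G_i$ is not required by the stated theorem and the paper does not verify it there. None of this changes the verdict: your approach is essentially the paper's, modulo the above clean-ups.
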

To prove this we adapt the proof of \cite[Theorem 2.9.17]{Cand:Notes2}.
\begin{proof}
Fix $x\in \ns$ and let $\Gamma=\stab_G(x)$.

We first claim that $\Gamma$ is discrete. Indeed, letting $h:\tran(\ns)\to\tran(\ns_{k-1})$ be the natural continuous homomorphism defined by $h(\alpha)(y)=\pi_{k-1}(\alpha(x))$ (see \cite[Lemma 2.9.3]{Cand:Notes2}), note that $h(\Gamma)$ is a subgroup of the stabilizer of $\pi_{k-1}(x)$ in $\tran(\ns_{k-1})$, and since $\ns_{k-1}$ is toral, this stabilizer is discrete (see the proof of \cite[Theorem 2.9.17]{Cand:Notes2}), so $h(\Gamma)$ is discrete. Then, since $h^{-1}(h(\Gamma))$ is a union of cosets of $\ker(h)$, it suffices to show that $\Gamma\cap\ker(h)$ is discrete. This follows from \cite[Lemma 2.9.9]{Cand:Notes2}, since no non-trivial element of $\tau(\ab_k)$  stabilizes $x$.\\
\indent By \cite[Corollary 2.9.12]{Cand:Notes2} the Lie group $\tran(\ns)^0$ acts transitively on the connected components of $\ns$, and since $\ns_{k-1}$ is toral, it follows that $\langle \tran(\ns)^0,\ab_k\rangle$ acts transitively on $\ns$. Indeed, if $x,y\in \ns$ are in different components, then there is $g'\in \tran(\ns_{k-1})^0$ such that $g'\pi_{k-1}(x)=\pi_{k-1}(y)$. Then there is $g\in \tran(\ns)^0$ such that $h(g)=g'$, and since $g$ is path-connected to the identity in $G$, it follows that $g x$ is in the same component as $x$. Moreover, by definition of $h$ we have $\pi_{k-1}(gx)=g'\pi_{k-1}(x)=\pi_{k-1}(y)$. There is therefore $z\in \ab_k$ such that $zg x=y$, which proves the claimed transitivity. Now since $G\supset \langle \tran(\ns)^0,\ab_k\rangle$, we have that $G$ also acts transitively on $\ns$, whence $\ns$ is homeomorphic to the coset space $G/\Gamma$ (see \cite[Ch.\ II, Theorem 3.2]{Helga}). In particular, since $\ns$ is compact, we have that $\Gamma$ is cocompact.

Recall from \cite[Definition 3.2.38]{Cand:Notes1} that two cubes $\q_1,\q_2\in \cu^n(\ns)$ are said to be translation equivalent if there is an element $\q\in \cu^n(G_\bullet)$ such that $\q_2(v)=\q(v)\cdot \q_1(v)$. We now show that $\cu^n(\ns)=\pi_{\Gamma}^{\db{n}}\big(\cu^n(G_\bullet)\big)$, i.e., that every cube on $\ns$ is translation equivalent to the constant $x$ cube. First we claim that for every cube $\q\in \cu^n(\ns)$ there is a cube $\q'\in \cu^n(\ns)$ that is translation equivalent to the constant $x$ cube and such that $\pi_{k-1}\co \q=\pi_{k-1}\co \q'$. Indeed, given $\q\in \cu^n(\ns)$, we have $\pi_{k-1}\co\q\in \cu^n(\ns_{k-1})$, and since $\ns$ is toral the latter cube is translation equivalent to the cube with constant value $x'=\pi_{k-1}(x)$, i.e.\ $\pi_{k-1}\co\q=\tilde\q\cdot x'$ for some cube $\tilde \q$ on the group $\tran(\ns_{k-1})^0$ with the filtration $\big(\tran_i(\ns_{k-1})^0\big)_{i\geq 0}$. By the unique factorization result for these cubes \cite[Lemma 2.2.5]{Cand:Notes1}, we have $\tilde\q={\tilde g_0}^{F_0}\cdots {\tilde g_{2^n-1}}^{F_{2^n-1}}$ where $\tilde g_j\in \tran_{\codim(F_j)}(\ns_{k-1})^0$. By \cite[Theorem 2.9.10 (ii)]{Cand:Notes2}, for each $j\in [0,2^n)$ there is $g_j\in \tran_{\codim(F_j)}(\ns)^0$ such that $h(g_j)=\tilde g_j$. 
Let $\q^*$ be the cube in $\cu^n(\tran(\ns)^0)$ defined by $\q^*={g_0}^{F_0}\cdots {g_{2^n-1}}^{F_{2^n-1}}$. Let $\q'=\q^*\cdot x$. This is in $\cu^n(\ns)$,  and is translation equivalent to the constant $x$ cube. By construction $\pi_{k-1}\co \q'$ $=\pi_{k-1}^{\db{n}}(\q^*\cdot x)= \big(\prod_j h(g_j)^{F_j}\big)\cdot x'= \big(\prod_j \tilde g_j^{F_j}\big)\cdot x'=\tilde \q \cdot x'=\pi_{k-1}\co \q$, as we claimed.

It follows from \cite[Theorem 3.2.19]{Cand:Notes1} and the definition of degree-$k$ bundles (in particular \cite[(3.5)]{Cand:Notes1}) that $\q-\q'\in \cu^n(\mc{D}_k(\ab_k))$. But then, using translations from $\tau(\ab_k)=\tran_k(\ns)$, we can correct $\q'$ further to obtain $\q$, thus showing that $\q$ is itself a translation cube with translations from $\tran(\ns)$. (Such a correction procedure has been used in previous arguments, see for instance the proof of \cite[Lemma 3.2.25]{Cand:Notes1}.)

We have thus shown that $\cu^n(\ns)\subset\pi_{\Gamma}^{\db{n}}\big(\cu^n(G_\bullet)\big)$. The opposite inclusion is clear, by definition of the groups $\tran_i(\ns)$.
\end{proof}

We can now prove Theorem \ref{thm:k-cube-connect}, which we restate here.
\begin{theorem}\label{thm:k-cube-connect-app}
Let $\ns$ be a $k$-step \textsc{cfr} nilspace. If $\cu^k(\ns)$ is connected, then $\ns$ is toral.
\end{theorem}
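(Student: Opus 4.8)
The plan is to prove the forward implication by induction on $k$, the cases $k\le 1$ being immediate: for $k=1$ a $1$-step \textsc{cfr} nilspace is $\mc{D}_1(\ab)$ for a compact abelian Lie group $\ab$, and $\cu^1(\mc{D}_1(\ab))\cong\ab\times\ab$ is connected precisely when $\ab$ is a torus; for $k=0$ the nilspace is a point and there is nothing to prove. Suppose then that $k\ge 2$ and that $\cu^k(\ns)$ is connected. The first step is to reduce to the hypothesis of Theorem \ref{thm:cosetnilspace-app}: restriction of a $k$-cube to a fixed $(k-1)$-dimensional face is a continuous surjection $\cu^k(\ns)\to\cu^{k-1}(\ns)$ (surjective because any $(k-1)$-cube extends to the $k$-cube that is constant in the last coordinate), and $\q\mapsto\pi_{k-1}\co\q$ is a continuous surjection $\cu^{k-1}(\ns)\to\cu^{k-1}(\ns_{k-1})$ since $\pi_{k-1}$ is a fibration; hence $\cu^{k-1}(\ns_{k-1})$ is connected, and by the inductive hypothesis applied to the $(k-1)$-step \textsc{cfr} nilspace $\ns_{k-1}$, the factor $\ns_{k-1}$ is toral.

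Now apply Theorem \ref{thm:cosetnilspace-app}: with $G=\tran(\ns)$, the degree-$k$ filtration $G_\bullet=(\tran_i(\ns))_{i\ge0}$, and $\Gamma=\stab_G(x)$ for a fixed $x\in\ns$, we have $\ns\cong(G/\Gamma,G_\bullet)$ with $\Gamma$ discrete cocompact, and $\cu^k(\ns)$ is homeomorphic to the coset space $\wt G/\wt\Gamma$, where $\wt G:=\cu^k(G_\bullet)$ is a nilpotent Lie group and $\wt\Gamma:=\cu^k(\Gamma_\bullet)=\wt G\cap\Gamma^{\db k}$ is a discrete cocompact subgroup (here $\Gamma_\bullet=(\Gamma\cap G_i)_{i\ge0}$, and the identification $\wt G\cap\Gamma^{\db k}=\cu^k(\Gamma_\bullet)$ holds because a cube of $G_\bullet$ with values in $\Gamma$ has all its face-derivatives in $\Gamma\cap G_j=\Gamma_j$). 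The top structure group of $\ns$ is identified with $\ab_k:=G_k=\tran_k(\ns)$, which is central in $G$; since $\ns_{k-1}$ is already known to be toral, it only remains to show that the compact abelian Lie group $\ab_k$ is connected (hence a torus), and this is the heart of the argument.

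The key tool is the homomorphism $\sigma_k\colon\wt G=\cu^k(G_\bullet)\to G_k=\ab_k$ sending a cube $\q$ to the alternating product $\prod_{v\in\db k}\q(v)^{(-1)^{|v|}}$: because $G_k$ is central, this is a well-defined continuous \emph{surjective} homomorphism, a standard fact about cube groups of filtered groups (see the relevant results in \cite{Cand:Notes1}). By Lemma \ref{lem:conncomp} applied to the coset nilspace $\wt G/\wt\Gamma\cong\cu^k(\ns)$, connectedness of $\cu^k(\ns)$ is equivalent to $\wt G=\wt G^0\,\wt\Gamma$, where $\wt G^0$ is the identity component. Composing $\sigma_k$ with the quotient $\ab_k\to F:=\ab_k/\ab_k^0$ yields a continuous surjective homomorphism $\bar\sigma_k\colon\wt G\to F$ onto a finite discrete group, so continuity forces $\bar\sigma_k$ to be trivial on the connected group $\wt G^0$. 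On the other hand, for $\q\in\wt\Gamma=\cu^k(\Gamma_\bullet)$ every value $\q(v)$ lies in $\Gamma$, so $\sigma_k(\q)\in\Gamma\cap G_k=\stab_G(x)\cap\tran_k(\ns)$, which is trivial because $\tran_k(\ns)\cong\ab_k$ acts freely on $\ns$ (no nontrivial element of $\tran_k(\ns)$ fixes $x$, as noted in the proof of Theorem \ref{thm:cosetnilspace-app}); hence $\bar\sigma_k(\wt\Gamma)=\{e\}$. Therefore $F=\bar\sigma_k(\wt G)=\bar\sigma_k(\wt G^0\,\wt\Gamma)=\bar\sigma_k(\wt G^0)\,\bar\sigma_k(\wt\Gamma)=\{e\}$, so $\ab_k=\ab_k^0$ is connected. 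Thus every structure group of $\ns$ is a torus, i.e.\ $\ns$ is toral, completing the induction.

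I expect the main point requiring care to be the precise justification of the properties of $\sigma_k$ invoked above — that on genuine $k$-cubes of the filtration $G_\bullet$ the alternating product is independent of the chosen ordering, lands in the central subgroup $G_k$, and defines a surjective group homomorphism, all of which rest on the centrality of $G_k$ and must be cited exactly from the filtered-group/nilspace literature. The other ingredients (the reduction to $\ns_{k-1}$ toral, the realisation $\cu^k(\ns)\cong\cu^k(G_\bullet)/\cu^k(\Gamma_\bullet)$ coming from Theorem \ref{thm:cosetnilspace-app}, and the computation of connected components via Lemma \ref{lem:conncomp}) are routine.
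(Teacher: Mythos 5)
Your proposal follows the same overall strategy as the paper's proof: reduce to $\ns_{k-1}$ toral by pushing connectedness down to $\cu^{k-1}(\ns_{k-1})$, invoke the coset representation of Theorem \ref{thm:cosetnilspace-app}, identify $\cu^k(\ns)$ with $\wt G/\wt\Gamma$, and then use the Gray-code map $\sigma_k$ on $\wt G=\cu^k(G_\bullet)$ — surjective onto $\ab_k$, trivial on $\wt\Gamma$ because $\Gamma\cap G_k$ is trivial — to detect a nontrivial finite quotient of $\ab_k$. The paper phrases this as a proof by contradiction (first quotienting $\ab_k$ down to a finite group and then producing a disconnection of $\cu^k(\ns)$ by openness of cosets), while you argue directly via Lemma \ref{lem:conncomp}; this is only a cosmetic difference.

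However, there is a genuine gap precisely at the step you flagged: $\sigma_k$ restricted to $\cu^k(G_\bullet)$ is \emph{not} in general a group homomorphism when $G$ is nonabelian. For a concrete counterexample, take $G$ to be the Heisenberg group with its lower central filtration $G_\bullet$ of degree $2$, and let $\q_1=y^{F_2}$, $\q_2=x^{F_1}$ be the two codimension-$1$ face-group elements in $\cu^2(G_\bullet)$, where $F_1=\{01,11\}$, $F_2=\{10,11\}$. Then $\sigma_2(\q_1)=\sigma_2(\q_2)=e$, but $\q_1\q_2$ is the cube $(e,x,y,yx)$ and $\sigma_2(\q_1\q_2)=x^{-1}yxy^{-1}=[x,y]^{-1}\neq e$. (Nor is the alternating product independent of the ordering of vertices: reversing the order of the two generators in this example gives a different answer.) The citation you hoped for does not exist because the statement is false.

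Fortunately, the homomorphism property is more than you need. The only consequence your argument uses is $\bar\sigma_k\big(\wt G^0\,\wt\Gamma\big)=\{e\}$, and this follows from continuity alone: $\bar\sigma_k$ is continuous into the discrete group $F$, hence locally constant, hence constant on every connected set; each left coset $\wt G^0 g$ is connected (homeomorphic to $\wt G^0$), so $\bar\sigma_k(\wt G^0 g)=\{\bar\sigma_k(g)\}$, and taking $g\in\wt\Gamma$ yields $\bar\sigma_k(\wt G^0\,\wt\Gamma)=\bar\sigma_k(\wt\Gamma)=\{e\}$. This is exactly the local-constancy observation the paper makes (``since $\sigma_k$ is continuous and $\ab_k$ is discrete, for every element $c\cdot\gamma\in C\cdot\Gamma^{\db{k}}$ $\dots$ we must also have $\sigma_k(c\cdot\gamma)=0$''). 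With the homomorphism claim replaced by this two-line topological remark, your proof becomes correct and is essentially the paper's argument.
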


\begin{proof}
We argue by induction on $k$. For $k=1$ the statement is clear. For $k>1$, first note that $\cu^k(\ns_{k-1})$ is connected (by continuity of $\pi_{k-1}$), and so (since projection to a $k-1$ face of a $k$ cube is a continuous map) we have also that $\cu^{k-1}(\ns_{k-1})$ is connected, so by induction we have that $\ns_{k-1}$ is toral. Now suppose for a contradiction that $\ns$ is not toral. Then the last structure group $\ab_k$ must be a disconnected compact abelian Lie group. By quotienting out the torus factor of $\ab_k$ if necessary, we can assume that $\ns$ now has $k$-th structure group $\ab_k$ being a finite abelian group of cardinality greater than 1. We shall now deduce that $\cu^k(\ns)$ must be disconnected, a contradiction.

By Theorem \ref{thm:cosetnilspace-app} we have that $\ns$ is isomorphic to the coset nilspace $(G/\Gamma,G_\bullet)$ where $G=\tran(\ns)$ and $\Gamma=\stab_G(x)$ for some fixed point $x\in \ns$. 
Hence $\cu^k(\ns)=\cu^k(G_\bullet)/\Gamma^{\db{k}}$. Let $\sigma_k$ be the Gray code map on $G^{\db{k}}$\cite[Definition 2.2.22]{Cand:Notes1}, and recall that restricted to $\cu^k(G_\bullet)$ this map takes values in $G_k$ (see \cite[Proposition 2.2.25]{Cand:Notes2}) and that $G_k\cong\ab_k$ (see \cite[Lemma 3.2.37]{Cand:Notes1}). We know that shifting any value $\q(v)$ of a cube $\q\in \cu^k(G_\bullet)$ by any element of $\ab_k$ still gives a cube in $\cu^k(G_\bullet)$ (see \cite[Remark 3.2.12]{Cand:Notes1}). It follows that $\sigma_k$ maps $\cu^k(G_\bullet)$ onto $\ab_k$. On the other hand, the map $\sigma_k$ only takes the value $\id_G$ on $\Gamma^{\db{k}}$, since $\Gamma\cap G_k=\{\id_G\}$ (as the action of $G_k\cong\ab_k$ is free). Now let $C$ denote the identity component of $\cu^n(G_\bullet)$. It is standard that $C$ is normal in $\cu^n(G_\bullet)$. We also have $\sigma_k(C\cdot \Gamma^{\db{k}})=\{\id_G\}$. Indeed, since $\sigma_k$ is continuous and $\ab_k$ is discrete, for every element $c\cdot \gamma\in C\cdot \Gamma^{\db{k}}$ we have $\sigma_k(\gamma)=0$, and $c\cdot \gamma$ is in the same component as $\gamma$, so we must also have $\sigma_k(c\cdot \gamma)=0$. But then the product set $C\cdot \Gamma^{\db{k}}$ must be a \emph{proper} subgroup of $\cu^n(G_\bullet)$ (otherwise its image under $\sigma_k$ would be  $G_k$). Thus we have shown that $\cu^n(G_\bullet) / C\cdot \Gamma^{\db{k}}$ is not the one point space. Hence there are at least two disjoint cosets of $C\cdot \Gamma^{\db{k}}$ forming a cover of $\cu^n(G_\bullet)$. Since the latter group is a Lie group, $C$ is open, and therefore these covering cosets of $C\cdot \Gamma^{\db{k}}$ are open sets. But then the quotient map $q: \cu^n(G_\bullet)\to \cu^n(G_\bullet) / \Gamma^{\db{k}}$ (which is open) sends these cosets to disjoint open sets covering $\cu^k(G_\bullet)/\Gamma^{\db{k}}$, so $\cu^k(\ns)$ is disconnected.
\end{proof}

\noindent We add the following lemma concerning the Haar measures on cube sets.

\begin{lemma}\label{lem:compequim}
Let $\ns$ be a $k$-step \textsc{cfr} nilspace such that $\ns_{k-1}$ is toral. Then for every integer $n\geq 0$ the connected components of $\cu^n(\ns)$ have equal positive Haar measure.
\end{lemma}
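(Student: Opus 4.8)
The plan is to reduce the statement to the coset--nilspace picture and then exploit the group structure on the cube set. Since $\ns_{k-1}$ is toral, Theorem~\ref{thm:cosetnilspace-app} identifies $\ns$ with a coset nilspace $(G/\Gamma,G_\bullet)$, where $G=\tran(\ns)$, the filtration $G_\bullet$ is $(\tran_i(\ns))_{i\geq 0}$, and $\Gamma=\stab_G(x)$ for a fixed $x\in\ns$. Fix $n\geq 0$ and write $H=\cu^n(G_\bullet)$, a closed subgroup of $G^{\db{n}}$, and $\wt\Gamma=H\cap\Gamma^{\db{n}}$. The left action of $H$ on the cube set $\cu^n(\ns)=\big(\cu^n(G_\bullet)\cdot\Gamma^{\db{n}}\big)/\Gamma^{\db{n}}$ is transitive, with stabiliser of the constant cube $x^{\db{n}}$ equal to $\wt\Gamma$, so $\cu^n(\ns)$ is $H$-equivariantly homeomorphic to $H/\wt\Gamma$; here $\wt\Gamma$ is discrete (being a subgroup of the discrete group $\Gamma^{\db{n}}$) and cocompact in $H$ (since $\cu^n(\ns)$ is compact). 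Moreover $G$ carries a degree-$k$ filtration, hence is a nilpotent Lie group, so $G^{\db{n}}$ and its closed subgroup $H$ are nilpotent Lie groups; in particular $H$ is unimodular.

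Next I would pin down the connected components of $\cu^n(\ns)=H/\wt\Gamma$. Let $H^0$ be the identity component of $H$, an open normal subgroup (as $H$ is a Lie group), so that $H^0\wt\Gamma$ is an open subgroup of $H$. Its left cosets are open, cover the compact space $H/\wt\Gamma$, and are pairwise disjoint, so there are only finitely many of them, say $N$; for each coset $gH^0\wt\Gamma$ the image $q(gH^0\wt\Gamma)$ under the quotient map $q\colon H\to H/\wt\Gamma$ is open and connected (the continuous image of the connected set $gH^0$), hence a connected component of $\cu^n(\ns)$, and these $N$ images exhaust the components. Because $H^0$ is normal in $H$, left translation by $g_2g_1^{-1}$ on $H/\wt\Gamma$ carries $q(g_1H^0\wt\Gamma)$ onto $q(g_2H^0\wt\Gamma)$; thus the left $H$-action on $\cu^n(\ns)$ permutes the $N$ components transitively.

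It then remains to show that the Haar measure $\mu_{\cu^n(\ns)}$ is invariant under this left $H$-action. Granting this, the $N$ components all have the same measure, and since they partition $\cu^n(\ns)$ each has measure $1/N>0$, which is the assertion (positivity included). \textbf{Establishing this invariance is the main obstacle.} The cleanest route is to recall that, for a compact nilspace, the Haar measure on $\cu^n(\ns)$ is invariant under the cube--translation action of $\cu^n(\tran_\bullet(\ns))=H$ (see \cite{Cand:Notes2}); if one prefers to prove it from scratch, one uses that $H$ is unimodular and $\wt\Gamma$ is discrete and cocompact, so by Weil's integration formula there is a unique $H$-invariant Borel probability measure $\nu$ on $H/\wt\Gamma$, and then one identifies $\nu$ with $\mu_{\cu^n(\ns)}$. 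The delicate point in this second route is precisely the identification: one checks that $\nu$ projects to the Haar measure on each factor $\cu^n(\ns_j)$ and disintegrates over $\cu^n(\ns_{j-1})$ with the Haar measure of the structure group $\ab_j$ on each fibre of the principal bundle $\cu^n(\ns_j)\to\cu^n(\ns_{j-1})$, so that $\nu$ satisfies the defining properties of the Haar measure on the iterated compact abelian bundle $\cu^n(\ns)$ (cf.\ \cite[Lemma 2.2.12]{Cand:Notes2}), whence $\nu=\mu_{\cu^n(\ns)}$. Alternatively, one can apply Lemma~\ref{lem:conncomp} to the \textsc{cfr} coset nilspace $\cu^n(\ns)$ to obtain the morphism $q\colon\cu^n(\ns)\to\nss'$ onto the finite discrete nilspace $\nss'$ whose points are the components; verifying that $q$ is a fibration (it is a quotient of $\cu^n(\ns)$ by a subgroup) lets one invoke \cite[Corollary 2.2.7]{Cand:Notes2} to conclude that $q$ pushes $\mu_{\cu^n(\ns)}$ to $\mu_{\nss'}$, and the Haar measure of a finite nilspace is uniform because its structure groups are finite abelian, so again each component has measure $1/N$.
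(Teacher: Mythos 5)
Your argument is correct in outline but takes a genuinely different path from the paper. The paper never passes through the coset-space model $\cu^n(\ns)\cong H/\wt\Gamma$ or the full $H$-invariance of the Haar measure. Instead it works directly with the compact-abelian-bundle structure of $\cu^n(\ns)$ over $\cu^n(\ns_{k-1})$ and uses only the $\wt{\ab}_k$-invariance of $\mu_{\cu^n(\ns)}$ (which is built into the construction of the Haar measure and explicitly cited). Given points $\q_1,\q_2$ in distinct components $C_1,C_2$, the torality of $\ns_{k-1}$ supplies a cube of translations in the \emph{connected} groups $\tran_i(\ns)^0$ carrying $\q_1$ to a point $\wt\q\cdot\q_1$ with the same $\pi_{k-1}^{\db n}$-image as $\q_2$; this point still lies in $C_1$ because the face-group generators sit in connected Lie groups. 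One then corrects by a single $z\in\wt{\ab}_k$ and invokes only $\wt{\ab}_k$-invariance of $\mu$ to get $\mu(C_1)\le\mu(C_2)$ and conversely. This is more economical than your main route, which hinges on the stronger assertion that $\mu_{\cu^n(\ns)}$ is invariant under the whole cube-translation group $H=\cu^n(\tran_\bullet(\ns))$.

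That $H$-invariance is the point you flagged as the main obstacle, and rightly so: it is not stated as such in \cite{Cand:Notes2}, only the $\wt{\ab}_j$-invariances are. Your route~1 therefore leans on a fact that would itself need a short argument (e.g.\ that $H$-translations preserve all the bundle projections and the central $\wt{\ab}_j$-actions, then uniqueness of the compatible probability measure). Your route~2 (Weil's formula plus identification) has the delicate matching step you acknowledge. Your route~3 — apply Lemma~\ref{lem:conncomp} to the \textsc{cfr} coset nilspace $\cu^n(\ns)=(H/\wt\Gamma,\wt G_\bullet)$, check that the quotient map $q$ to the finite component nilspace is a fibration, push forward the Haar measure by \cite[Corollary 2.2.7]{Cand:Notes2}, and observe that the Haar measure of a finite nilspace is uniform — is in fact the cleanest of the three and sidesteps the $H$-invariance question entirely. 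Either route~3 or the paper's bundle argument suffices; the paper's avoids passing to the coset picture and needs no fibration verification, while yours makes the transitive $H$-action and the finite quotient $\nss'$ explicit, which ties in naturally with the use of Lemma~\ref{lem:conncomp} elsewhere in the section.
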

\begin{proof}
Recall that $\cu^n(\ns)$ is a compact abelian bundle with base $\cu^n(\ns_{k-1})$, bundle projection $\pi:=\pi_{k-1}^{\db{n}}$, and structure group $\wt{\ab}_k:=\cu^n(\mc{D}_k(\ab_k))$, where $\ab_k$ is the $k$-th structure group of $\ns$ (see\cite[Lemma 2.2.12]{Cand:Notes2}). The Haar measure $\mu$ on $\cu^n(\ns)$ is invariant under the continuous action of $\wt{\ab}_k$, by construction (see \cite[Proposition 2.2.5]{Cand:Notes2}). Assuming that there is more than one component of $\cu^n(\ns)$, let $\q_1,\q_2$ be any points in distinct components $C_1$, $C_2$  respectively. Then, since $\ns_{k-1}$ is toral, by \cite[Theorem 2.9.17]{Cand:Notes2} there is a cube $\q\in \cu^n(\tran(\ns_{k-1})^0_\bullet)$ such that $\q\cdot \pi(\q_1)=\pi(\q_2)$. By \cite[Theorem 2.9.10]{Cand:Notes2} there is a cube $\wt{\q}\in \cu^n(\tran(\ns)^0_\bullet)$ such that $\pi(\wt{\q}\cdot \q_1)=\pi(\q_2)$. There is therefore $z\in \wt{\ab}_k$ such that $\wt{\q}\cdot \q_1 +\, z = \q_2$. Note that $\wt{\q}\cdot \q_1$ is still in $C_1$, since the map $\q_1\mapsto \wt{\q}\cdot \q_1$ is a composition of multiplications by face-group elements of the form $g^F$ where $F$ is a face in $\db{n}$ and $g$ is in the \emph{connected} Lie group $\tran_{\codim(F)}(\ns)^0$. Hence $(C_1 +z)\cap C_2$ is non-empty (containing $\q_2$), so $C_1 +z\subset C_2$ (since $C_1 +z$ is connected and $C_2$ is a maximal connected set), whence $\mu(C_1)=\mu(C_1 +z)\leq \mu(C_2)$. Similarly $\mu(C_2)\leq \mu(C_1)$.
\end{proof}
\noindent Next, we prove the properties of the $U^d$-seminorms from Definition \ref{def:Unorms}.
\begin{lemma}\label{lem:non-deg-app}
For every $k$-step compact nilspace $\ns$ and every $d\geq 2$, the function $f\mapsto \|f\|_{U^d}$ is a seminorm on $L^\infty(\ns)$.
\end{lemma}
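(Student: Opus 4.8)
The plan is to deduce the lemma from the Gowers--Cauchy--Schwarz inequality, in the usual way. For a family $(f_v)_{v\in\db{d}}$ in $L^\infty(\ns)$, introduce the Gowers inner product $\langle (f_v)_v\rangle:=\int_{\q\in\cu^d(\ns)}\prod_{v\in\db{d}}\mc{C}^{|v|}f_v(\q(v))\ud\mu(\q)$, so that $\|f\|_{U^d}^{2^d}=\langle (f)_v\rangle$ when all $f_v$ equal $f$. Everything then reduces to two statements: \emph{(a)} $\langle(f)_v\rangle$ is a non-negative real (so that $\|f\|_{U^d}$ is well defined as its non-negative $2^d$-th root); and \emph{(b)} $|\langle(f_v)_v\rangle|\le\prod_{v\in\db{d}}\|f_v\|_{U^d}$. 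Granting (a) and (b): homogeneity is immediate, since exactly $2^{d-1}$ of the indices $v$ have $|v|$ even and $2^{d-1}$ have $|v|$ odd, so $\|\lambda f\|_{U^d}^{2^d}=\lambda^{2^{d-1}}\overline{\lambda}^{\,2^{d-1}}\|f\|_{U^d}^{2^d}=|\lambda|^{2^d}\|f\|_{U^d}^{2^d}$; and subadditivity follows by expanding $\|f+g\|_{U^d}^{2^d}=\langle(f+g)_v\rangle=\sum_{S\subseteq\db{d}}\langle(f^S_v)_v\rangle$, where $f^S_v=f$ for $v\in S$ and $f^S_v=g$ otherwise, bounding each summand in absolute value by $\|f\|_{U^d}^{|S|}\,\|g\|_{U^d}^{2^d-|S|}$ via (b), and summing via the binomial theorem to reach $(\|f\|_{U^d}+\|g\|_{U^d})^{2^d}$.

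For (a) and (b) the key structural input concerns the Haar measure $\mu_{\cu^d(\ns)}$. Identify a $d$-cube $\q$ with the pair $(\q^0,\q^1)\in\cu^{d-1}(\ns)^2$ of its two codimension-$1$ faces in the last coordinate, and let $\nu$ be the pushforward of $\mu_{\cu^d(\ns)}$ along $\q\mapsto(\q^0,\q^1)$. I will use three facts: (i) each marginal of $\nu$ is $\mu_{\cu^{d-1}(\ns)}$, because restriction of a cube to a face carries the Haar measure on $\cu^d(\ns)$ to that on $\cu^{d-1}(\ns)$ (the theory of Haar measures on cube sets, \cite[\S2.2]{Cand:Notes2}); (ii) $\nu$ is invariant under the flip $(\q^0,\q^1)\mapsto(\q^1,\q^0)$, since this flip is induced by the cube automorphism of $\db{d}$ exchanging the two faces, and $\mu_{\cu^d(\ns)}$ is invariant under cube automorphisms; and (iii) the two coordinate $\sigma$-algebras on $\cu^{d-1}(\ns)^2$ are conditionally independent over their meet $\mc{I}$ with respect to $\nu$. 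From these, (a) follows by setting $\phi(\q'):=\prod_{w\in\db{d-1}}\mc{C}^{|w|}f(\q'(w))$ and computing, using $\overline{\mc{C}^{\,j}z}=\mc{C}^{\,j+1}z$, that $\langle(f)_v\rangle=\int\phi(\q^0)\,\overline{\phi(\q^1)}\ud\nu=\mb{E}_\nu\big(\mb{E}(\phi(\q^0)\mid\mc{I})\,\overline{\mb{E}(\phi(\q^1)\mid\mc{I})}\big)$ by (iii); since $\mc{I}$ is flip-invariant and $\nu$ is flip-symmetric with equal marginals by (i)--(ii), the two conditional expectations coincide almost surely, whence $\langle(f)_v\rangle=\mb{E}_\nu\big(|\mb{E}(\phi(\q^0)\mid\mc{I})|^2\big)\ge 0$. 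And (b) follows from the classical induction on $d$ (base case $d=1$, where $\mu_{\cu^1(\ns)}=\mu_{\ns}\times\mu_{\ns}$): one Cauchy--Schwarz step in the last coordinate, using (i)--(iii), bounds $|\langle(f_v)_v\rangle|^2$ by a product of two Gowers inner products in which the functions on the two last-coordinate faces have been made equal; iterating over all $d$ coordinates, each $f_v$ ends up with total exponent $1$.

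The crux is fact (iii). The conditional-independence axiom for cubic couplings (Definition \ref{def:cc}), and its validity for compact nilspaces (\cite[Proposition 3.6]{CScouplings}), is phrased for \emph{adjacent} codimension-$1$ faces, whereas the two faces $\{v\in\db{d}:v\sbr{d}=0\}$ and $\{v\in\db{d}:v\sbr{d}=1\}$ relevant here are opposite. I plan to obtain (iii) within that framework: by \cite[Proposition 3.6]{CScouplings} the Haar measures $(\mu_{\ns}^{\db{n}})_{n\ge 0}$ form a cubic coupling on $(\ns,\mc{B}_{\ns},\mu_{\ns})$, and $\|\cdot\|_{U^d}$ is, by construction, the corresponding uniformity seminorm of \cite[Definition 3.15]{CScouplings}; the needed statement (indeed the whole seminorm property, equivalently the Gowers--Cauchy--Schwarz inequality) is then part of the general theory of cubic couplings in \cite[\S3]{CScouplings}, where conditional independence is propagated from adjacent to opposite faces. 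A self-contained alternative, avoiding that reference, is to prove directly --- from the gluing and restriction properties of cube sets of compact nilspaces --- that $\nu$ is the relatively independent self-coupling of $\mu_{\cu^{d-1}(\ns)}$ over the $\sigma$-algebra generated by the $1$-arrow relation $\sim$ on $\cu^{d-1}(\ns)$, which yields (iii) at once; this is the nilspace analogue of the Host--Kra description of the cubic measures as relatively independent self-joinings.
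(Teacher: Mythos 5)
Your proposal is correct, and in the end it rests on the same foundation as the paper's proof: the paper simply cites \cite[Proposition 3.6]{CScouplings} (the Haar measures on $\cu^n(\ns)$ form a cubic coupling) and \cite[Corollary 3.17]{CScouplings} (the uniformity seminorms of any cubic coupling are seminorms), and your write-up, after sketching the Gowers--Cauchy--Schwarz machinery, also falls back on the general cubic-coupling theory of \cite[\S 3]{CScouplings} for the key fact (iii). So the logical dependency is identical; what you add is an exposition of the internal mechanism that \cite[Corollary 3.17]{CScouplings} encapsulates, plus a mention of the Host--Kra-style ``relatively independent self-coupling'' alternative, which the paper itself acknowledges in the sentence preceding its proof as one of the classical routes (together with Gowers's argument for abelian groups). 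One small technical point: in part (a), flip-invariance of $\nu$ and of $\mc{I}$ does not by itself give $\mb{E}(\phi\co p_0|\mc{I})=\mb{E}(\phi\co p_1|\mc{I})$ a.e.\ --- it only gives that the two conditional expectations are exchanged by the flip; what makes them actually coincide is idempotence, namely that $\mc{I}=_\nu p_0^{-1}(\mc{J})=_\nu p_1^{-1}(\mc{J})$ for the ``agreement'' $\sigma$-algebra $\mc{J}=\{A: p_0^{-1}(A)=_\nu p_1^{-1}(A)\}$ (this is \cite[Lemma 2.62(iii)]{CScouplings}, used elsewhere in the paper), so that both conditional expectations equal $\mb{E}(\phi|\mc{J})$ read through either projection. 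Since your argument ultimately cites the cubic-coupling theory anyway, this is a presentational rather than a substantive issue.
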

\noindent The case of this lemma for compact abelian groups is given in several sources, all based essentially on the original argument of Gowers in \cite[Lemma 3.9]{GSz}. The case of nilmanifolds appears in \cite[Ch.\ 12, Proposition 12]{HKbook}. These two cases already yield (via inverse limits) the result for the class of nilspaces concerned in our main results. Below we recall another proof from \cite{CScouplings}, which works at the more general level of cubic couplings. Let us mention also that $\|\cdot\|_{U^d}$ is non-degenerate (and is therefore a norm on $L^\infty(\ns)$) when the step $k$ of $\ns$ is less than $d$. For compact abelian groups this follows from the fact that $\|f\|_{U^d}\geq \|f\|_{U^2} = \|\wh{f}\|_{\ell^4}$, and for nilmanifolds it is given in \cite[Ch.\ 12, Theorem 17]{HKbook}. For general compact nilspaces, the non-degeneracy follows from results in nilspace theory; as it is not needed in this paper, we omit the details.
\begin{proof}[Proof of Lemma \ref{lem:non-deg-app}]
The lemma follows from results in \cite{CScouplings}, namely \cite[Proposition 3.6]{CScouplings}, which shows that the Haar measures $\mu^{\db{n}}$ on $\cu^n(\ns)$ form a cubic coupling, and \cite[Corollary 3.17]{CScouplings}, which yields the seminorm properties for a general cubic coupling.
\end{proof}

\noindent We close this appendix with a proof of Proposition \ref{prop:comdiag}. Recall the following basic useful description of polynomial sequences (see for instance \cite[Lemma 2.8]{CS}).

\begin{lemma}[Taylor expansion]\label{lem:Taylor}
Let $g \in \poly(\mb{Z}, G_\bullet)$, where $G_\bullet$ has degree at most $s$. Then there are unique \emph{Taylor coefficients} $g_i \in G_i$ such that for all $n \in \mb{Z}$ we have $g(n) = g_0 g_1^n g_2^{\binom{n}{2}} \cdots g_s^{\binom{n}{s}}$. Conversely, every such expression defines a map $g \in \poly(\mb{Z},G_\bullet)$. Moreover, if $H\leq G$ and $g$ is $H$-valued then $g_i \in H$ for each $i$.
\end{lemma}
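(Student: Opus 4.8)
\textbf{Proof plan for Lemma \ref{lem:Taylor} (Taylor expansion).}

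The plan is to prove simultaneously three things: existence of the $g_i$, uniqueness of the $g_i$, and the fact that a sequence of the stated form is a polynomial sequence, arguing by induction on the degree $s$ of the filtration $G_\bullet$. The base case $s=0$ is trivial: $G_\bullet$ has all terms beyond $G_0$ trivial, actually $G_1=\{\id\}$ so polynomial maps are constant, and $g(n)=g_0$ with $g_0=g(0)$ uniquely. For the inductive step, assume the statement for all filtrations of degree $< s$.

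For \textbf{existence}, given $g\in \poly(\mb{Z},G_\bullet)$ with $G_\bullet$ of degree $\leq s$, first set $g_0 := g(0)$ and consider $\tilde g(n) := g_0^{-1}g(n)$, which is still polynomial (translation on the left by a constant preserves $\poly$) and satisfies $\tilde g(0)=\id$. Now take the discrete derivative $\partial_1 \tilde g(n) := \tilde g(n)^{-1}\tilde g(n+1)$; by the standard fact that $\partial_1$ maps $\poly(\mb{Z},G_\bullet)$ into $\poly(\mb{Z},G_\bullet^{+1})$ (see the references to Leibman's work, e.g.\ \cite{Leib}, \cite{GTOrb}), and since $G_\bullet^{+1}$ has degree $\leq s-1$, the induction hypothesis gives $\partial_1\tilde g(n) = h_0 h_1^{n}\cdots h_{s-1}^{\binom{n}{s-1}}$ with $h_i\in G_{i+1}$. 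Set $g_{i+1}:=h_i$ for $0\leq i\leq s-1$, so $g_i\in G_i$ for $1\leq i\leq s$. One then verifies that the sequence $g^*(n):=g_0g_1^n g_2^{\binom n2}\cdots g_s^{\binom ns}$ has the same value at $n=0$ (namely $g_0$) and the same discrete derivative as $g$: this uses the Pascal-type identity $\binom{n+1}{i}=\binom ni+\binom n{i-1}$ together with the fact that all the $g_i$ with $i\geq 1$ pairwise commute modulo higher terms of the filtration, so that the telescoping rearrangement of $(g^*(n))^{-1}g^*(n+1)$ produces exactly $h_0h_1^n\cdots h_{s-1}^{\binom n{s-1}}$. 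Since two polynomial sequences with the same initial value and the same first discrete derivative coincide (a sequence with vanishing first derivative is constant, and $g^*(0)^{-1}g(0)=\id$), we get $g=g^*$, i.e.\ the claimed expression.

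For \textbf{uniqueness}, suppose $g_0g_1^n\cdots g_s^{\binom ns} = g_0'g_1'^n\cdots g_s'^{\binom ns}$ for all $n$. Evaluating at $n=0$ gives $g_0=g_0'$. Cancelling $g_0$ and taking discrete derivatives reduces, via the same rearrangement as above, to an identity of the form $h_0h_1^n\cdots h_{s-1}^{\binom n{s-1}} = h_0'h_1'^n\cdots h_{s-1}'^{\binom n{s-1}}$ in $G_\bullet^{+1}$ (degree $\leq s-1$), whence by the induction hypothesis $h_i=h_i'$ for all $i$, i.e.\ $g_{i+1}=g_{i+1}'$ for $1\leq i+1\leq s$. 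The \textbf{converse} (every such expression lies in $\poly(\mb{Z},G_\bullet)$) follows because each factor $n\mapsto g_i^{\binom ni}$ is polynomial relative to $G_\bullet$ — the binomial coefficient $\binom ni$ is an integer-valued polynomial of degree $i$, and $g_i\in G_i$, so this map lies in $\poly(\mb{Z},G_\bullet)$ by the standard criterion (see the proof of \cite[Lemma 6.7]{GTOrb}) — and $\poly(\mb{Z},G_\bullet)$ is a group under pointwise multiplication by Leibman's theorem \cite{Leib}. Finally, the \textbf{refinement for subgroups}: if $g$ is $H$-valued, then $g_0=g(0)\in H$, and $\partial_1\tilde g$ is $H$-valued, so by induction applied to the filtration $(H\cap G_i)_{i\geq 1}^{+1}$ — noting $\partial_1\tilde g$ is polynomial with respect to this filtration — we get $h_i\in H\cap G_{i+1}\subset H$, hence $g_i\in H$ for all $i$.

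\textbf{Main obstacle.} The routine-looking but genuinely delicate point is the commutation/rearrangement step: when one forms $(g_0g_1^n\cdots g_s^{\binom ns})^{-1}(g_0g_1^{n+1}\cdots g_s^{\binom{n+1}{s}})$ and wants to bring it into the normal form $h_0h_1^n\cdots h_{s-1}^{\binom n{s-1}}$, one must commute $g_i$-powers past $g_j$-powers. These do \emph{not} commute in general, but their commutators lie in $G_{i+j}$, so modulo $G_{s+1}=\{\id\}$ (and inductively using that $G_\bullet^{+1}$ has strictly smaller degree) the corrections are controlled; making this bookkeeping precise — rather than hand-waving "expand and collect" — is where the real work lies, and it is cleanest to package it by the induction-on-degree structure above so that at each stage one only ever rearranges inside a filtration one degree shorter.
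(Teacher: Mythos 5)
The paper does not prove this lemma: it is quoted as a known result with a pointer to \cite[Lemma 2.8]{CS}, so there is no internal proof to compare against. Your strategy (differentiate, apply the induction hypothesis to $G_\bullet^{+1}$, integrate back) is a legitimate route, but as written it has a genuine gap at the integration step — one that your own closing paragraph flags without closing.

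The false step is the assertion that, upon setting $g_{i+1}:=h_i$, the telescoping of $(g^*(n))^{-1}g^*(n+1)$ ``produces exactly $h_0h_1^n\cdots h_{s-1}^{\binom{n}{s-1}}$''. It does not: bringing $g_s^{-\binom{n}{s}}\cdots g_1^{-n}\cdot g_1^{n+1}\cdots g_s^{\binom{n+1}{s}}$ into normal form creates commutator corrections in the higher terms of the filtration, and these do not vanish. Concretely, for $s=3$ one computes $\partial_1\big(g_1^ng_2^{\binom n2}g_3^{\binom n3}\big)=g_1\,g_2^n\,(cg_3)^{\binom n2}$, where $c=[g_1,g_2]\in G_3$ is central; so the top Taylor coefficient of the derivative is $cg_3$, not $g_3$, and the naive assignment $g_3:=h_2$ gives $\partial_1 g^*\neq\partial_1\tilde g$, hence $g^*\neq g$. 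The approach is salvageable because the map from $(g_1,\dots,g_s)$ to the Taylor coefficients of $\partial_1 g^*$ is unitriangular (the correction to $h_i$ involves only $g_1,\dots,g_i$), so one can solve recursively for the $g_i$ (e.g. $g_3:=c^{-1}h_2$ above) — but that recursion \emph{is} the content of the existence claim and must actually be carried out. An alternative that avoids the commutator bookkeeping is the standard interpolation argument: choose $g_0,\dots,g_s$ greedily so that $g$ and $g^*$ agree at $n=0,1,\dots,s$ (each $g_i$ is forced, and lies in $G_i$ because a polynomial sequence vanishing at $0,\dots,i-1$ has $\partial_1^{\,i}$ at $0$ equal to $g(i)^{\pm1}\in G_i$), then show by induction on $s$ that a polynomial sequence of degree $\le s$ vanishing at $s+1$ consecutive integers is identically $\id$. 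Two smaller points: uniqueness needs none of this machinery — evaluate at $n=0,1,\dots,s$ in turn and cancel; and the subgroup refinement follows immediately by applying existence to the filtration $(H\cap G_i)_{i}$ (with respect to which $g$ is polynomial, since its iterated derivatives are words in its values) and then invoking uniqueness for $G_\bullet$.
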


\begin{proof}[Proof of Proposition \ref{prop:comdiag}]
Since $\phi\co\beta$ is a morphism $\mb{Z}\to G/\Gamma$, it suffices to prove the following statement: for every morphism $\phi:\mb{Z}\to G/\Gamma$, there is a morphism $\psi:\mb{Z}\to G$ (whence $\psi\in \poly(\mb{Z},G_\bullet)$) such that $\pi_{\Gamma}\co\psi=\phi$. We prove this by descending induction on $j\in [k+1]$, showing that the statement holds for maps $\phi$ taking values in $(G_j\Gamma)/\Gamma$. For $j=k+1$, since $G_{k+1}=\{\id_G\}$, the map $\phi$ is constant and the statement is trivially verified letting $\psi$ be a constant $\Gamma$-valued map. For $j< k+1$, suppose that the statement holds for $j+1$ and that $\phi$ takes values in $(G_j\Gamma)/\Gamma$. It follows from the filtration property that $G_{j+1}\Gamma$ is a normal subgroup of $G_j\Gamma$ and that the quotient $G_j\Gamma / (G_{j+1}\Gamma )$ is an abelian group. Denoting this abelian group by $A_j$, let $q_j:(G_j\Gamma)/\Gamma\to A_j$ be the quotient map for the action of $G_{j+1}$ on $(G_j\Gamma)/\Gamma$. Note that $q_j$ is a nilspace morphism. More precisely, for every cube $\q \Gamma^{\db{n}}$ on $(G_j\Gamma)/\Gamma$ (where $\q\in G_j^{\db{n}}\cap\cu^n(G_\bullet)$), we have $q_j\co (\q \Gamma^{\db{n}})= (\tilde q_j\co \q) \Gamma^{\db{n}}$ where $\tilde q_j$ is the quotient homomorphism $G_j\to G_j/G_{j+1}$; this implies that every $(j+1)$-face of $q_j\co (\q \Gamma^{\db{n}})$ has value $0$ under the Gray-code map $\sigma_{j+1}$, so $q_j$ is a morphism into $\mc{D}_j(A_j)$.
 It follows that $q_j\co\phi$ is a morphism $\mb{Z}\to \mc{D}_j(A_j)$, and is in particular a polynomial map of degree at most $k$, so by Lemma \ref{lem:Taylor} we have $q_j\co\phi(x)=\sum_{\ell=0}^k a_\ell\binom{x}{\ell}$ for $x\in \mb{Z}$, for some $a_\ell\in A_j$, and binomial coefficients $\binom{x}{\ell}$. Since $q_j$ is surjective, there exist elements $b_0,b_1,\dots,b_k$ in $G_j$ such that $q_j(b_\ell\Gamma) = a_\ell$ for each $\ell$.
Let $\alpha:\mb{Z}\to G$ be the polynomial map $\alpha(x)=\prod_{\ell=0}^k b_\ell^{\binom{x}{\ell}}$, and note that $q_j(\alpha(x)\Gamma)=q_j\co\phi(x)$ for all $x$. It follows that the map $\alpha^{-1}\phi$ is a morphism $\mb{Z}\to (G_{j+1}\Gamma)/\Gamma$, so by induction there is a map $\psi'\in \poly(\mb{Z},G_\bullet)$ such that $\alpha^{-1}(x)\phi(x)=\psi'(x)\Gamma$ for all $x$. Then $\psi(x):=\alpha(x) \psi'(x)$ is a map in $\poly(\mb{Z},G_\bullet)$ with the required property. 
\end{proof}

\section{Miscellaneous measure-theoretic results}\label{app:measure}

\begin{lemma}\label{lem:level}
Let $(\Omega,\mc{A},\lambda)$ be a probability space, let $\mc{B}$ be a sub-$\sigma$-algebra of $\mc{A}$, and suppose that $S\in \mc{A}$ satisfies $\|1_S - \mb{E}(1_S|\mc{B})\|_{L^2}\leq \epsilon$. Then $S'=\{x\in \Omega: \mb{E}(1_S|\mc{B})(x)>\epsilon^{1/2}\}$ satisfies $\lambda(S\Delta S')< 5\epsilon^{1/2}$.
\end{lemma}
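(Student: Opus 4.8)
The plan is to write $g := \mb{E}(1_S\mid\mc{B})$, so that the hypothesis becomes $\|1_S-g\|_{L^2}^2\le\epsilon^2$, and then to estimate the two halves $S'\setminus S$ and $S\setminus S'$ of $S\Delta S'$ separately, in each case by applying Markov's inequality to the nonnegative function $(1_S-g)^2$.

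First I would dispose of the trivial regime: if $\epsilon^{1/2}>1/2$ then $5\epsilon^{1/2}>1\ge\lambda(S\Delta S')$ and there is nothing to prove, so from now on one may assume $\epsilon^{1/2}\le 1/2$, in particular $\epsilon\le 1/4$.

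For the overshoot part $S'\setminus S$: on this set $1_S=0$ while, by the definition of $S'$, one has $g>\epsilon^{1/2}$, hence $(1_S-g)^2=g^2>\epsilon$; Markov's inequality therefore gives $\lambda(S'\setminus S)\le\lambda(\{(1_S-g)^2>\epsilon\})\le\epsilon^{-1}\|1_S-g\|_{L^2}^2\le\epsilon$. For the deficit part $S\setminus S'$: on this set $1_S=1$ while $g\le\epsilon^{1/2}\le 1/2$, hence $(1_S-g)^2=(1-g)^2\ge 1/4$; Markov's inequality gives $\lambda(S\setminus S')\le\lambda(\{(1_S-g)^2\ge 1/4\})\le 4\|1_S-g\|_{L^2}^2\le 4\epsilon^2$. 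Adding the two bounds yields $\lambda(S\Delta S')\le\epsilon+4\epsilon^2\le 2\epsilon\le 2\epsilon^{1/2}<5\epsilon^{1/2}$, where $\epsilon\le 1/4$ was used in the last two steps.

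The whole argument is routine; the only points needing a little attention are the preliminary reduction to the case $\epsilon\le 1/4$ and the choice of the two Markov thresholds (namely $\epsilon^{1/2}$ on $S^c$ and $1/4$ on $S$), both of which are forced by the shape of the desired inequality. I anticipate no real obstacle.
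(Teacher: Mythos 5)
Your proof is correct, and it follows a genuinely different route from the paper's. The paper exploits the orthogonal-projection property of conditional expectation, namely that $\langle 1_S, \mb{E}(1_S|\mc{B})\rangle = \|\mb{E}(1_S|\mc{B})\|_{L^2}^2$, and derives the two bounds $\lambda(S'\setminus S)< 2\epsilon^{1/2}$ and $\lambda(S\setminus S')\leq 3\epsilon^{1/2}$ by expanding $\int(1-1_S)\mb{E}(1_S|\mc{B})\ud\lambda$ and $\int 1_S\,\mb{E}(1_S|\mc{B})\ud\lambda$; this requires knowing that $g:=\mb{E}(1_S|\mc{B})$ is the projection of $1_S$ onto $L^2(\mc{B})$ and that $0\le g\le 1$. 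Your argument, by contrast, never uses that $g$ is a conditional expectation beyond $g\ge 0$: you simply apply Markov's inequality to $(1_S-g)^2$ at two thresholds, after a trivial reduction to $\epsilon\le 1/4$. This is more elementary, applies verbatim to any $g$ with $\|1_S-g\|_{L^2}\le\epsilon$ and $0\le g$, and in the non-trivial regime yields the sharper bound $\lambda(S\Delta S')\le\epsilon+4\epsilon^2 = O(\epsilon)$, which comfortably beats the $5\epsilon^{1/2}$ in the statement (and also the paper's intermediate $5\epsilon^{1/2}$). The only small thing worth noting is that the case split is needed precisely so that $(1-g)^2\ge 1/4$ on $S\setminus S'$, and you handle that cleanly; the paper instead implicitly assumes $\epsilon\le 1$ when it absorbs $2\epsilon$ into $2\epsilon^{1/2}$. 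Both proofs are short and routine, but yours is the more portable of the two.
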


\begin{proof}
We first observe that  $\lambda(S'\setminus S)\,\epsilon^{1/2} < \int_\Omega  (1-1_S)\mb{E}(1_S|\mc{B}) \ud\lambda$, which equals $\int_\Omega  \mb{E}(1_S|\mc{B}) -  1_S\mb{E}(1_S|\mc{B})\ud\lambda = \lambda(S) -  \|\mb{E}(1_S|\mc{B})\|_{L^2}^2$. 
Moreover, from the assumption and the triangle inequality we have $\|\mb{E}(1_S|\mc{B})\|_{L^2}\geq \|1_S \|_{L^2}-\epsilon$, whence $\|\mb{E}(1_S|\mc{B})\|_{L^2}^2\geq \|1_S \|_{L^2}^2-2\epsilon = \lambda(S)-2\epsilon$. Therefore $\lambda(S'\setminus S)< 2\epsilon^{1/2}$.

On the other hand, we have $\lambda(S)- 2\epsilon \leq \|\mb{E}(1_S|\mc{B})\|_{L^2}^2 = \langle \mb{E}(1_S|\mc{B}), \mb{E}(1_S|\mc{B})\rangle = \langle 1_S , \mb{E}(1_S|\mc{B})\rangle \leq \int_{S\cap S'} \mb{E}(1_S|\mc{B}) \ud\lambda +  \int_{S\setminus S'} \mb{E}(1_S|\mc{B}) \ud\lambda \leq \lambda(S\cap S') + \epsilon^{1/2}$, so $\lambda(S'\cap S)\geq \lambda(S) -3\epsilon^{1/2}$, whence $\lambda(S\setminus S')\leq 3\epsilon^{1/2}$.

Combining the main two inequalities above, the result follows.
\end{proof}
\noindent We use this lemma to prove the following fact about mod 0 intersections of conditionally independent $\sigma$-algebras.
\begin{lemma}\label{lem:CIalginter}
Let $(\Omega,\mc{A},\lambda)$ be a probability space, let $\mc{B}_0,\mc{B}_1$ be sub-$\sigma$-algebras of $\mc{A}$ such that $\mc{B}_0\upmod_{\lambda}\mc{B}_1$, let $S_i\in \mc{B}_i$, $i=0,1$, and suppose that $\lambda(S_0\Delta S_1)\leq \epsilon$. Then there exists $C\in \mc{B}_0\wedge\mc{B}_1$ such that $\lambda(C\Delta S_i)\leq 10\epsilon^{1/4}$ for $i=0,1$.
\end{lemma}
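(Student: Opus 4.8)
The statement to prove is Lemma \ref{lem:CIalginter}: given conditionally independent $\mc{B}_0, \mc{B}_1$ and $S_i \in \mc{B}_i$ with $\lambda(S_0 \triangle S_1) \leq \epsilon$, we want $C \in \mc{B}_0 \wedge \mc{B}_1$ with $\lambda(C \triangle S_i) \leq 10\epsilon^{1/4}$. The plan is to produce $C$ as a level set of the conditional expectation $\mb{E}(1_{S_0} \mid \mc{B}_1)$ — which, by conditional independence, is automatically $\mc{B}_0 \wedge \mc{B}_1$-measurable — and then apply Lemma \ref{lem:level} to control the symmetric differences.

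First I would observe that since $\mc{B}_0 \upmod_\lambda \mc{B}_1$ and $1_{S_0} \in L^\infty(\mc{B}_0)$, the defining property of conditional independence (as recalled in Section \ref{sec:ups}, via \cite[Proposition 2.10]{CScouplings}) gives $\mb{E}(1_{S_0} \mid \mc{B}_1) \in L^\infty(\mc{B}_0)$; being also $\mc{B}_1$-measurable, it lies in $L^\infty(\mc{B}_0 \wedge_\lambda \mc{B}_1)$. Next I would estimate $\|1_{S_0} - \mb{E}(1_{S_0} \mid \mc{B}_1)\|_{L^2}$. Since $S_1 \in \mc{B}_1$, the conditional expectation $\mb{E}(1_{S_0} \mid \mc{B}_1)$ is the $L^2(\mc{B}_1)$-orthogonal projection of $1_{S_0}$, so it is at least as close to $1_{S_0}$ in $L^2$ as $1_{S_1}$ is; that is, $\|1_{S_0} - \mb{E}(1_{S_0}\mid\mc{B}_1)\|_{L^2} \leq \|1_{S_0} - 1_{S_1}\|_{L^2} = \lambda(S_0 \triangle S_1)^{1/2} \leq \epsilon^{1/2}$. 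Now apply Lemma \ref{lem:level} with $S = S_0$, $\mc{B} = \mc{B}_1$, and the parameter there taken to be $\epsilon^{1/2}$ (so its $\epsilon$ is our $\epsilon^{1/2}$): the set $C := \{x : \mb{E}(1_{S_0}\mid\mc{B}_1)(x) > \epsilon^{1/4}\}$ is a level set of a $\mc{B}_0\wedge\mc{B}_1$-measurable function, hence $C \in \mc{B}_0 \wedge \mc{B}_1$, and it satisfies $\lambda(S_0 \triangle C) < 5\epsilon^{1/4}$.

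It remains to pass from $S_0$ to $S_1$: by the triangle inequality for symmetric differences, $\lambda(S_1 \triangle C) \leq \lambda(S_1 \triangle S_0) + \lambda(S_0 \triangle C) \leq \epsilon + 5\epsilon^{1/4}$, which is at most $6\epsilon^{1/4}$ (say, for $\epsilon \leq 1$, using $\epsilon \leq \epsilon^{1/4}$). Both bounds are comfortably below $10\epsilon^{1/4}$, so $C$ works. The only mild subtlety to be careful about is the bookkeeping of the exponent when feeding $\epsilon^{1/2}$ into Lemma \ref{lem:level} (whose conclusion has a square root), and handling the trivial range $\epsilon \geq 1$, where the conclusion is vacuous since $10\epsilon^{1/4} \geq 10 > 1$; neither is a genuine obstacle, so the proof is short.
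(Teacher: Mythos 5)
Your proposal is correct and takes essentially the same approach as the paper: define $C$ as a level set of $\mb{E}(1_{S_0}\mid\mc{B}_1)$, which is $\mc{B}_0\wedge\mc{B}_1$-measurable by conditional independence, and invoke Lemma \ref{lem:level}. Your version is mildly cleaner than the paper's — you get $\|1_{S_0}-\mb{E}(1_{S_0}\mid\mc{B}_1)\|_{L^2}\leq\epsilon^{1/2}$ directly from the $L^2$-projection property rather than via a triangle inequality (which gives $2\epsilon^{1/2}$ in the paper), and you pass from $S_0$ to $S_1$ by a one-line triangle inequality on symmetric differences rather than a second, somewhat delicate, invocation of Lemma \ref{lem:level}.
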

\begin{proof}
The assumption $\|1_{S_0}- 1_{S_1}\|_{L^2}^2 \leq \epsilon$ implies $\|1_{S_0}- \mb{E}(1_{S_0}|\mc{B}_1)\|_{L^2}\leq  \|1_{S_0}-1_{S_1}\|_{L^2} + \|1_{S_1}-\mb{E}(1_{S_0}|\mc{B}_1)\|_{L^2}\leq \epsilon^{1/2} + \|\mb{E}(1_{S_1}-1_{S_0}|\mc{B}_1)\|_{L^2}\leq 2\epsilon^{1/2}$. The assumption $\mc{B}_0\upmod_{\lambda}\mc{B}_1$ implies that $\mb{E}(1_{S_0}|\mc{B}_1)$ is $\mc{B}_0\wedge\mc{B}_1$-measurable (in particular $\mb{E}(1_{S_0}|\mc{B}_1)=\mb{E}(1_{S_0}|\mc{B}_0\wedge \mc{B}_1)$). By Lemma \ref{lem:level} with $\mc{B}=\mc{B}_0\wedge\mc{B}_1$ and $\mc{A}=\mc{B}_0$, the set $C=\{x\in \Omega: \mb{E}(1_{S_0}|\mc{B}_1)> (2\epsilon^{1/2})^{1/2}\}$ is in $\mc{B}_0\wedge\mc{B}_1$ and satisfies $\lambda(C\Delta S_0)\leq 5 (2\epsilon^{1/2})^{1/2} \leq 10\epsilon^{1/4}$. Similarly, by Lemma \ref{lem:level} with $\mc{A}=\mc{B}_1$ instead of $\mc{A}=\mc{B}_0$, this set $C$ satisfies $\lambda(C\Delta S_1) \leq 10\epsilon^{1/4}$.
\end{proof}
\noindent We can use this lemma in turn to prove the following fact about ultraproducts of conditionally independent $\sigma$-algebras.
\begin{lemma}\label{lem:upcialgs}
Let $(\mf{X},\mc{A},\lambda)$ be the ultraproduct of probability spaces $(X_i,\mc{A}_i,\lambda_i)$. For each $i$ let $\mc{B}_{i,0},\mc{B}_{i,1}$ be sub-$\sigma$-algebras of $\mc{A}_i$ such that $\mc{B}_{i,0}\upmod_{\lambda_i} \mc{B}_{i,1}$. For $j=0,1$ let $\mc{B}_j$ be the Loeb $\sigma$-algebra corresponding to the sequence $(\mc{B}_{i,j})_{i\in \mb{N}}$, and let $\mc{C}$ be the Loeb $\sigma$-algebra corresponding to  $(\mc{B}_{i,0}\wedge_{\lambda_i} \mc{B}_{i,1})_{i\in \mb{N}}$. Then $\mc{B}_0\wedge_\lambda \mc{B}_1 =_\lambda \mc{C}$ and $\mc{B}_0\upmod_\lambda \mc{B}_1$.
\end{lemma}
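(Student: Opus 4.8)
The plan is to prove the two assertions separately, noting first that the conditional independence statement $\mc{B}_0\upmod_\lambda\mc{B}_1$ will only use the easy half of the $\sigma$-algebra identity, so there is no circularity. For the inclusion $\mc{C}\subseteq_\lambda\mc{B}_0\wedge_\lambda\mc{B}_1$ I would argue on generators: a generating internal set of $\mc{C}$ has the form $\prod_{i\to\omega}C_i$ with $C_i\in\mc{B}_{i,0}\wedge_{\lambda_i}\mc{B}_{i,1}$, and choosing for each $i$ a set $B_{i,0}\in\mc{B}_{i,0}$ with $\lambda_i(C_i\Delta B_{i,0})=0$ gives $\prod_{i\to\omega}C_i=_\lambda\prod_{i\to\omega}B_{i,0}\in\mc{B}_0$, and symmetrically this internal set lies in $\mc{B}_1$. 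Hence every generator of $\mc{C}$ lies, mod $0$, in $\mc{B}_0\cap\mc{B}_1\subseteq\mc{B}_0\wedge_\lambda\mc{B}_1$; since the latter is a sub-$\sigma$-algebra of $\mc{A}$ containing all $\lambda$-null sets, it contains $\mc{C}$.

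The harder inclusion $\mc{B}_0\wedge_\lambda\mc{B}_1\subseteq_\lambda\mc{C}$ is where Lemma \ref{lem:CIalginter} gets transferred to the ultraproduct by a scale-by-scale diagonalization. Given $A\in\mc{B}_0\wedge_\lambda\mc{B}_1$, the Loeb (Hahn--Kolmogorov) construction provides, for each $n\in\mb{N}$, internal sets $B_0^{(n)}=\prod_{i\to\omega}B_{i,0}^{(n)}$ with $B_{i,0}^{(n)}\in\mc{B}_{i,0}$, and $B_1^{(n)}=\prod_{i\to\omega}B_{i,1}^{(n)}$ with $B_{i,1}^{(n)}\in\mc{B}_{i,1}$, such that $\lambda(A\Delta B_0^{(n)})<1/n$ and $\lambda(A\Delta B_1^{(n)})<1/n$. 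Then $\lim_\omega\lambda_i(B_{i,0}^{(n)}\Delta B_{i,1}^{(n)})<2/n$, so $S_n:=\{i:\lambda_i(B_{i,0}^{(n)}\Delta B_{i,1}^{(n)})<2/n\}\in\omega$. For $i\in S_n$, Lemma \ref{lem:CIalginter} applies (since $\mc{B}_{i,0}\upmod_{\lambda_i}\mc{B}_{i,1}$) and produces $C_i^{(n)}\in\mc{B}_{i,0}\wedge_{\lambda_i}\mc{B}_{i,1}$ with $\lambda_i(C_i^{(n)}\Delta B_{i,0}^{(n)})\le 10(2/n)^{1/4}$; for $i\notin S_n$ put $C_i^{(n)}=\emptyset$. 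Then $C^{(n)}:=\prod_{i\to\omega}C_i^{(n)}\in\mc{C}$ and $\lambda(C^{(n)}\Delta A)\le 10(2/n)^{1/4}+1/n\to 0$, so $1_{C^{(n)}}\to 1_A$ in $L^1(\lambda)$; as $L^1(\mc{C})$ is closed in $L^1(\lambda)$, we conclude $A\in_\lambda\mc{C}$. Combined with the previous paragraph this gives $\mc{B}_0\wedge_\lambda\mc{B}_1=_\lambda\mc{C}$.

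For $\mc{B}_0\upmod_\lambda\mc{B}_1$ I would reduce, via the definition (see \cite[Proposition 2.10]{CScouplings}), to showing that $\mb{E}(1_B|\mc{B}_1)$ is $\lambda$-a.e. equal to a $\mc{B}_0$-measurable function for every internal $B=\prod_{i\to\omega}B_i$ with $B_i\in\mc{B}_{i,0}$: the set $\{f\in L^2(\lambda):\mb{E}(f|\mc{B}_1)\in L^2(\mc{B}_0)\}$ is a closed subspace of $L^2(\lambda)$ containing the indicators of all such $B$, and these have dense linear span in $L^2(\mc{B}_0)$; moreover boundedness of $1_B$ upgrades $L^2(\mc{B}_0)$-membership to $L^\infty(\mc{B}_0)$-membership. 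Fixing such a $B$, set $g_i:=\mb{E}(1_{B_i}|\mc{B}_{i,1})$; since $\mc{B}_{i,0}\upmod_{\lambda_i}\mc{B}_{i,1}$, we may take $g_i=\mb{E}(1_{B_i}|\mc{B}_{i,0}\wedge_{\lambda_i}\mc{B}_{i,1})$, a $[0,1]$-valued $(\mc{B}_{i,0}\wedge_{\lambda_i}\mc{B}_{i,1})$-measurable function. Let $g:=\lim_\omega g_i$; a uniform approximation of each $g_i$ by simple $(\mc{B}_{i,0}\wedge_{\lambda_i}\mc{B}_{i,1})$-measurable functions shows $g$ is $\mc{C}$-measurable, hence, by the identity just proved, $\mc{B}_0$- and $\mc{B}_1$-measurable mod $0$. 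To identify $g$ with $\mb{E}(1_B|\mc{B}_1)$ it suffices to verify $\int_D g\ud\lambda=\lambda(B\cap D)$ for internal $D=\prod_{i\to\omega}D_i$ with $D_i\in\mc{B}_{i,1}$, and then extend to all $D\in\mc{B}_1$ by the monotone class theorem since the internal $\mc{B}_{i,1}$-sets form a generating algebra. Using $1_D\,g=\lim_\omega(1_{D_i}g_i)$ and the standard fact that $\int_{\mf{X}}\lim_\omega f_i\ud\lambda=\lim_\omega\int_{X_i}f_i\ud\lambda_i$ for uniformly bounded $(f_i)$ (see \cite{Warner}, \cite[\S 1.7]{TaoH}), one gets $\int_D g\ud\lambda=\lim_\omega\int_{D_i}\mb{E}(1_{B_i}|\mc{B}_{i,1})\ud\lambda_i=\lim_\omega\lambda_i(B_i\cap D_i)=\lambda(B\cap D)$, as wanted; by uniqueness of conditional expectation $g=\mb{E}(1_B|\mc{B}_1)$ a.e., which is $\mc{B}_0$-measurable mod $0$, so $\mc{B}_0\upmod_\lambda\mc{B}_1$.

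The main obstacle is the inclusion $\mc{B}_0\wedge_\lambda\mc{B}_1\subseteq_\lambda\mc{C}$: the three $\sigma$-algebras are not generated by a common family of internal sets, so one cannot work with a single representing sequence, and must instead run the $1/n\to0$ diagonalization above, feeding the internal approximants supplied by the Loeb construction into the quantitative statement of Lemma \ref{lem:CIalginter}. The conditional independence part is comparatively soft once this is in hand: it amounts to pushing $\mb{E}(\cdot\,|\mc{B}_{i,1})$ through $\lim_\omega$, which is legitimate because the equality $g=\mb{E}(1_B|\mc{B}_1)$ can be tested against internal sets, on which Loeb integration commutes with the ultralimit.
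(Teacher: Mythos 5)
Your proof is correct and follows essentially the same route as the paper's: the easy inclusion $\mc{C}\subset_\lambda\mc{B}_0\wedge_\lambda\mc{B}_1$ is argued on generators, the hard inclusion is obtained by applying Lemma \ref{lem:CIalginter} coordinatewise and passing to the ultraproduct, and the conditional independence $\mc{B}_0\upmod_\lambda\mc{B}_1$ follows from the levelwise independence together with the Loeb-measure identity $\mb{E}(\cdot\,|\mc{B}_j)=\lim_\omega\mb{E}(\cdot\,|\mc{B}_{i,j})$. The one small divergence is in the hard inclusion: the paper invokes the standard Loeb fact (used elsewhere in Section~\ref{sec:ccax}, citing \cite[\S 3.1]{Warner}) that every set in the Loeb $\sigma$-algebra $\mc{B}_j$ equals an internal $\mc{B}_{i,j}$-product mod null, so a single application of Lemma~\ref{lem:CIalginter} produces the witness $R=\prod_{i\to\omega}C_i\in\mc{C}$ with $R=_\lambda Q$ directly; you instead approximate to precision $1/n$, apply Lemma~\ref{lem:CIalginter} at each scale, and close up in $L^1(\mc{C})$. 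Both are valid, and your version avoids relying on the exact-internal-representative property at the cost of the extra diagonalization. Your treatment of the conditional-independence half fills in, carefully and correctly, what the paper leaves as a one-sentence remark.
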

\begin{proof}
The inclusion $\mc{B}_0\wedge_\lambda \mc{B}_1 \supset_\lambda \mc{C}$ is clear, for if $A\in \mc{C}$ then there are sets $A_i\in \mc{B}_{i,0}\wedge_{\lambda_i} \mc{B}_{i,1}$ such that $A=_\lambda \prod_{i\to \omega} A_i$, so $\prod_{i\to \omega} A_i$ is in $\mc{B}_j$ up to a null set, $j=0,1$, whence $A\in \mc{B}_0\wedge_\lambda \mc{B}_1$. For the opposite inclusion, let $Q$ be in $\mc{B}_0\wedge_\lambda \mc{B}_1$, so for $j=0,1$ there are sets $Q_{i,j}\in \mc{B}_{i,j}$ for each $i\in \mb{N}$ such that $Q=_\lambda \prod_{i\to\omega} Q_{i,j}$. Then $0=\lambda\big((\prod_{i\to \omega} Q_{i,0}) \Delta (\prod_{i\to \omega} Q_{i,1})\big)=\lambda\big( \prod_{i\to \omega} (Q_{i,0} \Delta  Q_{i,1})\big)$, so letting $\epsilon_i=\lambda_i(Q_{i,0} \Delta  Q_{i,1})$, we have $\lim_\omega \epsilon_i =0$. By Lemma \ref{lem:CIalginter}, for each $i$ there is $C_i\in \mc{B}_{i,0}\wedge_{\lambda_i} \mc{B}_{i,1}$ such that $\lambda(C_i\Delta Q_{i,j})\leq 10 \epsilon_i^{1/4}$ for $j=0,1$. Let $R=\prod_{i\to \omega} C_i$. By construction $R\in \mc{C}$, and by the last inequality we have $R=_\lambda Q$, so the required inclusion holds. Finally, the desired conclusion $\mc{B}_0\upmod_\lambda \mc{B}_1$ can be seen to follow from $\mc{B}_{i,0}\upmod_{\lambda_i} \mc{B}_{i,1}$, $i\in\mb{N}$, using the definition of conditional independence  \cite[Definition 2.9]{CScouplings} and basic facts about Loeb probability spaces. More precisely, by \cite[Theorem 2.4 and Remark 2.5]{CScouplings} it suffices to show that every function $f$ in $L^\infty(\mc{B}_1)$ satisfies $\mb{E}(f|\mc{B}_0)=_\lambda \mb{E}(f|\mc{B}_0\wedge_\lambda\mc{B}_1)$. To show this, we use first that $f$ is $\lambda$-almost-surely equal to a measurable function of the form $f'=\lim_\omega f_i'$ (see \cite[Corollary 5.1]{Ross}), and then we prove the equality $\mb{E}(f'|\mc{B}_0)=_\lambda \mb{E}(f'|\mc{B}_0\wedge_\lambda\mc{B}_1)$, by deducing it from the fact that, by the assumption $\mc{B}_{i,0}\upmod_{\lambda_i} \mc{B}_{i,1}$, the analogous equality holds for the $f'_i$. This last deduction is enabled by the fact that $\mb{E}(\cdot|\mc{B}_0)=\lim_\omega \mb{E}(\cdot|\mc{B}_{i,0})$, a fact which is confirmed in a straightforward way by checking that for any function of the form $g=\lim_\omega g_i \in L^1(\mc{A})$ (with each $g_i$ measurable) we have that $\lim_\omega \mb{E}(g_i|\mc{B}_{i,0})$ satisfies the defining property of the conditional expectation $\mb{E}(g|\mc{B}_0)$, i.e.\ that for every $h\in L^1(\mc{B}_0)$ we have $\int_{\mf{X}} h \, g\ud \lambda = \int_{\mf{X}} h \, \lim_\omega \mb{E}(g_i|\mc{B}_{i,0})\ud \lambda$. This last equality is seen using an \emph{$S$-integrable lifting} of $h$ (see \cite[Theorem 6.4]{Ross}), commuting ultralimit and integrals as afforded by \cite[Theorem 6.2, part 4]{Ross}, and basic properties of ultralimits.
\end{proof}
\noindent We also prove the following approximation result for measure-preserving group actions.
\begin{lemma}\label{lem:finapprox}
Let $G$ be an amenable group acting on a Borel probability space $(\Omega,\mc{A},\lambda)$ by measure-preserving transformations, and let $S\in\mc{A}$ be such that for some $\epsilon>0$ we have $\lambda\big(S\Delta (g\cdot S)\big)\leq \epsilon$ for every $g\in G$. Then there exists $S'\in \mc{A}$ such that $g\cdot S'=_\lambda S'$ for all $g\in G$ and $\lambda(S\Delta S')\leq 5\epsilon^{1/4}$.
\end{lemma}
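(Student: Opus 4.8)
The plan is to use amenability to replace $1_S$ by a genuinely $G$-invariant function that is $L^2$-close to it, and then to feed this into Lemma~\ref{lem:level}. Since the action is measure-preserving, the operators $U_g f(x)=f(g^{-1}\cdot x)$ are unitary on $L^2(\lambda)$, and the hypothesis reads $\|U_g1_S-1_S\|_{L^2}^2=\lambda(S\Delta g\cdot S)\leq\epsilon$, so $\|U_g1_S-1_S\|_{L^2}\leq\epsilon^{1/2}$ for every $g\in G$. Let $\mc{I}\subset\mc{A}$ be the $\sigma$-algebra of $G$-invariant sets (those $A$ with $g\cdot A=_\lambda A$ for all $g$). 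Note that $L^2(\mc{I})$ is exactly the space of vectors fixed by every $U_g$: indeed $f$ satisfies $U_gf=_\lambda f$ for each $g$ iff every level set $\{f>t\}$ lies in $\mc{I}$.

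First I would construct the averaged function. Fix a left-invariant mean $\mathfrak{m}$ on $\ell^\infty(G)$, and let $\bar f\in L^2(\lambda)$ be the vector with $\langle\bar f,h\rangle=\mathfrak{m}\big(g\mapsto\langle U_g1_S,h\rangle\big)$ for all $h\in L^2$ (this is a bounded functional of $h$ since $|\langle U_g1_S,h\rangle|\leq\|h\|_{L^2}$). A standard barycentre argument (Hahn--Banach separation in the weak topology) shows that $\bar f$ lies in the weakly closed convex hull $K$ of the orbit $\{U_g1_S:g\in G\}$; since $K$ is contained in both of the weakly closed convex sets $\{h:0\leq h\leq 1\text{ a.e.}\}$ and $\{h:\|h-1_S\|_{L^2}\leq\epsilon^{1/2}\}$, we obtain $0\leq\bar f\leq 1$ a.e.\ and $\|\bar f-1_S\|_{L^2}\leq\epsilon^{1/2}$. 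Left-invariance of $\mathfrak{m}$ gives $U_h\bar f=\bar f$ for every $h\in G$, so $\bar f$ is $\mc{I}$-measurable. Moreover, for any $A\in\mc{I}$ we have $\lambda(g\cdot S\cap A)=\lambda\big(g\cdot(S\cap A)\big)=\lambda(S\cap A)$ for all $g$, hence $\int_A\bar f\ud\lambda=\mathfrak{m}\big(g\mapsto\lambda(g\cdot S\cap A)\big)=\lambda(S\cap A)=\int_A1_S\ud\lambda$; combined with $\mc{I}$-measurability this shows $\bar f=\mb{E}(1_S\,|\,\mc{I})$.

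Finally I would apply Lemma~\ref{lem:level} with the sub-$\sigma$-algebra $\mc{B}=\mc{I}$ and with its parameter taken to be $\epsilon^{1/2}$: the required hypothesis $\|1_S-\mb{E}(1_S\,|\,\mc{I})\|_{L^2}\leq\epsilon^{1/2}$ is exactly what was established, so the set $S'=\{x:\bar f(x)>\epsilon^{1/4}\}$ satisfies $\lambda(S\Delta S')<5(\epsilon^{1/2})^{1/2}=5\epsilon^{1/4}$. Since $\bar f$ is $\mc{I}$-measurable, $S'\in\mc{I}$, i.e.\ $g\cdot S'=_\lambda S'$ for every $g\in G$, as required. (When $5\epsilon^{1/4}\geq 1$ the conclusion is vacuous and one may take $S'=\emptyset$, so one may assume $\epsilon$ small throughout.)

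The step needing the most care is the construction of $\bar f$: one must check that the mean-barycentre really is an element of $L^2$ lying in the weakly closed convex hull of the orbit, and that it is invariant mod $0$ for \emph{each} $g$ (which is all we need, and is handled cleanly by passing to the invariant $\sigma$-algebra $\mc{I}$). An equivalent and perhaps more transparent route, when $G$ admits a Følner \emph{sequence}, is the mean ergodic theorem: the Følner averages $\tfrac1{|F_n|}\sum_{g\in F_n}U_g1_S$ are $[0,1]$-valued, remain within $\epsilon^{1/2}$ of $1_S$ by the triangle inequality, and converge in $L^2$ to $\mb{E}(1_S\,|\,\mc{I})$; for the uncountable amenable groups arising in the applications (where $G$ is an ultraproduct of Lie groups acting on a Loeb space) one would instead use a Følner net, which is exactly why the invariant-mean formulation is convenient. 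In either case the remaining estimates are the elementary ones already isolated in Lemma~\ref{lem:level}.
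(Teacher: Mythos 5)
Your argument is correct, and it reaches the same key estimate (namely $\|1_S-\mb{E}(1_S\,|\,\mc{I})\|_{L^2}\leq\epsilon^{1/2}$, after which Lemma~\ref{lem:level} finishes things) by a genuinely different route. The paper's proof first treats the case of a \emph{countable} amenable $G$ via Følner averages and the mean ergodic theorem, and then handles the general case by a separate reduction: it shows that for any isometric action of $G$ on a separable metric space (applied to the measure algebra of $\mc{A}$) there is a countable subgroup $G_0\leq G$ whose fixed points are already $G$-fixed. Your invariant-mean/barycentre construction bypasses the Følner machinery and that countability reduction entirely, since a left-invariant mean on $\ell^\infty(G)$ exists for an arbitrary amenable group and the Hahn--Banach separation argument, together with the Riesz representation of the bounded functional $h\mapsto\mathfrak{m}\big(g\mapsto\langle U_g1_S,h\rangle\big)$, directly produces $\bar f=\mb{E}(1_S\,|\,\mc{I})\in\overline{\mathrm{conv}}^{\,w}\{U_g1_S\}$ with the required $L^2$ bound and $G$-invariance. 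What the paper's approach buys is that it stays within standard ergodic-theory toolkits (Følner averages, mean ergodic theorem) at the cost of the countable-reduction lemma; what yours buys is a uniform, one-step proof for all amenable $G$, at the cost of invoking the slightly more functional-analytic mean/barycentre argument. Both are sound, and your version would in fact be marginally shorter to write out in full.
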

\begin{proof}
We first suppose that $G$ is countable. Let $(F_j)_{j\in  \mb{N}}$ be a F{\o}lner sequence in $G$ and for each $j$ let $h_j=\mb{E}_{g\in F_j} 1_{g\cdot S}$. By the mean ergodic theorem for amenable groups \cite[Theorem 2.1]{Weiss}, letting $\mc{B}$ be the $\sigma$-algebra of $G$-invariant sets in $\mc{A}$, and $f$ be a version of $\mb{E}(1_S|\mc{B})$, we have $\|f-h_j\|_{L^2}\to 0$ as $j\to\infty$. Note that for every $j$ we have $\|1_S-f\|_{L^2}\leq \|1_S-h_j\|_{L^2}+ \|h_j-f\|_{L^2}\leq  \|h_j-f\|_{L^2} + \mb{E}_{g\in F_j} \|1_S-1_{g\cdot S}\|_{L^2}\leq \|h_j-f\|_{L^2} + \epsilon^{1/2}$, so letting $j\to\infty$ yields $\|1_S-f\|_{L^2}\leq \epsilon^{1/2}$. By Lemma \ref{lem:level}, the set $S'=\{x\in\Omega:f(x)>\epsilon^{1/4}\}$ satisfies $\lambda(S \Delta S')\leq 5\epsilon^{1/4}$, and since $f$ is $G$-invariant, we have $g\cdot S'=_\lambda S'$ for every $g\in G$.

We now reduce the general case to the countable case. It suffices to prove that if $G$ is a group acting on a separable metric space $(X,d)$ by isometries, then there is a countable group $G_0\leq G$ such that if $x\in X$ is a fixed point for $G_0$ then it is a fixed point for $G$ (we then apply this with $X$ the measure algebra of $\mc{A}$). Let $(x_i)_i$ be a dense sequence in $X$. For each $i$, the orbit $G\cdot x_i$ is itself separable, so there is a countable set $S_i\subset G$ such that $S_i\cdot x_i$ is dense in this orbit. Let $G_0$ be the subgroup of $G$ generated by $\bigcup_i S_i$. Observe that for every $i\in\mb{N}$, $g\in G$ and $\epsilon>0$, there is $g'\in S_i\subset G_0$ such that $d(g\cdot x_i,g'\cdot x_i)< \epsilon$. Suppose for a contradiction that there is $x\in X$ that is $G_0$-invariant but not $G$-invariant, so $d(g\cdot x,x)=\epsilon>0$. Then by the density of $(x_i)_i$ there is $i$ such that $d(x,x_i)<\epsilon/100$, so $d(g\cdot x_i,x_i)\geq d(g\cdot x_i,x)-d(x, x_i) \geq d(g\cdot x, x)-d(g\cdot x_i,g\cdot x)-d(x, x_i)$, which by the isometry property equals $d(g\cdot x, x)-2 d(x, x_i)\geq 98\epsilon/100$. Hence $d(g\cdot x_i,x_i)\geq 98\epsilon/100$. 
By the earlier observation, there is $g'\in G_0$ such that $d(g\cdot x_i,g'\cdot x_i)< \epsilon/100$, so $d(g'\cdot x_i,x_i)\geq d(g\cdot x_i,x_i) - d(g\cdot x_i,g'\cdot x_i)\geq 97\epsilon/100$. Combining this last inequality with $d(x,x_i)<\epsilon/100$ and the triangle inequality and isometry property, we deduce that $d(g'\cdot x,x)\geq d(g'\cdot x_i,x_i)-2 d(x,x_i)\geq 95\epsilon/100$, which contradicts that $x$ is $G_0$-invariant.
\end{proof}

\begin{lemma}\label{lem:Loebcomm}
Let $\nss$ be a compact Polish space, let $d$ be a metric compatible with the weak topology on $\mc{P}(\nss)$, and let $(\ns_i,\lambda_i)_{i\in \mb{N}}$ be a sequence of Borel probability spaces. For each $i\in\mb{N}$ let $f_i:\ns_i\to\nss$ be a  Borel function, and let $\omega$ be a non-principal ultrafilter on $\mb{N}$. Then, letting $f=\lim_\omega f_i$, we have $\lim_\omega d(\lambda_i\co f_i^{-1}, \lambda\co f^{-1})=0$. 
\end{lemma}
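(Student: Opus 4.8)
The plan is to reduce the statement to testing against continuous functions on $\nss$ and then to a standard interchange property of the Loeb integral. First I would record that, since $\nss$ is compact and metrizable, the space $\mc{P}(\nss)$ equipped with the weak topology is compact and metrizable and $d$ is a compatible metric on it. Consequently the ultralimit $\mu_\omega := \lim_\omega\big(\lambda_i\co f_i^{-1}\big)$ exists in $\mc{P}(\nss)$, and since the map $\mc{P}(\nss)\to\mb{R}$, $\mu\mapsto d(\mu,\lambda\co f^{-1})$, is continuous, we get $\lim_\omega d\big(\lambda_i\co f_i^{-1},\lambda\co f^{-1}\big) = d\big(\mu_\omega,\lambda\co f^{-1}\big)$. (Here $\lambda\co f^{-1}$ is a well-defined Borel probability measure on $\nss$ because $f=\lim_\omega f_i$ is $\mc{L}_{\mf{X}}$-measurable, as recalled in Section \ref{sec:ups}.) So it suffices to prove $\mu_\omega = \lambda\co f^{-1}$.

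Since a Borel probability measure on a compact metric space is determined by the integrals of continuous functions against it, the next step is to fix an arbitrary $g\in C(\nss)$ and show that $\int_\nss g\ud\mu_\omega = \int_\nss g\ud(\lambda\co f^{-1})$. By the defining property of the ultralimit $\mu_\omega$ in the weak topology, together with the change-of-variables formula for pushforward measures, the left-hand side equals $\lim_\omega\int_{\ns_i} g\co f_i\ud\lambda_i$; and by change of variables the right-hand side equals $\int_{\mf{X}} g\co f\ud\lambda$. Thus everything reduces to the identity $\int_{\mf{X}} g\co f\ud\lambda = \lim_\omega\int_{\ns_i} g\co f_i\ud\lambda_i$.

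To establish this last identity I would first observe that composing with a continuous map commutes with $\lim_\omega$: for a representative $(x_i)_i$ of $x\in\mf{X}$, if $y=f(x)$ is the unique point with $\{i:f_i(x_i)\in U\}\in\omega$ for every open $U\ni y$, then for every open $V\ni g(y)$ the set $g^{-1}(V)$ is an open neighbourhood of $y$, so $\{i:g(f_i(x_i))\in V\}\supseteq\{i:f_i(x_i)\in g^{-1}(V)\}\in\omega$; hence $g\co f=\lim_\omega(g\co f_i)$ (this is the same argument by which $\lim_\omega f_i$ was defined in Section \ref{sec:ups}). Now $(g\co f_i)_{i\in\mb{N}}$ is a uniformly bounded sequence of Borel functions, so the required equality is precisely the standard fact that the Loeb integral of $\lim_\omega h_i$ equals $\lim_\omega\int h_i\ud\lambda_i$ for any uniformly bounded sequence $(h_i)$ of Borel functions (see \cite{Warner}, or \cite[\S 2.10]{TaoH}); alternatively one may derive it by approximating $g$ uniformly by Borel simple functions on $\nss$, for which the identity is immediate from the definition of the Loeb premeasure on internal measurable sets. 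Combining the two reductions above then yields $\int_\nss g\ud\mu_\omega=\int_\nss g\ud(\lambda\co f^{-1})$ for every $g\in C(\nss)$, whence $\mu_\omega=\lambda\co f^{-1}$ and the lemma follows.

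I do not anticipate a serious obstacle: the computation is routine once one reduces to continuous test functions. The only points requiring any care are the commutation of the Loeb integral with the ultralimit (a classical property of Loeb measures) and the equally elementary fact that composition with the continuous function $g$ commutes with $\lim_\omega$; the conceptual content is merely to use compactness of $\mc{P}(\nss)$ to replace the sequence of pushforward measures by a single ultralimit measure, and then to identify that measure by pairing with $C(\nss)$.
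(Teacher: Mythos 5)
Your proof is correct and rests on the same core fact as the paper's: for continuous $g$ on $\nss$, the Loeb integral satisfies $\int_{\mf{X}} g\co f\ud\lambda = \lim_\omega\int_{\ns_i} g\co f_i\ud\lambda_i$. The paper reaches this via a contradiction argument using an explicit metric $d'(\mu,\nu)=\sum_r 2^{-r}|\int h_r\ud\mu-\int h_r\ud\nu|$ and an ultrafilter pigeonhole to fix a single test function $h_r$, whereas you argue directly by invoking compactness of $\mc{P}(\nss)$ to produce the ultralimit measure $\mu_\omega$ and then identify it with $\lambda\co f^{-1}$; this is a cosmetic difference and both are equally valid.
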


\begin{proof}
As shown in \cite[Theorem (17.19)]{Ke}, one can always metrize this space of probability measures with a metric of the form $d'(\mu,\nu)=\sum_{r\in\mb{N}} \tfrac{1}{2^r} |\int h_r \ud\mu - \int h_r \ud\nu|$, for a sequence of continuous functions $h_r:\nss\to \mb{C}$ with $\|h_r\|_{\infty}\leq 1$, $r\in \mb{N}$. Since $d$ and $d'$ metrize the same topology, it suffices to prove that $\lim_\omega d'(\lambda_i\co f_i^{-1}, \lambda\co f^{-1})=0$.

Suppose for a contradiction that for some $b\in (0,1)$ and some set $S\in\omega$, for every $i\in S$ we have $d'(\lambda_i\co f_i^{-1}, \lambda\co f^{-1})>b$. Then, for each $i\in S$, a short argument by contradiction shows that there exists $r=r(i)\in [1, 2 \lceil \log_2(2/b)\rceil\,]$ such that $|\int_{\ns_i} h_r\co f_i \ud \lambda_i - \int_{\ns} h_r\co f \ud\lambda|\geq b/2$. Using the ultrafilter properties, we then deduce that for some fixed integer $r$ there is a set $S'\subset S$ with $S'\in\omega$ such that for all $i\in S'$ we have $|\int_{\ns_i} h_r\co f_i \ud \lambda_i - \int_{\ns} h_r\co f \ud\lambda|\geq b/2$. Now we have two exhaustive possibilities. The first one is that some $S''\subset S'$ with $S''\in \omega$ satisfies $\int_{\ns_i} h_r\co f_i \ud\lambda_i \geq \int_{\ns} h_r\co f \ud \lambda +  b/2$ for all $i\in S''$; but then, commuting ultralimit and integrals (as in the proof of Lemma \ref{lem:upcialgs}), we obtain $\int_{\ns} h_r\co f \ud\lambda= \lim_\omega \int_{\ns_i} h_r\co f_i \ud \lambda_i \geq \int_{\ns} h_r\co f \ud\lambda + b/2 > \int_{\ns} h_r\co f \ud\lambda$, a contradiction. The other option is that some $S''\subset S'$ with $S''\in \omega$ satisfies $\int_{\ns} h_r\co f \ud\lambda \geq$  $\int_{\ns_i} h_r\co f_i \ud \lambda_i +  b/2$ for all $i\in S''$; then we deduce similarly that $\int_{\ns} h_r\co f \ud\lambda$ $= \lim_\omega \int_{\ns_i} h_r\co f_i \ud \lambda_i \leq \int_{\ns} h_r\co f \ud\lambda - b/2 < \int_{\ns} h_r\co f \ud\lambda$, obtaining again a contradiction.
\end{proof}

\noindent We finish with a lemma concerning the interaction of the Loeb-measure construction with products, when the underlying measures are couplings on Borel probability spaces.

\begin{lemma}\label{lem:applemf}
Let $(\ns_i)_{i\in \mb{N}}$, $(\nss_i)_{i\in \mb{N}}$ be sequences of Polish spaces, and for each $i\in\mb{N}$ let $\mu_i$ be a Borel probability measure on $\mc{B}(\ns_i)$ and $\nu_i$ be a Borel probability measure on $\mc{B}(\ns_i)\otimes \mc{B}(\nss_i)$. Let $(\mf{X},\mc{L}_{\mf{X}},\mu)$, $(\mf{X}\times \mf{Y},\mc{L}_{\mf{X}\times \mf{Y}}, \nu)$ be the corresponding Loeb probability spaces. Suppose that the projection $\pi_i:\ns_i\times \nss_i\to \ns_i$, $(x,y)\mapsto x$ is measure preserving for every $i\in\mb{N}$. Then the projection $\pi:\mf{X}\times \mf{Y}\to \mf{X}$, $(x,y)\mapsto x$ is measurable with respect to $\mc{L}_{\mf{X}}$, $\mc{L}_{\mf{X}\times \mf{Y}}$, and is measure-preserving with respect to $\mu,\nu$.
\end{lemma} 

\begin{proof}
The preimage under $\pi$ of any internal measurable set in $\mf{X}$ is an internal measurable set in $\mf{X}\times\mf{Y}$, and it is also clear that if $A$ is an internal measurable subset of $\mf{X}$ then $\nu\co \pi^{-1}(A)=\mu(A)$. (These claims follow from the fact the projections $\pi_i$ are measure-preserving maps and that taking ultraproducts commutes with taking preimages under the projections.) Now $\mc{L}_{\mf{X}}$ consists precisely of sets $S$ such that for every $\epsilon>0$ there exist internal measurable sets $A_i,A_o\subset \mf{X}$ with $A_i\subset S\subset A_o$ and $\mu(A_o\setminus A_i)<\epsilon$ \cite[\S 2.1]{Ross}. This combined with the properties already established for $\pi$ for internal sets implies that $\pi^{-1}(\mc{L}_{\mf{X}})\subset \mc{L}_{\mf{X}\times \mf{Y}}$ and $\mu\co \pi^{-1}=\nu$, as required.
\end{proof}


\begin{thebibliography}{1}

\bibitem{BTZ} V. Bergelson, T. Tao, T. Ziegler, \emph{An inverse theorem for the uniformity seminorms associated with the action $\mb{F}_p^\infty$}, Geom. Funct. Anal. \textbf{19} (2010), no. 6, 1539--1596.

\bibitem{CamSzeg} O. A. Camarena, B. Szegedy, \emph{Nilspaces, nilmanifolds and their morphisms}, preprint. \href{http://arxiv.org/abs/1009.3825}{arXiv:1009.3825}

\bibitem{Cand:Notes1} P. Candela, \emph{Notes on nilspaces: algebraic aspects}, Discrete Analysis, 2017, Paper No. 15, 59 pp.

\bibitem{Cand:Notes2} P. Candela, \emph{Notes on compact nilspaces}, Discrete Analysis, 2017, Paper No. 16, 57pp.

\bibitem{CGS} P. Candela, D. Gonz\'alez-S\'anchez, B. Szegedy, \emph{On nilspace systems and their morphisms}, Ergodic Theory Dynam. Systems \textbf{40} (2020), no. 11, 3015--3029.

\bibitem{CS} P. Candela, O. Sisask, \emph{Convergence results for systems of linear forms on cyclic groups, and periodic nilsequences}, SIAM J. Discrete Math. \textbf{28} (2014) (2), 786--810.

\bibitem{CScouplings} P. Candela, B. Szegedy, \emph{Nilspace factors for general uniformity seminorms, cubic exchangeability and limits}, to appear in Mem. Amer. Math. Soc. \href{http://arxiv.org/abs/1803.08758}{arXiv:1803.08758}

\bibitem{Cutland} N. J. Cutland, \emph{Nonstandard measure theory and its applications}, 
Bull. London Math. Soc. \textbf{15} (1983), no. 6, 529--589. 

\bibitem{E&S} G. Elek, B. Szegedy, \emph{A measure-theoretic approach to the theory of dense hypergraphs}, Adv. Math. \textbf{231} (2012), no. 3-4, 1731--1772.

\bibitem{Fremlin2} D. H. Fremlin, \emph{Measure theory}, Vol. 2, Broad foundations. Corrected second printing of the 2001 original. Torres Fremlin, Colchester, 2003.

\bibitem{Fremlin3} D. H. Fremlin, \emph{Measure theory}, Vol. 3, Measure algebras. Corrected second printing of the 2002 original. Torres Fremlin, Colchester, 2004.

\bibitem{Georg} G. Georganopoulos, \emph{Sur l'approximation des fonctions continues par des fonctions lipschitziennes}, C. R. Acad. Sci. Paris S\'er. A-B 264 1967 A319--A321.

\bibitem{GHFA} W. T. Gowers, \emph{Generalizations of Fourier analysis, and how to apply them}, Bull. Amer. Math. Soc. \textbf{54} (2017), no. 1, 1--44.

\bibitem{GSz} W. T. Gowers, \emph{A new proof of Szemer\'edi's theorem}, GAFA \textbf{11} (2001), 465--588.

\bibitem{GM} W. T. Gowers, L. Mili\'cevi\'c, \emph{An inverse theorem for Freiman multi-homomorphisms}, preprint. \href{http://arxiv.org/abs/2002.11667}{arXiv:2002.11667}

\bibitem{GTarith} B. Green, T. Tao, \emph{An arithmetic regularity lemma, an associated counting lemma, and applications}, in An Irregular Mind, Bolyai Soc. Math. Stud. \textbf{21}, J\'anos Bolyai Mathematical Society, Budapest, 2010, 261--334.

\bibitem{GTprimes} B. Green, T. Tao, \emph{The primes contain arbitrarily long arithmetic progressions}, Ann. of Math. (2) \textbf{167} (2008), no. 2, 481--547.

\bibitem{GTOrb} B. Green, T. Tao, \emph{The quantitative behaviour of polynomial orbits on nilmanifolds}, Ann. of Math. (2) \textbf{175} (2012), no. 2, 465--540.

\bibitem{GTZ} B. Green, T. Tao, T. Ziegler, \emph{An inverse theorem for the Gowers $U^{s+1}[N]$-norm}, Ann. of Math. \textbf{176} (2012), no. 2, 1231--1372.

\bibitem{GMV1} Y. Gutman, F. Manners, P. P. Varj\'u, \emph{The structure theory of nilspaces I},  J. Anal. Math. \textbf{140} (2020), no. 1, 299--369.

\bibitem{GMV2} Y. Gutman, F. Manners, P. P. Varj\'u, \emph{The structure theory of nilspaces II: Representation as nilmanifolds},  Trans. Amer. Math. Soc. \textbf{371} (2019), no. 7, 4951--4992. 

\bibitem{GMV3} Y. Gutman, F. Manners, P. P. Varj\'u, \emph{The structure theory of nilspaces III: Inverse limit representations and topological dynamics},  Adv. Math. \textbf{365} (2020), 107059, 53 pp.

\bibitem{GlaGutYe} E. Glasner, Y. Gutman, X. Ye, \emph{Higher order regionally proximal equivalence relations for general minimal group actions}, Advances in Mathematics \textbf{333} (2018), 1004--1041.

\bibitem{GutLian}  Y. Gutman, Z. Lian, \emph{Strictly ergodic distal models and a new approach to the Host--Kra factors}, preprint. \href{http://arxiv.org/abs/1909.11349}{arXiv:1909.11349} 

\bibitem{Helga} S. Helgason, \emph{Differential geometry, Lie groups, and symmetric spaces}, Pure and Applied Mathematics, 80. Academic Press, Inc., New York-London, 1978.

\bibitem{HK} B. Host, B. Kra, \emph{Nonconventional ergodic averages and nilmanifolds}, Ann. of Math. (2) \textbf{161} (2005), no. 1, 397--488.

\bibitem{HKbook} B. Host, B. Kra, \emph{Nilpotent structures in ergodic theory}, Mathematical Surveys and Monographs, Volume 236, 2018.

\bibitem{HKparas} B. Host, B. Kra, \emph{Parallelepipeds, nilpotent groups and Gowers norms}, Bull. Soc. Math. France \textbf{136} (2008), no. 3, 405--437.

\bibitem{Ke} A. S. Kechris, \emph{Classical descriptive set theory}, Graduate Texts in Mathematics, \textbf{156}. Springer-Verlag, New York, 1995.

\bibitem{Leib} A. Leibman, \emph{Polynomial mappings of groups}, Israel J. Math. \textbf{129} (2002), 29--60.

\bibitem{Ma} G. W. Mackey, \emph{Point realizations of transformation groups}, Illinois J. Math. \textbf{6} (1962), 327--335. 

\bibitem{Man1} F. Manners, \emph{Periodic nilsequences and inverse theorems on cyclic groups}, preprint. \href{http://arxiv.org/abs/1404.7742}{arXiv:1404.7742} 

\bibitem{Man2} F. Manners, \emph{Quantitative bounds in the inverse theorem for the Gowers $U^{s+1}$-norms over cyclic groups}, preprint. \href{https://arxiv.org/pdf/1811.00718.pdf}{arXiv:1811.00718}. 

\bibitem{Palais} R. S. Palais, \emph{The classification of G-spaces}, Mem. Amer. Math. Soc. No. \textbf{36}, 1960.

\bibitem{Ross} D. A. Ross, \emph{Loeb measure and probability}. Nonstandard analysis (Edinburgh, 1996), 91--120,
NATO Adv. Sci. Inst. Ser. C: Math. Phys. Sci., 493, Kluwer Acad. Publ., Dordrecht, 1997. 

\bibitem{Szegedy1} B. Szegedy, \emph{Higher order Fourier analysis as an algebraic theory I}, preprint 2009. \href{http://arxiv.org/abs/0903.0897}{arXiv:0903.0897}

\bibitem{Szegedy:reg} B. Szegedy, \emph{Gowers norms, regularization and limits of functions on abelian groups}, preprint 2010. \href{http://arxiv.org/abs/1010.6211}{arXiv:1010.6211}

\bibitem{Szegedy:HFA} B. Szegedy, \emph{On higher order Fourier analysis}, preprint 2012. \href{http://arxiv.org/abs/1203.2260}{arXiv:1203.2260}

\bibitem{TaoH} T. Tao, \emph{Hilbert's fifth problem and related topics}, Graduate Studies in Mathematics, 153. American Mathematical Society, Providence, RI, 2014. xiv+338 pp.

\bibitem{T&Z} T. Tao, T. Ziegler, \emph{The inverse conjecture for the Gowers norm over finite fields via the correspondence principle}, Anal. PDE \textbf{3} (2010), 1--20.

\bibitem{T&Z2} T. Tao, T. Ziegler, \emph{The inverse conjecture for the Gowers norm over finite fields in low characteristic}, Ann. Comb. 16 (2012), no. 1, 121--188.

\bibitem{Warner} E. Warner, \emph{Ultraproducts and the foundations of higher order Fourier analysis}, thesis available online at \href{https://www.math.columbia.edu/~warner/notes/UndergradThesis.pdf}{www.math.columbia.edu/\~{}warner/notes/UndergradThesis.pdf}. Retrieved 31-01-2022.

\bibitem{Weiss} B. Weiss, \emph{Actions of amenable groups}, Topics in dynamics and ergodic theory, 226--262, London Math. Soc. Lecture Note Ser., \textbf{310}, Cambridge Univ. Press, Cambridge, 2003. 
\end{thebibliography}
\end{document}